\numberwithin{equation}{section}
\newtheorem{Theorem}{Theorem}[section]
\newtheorem{Corollary}[Theorem]{Corollary}
\newtheorem{Lemma}[Theorem]{Lemma}
\newtheorem{Proposition}[Theorem]{Proposition}
 { \theoremstyle{definition}
\newtheorem{Definition}[Theorem]{Definition}
\newtheorem{Example}[Theorem]{Example}
\newtheorem{Remark}[Theorem]{Remark} }
\begin{document}

\newcommand{\arXivNumber}{2011.05886}

\renewcommand{\PaperNumber}{054}

\FirstPageHeading

\ShortArticleName{Nonsymmetric Macdonald Superpolynomials}

\ArticleName{Nonsymmetric Macdonald Superpolynomials}

\Author{Charles F.~DUNKL}

\AuthorNameForHeading{C.F.~Dunkl}

\Address{Department of Mathematics, University of Virginia, \\ PO Box 400137, Charlottesville VA 22904-4137, USA}
\Email{\href{mailto:cfd5z@virginia.edu}{cfd5z@virginia.edu}}
\URLaddress{\url{https://uva.theopenscholar.com/charles-dunkl}}

\ArticleDates{Received November 16, 2020, in final form May 13, 2021; Published online May 23, 2021}

\Abstract{There are representations of the type-A Hecke algebra on spaces of polynomials in anti-commuting variables. Luque and the author [\textit{S\'em. Lothar. Combin.} \textbf{66} (2012), Art.~B66b, 68~pages, arXiv:1106.0875] constructed nonsymmetric Macdonald polynomials taking values in arbitrary modules of the Hecke algebra. In~this paper the two ideas are combined to define and study nonsymmetric Macdonald polynomials taking values in the aforementioned anti-commuting polynomials, in other words, superpolynomials. The modules, their orthogonal bases and their properties are first derived. In~terms of the standard Young tableau approach to representations these modules correspond to hook tableaux. The details of the Dunkl--Luque theory and the particular application are presented. There is an inner product on the polynomials for which the Macdonald polynomials are mutually orthogonal. The squared norms for this product are determined. By using techniques of Baker and Forrester [\textit{Ann. Comb.} \textbf{3} (1999), 159--170, arXiv:q-alg/9707001] symmetric Macdonald polynomials are built up from the nonsymmetric theory. Here ``symmetric'' means in the Hecke algebra sense, not in the classical group sense. There is a concise formula for the squared norm of the minimal symmetric polynomial, and some formulas for anti-symmetric polynomials. For both symmetric and anti-symmetric polynomials there is a~factorization when the polynomials are evaluated at special points.}

\Keywords{superpolynomials; Hecke algebra; symmetrization; norms}

\Classification {33D56; 20C08; 05E05}

\vspace{-1mm}

\section{Introduction}

Nonsymmetric Macdonald~\cite{Macdonald2001} polynomials are simultaneous
eigenfunctions of a set of mutually commuting operators derived from an action
of the type-$A$ Hecke algebra on the space of~polynomials in $N$ variables.
They are significantly different from the symmetric Mac\-do\-nald polynomials in
the technique of their respective definitions and yet Baker and Forrester~\cite{BakerForrester97} established a strong relation between them. In~the analogous
theory of nonsymmetric Jack polynomials Griffeth~\cite{Griffeth2010} constructed such
polynomials which take values in modules of the underlying groups,
specifically the complex reflection groups in the infinite family $G(\ell,p,N)$. These polynomials constitute a standard module of the
rational Cherednik algebra. Luque and the author~\cite{DunklLuque2012} extended the
theory of nonsymmetric Macdonald polynomials in the direction suggested by
Griffeth's work by studying polynomials taking values in modules of the Hecke
algebra. The development relies on exploiting standard Young tableaux and the
Yang--Baxter graph technique of Lascoux~\cite{Lascoux2001}.

The superpolynomials considered here are generated by $N$ anti-commuting and
$N$ commuting variables. By defining representations of the Hecke algebra on
anti-commuting variables the theory of vector-valued nonsymmetric Macdonald
polynomials is applied to define and ana\-lyze superpolynomials. There is a
theory of symmetric Macdonald superpolynomials initiated by~Blondeau-Fournier,
Desrosiers, Lapointe, and Mathieu~\cite{BlondeauFournieretal2012} with further developments on~norm and special point values by Gonz\'{a}lez and Lapointe~\cite{GonzalezLapointe2020}.
Their approach and definitions are based on differential operators and linear
combinations of the classical nonsymmetric Macdonald polynomials, whose
coefficients involve anti-commuting variables. The theory developed in the
present paper is different due to the method of using anti-commuting variables
to form Hecke algebra modules.

Nonsymmetric Macdonald polynomials associated with general root systems were
intensively studied by Cherednik~\cite{Cherednik95}. By specializing to root systems
of type $A$ it becomes possible to develop more detailed relations, formulas
and structure. In~particular, the papers of Noumi and Mimachi~\cite{MimmachiNoumi98},
Baker and Forrester~\cite{BakerForrester97} provide important background for the present
paper. Note that some authors use different axioms for the quadratic relations
of the Hecke algebra, such as $\big(T-t^{1/2}\big)\big(T+t^{-1/2}\big) =0$, rather than $(T-t)(T+1)=0$.

The theory of Hecke algebras of type $A$ and their representations is briefly
described in~Sec\-tion~\ref{HA} and then applied to modules of polynomials in
anti-commuting variables. In~general the irreducible representations are
constructed as spans of standard Young tableaux whose shape corresponds to a
fixed partition of~$N$. In~the present situation it is the hook tableaux which
arise. The basis vectors are constructed and the important transformation
formulas are stated. There is an inner product in which the generators of the
Hecke algebra are self-adjoint which leads to evaluation of the squared norms
of the basis elements.

In~Section~\ref{NSMP} the theory of vector-valued nonsymmetric Macdonald
polynomials developed in~\cite{DunklLuque2012} is applied to produce superpolynomials,
considered as polynomials taking values in modules of anti-commuting
variables. The main results are stated without proofs but some important
details are carefully worked out. In~\cite{Dunkl2019} the author constructed an
inner product in which the nonsymmetric Macdonald polynomials are mutually
orthogonal, in the general vector-valued situation. This structure is worked
out for the superpolynomials in Section~\ref{SymFM} and the squared norms
are computed. In~Section~\ref{SyMP} the techniques of Baker and Forrester~\cite{BakerForrester97} are used to produce supersymmetric Macdonald polynomials, and
the squared norms. From results of~\cite{DunklLuque2012} the labels of these
polynomials correspond to the superpartitions of Desrosiers, Lapointe, and
Mathieu~\cite{Derosiersetal2003}. It has to be emphasized that in this paper the meaning
of symmetric is with respect to the Hecke algebra, not the symmetric group.
Also the squared norm of the lowest degree supersymmetric polynomial is
determined~-- the formula is more elegant than the general formula; its
calculation is able to use telescoping arguments for simplifications. There is
a derivation of formulas for antisymmetric Macdonald polynomials in Section~\ref{antiSy}. In~the conclusion some further topics of~inves\-tigation, such as
evaluation at special points, are discussed.

\section[The Hecke algebra of type A]{The Hecke algebra of type $\boldsymbol A$}\label{HA}

\subsection{Definitions and Jucys--Murphy elements}

The Hecke algebra $\mathcal{H}_{N}(t) $ of type $A_{N-1}$ with
parameter $t$ is the associative algebra over an~exten\-sion field of
$\mathbb{Q}$, generated by $\big\{ T_{1},\dots,T_{N-1}\big\} $ subject
to the braid relations%
\begin{subequations}
\begin{gather}
T_{i}T_{i+1}T_{i}=T_{i+1}T_{i}T_{i+1},\qquad 1\leq i<N-1,\label{Hbrd}
\\
T_{i}T_{j}=T_{j}T_{i},\qquad \vert i-j\vert \geq2, \label{Hbrd2}%
\end{gather}
and the quadratic relations%
\end{subequations}
\begin{gather}
(T_{i}-t)(T_{i}+1) =0,\qquad 1\leq i<N, \label{Hquad}%
\end{gather}
where $t$ is a generic parameter (this means $t^{n}\neq1$ for $2\leq n\leq
N$). The quadratic relation implies $T_{i}^{-1}=\frac{1}{t}(T_{i}+1-t)$. There is a commutative set in $\mathcal{H}_{N}(t)$ of \textit{Jucys--Murphy elements} defined by $\omega_{N}=1$, $\omega_{i}=t^{-1}T_{i}\omega_{i+1}T_{i}$ for $1\leq i<N$, that is,%
\begin{gather*}
\omega_{i}=t^{i-N}T_{i}T_{i+1}\cdots T_{N-1}T_{N-1}T_{N-2}\cdots T_{i}.%
\end{gather*}
Simultaneous eigenvectors of $\{\omega_{i}\} $ form bases of
irreducible representations of the algebra. The symmetric group $\mathcal{S}_{N}$ is the group of permutations of $\{1,2,\dots,N\} $ and is
generated by the simple reflections (adjacent transpositions) $\{
s_{i}\colon 1\leq i<N\}$, where $s_{i}$ interchanges $i$, $i+1$ and fixes the
other points (the $s_{i}$ satisfy the braid relations and $s_{i}^{2}=1$).
There is a linear isomorphism $\mathbb{Z}\mathcal{S}_{N}\rightarrow
\mathcal{H}_{N}(t) $ given by $\sum_{u\in\mathcal{S}_{N}%
}a_{u}u\rightarrow\sum_{u\in\mathcal{S}_{N}}a_{u}T(u) $,
where $T(u) =T_{i_{1}}\cdots T_{i_{\ell}}$ with $u=s_{i_{1}%
}\cdots s_{i_{\ell}}$ being a shortest expression for $u$ (in fact
$\ell=\#\{(i,j) \colon i<j,\,u(i) >u(j)\}$); $T(u)$ is well-defined because of the
braid relations (see~\cite{DipperJames86}).

\subsection{Modules of anti-commuting variables}

Consider polynomials in $N$ anti-commuting (fermionic) variables $\theta
_{1},\theta_{2},\dots,\theta_{N}$. They satisfy $\theta_{i}^{2}=0$ and
$\theta_{i}\theta_{j}+\theta_{j}\theta_{i}=0$ for $i\neq j$. The basis for
these polynomials consists of monomials labeled by subsets of $\{1,2,\dots,N\}$:%
\begin{gather*}
\phi_{E}:=\theta_{i_{1}}\cdots\theta_{i_{m}},\qquad
E=\{i_{1},i_{2},\dots,i_{m}\},\qquad
1\leq i_{1}<i_{2}<\cdots<i_{m}\leq N.
\end{gather*}

The polynomials have coefficients in an extension field of $\mathbb{Q}(q,t)$ with transcendental $q$, $t$, or~gene\-ric $q$, $t$ satisfying
$q,t\neq0$, $q^{a}\neq1$, $q^{a}t^{n}\neq1$ for $a\in\mathbb{Z}$ and
$n\neq2,3,\dots,N$.

\begin{Definition}
$\mathcal{P}:=\operatorname{span}\big\{\phi_{E}\colon E\subset\{1,\dots,N\}\big\}$ and $\mathcal{P}_{m}:=\operatorname{span}\big\{\phi_{E}\colon \#E=m\big\}$ for $0\leq m$ $\leq N$. The fermionic degree of~$\phi_{E}$ is $\#E$.
\end{Definition}

Some utility formulas are used for working with $\{\phi_{E}\}$.

\begin{Definition}
For a subset $E\subset\{1,2,\dots,N\}$ and $1\leq i<N$ let
\begin{gather*}
E^{C} :=\let\{1,2,\dots,N\} \backslash E,
\\
\operatorname{inv}(E) :=\#\big\{(i,j) \in E\times E^{C}\colon i<j\big\},
\\
s_{i}\phi_{E} =\phi_{s_{i}E}=\phi_{( E\backslash\{i\}) \cup\{i+1\}},\qquad
(i,i+1) \in E\times E^{C},
\\
s_{i}\phi_{E} =\phi_{s_{i}E}=\phi_{(E\backslash\{i+1\}) \cup\{i\}},\qquad
(i,i+1) \in E^{C}\times E.
\end{gather*}
\end{Definition}
\noindent
(When $\{i,i+1\} \subset E$ or $\subset E^{C}$ then $s_{i}E=E$
and $s_{i}\phi_{E}=\phi_{E}$.) Introduce a representation of~$\mathcal{H}_{N}(t)$ on~$\mathcal{P}$.

\begin{Definition}\label{defTi}
For $1\leq i<N$%
\begin{gather*}
T_{i}\phi_{E}=
\begin{cases}
-\phi_{E}, &\{i,i+1\} \subset E,
\\
t\phi_{E}, &\{i,i+1\} \subset E^{C},
\\
s_{i}\phi_{E}, &(i,i+1) \in E\times E^{C},
\\
(t-1) \phi_{E}+ts_{i}\phi_{E}, &(i,i+1) \in E^{C}\times E.
\end{cases}
\end{gather*}

\end{Definition}

\begin{Proposition}
The operators $\{T_{i}\}$ satisfy the braid and quadratic relations~\eqref{Hbrd} and~\eqref{Hquad}.
\end{Proposition}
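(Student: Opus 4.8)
The plan is to verify the defining relations of the Hecke algebra directly on the basis $\{\phi_E\}$ by checking how the operators $T_i$ act in each combinatorial case. The action in Definition~\ref{defTi} is governed entirely by the membership of the pair $\{i,i+1\}$ relative to the subset $E$, so I would organize the whole verification around these membership patterns. First I would dispatch the quadratic relation~\eqref{Hquad}, which is local to a single $T_i$. In the two diagonal cases ($\{i,i+1\}\subset E$ and $\{i,i+1\}\subset E^C$) the operator acts as the scalar $-1$ or $t$, so $(T_i-t)(T_i+1)$ annihilates $\phi_E$ immediately. In the two off-diagonal cases, $T_i$ couples $\phi_E$ and $\phi_{s_iE}$, and one computes $T_i^2$ on the two-dimensional span of these vectors; a short calculation using $s_i^2=\mathrm{id}$ shows $T_i^2=(t-1)T_i+t$, which is exactly $(T_i-t)(T_i+1)=0$.

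Next I would handle the commutation relation~\eqref{Hbrd2}. When $|i-j|\ge 2$ the pairs $\{i,i+1\}$ and $\{j,j+1\}$ are disjoint, so the membership pattern that determines the action of $T_i$ is unaffected by applying $T_j$ first, and vice versa; the two operators act on disjoint ``blocks'' of the index set, so $T_iT_j\phi_E=T_jT_i\phi_E$ on each basis element. This is essentially bookkeeping: one notes that $s_i$ and $s_j$ commute as permutations and that the scalars $t-1$, $t$ attached to each case depend only on the local configuration near $i$ (resp.\ $j$), which $T_j$ (resp.\ $T_i$) does not disturb.

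The main work, and the step I expect to be the genuine obstacle, is the braid relation~\eqref{Hbrd}, $T_iT_{i+1}T_i=T_{i+1}T_iT_{i+1}$. Here the relevant data is the placement of the three consecutive indices $i,i+1,i+2$ relative to $E$, giving $2^3=8$ membership patterns for the triple; by symmetry one can pair each pattern with its complement, cutting the genuinely distinct cases roughly in half. In each case I would compute both triple products explicitly, tracking how $\phi_E$ is sent to a linear combination of $\phi_{E'}$ over the subsets $E'$ obtainable by permuting $\{i,i+1,i+2\}$ within $E$, and compare coefficients. The delicate patterns are the ``mixed'' ones where the triple straddles $E$ and $E^C$ (for instance $i\in E$, $i+1\in E^C$, $i+2\in E$), since there the factor $(t-1)\phi_E+t\,s_i\phi_E$ produces several terms and one must check that the accumulated scalar weights from the two sides of the braid relation agree. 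The cleanest way to organize this is to recognize that the formulas in Definition~\ref{defTi} are exactly the matrix entries of the standard ($0/1$-filling) action making $\mathcal{P}$ a direct sum of hook-shape irreducibles, so the braid relation is inherited from the known Hecke-algebra representation on tableaux; nevertheless, for a self-contained argument I would carry out the case-by-case coefficient comparison, which reduces in every pattern to an elementary identity in $t$.
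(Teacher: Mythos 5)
Your proposal is correct and follows essentially the same route as the paper: both reduce the relations to the three indices actually involved, dispose of the all-in/all-out cases where $T_i$ acts as the scalar $-1$ or $t$, and verify the braid and quadratic relations by direct computation on the two three-dimensional spans (your ``mixed'' membership patterns are exactly the paper's $\operatorname{span}\{\theta_{1},\theta_{2},\theta_{3}\}$ and $\operatorname{span}\{\theta_{2}\theta_{3},\theta_{1}\theta_{3},\theta_{1}\theta_{2}\}$). The only cosmetic difference is that you organize by the $2^{3}$ membership patterns while the paper groups them into orbits by fermionic degree, and you additionally record the trivial commutation relation $T_iT_j=T_jT_i$ for $|i-j|\geq 2$, which the paper omits.
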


\begin{proof}
It suffices to verify that $T_{1}T_{2}T_{1}=T_{2}T_{1}T_{2}$ and $(T_{1}-t) (T_{1}+1) =0$ on the spaces $\operatorname{span}\{\theta_{1},\theta_{2},\theta_{3}\} $
and~$\operatorname{span}\{\theta_{2}\theta_{3},\theta_{1}\theta_{3},\theta_{1}\theta_{2}\}$. The~relations are trivially satisfied on $\operatorname{span}\{1\}$ and $\operatorname{span}\{\theta_{1}\theta_{2}\theta_{3}\}$.
\end{proof}

\begin{Remark}
For symbolic computation and to verify the previous proposition use%
\begin{gather*}
T_{i}f(\theta_{1},\dots,\theta_{N}) =tf+(t\theta_{i}-\theta_{i+1}) \bigg(\frac{\partial}{\partial\theta_{i+1}}-\frac{\partial}{\partial\theta_{i}}\bigg) f
 -\big(t\theta_{i}^{2}+\theta_{i+1}^{2}\big) \frac{\partial^{2}}{\partial\theta_{i}\partial\theta_{i+1}}f,
\\
T_{i}^{-1}f(\theta_{1},\dots,\theta_{N}) =\frac{1}{t}f+\bigg(\theta_{i}-\frac{1}{t}\theta_{i+1}\bigg) \bigg(\frac{\partial}{\partial\theta_{i+1}}-\frac{\partial}{\partial\theta_{i}}\bigg)f
 -\bigg(\theta_{i}^{2}+\frac{1}{t}\theta_{i+1}^{2}\bigg) \frac{\partial^{2}}{\partial\theta_{i}\partial\theta_{i+1}}f,
\end{gather*}
with the partial derivatives being formal (the order of variables is ignored).
\end{Remark}

There is a symmetric bilinear form on $\mathcal{P}$ which is positive-definite
for $t>0$ and in which ~$T_{i}$ is self-adjoint for $1\leq i<N$. The purpose
of the form is to make the simultaneous eigenvectors of $\{\omega_{i}\} $ mutually perpendicular.

\begin{Definition}\label{phiform}
For $E,F\subset\{1,2,\dots,N\}$ define
$\left\langle \phi_{E},\phi_{F}\right\rangle =\delta_{E,F}t^{-\operatorname{inv}(E)}$ and extend the form to $\mathcal{P}$ by linearity.
\end{Definition}

\begin{Proposition}
Suppose $f,g\in\mathcal{P}$ and $1\leq i<N$
then $\left\langle T_{i}f,g\right\rangle =\left\langle f,T_{i}g\right\rangle $.
\end{Proposition}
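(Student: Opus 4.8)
The plan is to reduce the identity to a single combinatorial fact about the inversion statistic. By bilinearity of $\langle\cdot,\cdot\rangle$ it suffices to prove $\langle T_i\phi_E,\phi_F\rangle=\langle\phi_E,T_i\phi_F\rangle$ for all subsets $E,F\subset\{1,\dots,N\}$. First I would observe from Definition~\ref{defTi} that $T_i$ acts locally: fixing $R\subset\{1,\dots,N\}\setminus\{i,i+1\}$, the operator $T_i$ preserves the four-dimensional space $\operatorname{span}\{\phi_R,\phi_{R\cup\{i\}},\phi_{R\cup\{i+1\}},\phi_{R\cup\{i,i+1\}}\}$, and these blocks (indexed by $R$) are mutually orthogonal because, by Definition~\ref{phiform}, the form is diagonal on the basis $\{\phi_E\}$. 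Hence self-adjointness may be checked one block at a time.

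Within a block, $\phi_R$ and $\phi_{R\cup\{i,i+1\}}$ are eigenvectors of $T_i$ with eigenvalues $t$ and $-1$ respectively, so they present no difficulty; moreover, since the form is symmetric, every ``diagonal'' comparison $\langle T_i\phi_E,\phi_E\rangle=\langle\phi_E,T_i\phi_E\rangle$ holds automatically. The only substantive comparison is the off-diagonal one on the pair $A:=R\cup\{i\}$, $B:=R\cup\{i+1\}$, where $T_i\phi_A=\phi_B$ and $T_i\phi_B=(t-1)\phi_B+t\phi_A$. Computing both sides gives $\langle T_i\phi_A,\phi_B\rangle=t^{-\operatorname{inv}(B)}$ and $\langle\phi_A,T_i\phi_B\rangle=t\,t^{-\operatorname{inv}(A)}$, so the entire proposition comes down to verifying the single identity $\operatorname{inv}(A)=\operatorname{inv}(B)+1$.

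To establish this, I would compare the inversion counts term by term. Since $B=s_iA$ and $A,B$ agree away from $\{i,i+1\}$, one checks that for every $k\notin\{i,i+1\}$ the contribution of the pairs linking $k$ to the ``moving'' element is unchanged: an element $k\in R$ contributes $[k<i+1]$ to $\operatorname{inv}(A)$ and $[k<i]$ to $\operatorname{inv}(B)$, while an element $k\notin R$ with $k\neq i,i+1$ contributes $[i<k]$ to $\operatorname{inv}(A)$ and $[i+1<k]$ to $\operatorname{inv}(B)$; in both cases the two values coincide because no integer lies strictly between $i$ and $i+1$. The sole remaining pair is $\{i,i+1\}$ itself, which is an inversion for $A$ (as $i\in A$, $i+1\in A^C$, $i<i+1$) but not for $B$. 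Thus $\operatorname{inv}(A)-\operatorname{inv}(B)=1$, exactly as needed.

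The main obstacle is precisely this inversion bookkeeping: the off-diagonal matching forces a specific power of $t$ between the squared norms of $\phi_A$ and $\phi_B$, and self-adjointness of $T_i$ is equivalent to the fact that applying $s_i$ to a set with $i$ in and $i+1$ out removes exactly one inversion. Once the decomposition into orthogonal $T_i$-invariant blocks is in place, no further algebra is required beyond this one-line count.
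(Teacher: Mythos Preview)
Your proof is correct and follows essentially the same approach as the paper's: both reduce to the single off-diagonal comparison on the pair $A=R\cup\{i\}$, $B=R\cup\{i+1\}$ and then verify $\operatorname{inv}(A)=\operatorname{inv}(B)+1$. The paper simply says ``it suffices to consider $T_1$'' and notes the extra inversion comes from the pair $(1,2)\in F_1\times F_1^{C}$, whereas you work with general $i$ and spell out the inversion bookkeeping in a bit more detail; the substance is identical.
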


\begin{proof}
It suffices to consider $T_{1}$. Let $F\subset\left\{ 3,4,\dots,N\right\}
$. Then $T_{1}\phi_{F}=t\phi_{F}$ and $T_{1}\phi_{\left\{ 1,2\right\} \cup
F}=-\phi_{\left\{ 1,2\right\} \cup F}$. Let $F_{i}=F\cup\{i\}
$ for $i=1,2$ and $T_{1}\phi_{F_{1}}=\phi_{F_{2}}$ and $T_{1}\phi_{F_{2}%
}=(t-1) \phi_{F_{2}}+t\phi_{F_{1}}$ so that%
\begin{gather*}
\left\langle T_{1}\phi_{F_{1}},\phi_{F_{2}}\right\rangle =\left\langle
\phi_{F_{2}},\phi_{F_{2}}\right\rangle =t^{-\operatorname{inv}\left( F_{2}\right)
},\\
\left\langle \phi_{F_{1}},T_{1}\phi_{F_{2}}\right\rangle =\left\langle
\phi_{F_{1}},(t-1) \phi_{F_{2}}+t\phi_{F_{1}}\right\rangle
=t\left\langle \phi_{F_{1}},\phi_{F_{1}}\right\rangle =t^{1-\operatorname{inv}%
\left( F_{1}\right) },
\end{gather*}
and $\operatorname{inv}(F_{1}) =\operatorname{inv}(F_{2}) +1$
by counting the pair $(1,2) \in F_{1}\times F_{1}^{C}$.
\end{proof}

\begin{Corollary}\label{omadjt}
If $f,g\in\mathcal{P}$ and $1\leq i\leq N$ then $\left\langle
\omega_{i}f,g\right\rangle =\left\langle f,\omega_{i}g\right\rangle$.
\end{Corollary}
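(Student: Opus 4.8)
The plan is to exploit the recursive definition $\omega_{i}=t^{-1}T_{i}\omega_{i+1}T_{i}$ together with the self-adjointness of each generator $T_{i}$ established in the preceding proposition, and to argue by downward induction on $i$ from $N$ to $1$. Since the form is symmetric bilinear rather than Hermitian, scalar factors such as $t^{-1}$ pass through both slots without conjugation, which is the only feature of the coefficient field I would need.

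The base case is $i=N$, where $\omega_{N}=1$ is trivially self-adjoint. For the inductive step I would assume $\left\langle \omega_{i+1}f,g\right\rangle =\left\langle f,\omega_{i+1}g\right\rangle$ for all $f,g\in\mathcal{P}$ and then peel off the generators one at a time. Starting from
\[
\left\langle \omega_{i}f,g\right\rangle =t^{-1}\left\langle T_{i}\omega_{i+1}T_{i}f,g\right\rangle,
\]
self-adjointness of $T_{i}$ moves the outer factor to the right slot, giving $t^{-1}\left\langle \omega_{i+1}T_{i}f,T_{i}g\right\rangle$; the inductive hypothesis for $\omega_{i+1}$ then yields $t^{-1}\left\langle T_{i}f,\omega_{i+1}T_{i}g\right\rangle$; and a final application of self-adjointness of $T_{i}$ produces
\[
t^{-1}\left\langle f,T_{i}\omega_{i+1}T_{i}g\right\rangle =\left\langle f,\omega_{i}g\right\rangle,
\]
completing the induction.

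I do not anticipate a genuine obstacle, since the argument rests solely on the symmetry of each $T_{i}$ and the bilinearity of the form. The one structural point worth recording is a more direct route: the explicit expression $\omega_{i}=t^{i-N}T_{i}T_{i+1}\cdots T_{N-1}T_{N-1}T_{N-2}\cdots T_{i}$ is a \emph{palindrome} in the generators. By the standard identity for adjoints of products with respect to the form, iterated over the factors, the adjoint of $\omega_{i}$ is obtained by reversing the order of factors and replacing each $T_{j}$ by its equal adjoint $T_{j}$; because the factor sequence is unchanged under reversal, $\omega_{i}$ equals its own adjoint in one stroke. Either the inductive computation or this palindrome observation establishes the claim for every $1\leq i\leq N$.
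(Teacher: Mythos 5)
Your proposal is correct and is essentially the paper's own proof: the paper justifies the corollary in one line by citing $\omega_{N}=1$ and the recursion $\omega_{i}=t^{-1}T_{i}\omega_{i+1}T_{i}$, which is exactly the downward induction you spell out using the self-adjointness of each $T_{i}$. Your palindrome remark is a harmless equivalent restatement of the same mechanism, not a different route.
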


\begin{proof}
This follows from $\omega_{N}=1$ and $\omega_{i}=t^{-1}T_{i}\omega_{i+1}T_{i}$ for $i<N$.
\end{proof}

There are two degree-changing linear maps which commute with the Hecke algebra action.

\begin{Definition}\label{defMD}
For $n\in\mathbb{Z}$ set $\sigma(n) :=(-1)^{n}$ and for $E\subset\{ 1,2,\dots,N\}$,
$1\leq i\leq N$ set $s(i,E) :=\#\{ j\in E\colon j<i\}$. Define
the operators $\partial_{i}$ and $\widehat{\theta}_{i}$ by $\partial_{i}
\theta_{i}\phi_{E}=\phi_{E}$, $\partial_{i}\phi_{E}=0$ and~$\widehat{\theta}_{i}\phi_{E}=\theta_{i}\phi_{E}=\sigma(s(i,E))\phi_{E\cup\{i\}}$
for~$i\notin E$, while~$\widehat{\theta}_{i}\phi_{E}=0$ for~$i\in E$ $\big($also $i\in E$ implies $\phi_{E}=\sigma(s(i,E)) \theta_{i}\phi_{E\backslash\{i\}}$ and~$\partial_{i}\phi_{E}=\sigma(s(i,E))
\phi_{E\backslash\{i\}}\big)$. Define $M:=\sum_{i=1}^{N}\widehat{\theta}_{i}$ and~$D:=\sum_{i=1}^{N}t^{i-1}\partial_{i}$.
\end{Definition}

By direct computation one can show that $\widehat{\theta}_{i}\partial_{j}=-\partial_{j}\widehat{\theta}_{i}$ for $i\neq j$.

\begin{Proposition}\label{commMD}
$M$ and $D$ commute with $T_{i}$ for $1\leq i<N.$
\end{Proposition}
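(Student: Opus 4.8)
The plan is to use the linearity of $M$ and $D$ to reduce the claim to an identity on each basis vector $\phi_{E}$, and, for a fixed index $i$, to split each operator into a \emph{local} part acting at the sites $i,i+1$ and a \emph{distant} part. Writing $M=\widehat{\theta}_{i}+\widehat{\theta}_{i+1}+\sum_{j\neq i,i+1}\widehat{\theta}_{j}$ and $D=t^{i-1}\partial_{i}+t^{i}\partial_{i+1}+\sum_{j\neq i,i+1}t^{j-1}\partial_{j}$, it suffices to prove that $T_{i}$ commutes with each distant summand individually and with the two local combinations $\widehat{\theta}_{i}+\widehat{\theta}_{i+1}$ and $t^{i-1}\partial_{i}+t^{i}\partial_{i+1}$.

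For the distant part I would show $T_{i}\widehat{\theta}_{j}=\widehat{\theta}_{j}T_{i}$ and $T_{i}\partial_{j}=\partial_{j}T_{i}$ for $j\notin\{i,i+1\}$. By Definition~\ref{defTi} the operator $T_{i}$ alters the membership of $E$ only at the two sites $i,i+1$ and leaves the case-type (the position of $\{i,i+1\}$ relative to $E$) unchanged when $\theta_{j}$ is inserted or deleted, since $s_{i}(E\cup\{j\})=(s_{i}E)\cup\{j\}$ and $s_{i}(E\setminus\{j\})=(s_{i}E)\setminus\{j\}$. The only point to verify is the sign $\sigma(s(j,E))$ from Definition~\ref{defMD}: if $j<i$ then $i,i+1>j$ and changing the sites $i,i+1$ does not affect $s(j,E)=\#\{k\in E\colon k<j\}$, while if $j>i+1$ then in the two transposition cases the element moved between $i$ and $i+1$ stays below $j$, so $s(j,E)$ is again preserved. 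Hence every distant summand commutes with $T_{i}$, and the scalar weights $t^{j-1}$ in $D$ pass through.

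The crux is the local part. Here I would verify $[T_{i},\widehat{\theta}_{i}+\widehat{\theta}_{i+1}]\phi_{E}=0$ and $[T_{i},\partial_{i}+t\partial_{i+1}]\phi_{E}=0$ (the latter being $t^{1-i}$ times the required identity for $D$) by running through the four cases of Definition~\ref{defTi}, abbreviating $a:=\#\{k\in E\colon k<i\}$, so that $\sigma(s(i,E))=\sigma(a)$ while $\sigma(s(i+1,E))=\sigma(a)$ when $i\notin E$ and $=-\sigma(a)$ when $i\in E$. The individual commutators $[T_{i},\widehat{\theta}_{i}]$ and $[T_{i},\widehat{\theta}_{i+1}]$ do \emph{not} vanish; the cancellation is produced by the sign reversal $\sigma(a+1)=-\sigma(a)$ together with the two-term output $(t-1)\phi_{E}+t\,s_{i}\phi_{E}$ arising in the case $(i,i+1)\in E^{C}\times E$. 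In that case the coefficient bookkeeping $(t-1)-t=-1$ settles the $M$-identity and $t(t-1)+t=t^{2}$ settles the $t$-weighted $D$-identity, the weights $1$ and $t$ attached to $\partial_{i},\partial_{i+1}$ being exactly what is needed; the remaining cases $\{i,i+1\}\subset E$, $\{i,i+1\}\subset E^{C}$, and $(i,i+1)\in E\times E^{C}$ are immediate. This four-case sign-and-coefficient check is the only real obstacle. As a consistency check one may also note $MD+DM=\big(\sum_{j=1}^{N}t^{j-1}\big)I$, which follows from $\widehat{\theta}_{j}\partial_{k}=-\partial_{k}\widehat{\theta}_{j}$ for $j\neq k$ and $\partial_{j}\widehat{\theta}_{j}+\widehat{\theta}_{j}\partial_{j}=1$.
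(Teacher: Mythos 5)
Your proposal is correct, and its skeleton is the same as the paper's: both split $M$ and $D$ into a distant part (summands with $j<i$ or $j>i+1$), which commutes with $T_{i}$ termwise, and the local combinations $\widehat{\theta}_{i}+\widehat{\theta}_{i+1}$ and $\partial_{i}+t\partial_{i+1}$ (the paper's $\partial_{1}+t\partial_{2}$, after your $t^{1-i}$ rescaling and a relabeling to $i=1$). Where you differ is in how the local commutation is verified. You run through the four cases of Definition~\ref{defTi} on the monomial basis $\phi_{E}$, tracking the signs $\sigma(s(\cdot,E))$; your sign table and the coefficient checks $(t-1)-t=-1$ and $t(t-1)+t=t^{2}$ are exactly right, and the remaining cases do close up as claimed (e.g., for $M$ in the case $\{i,i+1\}\subset E^{C}$ one needs $1+(t-1)=t$, which is of the same elementary kind). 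The paper instead evaluates the two local operators on a $T_{1}$-eigenbasis of the four-dimensional local space: with $1,2\notin F$, the vectors $p_{1}=\phi_{F}$, $p_{2}=(\theta_{1}+\theta_{2})\phi_{F}$ are $t$-eigenvectors and $p_{3}=(t\theta_{1}-\theta_{2})\phi_{F}$, $p_{4}=\theta_{1}\theta_{2}\phi_{F}$ are $(-1)$-eigenvectors, and one computes that $\partial_{1}+t\partial_{2}$ sends $p_{2}\mapsto(t+1)p_{1}$, $p_{4}\mapsto-p_{3}$ (killing $p_{1},p_{3}$), while $\widehat{\theta}_{1}+\widehat{\theta}_{2}$ sends $p_{1}\mapsto p_{2}$, $p_{3}\mapsto-(t+1)p_{4}$ (killing $p_{2},p_{4}$); since each local operator preserves both eigenspaces, commutation with $T_{1}$ is immediate. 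The eigenbasis route buys freedom from sign bookkeeping (no $\sigma$'s appear because the basis is written directly in $\theta_{1},\theta_{2}$) and halves the work, since one never has to compose in both orders; your monomial-basis route is more mechanical but equally valid and makes the role of the sign reversal $\sigma(a+1)=-\sigma(a)$ explicit. Your closing consistency check $MD+DM=[N]_{t}$ is also sound and reproduces Proposition~\ref{MDDM}.
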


\begin{proof}It follows from the definitions that $\partial_{i}$ and $\widehat{\theta}_{j}$
commute with $T_{i}$ when $j<i$ or $j>i+1$. It suffices to show $\partial
_{1}+t\partial_{2}$ and $\widehat{\theta}_{1}+\widehat{\theta}_{2}$ commute
with $T_{1}$ applied to $p_{1}:=\phi_{F}$, $p_{2}:=( \theta_{1}
+\theta_{2}) \phi_{F}$, $~p_{3}:=( t\theta_{1}-\theta_{2})
\phi_{F}$, $p_{4}:=\theta_{1}\theta_{2}\phi_{F}$ with $1,2\notin F$. Then
$T_{1}p_{i}=tp_{i}$ for $i=1,2$, $T_{1}p_{i}=-p_{i}$ for $i=3,4$ and
\begin{gather*}
( \partial_{1}+t\partial_{2}) [ p_{1},p_{2},p_{3},p_{4}]  =[ 0,( t+1) p_{1},0,-p_{3}] ,\\
( \widehat{\theta}_{1}+\widehat{\theta}_{2}) [ p_{1},p_{2},p_{3},p_{4}] =[ p_{2},0,-( t+1)p_{4},0] .
\end{gather*}
This concludes the proof.
\end{proof}

It is clear that $D^{2}=0=M^{2}$. For $n=0,1,2,\dots$ let $[n]_{t}:=\frac{1-t^{n}}{1-t}$ and $[n]_{t}!:=[1]_{t}[2]_{t}\cdots[n]_{t}$.

\begin{Proposition}\label{MDDM}
$MD+DM=[N]_{t}$.
\end{Proposition}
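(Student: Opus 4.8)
The plan is to reduce the claim to the fermionic canonical anti-commutation relations for the pair $\widehat{\theta}_i$, $\partial_i$, and then to expand $MD+DM$ as a double sum and collect terms. The excerpt already records the off-diagonal relation $\widehat{\theta}_i\partial_j=-\partial_j\widehat{\theta}_i$ for $i\neq j$, so the one new ingredient I would establish is the diagonal relation $\widehat{\theta}_i\partial_i+\partial_i\widehat{\theta}_i=\mathrm{id}$ on all of $\mathcal{P}$.

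First I would verify this diagonal relation by testing both operators on a basis vector $\phi_E$, splitting into the two cases $i\notin E$ and $i\in E$. If $i\notin E$, then $\partial_i\phi_E=0$, so only the term $\partial_i\widehat{\theta}_i\phi_E$ survives: applying $\widehat{\theta}_i$ produces $\sigma(s(i,E))\phi_{E\cup\{i\}}$, and then $\partial_i$ returns $\sigma(s(i,E\cup\{i\}))\phi_E$. Since adjoining $i$ does not change the number of elements of $E$ below $i$, one has $s(i,E\cup\{i\})=s(i,E)$, so the two signs multiply to $\sigma(s(i,E))^2=1$, leaving $\phi_E$. The case $i\in E$ is symmetric: now $\widehat{\theta}_i\phi_E=0$, the surviving term is $\widehat{\theta}_i\partial_i\phi_E$, and the identical sign-and-count bookkeeping (using $s(i,E\setminus\{i\})=s(i,E)$) again yields $\phi_E$. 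Hence $\widehat{\theta}_i\partial_i+\partial_i\widehat{\theta}_i$ acts as the identity on every basis vector.

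With both relations in hand, I would expand
\begin{gather*}
MD+DM=\sum_{i=1}^{N}\sum_{j=1}^{N}t^{j-1}\big(\widehat{\theta}_i\partial_j+\partial_j\widehat{\theta}_i\big).
\end{gather*}
Every off-diagonal term ($i\neq j$) vanishes by the anti-commutation relation, while each diagonal term contributes $t^{i-1}\,\mathrm{id}$. Summing the surviving diagonal terms gives $\sum_{i=1}^{N}t^{i-1}=1+t+\cdots+t^{N-1}=[N]_t$, which is precisely the asserted identity.

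I expect the only genuinely delicate point to be the sign accounting in the diagonal relation: one must confirm that the factors $\sigma(s(i,E))$ picked up by $\widehat{\theta}_i$ and by $\partial_i$ are indexed by the \emph{same} value of $s(i,E)$, so that they cancel as a square rather than introducing a stray sign. Once $s(i,E\cup\{i\})=s(i,E)=s(i,E\setminus\{i\})$ is noted, everything else is a routine geometric-series computation.
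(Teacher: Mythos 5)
Your proof is correct, and the one delicate point you flagged is handled properly: $s(i,E\cup\{i\})=s(i,E)=s(i,E\setminus\{i\})$, so the two signs square to $1$ in both cases of the diagonal relation. The route is a genuine repackaging of the paper's argument rather than a copy of it. The paper works entirely inside $MD\phi_E+DM\phi_E$ for a fixed basis element: it reads off the coefficient of $\phi_E$ itself (getting $\sum_{j\in E}t^{j-1}$ from $MD$ and $\sum_{i\notin E}t^{i-1}$ from $DM$, which combine to $[N]_t$), and then shows by an explicit sign computation that the cross terms $t^{i-1}\sigma(s(i,E))\widehat{\theta}_j\phi_{E\setminus\{i\}}$ (with $i\in E$, $j\notin E$) cancel between the two products. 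You instead isolate two operator identities first, namely the diagonal anti-commutator $\widehat{\theta}_i\partial_i+\partial_i\widehat{\theta}_i=\mathrm{id}$ (which the paper never states as a lemma) and the off-diagonal relation $\widehat{\theta}_i\partial_j=-\partial_j\widehat{\theta}_i$ for $i\neq j$ (which the paper asserts without proof just before the proposition), and then the identity $MD+DM=\sum_{i,j}t^{j-1}\big(\widehat{\theta}_i\partial_j+\partial_j\widehat{\theta}_i\big)=\sum_{i=1}^{N}t^{i-1}=[N]_t$ is pure formal algebra. What your factorization buys is modularity: the cancellation of cross terms becomes structural (it is exactly the off-diagonal anti-commutation relation) instead of an ad hoc monomial-by-monomial check, and the canonical anti-commutation relations you extract are reusable facts about the fermionic calculus. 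The mild cost is that you lean on the paper's unproved off-diagonal assertion; the paper's own proof, by contrast, is self-contained, since its cross-term cancellation is in effect a proof of that relation in the only nontrivial case.
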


\begin{proof}
Fix $\phi_{E}$, $\#E=m$; $\partial_{j}\phi_{E}=\sigma(s(j,E)) \phi_{E\backslash\{j\}}$, then
$\widehat{\theta}_{j}\partial_{j}\phi_{E}=\sigma(s(j,E)) \theta_{j}\phi_{E\backslash\{j\}}=\phi_{E}$ thus the
coefficient of $\phi_{E}$ in $MD$ is $\sum_{j\in E}t^{j-1}$. Also $M\phi_{E}=\sum_{i\notin E}\theta_{i}\phi_{E}$ and the coefficient of~$\phi_{E}$ in $DM\phi_{E}$ is $\sum_{i\notin E}t^{i-1}$ so that the
coefficient of $\phi_{E}$ in $MD\!+\!DM$ is $\sum_{j=1}^{N}\!t^{j-1}=[N]_{t}$. Suppose $i\in E$, $j\notin E$ then $\widehat{\theta}_{j}\phi_{E\backslash\{i\}}$ appears in $MD$ with coefficient
$t^{i-1}\sigma(s(i,E)) $ while $t^{i-1}\partial_{i}\theta_{j}\phi_{E}=\sigma(s(i,E))
t^{i-1}\partial_{i}\theta_{j}\theta_{i}\phi_{E\backslash\{i\}
}=-\sigma(s(i,E)) t^{i-1}\widehat{\theta}_{j}\phi_{E\backslash\{i\} }$, and this term is canceled out in~\mbox{$MD+DM$}.
\end{proof}

\subsection[Representations of H N (t)]{Representations of $\boldsymbol{\mathcal{H}_{N}(t)}$}

These representations correspond to partitions of $N$, namely $\lambda=\left(
\lambda_{1},\dots,\lambda_{N}\right) \in\mathbb{N}_{0}^{N}$ with
$\lambda_{1}\geq\lambda_{2}\geq\cdots\geq\lambda_{N}$ and $\sum_{i=1}%
^{N}\lambda_{i}=N$. The length of $\lambda$ is $\ell(\lambda)
=\max\{i\colon \lambda_{i}\geq1\}$. There is a graphical device to
picture $\lambda$, called the \textit{Ferrers diagram}, which has boxes at
$\{(i,j) \colon 1\leq i\leq\ell(\lambda)\,,1\leq j\leq\lambda_{i}\}$ (integer points). A reverse standard tableau
(RSYT) is a filling of the Ferrers diagram with the numbers $\{1,2,\dots,N\}$ such that the entries decrease in each row and in each
column. The relevant representation of $\mathcal{H}_{N}(t) $ is
defined on the span of the RSYT's of shape $\lambda$ in such a way that
$\omega_{i}Y=t^{c(i,Y) }Y$ for $1\leq i\leq N$, where $Y[a,b] =i$, $c(i,Y) =b-a$ ($b-a$ is called the
\textit{content} of $[a,b]$), and $Y$ is a RSYT of shape
$\lambda$. In~the present work only hook tableaux will occur, namely
partitions of the form $\lambda=(N-n,1^{n})$ (the part $1$ is
repeated $n$ times), so that $\ell(\lambda) =n+1$.

We will show that $\mathcal{P}_{m}$ is a direct sum of the $\mathcal{H}_{N}(t)$-modules corresponding to $(N-m,1^{m})$ and $(N+1-m,1^{m-1})$. Here is a structure for labeling the
$\phi_{E}$ of interest.

\begin{Definition}
Let $\mathcal{Y}_{0}:=\{E\colon \#E=m+1,\,N\in E\} $ and
$\mathcal{Y}_{1}:=\{E\colon \#E=m-1,\,N\notin E\}$.
\end{Definition}

These sets are associated to RSYT's of shape $\big(N-m,1^{m}\big)$
and $\big(N-m+1,1^{m-1}\big)$ respectively, and this
correspondence will be used to define content vectors for~$E$.

\begin{Definition}
Suppose $E\in\mathcal{Y}_{0}$ and $E=\{i_{1},\dots,i_{m},i_{m+1}\}$,
 $E^{C}=\{j_{1},\dots,j_{N-m-1}\}$ with $i_{1}<i_{2}<\cdots<i_{m+1}=N$ and $j_{1}<j_{2}<\cdots$ then
$Y_{E}$ is the RSYT of shape $(N-m,1^{m})$ given by
$Y_{E}[k,1] =i_{m+2-k}$ for $1\leq k\leq m+1$, and $Y_{E}[1,k] =j_{N-m+1-k}$ for $2\leq k\leq N-m$. Suppose $E\in\mathcal{Y}_{1}$
and $E=\{i_{1},\dots,i_{m-1}\}$, $E^{C}=\{j_{1},\dots,j_{N-m+1}\}$ with $i_{1}<i_{2}<\cdots$ and $j_{1}<j_{2}<\cdots<j_{N-m+1}=N$ then $Y_{E}$ is the RSYT of shape $(N-m+1,1^{m-1})$ given by~$Y_{E}[k,1] =i_{m+1-k}$ for~$2\leq k\leq m$,
$Y_{E}[1,k] =j_{N-m+2-k}$ for~$1\leq k\leq N-m+1$. In~both cases
define the content vector $c(i,E) =c\big( i,Y_{E}\big)$
for $1\leq i\leq N$.
\end{Definition}

For space-saving convenience the RSYT's are displayed in two rows, with the
second row consisting of the entries $Y_{E}[2,1],\,Y_{E}[3,1],\dots$\,. Recall the \textit{content} of cell $[i,j]$ is $j-i$.

As example let $N=8$, $m=3$, $E=\{2,5,7,8\}$ then
\begin{gather*}
Y_{E}=%
\begin{bmatrix}
8 & 6 & 4 & 3 & 1
\\
\cdot & 7 & 5 & 2 &
\end{bmatrix}
\end{gather*}
and $[c(i,E)]_{i=1}^{8}=[4,-3,3,2,-2,1,-1,0]$.

We will construct for each $E\in\mathcal{Y}_{0}\cup\mathcal{Y}_{1}$ a
polynomial $\tau_{E}\in\mathcal{P}_{m}$ such that $\omega_{i}\tau
_{E}=t^{c(i,E) }\tau_{E}$ for~$1\leq i\leq N$. To start let
$E_{0}:=\{N-m,N-m+1,\dots,N\} \in\mathcal{Y}_{0}$. Then
\begin{gather*}
Y_{E_{0}} =\begin{bmatrix}
N & N-m-1 & N-m-2 & \cdots & \cdots & 1
\\
\cdot & N-1 & N-2 & \cdots & N-m &
\end{bmatrix}\!,
\\
\left[c(i,E_{0})\right]_{i=1}^{N} =[N-m-1,N-m-2,\dots,1,-m,1-m,\dots,-1,0].
\end{gather*}

\begin{Theorem}\label{psi0eigen}
Let $\psi_{0}=D\phi_{E_{0}}\in\ker D\cap\mathcal{P}_{m}$,
then $\omega_{i}\psi_{0}=t^{c(i,E_{0})}\psi_{0}$ for $1\leq i\leq N$.
\end{Theorem}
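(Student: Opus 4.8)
The plan is to lean on two facts established above: that $D$ commutes with every $T_i$ (Proposition~\ref{commMD}), hence with every Jucys--Murphy element $\omega_i$, and the recursion $\omega_i = t^{-1}T_i\omega_{i+1}T_i$. The two membership claims are immediate: $\phi_{E_0}\in\mathcal{P}_{m+1}$ and $D$ lowers the fermionic degree by one, so $\psi_0\in\mathcal{P}_m$, while $D\psi_0 = D^2\phi_{E_0}=0$ places $\psi_0\in\ker D$.

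For the eigenvalue equations I would split on the index. When $i\ge N-m$ the monomial $\phi_{E_0}$ is itself an eigenvector: since $\{j,j+1\}\subset E_0$ for $N-m\le j\le N-1$, Definition~\ref{defTi} gives $T_j\phi_{E_0}=-\phi_{E_0}$, and $\omega_i=t^{i-N}T_i\cdots T_{N-1}T_{N-1}\cdots T_i$ is $t^{i-N}$ times an even number of such factors, whence $\omega_i\phi_{E_0}=t^{i-N}\phi_{E_0}=t^{c(i,E_0)}\phi_{E_0}$; applying $D$ and using commutativity gives $\omega_i\psi_0=t^{c(i,E_0)}\psi_0$. For $i\le N-m-1$ I would argue by downward induction on $i$. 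Here $\psi_0=\sum_{j\in E_0}t^{j-1}\sigma(s(j,E_0))\phi_{E_0\setminus\{j\}}$ involves only the variables $\theta_{N-m},\dots,\theta_N$, so for $i\le N-m-2$ neither $\theta_i$ nor $\theta_{i+1}$ occurs and Definition~\ref{defTi} gives $T_i\psi_0=t\psi_0$. Inserting this into $\omega_i\psi_0=t^{-1}T_i\omega_{i+1}T_i\psi_0$ (using $T_i\psi_0=t\psi_0$ twice) together with the inductive value of $\omega_{i+1}\psi_0$ and the content identity $c(i,E_0)=c(i+1,E_0)+1$ collapses the entire range to the single remaining identity $\omega_{N-m-1}\psi_0=t\psi_0$.

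This last identity is the crux. The extra variable $\theta_{N-m-1}$ now genuinely enters: writing $E_1:=\{N-m+1,\dots,N\}$ and $\eta:=D\phi_{E_1}$, Definition~\ref{defTi} yields $T_{N-m-1}\psi_0=t\psi_0-(t\theta_{N-m-1}-\theta_{N-m})\eta$, while the anticommutator $\{D,\widehat{\theta}_{N-m}\}=t^{N-m-1}$ (a consequence of $\widehat{\theta}_i\partial_j=-\partial_j\widehat{\theta}_i$ and $\partial_i\widehat{\theta}_i+\widehat{\theta}_i\partial_i=1$) gives the key relation $\psi_0=t^{N-m-1}\phi_{E_1}-\theta_{N-m}\eta$. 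Expanding $\omega_{N-m-1}\psi_0=t^{-1}T_{N-m-1}\,\omega_{N-m}\,T_{N-m-1}\psi_0$ then needs $\omega_{N-m}$ applied to $\psi_0$ (known, $=t^{-m}\psi_0$), to $\theta_{N-m-1}\eta$ (equal to $t\,\theta_{N-m-1}\eta$, since $\widehat{\theta}_{N-m-1}$ commutes with $\omega_{N-m}$ and, by an outer induction on $m$, $\omega_{N-m}\eta=t\eta$), and to $\theta_{N-m}\eta$.

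The last term is the main obstacle, because $\widehat{\theta}_{N-m}$ does \emph{not} commute with $\omega_{N-m}$ (it clashes with $T_{N-m}$), so the eigenvalue cannot simply be transported. I would therefore carry, as a second clause of the induction on $m$, the auxiliary identity $\omega_{N-m}(\theta_{N-m}\eta)=(t-t^{1-m})\psi_0+t\,\theta_{N-m}\eta$ (equivalently a closed formula for $\omega_{N-m}\phi_{E_1}$). This is proved from its analogue one level up via $\omega_{N-m}=t^{-1}T_{N-m}\omega_{N-m+1}T_{N-m}$ and the degree relation $\eta=t^{N-m}\phi_{E_2}-\theta_{N-m+1}D\phi_{E_2}$, where $E_2:=\{N-m+2,\dots,N\}$; the base case $m=0$ (where $\psi_0$ is a scalar) is trivial. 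Granting this identity, the three scalar contributions combine so that $T_{N-m-1}\,\omega_{N-m}\,T_{N-m-1}\psi_0=t^2\psi_0$, giving $\omega_{N-m-1}\psi_0=t\psi_0=t^{c(N-m-1,E_0)}\psi_0$ and closing the argument. (Conceptually, one can instead note that $\psi_0$ lies in the joint eigenspace of $\omega_{N-m},\dots,\omega_N$ for the eigenvalues $t^{-m},\dots,t^0$, and that for generic $t$ this joint eigenspace is one dimensional because $Y_{E_0}$ is the unique RSYT of the two hook shapes realizing that partial content; then any nonzero $\psi_0$ in it is automatically a simultaneous eigenvector of all $\omega_i$. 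Verifying this one dimensionality is, however, of comparable difficulty to the inductive computation above.)
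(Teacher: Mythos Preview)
Your reduction to the single identity $\omega_{N-m-1}\psi_0=t\psi_0$ is exactly the paper's, but from that point on the two arguments diverge. The paper stays ``upstairs'' in $\mathcal{P}_{m+1}$: it sets $F=\{N-m-1,\dots,N\}$, $p_j=\phi_{F\setminus\{j\}}$, and computes the partial products $U_i=T_i\cdots T_{N-m-1}$ applied to $\phi_{E_0}$ by a closed-form induction on $i$, obtaining
\[
U_{N-1}\phi_{E_0}=t^{m+2}p_N+(t-1)(-1)^m t^{-N+m+2}D\phi_F.
\]
Only then is $D$ applied (killing the $D\phi_F$ term), and the remaining string $T_{N-m-1}\cdots T_{N-1}$ sends $Dp_N$ back to $Dp_{N-m-1}=\psi_0$. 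No induction on $m$ is used. Your route, by contrast, stays in $\mathcal{P}_m$ and trades the single explicit formula for an outer induction on $m$ carrying the auxiliary identity $\omega_{N-m}(\theta_{N-m}\eta)=(t-t^{1-m})\psi_0+t\,\theta_{N-m}\eta$. That identity is correct and its induction step does go through (using $\omega_{N-m+1}\phi_{E_1}=t^{1-m}\phi_{E_1}$, which follows from $T_j\phi_{E_1}=-\phi_{E_1}$ for $j\ge N-m+1$), though you only assert this rather than verify it. What your approach buys is a structural recursion that never needs the somewhat intricate closed form for $U_i\phi_{E_0}$; what the paper's approach buys is a single self-contained computation with no auxiliary statement to maintain. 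One caution: your local use of the symbol $E_1$ for $\{N-m+1,\dots,N\}$ collides with the paper's $E_1=\{1,\dots,m-1\}$ introduced just after this theorem, so rename it before merging.
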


\begin{proof}
If $N-m\leq i<N$ then $T_{i}\phi_{E_{0}}=-\phi_{E_{0}}$ and so $T_{i}\psi
_{0}=-\psi_{0}$, because $T_{i}D=DT_{i}$ and $\psi_{0}=D\phi_{E_{0}}$, thus
$\omega_{i}\psi_{0}=t^{i-N}\psi_{0}$. It is clear that $T_{i}\psi_{0}%
=t\psi_{0}$ for $1\leq i<N-m-1$ and so it remains to prove $\omega_{N-m-1}%
\psi_{0}=t\psi_{0}$. (The remaining part of the argument is straightforward,
and is at the end of this proof; for example $\omega_{N-m-2}\psi_{0}%
=t^{-1}T_{N-m-2}\omega_{N-m-1}T_{N-m-2}\psi_{0}=T_{N-m-2}\omega_{N-m-1}%
\psi_{0}=t^{2}\psi_{0}$.) Let $F:=\{N-m-1,N-m,\dots,N\} $ and
$F_{j}:=F\backslash\{j\}$, $p_{j}=\phi_{F_{j}}$, (so that
$p_{N-m-1}=\phi_{E_{0}}$) then $T_{i}p_{j}=-p_{j}$ if $i>j$ or $N-m-1\leq
i<j-1$, $T_{j}p_{j}=(1-t) p_{j}+tp_{j+1}$ and $T_{j}%
p_{j+1}=p_{j}$. To set up an induction argument let $U_{N-m-1}=T_{N-m-1}$ and
$U_{i+1}=T_{i+1}U_{i}$ for $i<N-1$. We claim%
\[
U_{i}\phi_{E_{0}}=t^{i-N+m+2}p_{i+1}+(t-1) \sum_{j=N-m-1}^{i}(-1)^{i-j}t^{j-N+m+1}p_{j}.
\]
At the start of the induction $T_{N-m-1}p_{N-m-1}=tp_{N-m}+(t-1)
p_{N-m-1}$. Suppose the formula holds for $i$ then%
\begin{gather*}
U_{i+1}\phi_{E_{0}}=t^{i-N+m+2}T_{i+1}p_{i+1}+(t-1)\sum_{j=N-m-1}^{i}(-1)^{i-j}t^{j-N+m+1}T_{i+1}p_{j}
\\ \hphantom{U_{i+1}\phi_{E_{0}}}
{}=t^{i-N+m+2}(tp_{i+2}+(t-1) p_{i+1}) -(t-1) \sum_{j=N-m-1}^{i}(-1)^{i-j}t^{j-N+m+1}p_{j}
\\ \hphantom{U_{i+1}\phi_{E_{0}}}
{}=t^{i-N+m+3}p_{i+2}+(t-1) \sum_{j=N-m}^{i+1}(-1)^{j-i+1}t^{j-N+m+1}p_{j},
\end{gather*}
as is to be shown. We also need
\[
D\phi_{F}=\sum_{j=N-m-1}^{N-1}(-1)^{j-N+m+1}t^{j-1}p_{j}+(-1)^{m+1}t^{N-1}p_{N}.
\]
Thus%
\begin{gather*}
U_{N-1}\phi_{E_{0}} =t^{m+1}p_{N}+(t-1) \sum_{j=N-m-1}^{N-1}(-1)^{N-1-j}t^{j-N+m+1}p_{j}
\\ \hphantom{U_{N-1}\phi_{E_{0}}}
{} =t^{m+1}p_{N}+(t-1) (-1)^{m}t^{-N+m+2}\big\{D\phi_{F}-(-1)^{m+1}t^{N-1}p_{N}\big\}
\\ \hphantom{U_{N-1}\phi_{E_{0}}}
{} =t^{m+2}p_{N}+(t-1) (-1)^{m}t^{-N+m+2}D\phi_{F}.
\end{gather*}
Then $U_{N-1}\psi_{0}=DU_{N-1}\phi_{E_{0}}=t^{m+2}Dp_{N}$ and $T_{N-m-1}
T_{N-m}\cdots T_{N-1}U_{N-1}\psi_{0}=t^{m+2}Dp_{N-m-1}$ $=t^{m+2}\psi_{0}$ since
$T_{j}p_{j+1}=p_{j}$ for $N-m-1\leq j<N$. Hence $\omega_{N-m-1}\psi
_{0}=t^{N-m-1-N}t^{m+2}\psi_{0}$ $=t\psi_{0}$. It follows that%
\begin{align*}
\omega_{i}\psi_{0} & =t^{i-N+m+1}T_{i}\cdots T_{N-m-2}\omega_{N-m-1}%
T_{N-m-2}\cdots T_{i}\psi_{0}
\\
& =t^{i-N+m+1}t^{1+2(N-m-1-i)}\psi_{0}=t^{N-m-i}\psi_{0},
\end{align*}
for $1\leq i\leq N-m-1$.
\end{proof}

Turning to the isotype $\left( N-m+1,1^{m-1}\right) $, let $E_{1}:=\{1,2,\dots,m-1\} \in\mathcal{Y}_{1}$ so that%
\begin{gather*}
Y_{E_{1}} =\begin{bmatrix}
N & N-1 & N-2 & \cdots & \cdots & m
\\
\cdot & m-1 & m-2 & \cdots & 1 &
\end{bmatrix}\!,
\\
[c(i,E_{1})]_{i=1}^{N} =[1-m,2-m,\dots,-1,N-m,N-m-1,\dots,1,0] .
\end{gather*}

\begin{Theorem}
\label{eta0eigen}
Let $\eta_{E_{1}}=M\phi_{E_{1}}\in\ker M\cap\mathcal{P}_{m}$.
Then $\omega_{i}\eta_{E_{1}}=t^{c(i,E_{1})}\eta_{E_{1}}$ for~$1\leq i\leq N$.
\end{Theorem}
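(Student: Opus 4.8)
The plan is to mirror the proof of Theorem~\ref{psi0eigen}, interchanging the lowering operator $D$ with the raising operator $M$ and exploiting $M^{2}=0$ in place of $D^{2}=0$. The organizing principle is that, by Proposition~\ref{commMD}, $M$ commutes with every $T_{i}$ and hence with every $\omega_{i}$, so that $\omega_{i}\eta_{E_{1}}=\omega_{i}M\phi_{E_{1}}=M\omega_{i}\phi_{E_{1}}$. This reduces each eigenvalue computation to an action on the single monomial $\phi_{E_{1}}$ followed by one application of $M$, after which $M^{2}=0$ will eliminate unwanted terms.

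First I would dispose of the easy contents. Since $\{i,i+1\}\subset E_{1}$ for $1\le i\le m-2$, Definition~\ref{defTi} gives $T_{i}\phi_{E_{1}}=-\phi_{E_{1}}$ and hence $T_{i}\eta_{E_{1}}=-\eta_{E_{1}}$; since $\{i,i+1\}\subset E_{1}^{C}$ for $m\le i<N$, we get $T_{i}\phi_{E_{1}}=t\phi_{E_{1}}$ and so $T_{i}\eta_{E_{1}}=t\eta_{E_{1}}$. Because $\omega_{i}=t^{i-N}T_{i}\cdots T_{N-1}T_{N-1}\cdots T_{i}$ is a product of $2(N-i)$ generators, the second family at once yields $\omega_{i}\eta_{E_{1}}=t^{i-N}t^{2(N-i)}\eta_{E_{1}}=t^{N-i}\eta_{E_{1}}=t^{c(i,E_{1})}\eta_{E_{1}}$ for $m\le i\le N$.

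The one genuinely nontrivial content is the corner value $c(m-1,E_{1})=-1$; everything below it then follows by downward induction from the recursion $\omega_{i}=t^{-1}T_{i}\omega_{i+1}T_{i}$ together with $T_{i}\eta_{E_{1}}=-\eta_{E_{1}}$ for $1\le i\le m-2$, producing $\omega_{i}\eta_{E_{1}}=t^{i-m}\eta_{E_{1}}$ exactly as at the end of the proof of Theorem~\ref{psi0eigen}. To secure $\omega_{m-1}\eta_{E_{1}}=t^{-1}\eta_{E_{1}}$ I would compute $\omega_{m-1}\phi_{E_{1}}$ explicitly. Writing $r_{j}:=\phi_{\{1,\dots,m-2,j\}}$ for $m-1\le j\le N$ (so $r_{m-1}=\phi_{E_{1}}$), the right half $T_{N-1}\cdots T_{m}T_{m-1}$ of $\omega_{m-1}$ marches the top index upward via $T_{j}r_{j}=r_{j+1}$, carrying $\phi_{E_{1}}\mapsto r_{N}$. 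An induction on the partial products $W_{k}:=T_{k}T_{k+1}\cdots T_{N-1}$ (the analogue of the operators $U_{i}$ in Theorem~\ref{psi0eigen}) should then give the closed form
\[
W_{m-1}r_{N}=t^{N-m+1}r_{m-1}+t^{N-m}(t-1)\sum_{j=m}^{N}r_{j},
\]
so that after the prefactor $t^{m-1-N}$ one obtains $\omega_{m-1}\phi_{E_{1}}=r_{m-1}+t^{-1}(t-1)\sum_{j=m}^{N}r_{j}$.

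The decisive step is the $M^{2}=0$ trick. Setting $E_{1}':=\{1,\dots,m-2\}$, Definition~\ref{defMD} gives $M\phi_{E_{1}'}=(-1)^{m-2}\sum_{j=m-1}^{N}r_{j}$, hence $\sum_{j=m}^{N}r_{j}=(-1)^{m-2}M\phi_{E_{1}'}-r_{m-1}$. Applying $M$ to $\omega_{m-1}\phi_{E_{1}}$ therefore annihilates the $M\phi_{E_{1}'}$ contribution through $M^{2}=0$ and leaves $M\bigl(\sum_{j=m}^{N}r_{j}\bigr)=-Mr_{m-1}=-\eta_{E_{1}}$, so that $\omega_{m-1}\eta_{E_{1}}=M\omega_{m-1}\phi_{E_{1}}=\bigl(1-t^{-1}(t-1)\bigr)\eta_{E_{1}}=t^{-1}\eta_{E_{1}}$, as required. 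I expect the main obstacle to be establishing the closed form for $W_{m-1}r_{N}$ rigorously: the induction is routine but bookkeeping-heavy, precisely as with the $U_{i}$ computation in Theorem~\ref{psi0eigen}, since it must track the diagonal, shift, and $(t-1)$-type actions of Definition~\ref{defTi} across all of $r_{m-1},\dots,r_{N}$. Once that identity is in hand, the commutation of $M$ with $\omega_{m-1}$ and the relation $M^{2}=0$ make the remainder automatic, and the downward propagation finishes the contents $c(i,E_{1})=i-m$ for $1\le i\le m-1$.
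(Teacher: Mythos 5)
Your proposal is correct and follows essentially the same route as the paper's own proof: the same easy eigenvalue cases, the same inductive closed form for $T_{m-1}\cdots T_{N-1}$ applied to $\phi_{\{1,\dots,m-2,N\}}$, the same substitution of $\sum_j r_j$ by a multiple of $M\phi_{\{1,\dots,m-2\}}$ killed via $M^{2}=0$, and the same downward recursion $\omega_{i}=t^{-1}T_{i}\omega_{i+1}T_{i}$ for $i<m-1$. The only (harmless) differences are that you apply $M$ at the very end rather than interleaving it, and that you track the sign $(-1)^{m-2}$ in $M\phi_{E_{1}'}$, which the paper elides since that term is annihilated anyway.
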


\begin{proof}
Since $T_{i}\phi_{E_{1}}=t\phi_{E_{1}}$ for $m\leq i<N$ it follows that
$\omega_{i}M\phi_{E_{1}}=t^{N-i}M\phi_{E_{1}}$. Also $T_{i}\phi_{E_{1}}=-\phi_{E_{1}}$ and $T_{i}M\phi_{E_{1}}=-M\phi_{E_{1}}$ for $1\leq i<m-1$ and
so it remains to show $\omega_{m-1}\eta_{E_{1}}=t^{-1}\eta_{E_{1}}$. Let
$F=\{1,2,\dots,m-2\} $ and for $m-1\leq j\leq N$ let
$F_{j}=F\cup\{j\} $ and $p_{j}=\phi_{F_{j}}$ ($\phi_{E_{1}}=p_{m-1}$). Then $T_{j}p_{j}=p_{j+1}$ so that $T_{N-1}T_{N-2}\cdots
T_{m-1}p_{m-1}=p_{N}$. Also $T_{j}p_{j+1}=tp_{j}+(t-1) p_{j+1}$
and~$T_{j}p_{i}=tp_{i}$ for $i>j+1$. By induction we prove that
\[
T_{i}T_{i+1}\cdots T_{N-1}p_{N}=t^{N-i}p_{i}+(t-1) t^{N-i-1}\sum_{j=i+1}^{N}p_{j}.
\]
The formula is valid for $i=N-1$ and assuming it is true for $i$ apply
$T_{i-1}$ to both sides, then the first term becomes $t^{N-i}\left(
p_{i-1}+(t-1) p_{i}\right) $ and the second term is multiplied
by $t$. Substitute $M\phi_{F}=\sum_{j=m-1}^{N}p_{j}$ in the formula with
$i=m-1$ to obtain
\begin{gather*}
T_{m-1}\cdots T_{N-1}p_{N} =t^{N-m+1}p_{m-1}+(t-1)
t^{N-m}\big\{ M\phi_{F}-p_{m-1}\big\}
\\ \hphantom{T_{m-1}\cdots T_{N-1}p_{N} }
{} =t^{N-m}p_{m-1}+(t-1) t^{N-m}M\phi_{F}.
\end{gather*}
Thus $\omega_{m-1}\eta_{E_{1}}=t^{m-1-N}t^{N-m}Mp_{m-1}=t^{-1}\eta_{E_{1}}$
(since $M^{2}=0$). From $T_{i}\eta_{E_{1}}=-\eta_{E_{1}}$ for $1\leq i<m-1$ it
follows that $\omega_{i}\eta_{E_{1}}=t^{i-m}\eta_{E_{1}}$. Thus $\omega
_{i}\eta_{E_{1}}=t^{c(i,E_{1})}\eta_{E_{1}}$ for $1\leq i\leq N$.
\end{proof}

\subsection{Steps}

Having found two polynomials which are $\{\omega_{i}\} $
simultaneous eigenfunctions we describe the method for constructing for each
$E\in\mathcal{Y}_{0}\cup\mathcal{Y}_{1}$ a polynomial $\tau_{E}\in
\mathcal{P}_{m}$ such that $\omega_{i}\tau_{E}=t^{c(i,E) }%
\tau_{E}$ for $1\leq i\leq N$. Recall the standard properties $T_{i}\omega
_{j}=\omega_{j}T_{i}$ for $i<j-1$ (obvious) and for $i>j$ (suppose $i=j+1$)
then%
\begin{align*}
T_{j+1}\omega_{j} & =t^{-2}T_{j+1}T_{j}T_{j+1}\omega_{j+2}T_{j+1}%
T_{j}=t^{-2}T_{j}T_{j+1}T_{j}\omega_{j+2}T_{j+1}T_{j}=t^{-2}T_{j}T_{j+1}\omega_{j+2}T_{j}T_{j+1}T_{j}
\\
&=t^{-2}T_{j}T_{j+1}%
\omega_{j+2}T_{j=1}T_{j}T_{j+1}=\omega_{j}T_{j+1},
\end{align*}
by the braid relations; and%
\begin{align}
T_{j}\omega_{j} & =t^{-1}T_{j}^{2}\omega_{j+1}T_{j}=t^{-1}\{(t-1) T_{j}+t\} \omega_{j+1}T_{j}
 =(t-1) \omega_{j}+\omega_{j+1}T_{j},\nonumber
\\
\omega_{j}T_{j} & =T_{j}\omega_{j+1}+(t-1) \omega_{j}.\label{Tomego}
\end{align}

\begin{Proposition}\label{Tf2g}
Suppose $\omega_{j}f=\lambda_{j}f$ for $1\leq j\leq N$ $(
f\neq0)$, $\lambda_{i}\neq\lambda_{i+1}$ and
\[
g:=T_{i}f+\dfrac{(t-1) \lambda_{i}}{\lambda_{i+1}-\lambda_{i}}f
\]
then $\omega
_{j}g=\lambda_{j}g$ for all $j\neq i,i+1$ and $\omega_{i}g=\lambda
_{i+1}g$, $\omega_{i+1}g=\lambda_{i}g$. If $\lambda_{i+1}\neq t^{\pm1}\lambda
_{i}$ then $g\neq0$.
\end{Proposition}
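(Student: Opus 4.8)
The plan is to separate the three eigenvalue assertions from the nonvanishing assertion. For the eigenvalues I would use only the relations already recorded just before the statement: that $T_i$ commutes with $\omega_j$ whenever $j\neq i,i+1$, together with the two crossing identities $\omega_i T_i=T_i\omega_{i+1}+(t-1)\omega_i$ (this is \eqref{Tomego} with $j=i$) and its companion $T_i\omega_i=(t-1)\omega_i+\omega_{i+1}T_i$. Throughout write $c:=\frac{(t-1)\lambda_i}{\lambda_{i+1}-\lambda_i}$, so that the single defining identity $c(\lambda_{i+1}-\lambda_i)=(t-1)\lambda_i$ drives every simplification.

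For $j\neq i,i+1$ the operator $\omega_j$ commutes with $T_i$, so applying $\omega_j$ to $g=T_i f+cf$ and using $\omega_j f=\lambda_j f$ gives $\omega_j g=\lambda_j g$ at once. For $\omega_i g$ I would rewrite $\omega_i T_i f$ by \eqref{Tomego} as $T_i\omega_{i+1}f+(t-1)\omega_i f=\lambda_{i+1}T_i f+(t-1)\lambda_i f$; then the coefficient of $T_i f$ in $\omega_i g$ is already $\lambda_{i+1}$, and matching the coefficient of $f$ reduces exactly to $(t-1)\lambda_i+c\lambda_i=c\lambda_{i+1}$, i.e.\ to the defining identity for $c$, so $\omega_i g=\lambda_{i+1}g$. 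The computation for $\omega_{i+1}g$ is symmetric: using $\omega_{i+1}T_i=T_i\omega_i-(t-1)\omega_i$ gives $\omega_{i+1}T_i f=\lambda_i T_i f-(t-1)\lambda_i f$, and again the coefficient of $f$ collapses via $c(\lambda_{i+1}-\lambda_i)=(t-1)\lambda_i$, yielding $\omega_{i+1}g=\lambda_i g$.

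The substantive part is the nonvanishing claim, which I would prove by applying the intertwining construction a second time. Since $g$ has $\omega_i g=\lambda_{i+1}g$ and $\omega_{i+1}g=\lambda_i g$, set $g':=T_i g+c'g$ with $c':=\frac{(t-1)\lambda_{i+1}}{\lambda_i-\lambda_{i+1}}$, which is well-defined because $\lambda_i\neq\lambda_{i+1}$. Expanding $g'$ in the basis $\{f,T_i f\}$ and using the quadratic relation $T_i^2 f=(t-1)T_i f+tf$ produces $g'=(t-1+c+c')\,T_i f+(t+cc')\,f$. A one-line check shows $c+c'=-(t-1)$, so the $T_i f$ term cancels and $g'=(t+cc')f$. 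The crux is then to factor the remaining scalar: after clearing $(\lambda_{i+1}-\lambda_i)^2$ one finds $t+cc'=\dfrac{t(\lambda_i-t\lambda_{i+1})(\lambda_i-t^{-1}\lambda_{i+1})}{(\lambda_{i+1}-\lambda_i)^2}$, whose numerator vanishes precisely when $\lambda_{i+1}=t\lambda_i$ or $\lambda_{i+1}=t^{-1}\lambda_i$. Under the hypothesis $\lambda_{i+1}\neq t^{\pm1}\lambda_i$ and genericity $t\neq0$ this scalar is nonzero; were $g=0$ we would get $0=g'=(t+cc')f$ with $f\neq0$, a contradiction, so $g\neq0$.

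The step I expect to be the main obstacle is exactly this final factorization. Recognizing that the symmetric quadratic $t\lambda_i^2-(t^2+1)\lambda_i\lambda_{i+1}+t\lambda_{i+1}^2$ arising from $t+cc'$ splits as $t(\lambda_i-t\lambda_{i+1})(\lambda_i-t^{-1}\lambda_{i+1})$ is what ties the nonvanishing to the precise condition $\lambda_{i+1}\neq t^{\pm1}\lambda_i$; by contrast the three eigenvalue-swapping identities are routine once the crossing relations and the defining identity for $c$ are in hand.
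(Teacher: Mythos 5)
Your proof is correct and follows essentially the same route as the paper: the eigenvalue swaps come from the crossing relations \eqref{Tomego} exactly as in the paper's proof, and your nonvanishing argument (applying $T_i+c'$ to $g$, cancelling the $T_if$ term via $c+c'=-(t-1)$, and factoring $t+cc'$) is precisely the paper's computation of the operator identity $\bigl(T_{i}+\tfrac{(t-1)\lambda_{i+1}}{\lambda_{i}-\lambda_{i+1}}\bigr)\bigl(T_{i}+\tfrac{(t-1)\lambda_{i}}{\lambda_{i+1}-\lambda_{i}}\bigr)=\tfrac{(\lambda_{i}t-\lambda_{i+1})(\lambda_{i}-t\lambda_{i+1})}{(\lambda_{i}-\lambda_{i+1})^{2}}$, your factorization being the same scalar written with $t$ pulled out of one factor.
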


\begin{proof}
If $j>i+1$ or $j<i$ then $\omega_{j}T_{i}=T_{i}\omega_{j}$ and thus
$\omega_{j}g=\lambda_{j}g$. By (\ref{Tomego})%
\begin{align*}
\omega_{i}g & =\omega_{i}T_{i}f+\lambda_{i}\dfrac{(t-1)
\lambda_{i}}{\lambda_{i+1}-\lambda_{i}}f=T_{i}\omega_{i+1}f+\bigg\{
t-1+\dfrac{(t-1) \lambda_{i}}{\lambda_{i+1}-\lambda_{i}%
}\bigg\} \lambda_{i}f
\\
& =\lambda_{i+1}T_{i}f+\frac{(t-1) \lambda_{i+1}\lambda_{i}%
}{\lambda_{i+1}-\lambda_{i}}f=\lambda_{i+1}g.
\end{align*}
A similar calculation using $\omega_{i+1}T_{i}=T_{i}\omega_{i}-(t-1) \omega_{i}$ shows that $\omega_{i+1}g=\lambda_{i}g.$ Since $T_{i}^{2}=(t-1) T_{i}+t$%
\[
\bigg(T_{i}+\dfrac{(t-1) \lambda_{i+1}}{\lambda_{i}-\lambda_{i+1}}\bigg)
\bigg(T_{i}+\dfrac{(t-1) \lambda_{i}}{\lambda_{i+1}-\lambda_{i}}\bigg)
=\frac{(\lambda_{i}t-\lambda_{i+1})
(\lambda_{i}-t\lambda_{i+1})}{(\lambda_{i}-\lambda_{i+1})^{2}},
\]
thus $\lambda_{i+1}\neq t^{\pm1}\lambda_{i}$ implies $g\neq0$.
\end{proof}

Given the hypotheses of the proposition and the self-adjointness of
$\omega_{i}$ (Corollary~\ref{omadjt}) it follows that $\langle
f,g\rangle =0$ ($\lambda_{i}\langle f,g\rangle =\langle
\omega_{i}f,g\rangle =\langle f,\omega_{i}g\rangle
=\lambda_{i+1}\langle f,g\rangle $).

\begin{Lemma}\label{fgnorm}
Suppose $g=(T_{i}+b) f$ and $\langle f,g\rangle =0$ then $\Vert g\Vert^{2}=(1-b)
(t+b) \Vert f\Vert^{2}$.
\end{Lemma}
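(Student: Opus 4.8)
The plan is to expand $\Vert g\Vert^{2}=\langle g,g\rangle$ directly by bilinearity and then collapse it using the two structural facts already available: the self-adjointness of $T_{i}$ (the Proposition preceding Corollary~\ref{omadjt}) and the quadratic relation~\eqref{Hquad}. The orthogonality hypothesis $\langle f,g\rangle=0$ is fed in only at the very end to eliminate the one unknown quantity $\langle f,T_{i}f\rangle$.

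First I would write $g=T_{i}f+bf$ and expand $\langle g,g\rangle=\langle T_{i}f,T_{i}f\rangle+2b\langle f,T_{i}f\rangle+b^{2}\langle f,f\rangle$, where I have used $\langle T_{i}f,f\rangle=\langle f,T_{i}f\rangle$ by self-adjointness. To handle the first term I move one copy of $T_{i}$ across the form, obtaining $\langle T_{i}f,T_{i}f\rangle=\langle f,T_{i}^{2}f\rangle$, and then invoke the quadratic relation in the rewritten form $T_{i}^{2}=(t-1)T_{i}+t$ (which is just $(T_{i}-t)(T_{i}+1)=0$). Abbreviating $A:=\langle f,T_{i}f\rangle$ and $\Vert f\Vert^{2}=\langle f,f\rangle$, this gives $\langle T_{i}f,T_{i}f\rangle=(t-1)A+t\Vert f\Vert^{2}$, and hence $\Vert g\Vert^{2}=(t-1+2b)A+(t+b^{2})\Vert f\Vert^{2}$.

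Second I would use the hypothesis. Since $\langle f,g\rangle=\langle f,T_{i}f\rangle+b\langle f,f\rangle=A+b\Vert f\Vert^{2}=0$, we have $A=-b\Vert f\Vert^{2}$. Substituting this single relation into the expression above and collecting terms yields $\Vert g\Vert^{2}=\bigl(t+b-bt-b^{2}\bigr)\Vert f\Vert^{2}$, which factors as $(1-b)(t+b)\Vert f\Vert^{2}$, as claimed.

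I do not expect any genuine obstacle here; the argument is short and mechanical. The only points requiring care are the bookkeeping in the final collection of terms and recognizing the factorization $t+b-bt-b^{2}=(1-b)(t+b)$. The essential structural observation is that self-adjointness together with the quadratic relation force the term $\langle T_{i}f,T_{i}f\rangle$ to reduce to a combination of $A$ and $\Vert f\Vert^{2}$ alone, after which the orthogonality condition pins down $A$ and the result follows.
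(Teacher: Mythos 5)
Your proposal is correct and follows essentially the same route as the paper's own proof: expand $\Vert g\Vert^{2}$ by bilinearity, reduce $\Vert T_{i}f\Vert^{2}$ via self-adjointness and the quadratic relation $T_{i}^{2}=(t-1)T_{i}+t$, and use $\langle f,g\rangle=0$ to substitute $\langle T_{i}f,f\rangle=-b\Vert f\Vert^{2}$ before factoring. No gaps; the bookkeeping matches the paper's computation exactly.
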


\begin{proof}
It follows from $T_{i}$ being self-adjoint that $\langle T_{i}f,T_{i}f\rangle
=\left\langle T_{i}^{2}f,f\right\rangle =(t-1) \langle T_{i}f,f\rangle +t\Vert f\Vert^{2}$
and $\langle f,g\rangle =0$ implies $\langle T_{i}f,f\rangle +b\Vert f\Vert^{2}=0$. Thus%
\begin{gather*}
\Vert g\Vert^{2} =\Vert T_{i}f\Vert^{2}
+2b\langle T_{i}f,f\rangle +b^{2}\Vert f\Vert^{2}
=(t-1+2b)\langle T_{i}f,f\rangle +(t+b^{2}) \Vert f\Vert^{2}
\\ \hphantom{\Vert g\Vert^{2} }
{} =\big\{{-}b(t-1+2b) +t+b^{2}\big\} \Vert f\Vert^{2}
=\big({-}b^{2}-b(t-1) +t\big) \Vert f\Vert^{2} =(1-b) (t+b) \Vert f\Vert^{2}.\!\!\!\!\!\!
\tag*{\qed}
\end{gather*}
\renewcommand{\qed}{}
\end{proof}

\begin{Corollary}\label{g2zer}
Suppose $g=(T_{i}+b) f$, $\langle g,f\rangle =0$ and $b=1$ or $b=-t$ then $g=0$.
\end{Corollary}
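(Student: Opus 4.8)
The plan is to read this off directly from Lemma~\ref{fgnorm} and then invoke the positive-definiteness of the form. First I would note that the form is symmetric, so the stated hypothesis $\langle g,f\rangle=0$ is the same as $\langle f,g\rangle=0$, which is precisely the hypothesis under which Lemma~\ref{fgnorm} applies to $g=(T_{i}+b)f$. Hence $\Vert g\Vert^{2}=(1-b)(t+b)\Vert f\Vert^{2}$.

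Next I would substitute the two admissible values of $b$. If $b=1$ the factor $(1-b)$ vanishes, and if $b=-t$ the factor $(t+b)$ vanishes; in either case the right-hand side is $0$, so $\Vert g\Vert^{2}=0$.

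The remaining step---passing from $\Vert g\Vert^{2}=0$ to $g=0$---is where the only genuine content lies, since a symmetric bilinear form over a general coefficient field need not be anisotropic. I would use the fact, recorded just before Definition~\ref{phiform}, that the form is positive-definite for $t>0$: specializing the expression $g=(T_{i}+b)f$ (which is rational in $t$) to any real $t>0$, positive-definiteness forces $g=0$ at that value, and since $g$ depends rationally on $t$ and vanishes on an interval, it vanishes identically. As a cross-check one can argue directly by splitting $f=f_{+}+f_{-}$ into the $t$- and $(-1)$-eigenspaces of the self-adjoint operator $T_{i}$, which are mutually orthogonal because the eigenvalues $t$ and $-1$ are distinct: then $g=(t+1)f_{+}$ when $b=1$ and $g=-(t+1)f_{-}$ when $b=-t$, so $\langle g,f\rangle=0$ together with $t+1\neq0$ pins the surviving squared norm to zero, and positive-definiteness again finishes the argument.

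The main (indeed only) obstacle is thus this final passage from vanishing norm to vanishing vector; everything preceding it is a one-line substitution into Lemma~\ref{fgnorm}, so I expect the corollary to be essentially immediate once positive-definiteness (for $t>0$, extended to generic $t$ by the rational dependence on the parameter) is brought to bear.
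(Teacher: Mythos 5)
Your proposal is correct and follows exactly the route the paper intends: the paper states this corollary without proof as an immediate consequence of Lemma~\ref{fgnorm} (substituting $b=1$ or $b=-t$ kills the factor $(1-b)(t+b)$) together with the positive-definiteness of the form for $t>0$ noted before Definition~\ref{phiform}. Your extra care in passing from $\Vert g\Vert^{2}=0$ to $g=0$ over the generic coefficient field---specializing $t$ to positive reals and using rational dependence---is a detail the paper glosses over, and your eigenspace cross-check is a harmless alternative phrasing of the same fact.
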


\subsubsection[Isotype (N-m,1m)]{Isotype $\boldsymbol{( N-m,1^{m})}$}

This concerns the polynomials in $\mathcal{P}_{m,0}=\ker D\cap\mathcal{P}%
_{m}=D\mathcal{P}_{m+1}.$ Recall $\mathcal{Y}_{0}=\{E\colon \#E=m+1$, $N\in
E\} $.

\begin{Definition}
For $\#E=m+1$ define $\psi_{E}=D\phi_{E}$.
\end{Definition}

The set $\big\{\psi_{E}\colon E\in\mathcal{Y}_{0}\big\}$ spans $\mathcal{P}_{m,0}$;
for suppose $N\notin E$ then $\theta_{N}D\phi_{E}$ is a linear
combination of~$\phi_{F}$ with $F\in\mathcal{Y}_{0}$
and $t^{1-N}D\theta_{N}D\phi_{E}=D\phi_{E}=\psi_{E}$.
The map $p\big(\theta_{1},\dots,\theta_{N}\big) \rightarrow p\big(\theta_{1},\dots,\theta
_{N-1},0\big)$ takes $\psi_{E}$ to $t^{N-1}\phi_{E\backslash\{N\}}$; thus $\dim\mathcal{P}_{m,0}=\binom{N-1}{m}$. The function
$\operatorname{inv}(E) $ provides a partial order~on~$\mathcal{Y}_{0}$.

\begin{Definition}
For $0\leq n\leq m(N-1-m) $ let%
\[
\mathcal{P}_{m,0}^{(n)}:=\operatorname{span}\big\{\psi_{E}
\colon E\in\mathcal{Y}_{0},\,\operatorname{inv}(E) \leq n\big\}.
\]

\end{Definition}

The extreme cases are $\operatorname{inv}(\{N-m,\dots,N\}) =0$
and $\operatorname{inv}(\{1,2,\dots,m,N\}) =m(N-1-m)$.

\begin{Lemma}
Suppose $E\in\mathcal{Y}_{0}$ and $\operatorname{inv}(E) =n$. If
$\{i,i+1\} \subset E$ then $T_{i}\psi_{E}=-\psi_{E}$, or if
$\{i,i+1\} \cap E=\varnothing$ then $T_{i}\psi_{E}=t\psi_{E}$. If
$(i,i+1) \in E\times E^{C}$ then $\operatorname{inv}(s_{i}E) =n-1$
and $T_{i}\psi_{E}=\psi_{s_{i}E}$. If $i(i,i+1) \in E^{C}\times E$
then $\operatorname{inv}(s_{i}E)=n+1$ and $T_{i}\psi_{E}=(t-1) \psi_{E}+t\psi_{s_{i}E}%
\in\mathcal{P}_{m,0}^{(n+1)}$.
That is, $T_{i}\mathcal{P}_{m,0}^{(n)}\subset\mathcal{P}_{m,0}^{(n+1)}$.
\end{Lemma}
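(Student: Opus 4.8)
The plan is to reduce the whole statement to the action of $T_i$ on the monomials $\phi_E$, exploiting that $\psi_E=D\phi_E$ and that $D$ commutes with each $T_i$ (Proposition~\ref{commMD}). Thus $T_i\psi_E=T_iD\phi_E=DT_i\phi_E$, and I would simply read off $T_i\phi_E$ from Definition~\ref{defTi} and apply $D$, using $D\phi_F=\psi_F$. When $\{i,i+1\}\subset E$ this gives $D(-\phi_E)=-\psi_E$; when $\{i,i+1\}\subset E^{C}$ it gives $t\psi_E$; when $(i,i+1)\in E\times E^{C}$ it gives $D\phi_{s_iE}=\psi_{s_iE}$; and when $(i,i+1)\in E^{C}\times E$ it gives $(t-1)\psi_E+t\psi_{s_iE}$. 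So the four displayed transformation formulas are immediate once the commutation is invoked.

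For the behaviour of $\operatorname{inv}$ I would use the closed form $\operatorname{inv}(S)=(m+1)N-\sum_{a\in S}a-\binom{m+1}{2}$, valid for every $S$ with $\#S=m+1$ (count, for each $a\in S$, the $N-a$ larger elements and subtract the $\binom{m+1}{2}$ pairs internal to $S$). Since $s_i$ changes only $\sum_{a\in S}a$, in the third case it replaces $i$ by $i+1$ and lowers $\operatorname{inv}$ by $1$, giving $\operatorname{inv}(s_iE)=n-1$, while in the fourth case it replaces $i+1$ by $i$ and raises it by $1$, giving $\operatorname{inv}(s_iE)=n+1$. This settles the filtration inclusion whenever $s_iE$ still lies in $\mathcal{Y}_{0}$. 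Because $N\in E$, the third case forces $i+1<N$ (else $N\in E^{C}$), and the fourth case keeps $N\in s_iE$ as long as $i+1<N$; so the only case where $s_iE$ escapes $\mathcal{Y}_{0}$ is the fourth alternative with $i=N-1$.

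The one genuine obstacle is exactly that boundary case: here $N-1\in E^{C}$, $N\in E$, and $s_{N-1}E=(E\setminus\{N\})\cup\{N-1\}$ omits $N$, so $\psi_{s_{N-1}E}$ must be re-expanded in the basis $\{\psi_G\colon G\in\mathcal{Y}_{0}\}$ (which is a basis since $\#\mathcal{Y}_{0}=\binom{N-1}{m}=\dim\mathcal{P}_{m,0}$). I would compute its coefficients using the ``set $\theta_N=0$'' map $\rho$, under which $\rho(\psi_G)=t^{N-1}\phi_{G\setminus\{N\}}$ and $\rho$ is injective on $\mathcal{P}_{m,0}$; since $s_{N-1}E$ avoids $N$ one has $\rho(\psi_{s_{N-1}E})=\psi_{s_{N-1}E}=D\phi_{s_{N-1}E}=\sum_{j\in s_{N-1}E}t^{j-1}\sigma(s(j,s_{N-1}E))\phi_{s_{N-1}E\setminus\{j\}}$, so the sets $G$ that occur are $G=((s_{N-1}E)\setminus\{j\})\cup\{N\}$. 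The term $j=N-1$ returns $G=E$ with $\operatorname{inv}=n$, and for each $j\in E\setminus\{N\}$ one gets $G_j=(E\setminus\{j,N\})\cup\{N-1,N\}\in\mathcal{Y}_{0}$ with, by the same closed form, $\operatorname{inv}(G_j)-\operatorname{inv}(E)=j-(N-1)\le-1$, as $j\le N-2$. Hence every contributing $G$ has $\operatorname{inv}(G)\le n$, so $\psi_{s_{N-1}E}\in\mathcal{P}_{m,0}^{(n)}\subseteq\mathcal{P}_{m,0}^{(n+1)}$ and therefore $T_{N-1}\psi_E\in\mathcal{P}_{m,0}^{(n+1)}$.

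Assembling the four cases with these $\operatorname{inv}$ estimates gives $T_i\mathcal{P}_{m,0}^{(n)}\subseteq\mathcal{P}_{m,0}^{(n+1)}$ in every case. I expect the third and fourth cases to be routine via the closed form for $\operatorname{inv}$; the only place demanding care is the boundary re-expansion above, where one must verify that leaving $\mathcal{Y}_{0}$ does not violate the filtration bound (and in fact lands one level lower, in $\mathcal{P}_{m,0}^{(n)}$).
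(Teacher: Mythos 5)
Your proposal is correct, and its core mechanism is exactly the paper's: the paper's entire proof is the one-line observation that the four transformation formulas follow from Definition~\ref{defTi} together with $DT_{i}=T_{i}D$ (Proposition~\ref{commMD}), which is your first paragraph. Where you go beyond the paper is in the bookkeeping claims. The $\operatorname{inv}$ computations via the closed form $\operatorname{inv}(S)=(m+1)N-\sum_{a\in S}a-\binom{m+1}{2}$ are fine (the paper treats these as evident). More substantively, you are right that the fourth alternative with $i=N-1$ (so $N-1\in E^{C}$, $N\in E$) is a genuine boundary case: there $s_{N-1}E\notin\mathcal{Y}_{0}$, so $\psi_{s_{N-1}E}$ is not one of the spanning elements in the definition of $\mathcal{P}_{m,0}^{(n+1)}$, and the membership claim requires re-expanding it in the basis $\big\{\psi_{G}\colon G\in\mathcal{Y}_{0}\big\}$. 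Your re-expansion --- apply the specialization $\theta_{N}\mapsto0$, under which $\psi_{s_{N-1}E}$ is unchanged while $\psi_{G}\mapsto\pm t^{N-1}\phi_{G\backslash\{N\}}$, then match monomials --- correctly identifies the contributing sets as $G_{N-1}=E$ and $G_{j}=(E\backslash\{j,N\})\cup\{N-1,N\}$ with $\operatorname{inv}(G_{j})=n+j-(N-1)\leq n-1$, so that $T_{N-1}\psi_{E}$ in fact lands in $\mathcal{P}_{m,0}^{(n)}$. The paper's proof never addresses this case, even though the filtration inclusion is precisely what gets used in Theorem~\ref{Yzeroex}; your argument fills that gap. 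One cosmetic point: $\theta_{N}\mapsto0$ sends $\psi_{G}$ to $(-1)^{m}t^{N-1}\phi_{G\backslash\{N\}}$ rather than $t^{N-1}\phi_{G\backslash\{N\}}$ (the paper makes the same harmless omission); since the sign is uniform over $G\in\mathcal{Y}_{0}$ it does not affect your conclusion.
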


\begin{proof}
The transformation rules follow from Definition~\ref{defTi} and $DT_{i}%
=T_{i}D$ (see Proposition~\ref{commMD}).
\end{proof}

\begin{Theorem}\label{Yzeroex}
Suppose for some $n$ and for each $E\in\mathcal{Y}_{0}$ with
$\operatorname{inv}(E) =n$ there is a~polynomial $\tau_{E}=t^{n}%
\psi_{E}+p_{E}$ with $p_{E}\in\mathcal{P}_{m,0}^{(n-1)}$ such
that $\omega_{i}\tau_{E}=t^{c(i,E) }\tau_{E}$ for all $i$ then
this property holds for~$n+1$.
\end{Theorem}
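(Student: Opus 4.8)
Let me understand what needs to be proven. We have the hook isotype $(N-m,1^m)$, with the space $\mathcal{P}_{m,0}$ filtered by $\operatorname{inv}(E)$. The base case is $n=0$: the single element $E_0 = \{N-m,\dots,N\}$ with $\operatorname{inv}(E_0)=0$, and $\tau_{E_0} = \psi_0 = D\phi_{E_0}$, which Theorem \ref{psi0eigen} already showed is an eigenvector. The claim is that if we've constructed eigenvectors $\tau_E$ for all $E$ with $\operatorname{inv}(E)=n$, with leading term $t^n \psi_E$ plus lower filtration terms, then we can build them for $\operatorname{inv}=n+1$.

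The key mechanism is Proposition \ref{Tf2g}. Given $E'$ with $\operatorname{inv}(E')=n+1$, I need to find some $E$ with $\operatorname{inv}(E)=n$ and an index $i$ such that $s_i E = E'$ raises the inversion count. From the Lemma just before, this happens exactly when $(i,i+1) \in E^C \times E$, and then $T_i \psi_E = (t-1)\psi_E + t\psi_{s_iE}$.

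**The approach.** So let me reconstruct the logic. Take $E'$ with $\operatorname{inv}(E')=n+1$. Since $E' \neq E_0$ (which has $\operatorname{inv}=0$), there exists an index $i$ such that lowering gives $E = s_i E'$ with $\operatorname{inv}(E) = n$, i.e. $(i,i+1)\in E \times E^C$ when we look from $E$'s side. By the inductive hypothesis, $\tau_E = t^n\psi_E + p_E$ exists with $\omega_j \tau_E = t^{c(j,E)}\tau_E$. Now I apply Proposition \ref{Tf2g} with $f = \tau_E$: the eigenvalues are $\lambda_j = t^{c(j,E)}$. Since $s_iE = E'$ swaps the content values at positions $i, i+1$, I have $\lambda_i \neq \lambda_{i+1}$, and the proposition produces
\[
\tau_{E'} := g = T_i \tau_E + \frac{(t-1)\lambda_i}{\lambda_{i+1}-\lambda_i}\tau_E,
\]
which satisfies $\omega_i g = \lambda_{i+1}g = t^{c(i,E')}g$, $\omega_{i+1}g = \lambda_i g = t^{c(i+1,E')}g$, and $\omega_j g = \lambda_j g$ for $j \neq i,i+1$. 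Since the contents $c(\cdot,E')$ agree with $c(\cdot,E)$ except that positions $i,i+1$ are swapped, this gives exactly $\omega_j \tau_{E'} = t^{c(j,E')}\tau_{E'}$ for all $j$, as desired.

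**Checking the leading term and nonvanishing.** It remains to verify two things. First, the leading-term structure: I compute the top filtration part of $g$. Using $T_i \tau_E = T_i(t^n\psi_E + p_E)$; the Lemma gives $T_i \psi_E = (t-1)\psi_E + t\psi_{E'}$ (the raising case), so the $\psi_{E'}$-coefficient of $g$ is $t^n \cdot t = t^{n+1}$, matching the required form $\tau_{E'} = t^{n+1}\psi_{E'} + p_{E'}$ with $p_{E'} \in \mathcal{P}_{m,0}^{(n)}$. The lower terms from $p_E$ and from the $(t-1)\psi_E$ piece all land in filtration $\leq n$, so they are absorbed into $p_{E'}$. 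Second, nonvanishing: Proposition \ref{Tf2g} guarantees $g \neq 0$ provided $\lambda_{i+1} \neq t^{\pm 1}\lambda_i$. Since $\lambda_i/\lambda_{i+1} = t^{c(i,E)-c(i+1,E)}$ and adjacent cells in a standard tableau that are swappable have contents differing by at least $2$ in absolute value (they cannot be in adjacent diagonals for a legal transposition), this genericity condition holds; this is the place to be careful.

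**Main obstacle.** The main subtlety is the well-definedness of $\tau_{E'}$: a given $E'$ may be reachable from several different $E$ via different indices $i$, so I must ensure the construction is consistent, or else argue that I fix one canonical reduction (e.g. the graph/path structure from the Yang--Baxter approach of Lascoux guarantees coherence). The cleanest route is to note that the eigenvalue equations $\omega_j \tau_{E'} = t^{c(j,E')}\tau_{E'}$ together with the normalized leading coefficient $t^{n+1}$ determine $\tau_{E'}$ uniquely up to scalar, and the genericity of $q,t$ ensures the eigenspace is one-dimensional, so any valid construction yields the same polynomial. Verifying this uniqueness — that the simultaneous $\{\omega_i\}$-eigenspace for a fixed content vector is one-dimensional within $\mathcal{P}_{m,0}$ — is the real content; the rest is the mechanical application of Proposition \ref{Tf2g} and bookkeeping of the $\operatorname{inv}$-filtration.
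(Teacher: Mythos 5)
Your construction is exactly the paper's proof: given $E'$ with $\operatorname{inv}(E')=n+1$, pick $i$ so that $E=s_iE'$ has inversion number $n$, apply Proposition~\ref{Tf2g} to $\tau_{E}$, and use the transformation rules of the preceding Lemma (including $T_i\mathcal{P}_{m,0}^{(n-1)}\subset\mathcal{P}_{m,0}^{(n)}$) to see that the result is $t^{n+1}\psi_{E'}$ plus terms in $\mathcal{P}_{m,0}^{(n)}$, with the correct eigenvalues because passing from $Y_{E}$ to $Y_{E'}$ merely swaps the contents at positions $i$ and $i+1$. One correction to your assessment, though: the ``main obstacle'' you flag is not an obstacle at all, and it is certainly not ``the real content.'' The theorem is purely an existence statement, so a single fixed choice of $i$ suffices and no coherence between different reduction paths is required; uniqueness is the business of the Corollary that follows in the paper, where it is deduced from the triangular leading term (the $\psi_F$, $F\in\mathcal{Y}_0$, span $\mathcal{P}_{m,0}$), not assumed as an ingredient here. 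Two minor points: viewed from the lower set $E=s_iE'$ the raising condition is $(i,i+1)\in E^{C}\times E$, not $E\times E^{C}$ as written in your ``approach'' paragraph (your leading-term computation uses the correct formula, so nothing downstream breaks); and nonvanishing of $g$ is already automatic from the leading coefficient $t^{n+1}\neq 0$, so the genericity check---which is correct, since here $c(i+1,E)\leq -1 <1\leq c(i,E)$ forces the contents to differ by at least $2$---is superfluous.
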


\begin{proof}
Suppose $E\in\mathcal{Y}_{0}$ with $\operatorname{inv}(E) =n+1$ then
for some $i$ it holds that $(i,i+1) \in E\times E^{C}$
(otherwise $E=E_{0}=\{N-m,N-m+1,\dots,N\}$ and $\operatorname{inv}(E_{0}) =0$),
then let $F=s_{i}E$ so that $\operatorname{inv}(F) =n$. Then $c(j,E) =c(j,F) $ for $j\neq
i,i+1$ and $c(i,E) =c(i+1,F) \leq-1$, $c(i+1,E) =c(i,F) \geq1$. By Proposition~\ref{Tf2g} let%
\[
\tau_{E}=\bigg(T_{i}+\dfrac{(t-1) t^{c(i,F)}}{t^{c(i+1,F)}-t^{c(i,F)}}\bigg) \tau_{F}.
\]
Then $\omega_{j}\tau_{E}=t^{c(j,E) }\tau_{E}$ for all $j$; and
$\tau_{E}=(T_{i}+b) \big(t^{n}\psi_{F}+p_{F}\big)
=t^{n+1}\psi_{E}+t^{n}(t-1+b) \psi_{F}+(T_{i}+b)
p_{E}$ (for a constant $b$). By the lemma $t^{n}(t-1+b) \psi_{F}+(T_{i}+b) p_{E}\in\mathcal{P}_{m,0}^{(n) }$.
\end{proof}

\begin{Corollary}
For each $E\in\mathcal{Y}_{0}$ and $\operatorname{inv}(E) =n$ there is
a unique $\tau_{E}\in\mathcal{P}_{m,0}$ with $\tau_{E}=t^{n}\psi_{E}+p_{E}$
and $p_{E}\in\mathcal{P}_{m,0}^{(n-1) }$ such that $\omega
_{i}\tau_{E}=t^{c(i,E) }\tau_{E}$ for all $i$.
\end{Corollary}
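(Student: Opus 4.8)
The plan is to obtain existence by turning the two preceding theorems into a single induction on $\operatorname{inv}(E)$, and then to deduce uniqueness from the fact that distinct subsets in $\mathcal{Y}_0$ carry distinct content vectors. For the base case I would note that the only $E\in\mathcal{Y}_0$ with $\operatorname{inv}(E)=0$ is $E_0=\{N-m,\dots,N\}$, since any other $E$ contains a pair $(i,i+1)\in E^C\times E$ contributing to $\operatorname{inv}(E)$. Theorem~\ref{psi0eigen} supplies $\psi_0=D\phi_{E_0}=\psi_{E_0}$ with $\omega_i\psi_0=t^{c(i,E_0)}\psi_0$, so setting $\tau_{E_0}=\psi_0=t^0\psi_{E_0}$ (with $p_{E_0}=0\in\mathcal{P}_{m,0}^{(-1)}=\{0\}$) establishes the asserted property at level $n=0$. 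Repeated application of Theorem~\ref{Yzeroex} then propagates it from $n$ to $n+1$, so by induction it holds for every $n$ with $0\le n\le m(N-1-m)$, that is, for every $E\in\mathcal{Y}_0$.

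For uniqueness the key preliminary step is to check that the map $E\mapsto[c(i,E)]_{i=1}^N$ is injective on $\mathcal{Y}_0$. Every $E\in\mathcal{Y}_0$ labels an RSYT of the one fixed hook shape $(N-m,1^m)$, whose cells carry the pairwise distinct contents $-m,\dots,-1,0,1,\dots,N-m-1$; hence the vector $[c(i,E)]_i$ records the distinct cell occupied by each entry $i$, which determines $Y_E$ and therefore $E$. By the genericity of $t$ the eigenvalue tuples $[t^{c(i,E)}]_i$ are then distinct for distinct $E$.

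Finally I would assemble these into the uniqueness argument. The family $\{\tau_E\colon E\in\mathcal{Y}_0\}$ is linearly independent, being triangular with invertible leading coefficients $t^{\operatorname{inv}(E)}$ relative to the $\operatorname{inv}$-filtration and the basis $\{\psi_E\}$, and its cardinality $\binom{N-1}{m}=\dim\mathcal{P}_{m,0}$ shows it is a basis of simultaneous $\{\omega_i\}$-eigenvectors. If $\tau_E$ and $\tau_E'$ both have the stated form, their difference $w=\tau_E-\tau_E'$ lies in $\mathcal{P}_{m,0}^{(n-1)}$ and satisfies $\omega_i w=t^{c(i,E)}w$ for all $i$; expanding $w=\sum_F a_F\tau_F$ and applying each $\omega_i$ forces $a_F\big(t^{c(i,E)}-t^{c(i,F)}\big)=0$, so distinctness of the tuples gives $a_F=0$ for $F\neq E$ and $w=a_E\tau_E$. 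Since $w$ has zero $\psi_E$-component while $\tau_E$ has $\psi_E$-coefficient $t^n\neq0$, we get $a_E=0$ and $w=0$. The existence half is essentially handed over by the two cited theorems, so the main obstacle is really this uniqueness step, whose one genuinely new ingredient is the injectivity of the content vectors, which forces each joint $\{\omega_i\}$-eigenspace to be one-dimensional.
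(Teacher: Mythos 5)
Your proposal is correct and takes essentially the same route as the paper's proof: existence by induction starting from $\tau_{E_0}=\psi_{E_0}$ (Theorem~\ref{psi0eigen}) and propagating with Theorem~\ref{Yzeroex}, and uniqueness from the leading term together with the fact that for generic $t$ the eigenvalue tuples $\big[t^{c(i,E)}\big]_{i=1}^{N}$ separate the elements of $\mathcal{Y}_0$ (the paper compresses all of this into three sentences, which you have merely expanded). One parenthetical slip worth fixing: a pair $(i,i+1)\in E^{C}\times E$ does not contribute to $\operatorname{inv}(E)$, which counts pairs in $E\times E^{C}$ whose smaller entry lies in $E$ (for $m\le N-2$ the set $E_0$ itself contains $(N-m-1,N-m)\in E_0^{C}\times E_0$ yet $\operatorname{inv}(E_0)=0$); the base case is nevertheless right, since $\operatorname{inv}(E)=0$ forces every element of $E$ to exceed every element of $E^{C}$, whence $E=\{N-m,\dots,N\}=E_0$.
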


\begin{proof}
The existence follows from induction starting with $\tau_{E_{0}}=\psi_{E_{0}}$
and Theorem~\ref{psi0eigen}. Uniqueness follows from the leading term. The
$\{\omega_{i}\} $-eigenvalues of $\tau_{E}$ determine $E$ uniquely.
\end{proof}

\begin{Corollary}\label{onedim}
Suppose $E\in\mathcal{Y}_{0}$, if $\{i,i+1\}
\subset E$ then $T_{i}\tau_{E}=-\tau_{E}$ and if $\{i,i+1\} \cap
E=\varnothing$ or $i=N-1\notin E$ then $T_{i}\tau_{E}=t\tau_{E}$.
\end{Corollary}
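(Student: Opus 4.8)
The plan is to show that in each listed case $\tau_E$ is an eigenvector of $T_i$, and then to read off the eigenvalue from the content data. First I would record the geometry of the tableau $Y_E$. Since $E\in\mathcal{Y}_0$ has $N\in E$, the corner cell $[1,1]$ (content $0$) always holds $N$, the remaining elements of $E$ fill the first column with strictly negative contents, and the elements of $E^C$ fill the first row with strictly positive contents. Consequently: if $\{i,i+1\}\subset E$, then $i+1$ sits directly above $i$ in the column, so $c(i+1,E)=c(i,E)+1$; if $\{i,i+1\}\subset E^C$, then $i+1$ sits directly to the left of $i$ in the row, so $c(i,E)=c(i+1,E)+1$; and in the boundary case $i=N-1\notin E$ one has $c(N-1,E)=1$ and $c(N,E)=0$, again $c(i,E)=c(i+1,E)+1$. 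Writing $\lambda_j=t^{c(j,E)}$, the first case gives $\lambda_{i+1}=t\lambda_i$ and the other two give $\lambda_{i+1}=t^{-1}\lambda_i$.

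Next I would prove that $T_i\tau_E$ is a scalar multiple of $\tau_E$. Using \eqref{Tomego} together with the braid relations one checks directly that $T_i$ commutes with both $\omega_i+\omega_{i+1}$ and $\omega_i\omega_{i+1}$, as well as with each $\omega_j$ for $j\neq i,i+1$. Hence $T_i\tau_E$ lies in the joint eigenspace $V$ of the commuting family $\{\omega_j\}_{j\neq i,i+1}$, $\omega_i+\omega_{i+1}$, $\omega_i\omega_{i+1}$ attached to the eigenvalues of $\tau_E$. Because $\{\tau_F\colon F\in\mathcal{Y}_0\}$ is a basis of $\mathcal{P}_{m,0}$ consisting of simultaneous $\{\omega_j\}$-eigenvectors whose eigenvalue tuples determine $F$ (preceding corollary), and since $t$ is generic, $V=\operatorname{span}\big\{\tau_F\colon c(j,F)=c(j,E)\ (j\neq i,i+1),\ \{c(i,F),c(i+1,F)\}=\{c(i,E),c(i+1,E)\}\big\}$. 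The only candidates are $F=E$ and the labeling obtained by interchanging the contents at the slots $i$, $i+1$.

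The crux is that in each of the three cases this swapped content vector is not realizable by a hook RSYT, so $V=\operatorname{span}\{\tau_E\}$ is one dimensional. Indeed, in the column case the swap would place the larger value $i+1$ below the smaller value $i$ in the first column; in the row case it would place $i+1$ to the right of $i$ in the first row; and in the boundary case it would move the maximal entry $N$ off the corner $[1,1]$ into the content-$1$ cell. Each violates the decrease condition defining an RSYT, so no second $\tau_F$ occurs and $T_i\tau_E=\mu\tau_E$ for a scalar $\mu$, necessarily $\mu\in\{t,-1\}$ by the quadratic relation~\eqref{Hquad}. Finally I would pin down $\mu$ from \eqref{Tomego}: applying $\omega_iT_i=T_i\omega_{i+1}+(t-1)\omega_i$ to $\tau_E$ gives $\mu\lambda_i=\mu\lambda_{i+1}+(t-1)\lambda_i$, whence $\mu=(t-1)\lambda_i/(\lambda_i-\lambda_{i+1})$. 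Substituting $\lambda_{i+1}=t\lambda_i$ yields $\mu=-1$ (the case $\{i,i+1\}\subset E$), while $\lambda_{i+1}=t^{-1}\lambda_i$ yields $\mu=t$ (the cases $\{i,i+1\}\cap E=\varnothing$ and $i=N-1\notin E$). The main obstacle is the one-dimensionality step; the content computations and the final eigenvalue identification are then routine.
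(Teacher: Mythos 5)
Your proof is correct, but it takes a genuinely different route from the paper's. The paper's own argument is metric: from the content computations (which you reproduce correctly, including the boundary case $i=N-1\notin E$) the constant $b$ of Proposition~\ref{Tf2g} equals $1$ when $\{i,i+1\}\subset E$ and $-t$ in the other two cases; since $\langle (T_i+b)\tau_E,\tau_E\rangle =0$ by self-adjointness of $\omega_i$ (Corollary~\ref{omadjt}), Lemma~\ref{fgnorm} gives $\Vert (T_i+b)\tau_E\Vert^2=(1-b)(t+b)\Vert \tau_E\Vert^2=0$, and Corollary~\ref{g2zer} concludes $(T_i+b)\tau_E=0$. That mechanism leans on the bilinear form of Definition~\ref{phiform} being definite (so that zero norm forces the zero vector), which is why the corollary is a two-line consequence in the paper. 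You instead avoid the form entirely: you check that $T_i$ commutes with $\{\omega_j\}_{j\neq i,i+1}$, $\omega_i+\omega_{i+1}$ and $\omega_i\omega_{i+1}$, so $T_i\tau_E$ stays in the corresponding joint eigenspace; since the cells of a hook have pairwise distinct contents, a content vector determines the filling, and in each of the three cases the swapped content vector would violate the decreasing condition defining an RSYT, so the eigenspace is $\operatorname{span}\{\tau_E\}$; the quadratic relation \eqref{Hquad} then restricts the eigenvalue to $\{t,-1\}$, and \eqref{Tomego} identifies it as $-1$ resp.\ $t$. This is the classical seminormal-form/intertwiner argument: it is purely algebraic, needs no positivity or nondegeneracy of the form, and makes the combinatorial obstruction explicit. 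Its only extra inputs are that $\{\tau_F\colon F\in\mathcal{Y}_0\}$ is a basis of $\mathcal{P}_{m,0}$ — immediate either from the triangularity $\tau_F=t^{\operatorname{inv}(F)}\psi_F+p_F$ or from the fact that the $\binom{N-1}{m}=\dim\mathcal{P}_{m,0}$ polynomials $\tau_F$ have pairwise distinct joint spectra — and genericity of $t$ so that equal powers of $t$ have equal exponents; both are available in the paper, so your argument is complete.
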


\begin{proof}
Let $b=\frac{(t-1) t^{c(i,E) }}{t^{c(i+1,E)}-t^{c(i,E)}}
=\frac{t-1}{t^{c(i+1,E)-c(i,E)}-1}$. If $\{i,i+1\} \subset E$ then
$c(i+1,E) =1+c(i,E)$ and $b=1$; thus
$\left\langle (T_{i}+1) \tau_{E},\tau_{E}\right\rangle =0$ (by
the comment after Proposition~\ref{Tf2g}) and $\Vert (T_{i}+1)\tau_{E}\Vert^{2}$ $=0$ by Corollary~\ref{g2zer}. If~$\{i,i+1\} \cap E=\varnothing$ (or $i=N-1\notin E$) then $c(i+1,E) =c(i,E) -1$ and $b=\frac{t-1}{t^{-1}-1}=-t$; thus
$\left\Vert (T_{i}-t)\tau_{E}\right\Vert^{2}=0$.
\end{proof}

\begin{Definition}\label{defuz}
Let $u(z):=\frac{(t-z) (1-tz) }{(1-z)^{2}}$. Suppose $v\in\mathbb{Z}^{N}$ and
$v_{j}\neq0$ for $j<N$, $v_{N}=0$, then%
\begin{gather}\label{Cprod1}
\mathcal{C}(v) :=\prod\limits_{1\leq i<j<N}\big\{u\big(t^{v_{i}-v_{j}}\big) \colon v_{i}<0<v_{j}\big\}.
\end{gather}

\end{Definition}

\begin{Proposition}
\label{tau0norm}Suppose $E\in\mathcal{Y}_{0}$ then
\[
\Vert\tau_{E}\Vert^{2}=t^{2(N-m-1)}[m+1]_{t}\mathcal{C}\big([c(i,E)]_{i=1}^{N}\big).
\]

\end{Proposition}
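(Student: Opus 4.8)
The plan is to induct on $\operatorname{inv}(E)$, exploiting the recursive construction of $\tau_{E}$ in Theorem~\ref{Yzeroex} together with the norm-change formula of Lemma~\ref{fgnorm}. Recall that for $\operatorname{inv}(E)=n+1$ one selects $i$ with $(i,i+1)\in E\times E^{C}$, sets $F=s_{i}E$ (so $\operatorname{inv}(F)=n$), and obtains $\tau_{E}=(T_{i}+b)\tau_{F}$ with $b=\frac{(t-1)t^{c(i,F)}}{t^{c(i+1,F)}-t^{c(i,F)}}$. Since $\tau_{E}$ and $\tau_{F}$ are $\{\omega_{j}\}$-eigenvectors whose eigenvalues at position $i$ differ, the remark following Proposition~\ref{Tf2g} gives $\langle\tau_{F},\tau_{E}\rangle=0$, so Lemma~\ref{fgnorm} yields $\Vert\tau_{E}\Vert^{2}=(1-b)(t+b)\Vert\tau_{F}\Vert^{2}$. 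The proof thus reduces to the base case $E=E_{0}$ and to matching the scalar $(1-b)(t+b)$ with the factor by which $\mathcal{C}$ grows under the swap $F\mapsto E=s_{i}F$.

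For the base case write $\psi_{E_{0}}=D\phi_{E_{0}}=\sum_{k=0}^{m}(-1)^{k}t^{N-m+k-1}\phi_{E_{0}\setminus\{N-m+k\}}$ from the definition of $D$ and the sign rule for $\partial_{i}$. These summands are pairwise orthogonal, and a direct count gives $\operatorname{inv}(E_{0}\setminus\{N-m+k\})=k$, so $\Vert\phi_{E_{0}\setminus\{N-m+k\}}\Vert^{2}=t^{-k}$ by Definition~\ref{phiform}; summing, $\Vert\tau_{E_{0}}\Vert^{2}=\sum_{k=0}^{m}t^{2(N-m+k-1)}t^{-k}=t^{2(N-m-1)}[m+1]_{t}$. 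As the content vector of $E_{0}$ lists all positive entries before all negative ones, no pair $i<j$ has $c(i,E_{0})<0<c(j,E_{0})$, whence $\mathcal{C}([c(i,E_{0})]_{i=1}^{N})=1$ and the formula holds at $n=0$. For the inductive scalar, set $\lambda_{i}=t^{c(i,F)}$ and $\lambda_{i+1}=t^{c(i+1,F)}$; then $1-b=\frac{\lambda_{i+1}-t\lambda_{i}}{\lambda_{i+1}-\lambda_{i}}$ and $t+b=\frac{t\lambda_{i+1}-\lambda_{i}}{\lambda_{i+1}-\lambda_{i}}$, so after clearing signs and comparing with Definition~\ref{defuz} one finds $(1-b)(t+b)=u\big(t^{c(i+1,F)-c(i,F)}\big)$.

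It remains to prove $\mathcal{C}([c(\cdot,E)])=u\big(t^{c(i+1,F)-c(i,F)}\big)\,\mathcal{C}([c(\cdot,F)])$, and this bookkeeping is the heart of the argument. Write $w=[c(\cdot,F)]$ and $v=[c(\cdot,E)]=s_{i}w$, so that $w_{i}>0>w_{i+1}$ while $v_{i}=w_{i+1}$, $v_{i+1}=w_{i}$, all other entries agree, and $v_{N}=w_{N}=0$. The pair $(i,i+1)$ contributes nothing to $\mathcal{C}(w)$ (because $w_{i}>0$) but contributes the factor $u\big(t^{w_{i+1}-w_{i}}\big)$ to $\mathcal{C}(v)$; this is the single new factor. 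For any index $a\neq i,i+1$ one checks that the pairs involving $i$ or $i+1$ give the same factor to both products, merely relabelled: for $a<i$ with $w_{a}<0$ the factor $u\big(t^{w_{a}-w_{i}}\big)$ comes from $(a,i)$ in $\mathcal{C}(w)$ and from $(a,i+1)$ in $\mathcal{C}(v)$, while for $a>i+1$ with $w_{a}>0$ the factor $u\big(t^{w_{i+1}-w_{a}}\big)$ comes from $(i+1,a)$ in $\mathcal{C}(w)$ and from $(i,a)$ in $\mathcal{C}(v)$; all remaining sign patterns contribute to neither. Because $N\in E$ forces $i+1<N$, the restriction $b<N$ in Definition~\ref{defuz} never interferes. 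Combining these facts gives $\Vert\tau_{E}\Vert^{2}=(1-b)(t+b)\Vert\tau_{F}\Vert^{2}=u\big(t^{c(i+1,F)-c(i,F)}\big)\,t^{2(N-m-1)}[m+1]_{t}\,\mathcal{C}([c(\cdot,F)])=t^{2(N-m-1)}[m+1]_{t}\,\mathcal{C}([c(\cdot,E)])$, closing the induction. The main obstacle is precisely the case analysis of this last paragraph: confirming that swapping one adjacent positive/negative pair of contents alters $\mathcal{C}$ by exactly the expected single factor and leaves every other factor intact.
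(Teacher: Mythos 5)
Your proof is correct and takes essentially the same route as the paper's: the same direct computation of $\Vert\tau_{E_{0}}\Vert^{2}$ for the base case, the same inductive step on $\operatorname{inv}(E)$ using Proposition~\ref{Tf2g} and Lemma~\ref{fgnorm} to get the factor $(1-b)(t+b)=u(z)$, and the same pair-by-pair matching of factors in $\mathcal{C}$ under the swap $F\mapsto s_{i}F$. Your write-up simply makes explicit details the paper leaves implicit (that $\mathcal{C}$ is an empty product at $E_{0}$, the algebraic identification $(1-b)(t+b)=u(z)$, and that $N\in E$ forces $i+1<N$ so the constraint $j<N$ in the product never interferes).
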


\begin{proof}
By definition $\tau_{E_{0}}=\sum_{j=N-m}^{N}t^{j-1}(-1)
^{N-m-j}\phi_{E_{0}\backslash\{j\} }$ and $\left\Vert
\tau_{E_{0}}\right\Vert^{2}=\sum_{j=N-m}^{N}t^{2j-2}t^{-i_{j}}$, where
$i_{j}=\operatorname{inv}\big(E_{0}\backslash\{j\} \big)
=j-N+m$; thus $\Vert \tau_{E_{0}}\Vert^{2}=t^{2(N-m-1)}\sum_{j=0}^{m}t^{j}$. Suppose the formula is valid for all $E$
with $\operatorname{inv}(E) \leq n$ and $\operatorname{inv}(E)
=n+1$. Then $E=s_{i}F$ for some $i\in E$ with $i+1\notin E$ and $\operatorname{inv}%
(F) =n$ (so $i+1\in F$, $i\notin F$). By Lemma~\ref{fgnorm}
$\Vert\tau_{E}\Vert^{2}=(1-b) (t+b)
\Vert \tau_{F}\Vert^{2}$, where $b=\frac{t-1}{t^{c(i+1,F) -c(i,F)}-1}$. Write $z=t^{c(i+1,F)-c(i,F)}=t^{c(i,E) -c(i+1,E)}$ then%
\[
\Vert\tau_{E}\Vert^{2}=\frac{(t-z) (1-tz) }{(1-z)^{2}}\Vert \tau_{F}\Vert^{2}
=u(z) \Vert \tau_{F}\Vert^{2}.
\]
In~the product for $\Vert \tau_{F}\Vert^{2}$ the factors for pairs
$(i)$~$(k,\ell)$ with $\{k,\ell\} \cap\{i,i+1\} =\varnothing$, $(ii)$~$(k,i)$, $k\in F$, $k<i$, $(iii)$~$(i+1,\ell)$, $\ell\notin F,\ell>i+1$ are the same in the product
for $\Vert\tau_{E}\Vert^{2}$ for the pairs $(i)$~$(k,\ell)$, $(ii)$~$(k,i+1)$, $(iii)$~$(i,\ell)$, respectively. The extra factor in the product for $\Vert \tau_{E}\Vert^{2}$ has the desired value.
\end{proof}

\subsubsection[Isotype (N-m+1, 1 m-1)]{Isotype $\boldsymbol{(N-m+1,1^{m-1})}$}

This concerns the polynomials in $\mathcal{P}_{m,1}=\ker M\cap\mathcal{P}_{m}=M\mathcal{P}_{m-1}$.

\begin{Definition}
For $\#E=m-1$ define $\eta_{E}:=M\phi_{E}$. The set
$\mathcal{Y}_{1}=\big\{E\colon \#E=m-1,N\notin E\big\}$.
\end{Definition}

The set $\big\{\eta_{E}\colon E\in\mathcal{Y}_{1}\big\} $ spans $\mathcal{P}_{m,1}$;
for suppose $N\in E$ then $\partial_{N}M\phi_{E}$ is a linear
combination of $\phi_{F}$ with $F\in\mathcal{Y}_{1}$ and
$M\big(\partial_{N}M\phi_{E}\big) =M\phi_{E}=\eta_{E}$. Furthermore $\partial
_{N}\eta_{E}=\phi_{E}$ and thus $\dim\mathcal{P}_{m,1}=\binom{N-1}{m-1}$. The
function $\operatorname{inv}(E)$ provides a partial order on
$\mathcal{Y}_{1}$. The extreme cases are
$\operatorname{inv}(\{N-m+1,\dots,N-1\}) =m-1$ and
$\operatorname{inv}(\{1,2,\dots,m-1\}) =(m-1) (N-m+1) $.

\begin{Definition}
For $0\leq n\leq(m-1) (N-m+1) $ let%
\[
\mathcal{P}_{m,1}^{(n) }:=\operatorname{span}\big\{\eta_{E}%
\colon E\in\mathcal{Y}_{1},\,\operatorname{inv}(E) \geq n\big\} .
\]

\end{Definition}

\begin{Lemma}
Suppose $E\in\mathcal{Y}_{1}$ and $\operatorname{inv}(E) =n$. If
$\{i,i+1\} \subset E$ then $T_{i}\eta_{E}=-\eta_{E}$, or if
$\{i,i+1\} \cap E=\varnothing$ then $T_{i}\eta_{E}=t\eta_{E}$.
If~$i\notin E$, $i+1\in E$ then $\operatorname{inv}(s_{i}E) =n+1$ and
$T_{i}\eta_{E}=(t-1) \eta_{E}+t\eta_{s_{i}E}\in\mathcal{P}%
_{m,1}^{(n)}$. If~$i\in E$, $i+1\notin E$ then $\operatorname{inv}(
s_{i}E) =n-1$ and $T_{i}\eta_{E}=\eta_{s_{i}E}\in\mathcal{P}%
_{m,1}^{(n-1)}$.
\end{Lemma}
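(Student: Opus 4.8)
The plan is to commute $M$ through $T_i$ using Proposition~\ref{commMD}, so that $T_i\eta_E = T_iM\phi_E = MT_i\phi_E$, and then to read off $T_i\phi_E$ from the four lines of Definition~\ref{defTi}. Since $M$ is linear and $\eta_F=M\phi_F$ for every $F$ with $\#F=m-1$, each line transfers verbatim: when $\{i,i+1\}\subset E$ we get $T_i\eta_E=-\eta_E$; when $\{i,i+1\}\cap E=\varnothing$ we get $T_i\eta_E=t\eta_E$; when $i\notin E$, $i+1\in E$ we get $T_i\eta_E=(t-1)\eta_E+t\eta_{s_iE}$; and when $i\in E$, $i+1\notin E$ we get $T_i\eta_E=\eta_{s_iE}$. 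Thus the operator identities are immediate, and the real content is the behavior of $\operatorname{inv}$ together with the filtration statements.

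For the inversion counts I would note that $s_i$ changes membership only at the two positions $i$ and $i+1$, so any pair involving an index $k\notin\{i,i+1\}$ keeps its inversion status: for $k<i$ the two pairs $(k,i)$ and $(k,i+1)$ contribute the same total to $\operatorname{inv}(E)$ and to $\operatorname{inv}(s_iE)$, and likewise for $k>i+1$ the pairs $(i,k)$, $(i+1,k)$. Hence only the single pair $(i,i+1)$ can switch. If $i\notin E$, $i+1\in E$ this pair is not an inversion of $E$ but is one of $s_iE$, so $\operatorname{inv}(s_iE)=n+1$; then $\eta_E$ and $\eta_{s_iE}$ have inversion numbers $n$ and $n+1$, both $\ge n$, so $T_i\eta_E\in\mathcal{P}_{m,1}^{(n)}$. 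If $i\in E$, $i+1\notin E$ the reverse holds and $\operatorname{inv}(s_iE)=n-1$; provided $s_iE\in\mathcal{Y}_1$, this gives $\eta_{s_iE}\in\mathcal{P}_{m,1}^{(n-1)}$ directly. Note that in the ascending case $i+1\in E$ forces $i+1\neq N$, so $s_iE\in\mathcal{Y}_1$ automatically.

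The one delicate point, which I expect to be the main obstacle, is the descending case $i\in E$, $i+1\notin E$ with $i=N-1$: because $E\in\mathcal{Y}_1$ forces $N\notin E$, the set $s_{N-1}E$ then contains $N$, so it is no longer in $\mathcal{Y}_1$ and $\eta_{s_{N-1}E}=M\phi_{s_{N-1}E}$ is not one of the basis vectors. To place it in the filtration I would invoke the spanning relation used to define $\mathcal{P}_{m,1}$: writing $F=s_{N-1}E$ with $N\in F$ and $F'=F\setminus\{N\}$, one has $\eta_F=M(\partial_N\eta_F)$, and expanding $\partial_N\eta_F$ in the monomials $\phi_{F'\cup\{j\}}$ for $j\in\{1,\dots,N-1\}\setminus F'$ and reapplying $M$ writes $\eta_F$ as a linear combination of the genuine basis vectors $\eta_{F'\cup\{j\}}\in\mathcal{P}_{m,1}$.

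It then remains to bound the inversion numbers of the contributing sets. A direct count yields the clean linear formula $\operatorname{inv}(F'\cup\{j\})=\operatorname{inv}(F')+N-m+2-j$, strictly decreasing in $j$. Since $N-1\notin F'$, the largest admissible index is $j=N-1$, and $F'\cup\{N-1\}=E$ realises it, so $\operatorname{inv}(F'\cup\{j\})=n+(N-1)-j\ge n$ for every contributing $j$. Therefore $\eta_{s_{N-1}E}\in\mathcal{P}_{m,1}^{(n)}\subseteq\mathcal{P}_{m,1}^{(n-1)}$, which settles the edge case and completes the lemma.
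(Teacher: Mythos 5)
Your proof is correct and rests on the same core idea as the paper's, whose entire proof reads: the transformation rules follow from Definition~\ref{defTi} and $MT_{i}=T_{i}M$ (Proposition~\ref{commMD}). The additional work you do --- checking that only the pair $(i,i+1)$ can change inversion status, and especially handling the case $i=N-1\in E$, where $s_{N-1}E\notin\mathcal{Y}_{1}$ so that $\eta_{s_{N-1}E}$ must be re-expanded via $\eta_{s_{N-1}E}=M\big(\partial_{N}\eta_{s_{N-1}E}\big)$ in the basis $\{\eta_{F}\colon F\in\mathcal{Y}_{1}\}$, with the correct count $\operatorname{inv}(F'\cup\{j\})=n+(N-1)-j\geq n$ placing it in $\mathcal{P}_{m,1}^{(n)}\subseteq\mathcal{P}_{m,1}^{(n-1)}$ --- is sound and supplies exactly the details that the paper's one-line proof leaves implicit.
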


\begin{proof}
The transformation rules follow from Definition~\ref{defTi} and $MT_{i}%
=T_{i}M$.
\end{proof}

\begin{Theorem}
Suppose for some $n$ and for each $E\in\mathcal{Y}_{1}$ with $\operatorname{inv}%
(E) =n$ there is a polynomial $\tau_{E}=\eta_{E}+p_{E}$ with
$p_{E}\in\mathcal{P}_{m,1}^{(n+1) }$ such that $\omega_{i}%
\tau_{E}=t^{c(i,E) }\tau_{E}$ for all $i$ then this property
holds for~$n-1$.
\end{Theorem}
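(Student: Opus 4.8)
The plan is to mirror the argument of Theorem~\ref{Yzeroex} for the isotype $(N-m,1^m)$, with the one essential change that here the filtration runs in the opposite direction: $\mathcal{P}_{m,1}^{(n)}$ collects the $\eta_E$ with $\operatorname{inv}(E)\geq n$, so larger $\operatorname{inv}$ means a deeper (smaller) subspace and $\mathcal{P}_{m,1}^{(n+1)}\subset\mathcal{P}_{m,1}^{(n)}$. Accordingly the induction descends from the maximal value $\operatorname{inv}(E_1)=(m-1)(N-m+1)$, whose base case $\tau_{E_1}=\eta_{E_1}$ is furnished by Theorem~\ref{eta0eigen}; the present statement is the inductive step passing from level $n$ to level $n-1$.

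First I would fix $E\in\mathcal{Y}_1$ with $\operatorname{inv}(E)=n-1$. Since $n-1$ is not maximal we have $E\neq E_1$, so some $i$ satisfies $i\notin E$, $i+1\in E$ (otherwise $E$ would be downward closed, forcing $E=\{1,\dots,m-1\}=E_1$). Put $F=s_iE$, so $i\in F$, $i+1\notin F$ and $\operatorname{inv}(F)=n$, placing $F$ at the level where the hypothesis gives $\tau_F=\eta_F+p_F$ with $p_F\in\mathcal{P}_{m,1}^{(n+1)}$ and $\omega_j\tau_F=t^{c(j,F)}\tau_F$. Because $i\in F$ sits in the first column (content $<0$) while $i+1\notin F$ sits in the first row and cannot be the corner $N$ (as $i+1\in E$ and $N\notin E$), the contents obey $c(i,F)<0$ and $c(i+1,F)\geq 1$, whence $\lambda_i:=t^{c(i,F)}$ and $\lambda_{i+1}:=t^{c(i+1,F)}$ are distinct with $\lambda_{i+1}\neq t^{\pm1}\lambda_i$. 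Proposition~\ref{Tf2g} then lets me define
\[
\tau_E:=\left(T_i+\frac{(t-1)\,t^{c(i,F)}}{t^{c(i+1,F)}-t^{c(i,F)}}\right)\tau_F,
\]
a nonzero eigenvector with $\omega_i\tau_E=\lambda_{i+1}\tau_E=t^{c(i,E)}\tau_E$, $\omega_{i+1}\tau_E=\lambda_i\tau_E=t^{c(i+1,E)}\tau_E$, and $\omega_j\tau_E=t^{c(j,E)}\tau_E$ for $j\neq i,i+1$, using $c(i,E)=c(i+1,F)$, $c(i+1,E)=c(i,F)$, and $c(j,E)=c(j,F)$ otherwise.

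Finally I would verify the leading-term normalization. Writing $b$ for the scalar above, $\tau_E=T_i\eta_F+b\,\eta_F+(T_i+b)p_F$. The preceding lemma on the action of $T_i$ (case $i\in F$, $i+1\notin F$) gives $T_i\eta_F=\eta_{s_iF}=\eta_E$, the genuine leading term with coefficient $1$. Since $\operatorname{inv}(F)=n$ we have $b\,\eta_F\in\mathcal{P}_{m,1}^{(n)}$, and since the same lemma shows $T_i$ lowers $\operatorname{inv}$ by at most one unit, it maps $\mathcal{P}_{m,1}^{(n+1)}$ into $\mathcal{P}_{m,1}^{(n)}$, so $(T_i+b)p_F\in\mathcal{P}_{m,1}^{(n)}$. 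Hence $\tau_E=\eta_E+p_E$ with $p_E\in\mathcal{P}_{m,1}^{(n)}=\mathcal{P}_{m,1}^{((n-1)+1)}$, exactly as required.

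The step I expect to be the main obstacle is the filtration bookkeeping in this last paragraph. One must keep straight that the convention $\operatorname{inv}\geq n$ reverses the inclusion direction relative to the $\mathcal{Y}_0$ case, and then confirm that $T_i$ can decrease $\operatorname{inv}$ by only a single unit; this is what guarantees that the stray terms $b\,\eta_F$ and $(T_i+b)p_F$ land one level shallower, precisely in $\mathcal{P}_{m,1}^{(n)}$ and no further out, so that $\eta_E$ survives as the unique leading term.
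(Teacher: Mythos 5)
Your proof is correct and follows essentially the same route as the paper: choose $i\notin E$, $i+1\in E$, set $F=s_iE$ with $\operatorname{inv}(F)=n$, apply Proposition~\ref{Tf2g} with $b=\frac{(t-1)t^{c(i,F)}}{t^{c(i+1,F)}-t^{c(i,F)}}$, and use the transformation lemma ($T_i\eta_F=\eta_E$, plus $T_i$ lowering $\operatorname{inv}$ by at most one) to confirm the leading term and that the remainder lies in $\mathcal{P}_{m,1}^{(n)}$. Your write-up is in fact slightly more explicit than the paper's on two points it leaves implicit: why $c(i+1,F)\geq 1$ (namely $i+1\neq N$ since $N\notin E$) and the reversed-filtration bookkeeping.
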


\begin{proof}
Suppose $E\in\mathcal{Y}_{1}$ with $\operatorname{inv}(E) =n-1$ then
for some $i$ it holds that $i\notin E,i+1\in E$ (otherwise $E=E_{1}=\{1,\dots,m-1\}$ and $\operatorname{inv}(E_{1}) =(m-1) (N-m+1)$), then let $F=s_{i}E$ so that
$\operatorname{inv}(F) =n$. Then $c(j,E) =c(j,F)$ for $j\neq i,i+1$ and $c(i,E) =c(i+1,F) \geq1$, $c(i+1,E) =c(i,F) \leq-1$. By~Proposition~\ref{Tf2g} let
\[
\tau_{E}=\bigg( T_{i}+\dfrac{(t-1) t^{c(i,F)}}{t^{c(i+1,F) }-t^{c(i,F) }}\bigg) \tau_{F}.
\]
Then $\omega_{j}\tau_{E}=t^{c(j,E) }\tau_{E}$ for all $j$; and
$\tau_{E}=(T_{i}+b) \big(\eta_{F}+p_{F}\big) =\eta
_{E}+b\eta_{F}+(T_{i}+b) p_{F}$ (for a~constant~$b$). By the
previous lemma $b\eta_{F}+(T_{i}+b) p_{E}\in\mathcal{P}_{m,1}^{(n)}$.
\end{proof}

\begin{Corollary}
For each $E\in\mathcal{Y}_{1}$ and $\operatorname{inv}(E)=n\leq(m-1) (N-m+1)$ there is a unique
$\tau_{E}\in\mathcal{P}_{m,1}$ with $\tau_{E}=\eta_{E}+p_{E}$ and
$p_{E}\in\mathcal{P}_{m,1}^{(n+1)}$ such that $\omega_{i}\tau_{E}=t^{c(i,E) }\tau_{E}$ for all $i$.
\end{Corollary}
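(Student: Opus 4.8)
The plan is to mirror the proof of the corresponding Corollary for the isotype $(N-m,1^m)$, the only difference being that the filtration $\mathcal{P}_{m,1}^{(n)}$ now grows as $n$ decreases, so the induction runs downward. For existence I would induct downward on $n=\operatorname{inv}(E)$. The base case is the maximal value $n=(m-1)(N-m+1)$, attained uniquely by $E_1=\{1,\dots,m-1\}$; there Theorem~\ref{eta0eigen} gives $\tau_{E_1}=\eta_{E_1}$, with $p_{E_1}=0\in\mathcal{P}_{m,1}^{(n+1)}=\{0\}$ since no $E$ has larger $\operatorname{inv}$. The passage from $\operatorname{inv}(E)=n$ to $\operatorname{inv}(E)=n-1$ is exactly the Theorem immediately preceding this Corollary: for $E$ with $\operatorname{inv}(E)=n-1$ one selects $i$ with $i\notin E$, $i+1\in E$, sets $F=s_iE$ (so $\operatorname{inv}(F)=n$), and defines $\tau_E=(T_i+b)\tau_F$ via Proposition~\ref{Tf2g}. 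Since $T_i\eta_F=\eta_E$, the expansion $\tau_E=\eta_E+b\eta_F+(T_i+b)p_F$ has unit leading coefficient and remainder in $\mathcal{P}_{m,1}^{(n)}=\mathcal{P}_{m,1}^{(\operatorname{inv}(E)+1)}$, as required. Iterating downward produces $\tau_E$ for every $E\in\mathcal{Y}_1$.

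For uniqueness the crucial point is that on a hook shape the cells have pairwise distinct contents, so the content vector $[c(i,E)]_{i=1}^N$ determines $Y_E$ and hence $E$; genericity of $t$ then makes the joint eigenvalue systems $\big(t^{c(i,E)}\big)_i$ pairwise distinct as $E$ ranges over $\mathcal{Y}_1$. Because $\tau_F=\eta_F+(\text{terms }\eta_{F'}\text{ with }\operatorname{inv}(F')>\operatorname{inv}(F))$, the change of basis from $\{\eta_F:\operatorname{inv}(F)\geq n+1\}$ to $\{\tau_F:\operatorname{inv}(F)\geq n+1\}$ is unitriangular in $\operatorname{inv}$, so these $\tau_F$ span $\mathcal{P}_{m,1}^{(n+1)}$. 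If $\tau_E$ and $\tau_E'$ both met the conditions, their difference would be a joint $\{\omega_i\}$-eigenvector with eigenvalue system $\big(t^{c(i,E)}\big)_i$ lying in $\mathcal{P}_{m,1}^{(n+1)}$; but every eigenvector there is a multiple of some $\tau_F$ with $\operatorname{inv}(F)\geq n+1>n=\operatorname{inv}(E)$, whose eigenvalue system differs from that of $\tau_E$, forcing $\tau_E-\tau_E'=0$.

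The induction itself is routine, the inductive step being already established. The point requiring genuine care---the main obstacle---is the uniqueness clause, specifically verifying that the content vector separates the elements of $\mathcal{Y}_1$ and that $\mathcal{P}_{m,1}^{(n+1)}$ harbors no joint eigenvector with the eigenvalue system of $\tau_E$. Both reduce to the distinct-contents property of hook cells together with the unitriangularity of the $\eta\to\tau$ change of basis, so no computation beyond the preceding Theorem and Theorem~\ref{eta0eigen} is needed.
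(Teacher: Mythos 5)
Your proposal is correct and follows essentially the same route as the paper: existence by downward induction on $\operatorname{inv}(E)$ starting from $\tau_{E_{1}}=\eta_{E_{1}}$ (Theorem~\ref{eta0eigen}) with the preceding Theorem supplying the inductive step, and uniqueness from the unit leading term together with the fact that the eigenvalue system $\big(t^{c(i,E)}\big)_{i}$ determines $E$. Your write-up merely spells out the uniqueness argument (distinct hook contents, unitriangularity of the $\eta\to\tau$ basis change) that the paper compresses into two sentences.
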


\begin{proof}
The existence follows from induction starting with $\tau_{E_{1}}=M\phi_{E_{1}}=\eta_{E_{1}}$ and Theorem~\ref{eta0eigen}. Uniqueness follows from the
leading term. The eigenvalues of $\tau_{E}$ determine $E$ uniquely.
\end{proof}

\begin{Corollary}
Suppose $E\in\mathcal{Y}_{1}$, if $\{i,i+1\} \in E$ then
$T_{i}\tau_{E}=-\tau_{E}$ and if $\{i,i+1\} \cap E=\varnothing$
then $T_{i}\tau_{E}=t\tau_{E}$.
\end{Corollary}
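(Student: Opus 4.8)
The plan is to follow the proof of Corollary~\ref{onedim} line by line, with $\mathcal{Y}_{0}$ replaced by $\mathcal{Y}_{1}$ and $\psi_{E}$ replaced by $\eta_{E}$. Set
\[
b=\frac{(t-1)\,t^{c(i,E)}}{t^{c(i+1,E)}-t^{c(i,E)}}=\frac{t-1}{t^{\,c(i+1,E)-c(i,E)}-1},
\]
so that the operator $T_{i}+b$ is exactly the one produced by Proposition~\ref{Tf2g} applied to $f=\tau_{E}$ with $\lambda_{j}=t^{c(j,E)}$. Everything then reduces to pinning down the single quantity $c(i+1,E)-c(i,E)$ in the two cases.

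First I would record the relevant content differences directly from the tableau $Y_{E}$ of shape $(N-m+1,1^{m-1})$. There the elements of $E$ fill the column cells $[k,1]$, $2\le k\le m$, whose contents are $-1,-2,\dots,-(m-1)$ and become more negative as the entry decreases, while the elements of $E^{C}$ fill the row cells $[1,k]$, $1\le k\le N-m+1$, whose contents are $0,1,\dots,N-m$ and increase as the entry decreases. Hence if $\{i,i+1\}\subset E$ the two entries are vertically adjacent in the column and $c(i+1,E)=c(i,E)+1$, whereas if $\{i,i+1\}\cap E=\varnothing$ they are horizontally adjacent in the row and $c(i+1,E)=c(i,E)-1$. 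Substituting into the formula for $b$ gives $b=1$ in the first case and $b=\dfrac{t-1}{t^{-1}-1}=-t$ in the second; in particular $c(i,E)\ne c(i+1,E)$, so the hypothesis $\lambda_{i}\ne\lambda_{i+1}$ of Proposition~\ref{Tf2g} holds by genericity of $t$.

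Now set $g=(T_{i}+b)\tau_{E}$. By Proposition~\ref{Tf2g} the vector $g$ is an $\{\omega_{j}\}$-eigenvector with the $i$- and $(i+1)$-eigenvalues interchanged, so the self-adjointness of $\omega_{i}$ (Corollary~\ref{omadjt}) forces $\langle\tau_{E},g\rangle=0$, exactly as in the comment following Proposition~\ref{Tf2g}. Since $b\in\{1,-t\}$ in the two cases, Corollary~\ref{g2zer} applies and yields $g=0$; equivalently, the factor $(1-b)(t+b)$ in Lemma~\ref{fgnorm} vanishes, so $\Vert g\Vert^{2}=0$. Thus $(T_{i}+1)\tau_{E}=0$ when $\{i,i+1\}\subset E$, i.e.\ $T_{i}\tau_{E}=-\tau_{E}$, and $(T_{i}-t)\tau_{E}=0$ when $\{i,i+1\}\cap E=\varnothing$, i.e.\ $T_{i}\tau_{E}=t\tau_{E}$.

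The only place requiring genuine care is the content bookkeeping of the second paragraph: one must confirm that the roles of $E$ (column) and $E^{C}$ (row), together with the resulting signs, come out identically to the $\mathcal{Y}_{0}$ case, even though here it is $E^{C}$ rather than $E$ that occupies the corner cell $[1,1]$. Once the two content differences $\pm1$ are established, the remainder is purely formal and mirrors Corollary~\ref{onedim} verbatim; no new estimate or computation is needed.
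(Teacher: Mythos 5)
Your proof is correct and is essentially the paper's own argument: the paper proves this corollary by declaring it has "the same proof as Corollary~\ref{onedim}," which is exactly the $b=1$ versus $b=-t$ computation via Proposition~\ref{Tf2g}, the orthogonality remark, and Corollary~\ref{g2zer} that you carry out. Your explicit verification of the content differences $c(i+1,E)-c(i,E)=\pm1$ for the shape $(N-m+1,1^{m-1})$ is the only detail the paper leaves implicit, and you have it right.
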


\begin{proof}
This has the same proof as Corollary~\ref{onedim}.
\end{proof}

\begin{Proposition}\label{tauEnorm}
Suppose $E\in\mathcal{Y}_{1}$ then%
\[
\Vert\tau_{E}\Vert^{2}=t^{-m(N-m)}[N-m+1]_{t}~\mathcal{C}\big([-c(i,E)]_{i=1}^{N}\big).
\]

\end{Proposition}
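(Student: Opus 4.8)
The plan is to mirror the proof of Proposition~\ref{tau0norm}, running an induction on $\operatorname{inv}(E)$ but now \emph{downward}: for the isotype $\big(N-m+1,1^{m-1}\big)$ the recursion \emph{lowers} $\operatorname{inv}$, so the base case is the polynomial $\tau_{E_1}=\eta_{E_1}$ of maximal $\operatorname{inv}(E_1)=(m-1)(N-m+1)$, and each step produces an $E$ of smaller $\operatorname{inv}$ from an already-treated $F$ of larger $\operatorname{inv}$. The two ingredients are Lemma~\ref{fgnorm}, which governs how the norm changes under $\tau_E=(T_i+b)\tau_F$, and a bookkeeping argument showing that $\mathcal{C}\big([-c(i,E)]\big)$ changes by exactly the same factor.

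First I would dispose of the base case directly. Since $\widehat{\theta}_i\phi_{E_1}=0$ for $i\in E_1=\{1,\dots,m-1\}$, one has $\eta_{E_1}=(-1)^{m-1}\sum_{i=m}^{N}\phi_{E_1\cup\{i\}}$, and these monomials are mutually orthogonal, so $\Vert\eta_{E_1}\Vert^2=\sum_{i=m}^{N}t^{-\operatorname{inv}(E_1\cup\{i\})}$. A short count gives $\operatorname{inv}(E_1\cup\{i\})=(m-1)(N-m)+(N-i)$, whence $\Vert\eta_{E_1}\Vert^2=t^{-(m-1)(N-m)}\sum_{k=0}^{N-m}t^{-k}=t^{-m(N-m)}[N-m+1]_t$. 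On the other side, in the vector $[-c(i,E_1)]_{i=1}^N$ the entries are positive exactly on positions $1,\dots,m-1$ and negative on positions $m,\dots,N-1$; since a contributing pair in Definition~\ref{defuz} needs a negative entry \emph{before} a positive one, the product $\mathcal{C}\big([-c(i,E_1)]\big)$ is empty and equals $1$, matching the claimed formula at the base.

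For the inductive step I would take $E\in\mathcal{Y}_1$ with $\operatorname{inv}(E)=n-1$, choose $i$ with $i\notin E$, $i+1\in E$ (such $i$ exists unless $E=E_1$), and set $F=s_iE$, so that $\operatorname{inv}(F)=n$; this is exactly the data of the recursion $\tau_E=(T_i+b)\tau_F$ with $b=\frac{(t-1)t^{c(i,F)}}{t^{c(i+1,F)}-t^{c(i,F)}}=\frac{t-1}{z-1}$, where $z=t^{c(i+1,F)-c(i,F)}=t^{c(i,E)-c(i+1,E)}$. By the orthogonality remark following Proposition~\ref{Tf2g} we have $\langle\tau_F,\tau_E\rangle=0$, so Lemma~\ref{fgnorm} yields $\Vert\tau_E\Vert^2=(1-b)(t+b)\Vert\tau_F\Vert^2=u(z)\Vert\tau_F\Vert^2$, using the same elementary identity $(1-b)(t+b)=u(z)$ as in Proposition~\ref{tau0norm}. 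It then remains to check that the passage from $F$ to $E$ multiplies $\mathcal{C}\big([-c(i,E)]\big)$ by the same $u(z)$. Writing $v'=[-c(i,F)]$ and $v=[-c(i,E)]=s_iv'$, the two vectors differ only by the transposition of entries $i,i+1$; I would organize the pairs of Definition~\ref{defuz} as in Proposition~\ref{tau0norm}: pairs disjoint from $\{i,i+1\}$ are unchanged, while for each fixed $k<i$ the two factors coming from $(k,i),(k,i+1)$, and for each fixed $\ell$ with $i+1<\ell<N$ the two factors coming from $(i,\ell),(i+1,\ell)$, are merely relabeled and contribute identically to $\mathcal{C}(v)$ and $\mathcal{C}(v')$. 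The only genuine change is the pair $(i,i+1)$ (counted since $i+1\in E$ and $N\notin E$ force $i+1<N$): in $v'$ one has $v'_i=-c(i,F)>0$, so it contributes nothing, whereas in $v$ one has $v_i<0<v_{i+1}$, contributing $u\big(t^{v_i-v_{i+1}}\big)=u\big(t^{c(i+1,E)-c(i,E)}\big)=u(z^{-1})$. Using the symmetry $u(z)=u(z^{-1})$, this extra factor equals $u(z)$, so $\mathcal{C}\big([-c(i,E)]\big)=u(z)\,\mathcal{C}\big([-c(i,F)]\big)$; combined with $\Vert\tau_E\Vert^2=u(z)\Vert\tau_F\Vert^2$ and the inductive hypothesis this closes the induction.

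The main obstacle is the bookkeeping in the last paragraph: one must verify that, after the transposition $v=s_iv'$, every pair other than $(i,i+1)$ contributes identically, so that the entire discrepancy is the single new factor for $(i,i+1)$. The small but essential point is the symmetry $u(z)=u(z^{-1})$ (a one-line computation from Definition~\ref{defuz}, clearing $z^2$ in numerator and denominator), which reconciles the $u(z^{-1})$ produced on the $\mathcal{C}$ side with the $u(z)$ produced by Lemma~\ref{fgnorm}; without it the two sides would differ.
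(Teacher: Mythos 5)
Your proposal is correct and takes essentially the same route as the paper's proof: a downward induction on $\operatorname{inv}(E)$ starting from $\tau_{E_{1}}=\eta_{E_{1}}$ (with the same count $\operatorname{inv}(E_{1}\cup\{j\})=m(N+1-m)-j$ giving $t^{-m(N-m)}[N-m+1]_{t}$), the step $\Vert\tau_{E}\Vert^{2}=u(z)\Vert\tau_{F}\Vert^{2}$ via Lemma~\ref{fgnorm}, and the pair-matching bookkeeping showing the products $\mathcal{C}\big([-c(\cdot,E)]\big)$ and $\mathcal{C}\big([-c(\cdot,F)]\big)$ differ only by the factor coming from the pair $(i,i+1)$. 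You additionally spell out two details the paper leaves implicit, namely that $\mathcal{C}\big([-c(i,E_{1})]\big)=1$ at the base case and that the symmetry $u(z)=u\big(z^{-1}\big)$ is what identifies the extra factor $u\big(z^{-1}\big)$ with the norm ratio $u(z)$; both checks are accurate.
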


\begin{proof}
By definition $\tau_{E_{1}}=(-1)^{m-1}\sum_{j=m}^{N}\phi_{E_{1}\cup\{j\} }$ and
$\Vert \tau_{E_{1}}\Vert^{2}=\sum_{j=m}^{N}t^{-i_{j}}$, where
$i_{j}=\operatorname{inv}\big(E_{1}\cup\{j\}\big) =m(N+1-m)-j$; thus
$\Vert \tau_{E_{1}}\Vert^{2}=t^{-m(N-m) }\sum_{j=0}^{N-m}t^{j}$.
Suppose the formula is valid for all $E$ with
$\operatorname{inv}(E) \geq n$ and $\operatorname{inv}(E)=n-1$. Then $E=s_{i}F$ for some $i\notin E$ with~$i+1\in E$ and $\operatorname{inv}(F) =n$ (so $(i,i+1) \in F\times F^{C}$). By Lemma~\ref{fgnorm} $\Vert\tau_{E}\Vert^{2}=(1-b) (t+b) \Vert \tau_{F}\Vert^{2}$, where $b=\frac{t-1}{t^{c(i+1,F) -c(i,F) }-1}$. Write $z=t^{c(i+1,F) -c(i,F)}=t^{c(i,E)-c(i+1,E)}$ then%
\[
\Vert\tau_{E}\Vert^{2}=\frac{(t-z) (1-tz) }{(1-z)^{2}}\Vert \tau_{F}\Vert
^{2}=u(z) \Vert \tau_{F}\Vert^{2}.
\]
In~the product for $\vert \tau_{F}\vert^{2}$ the factors for pairs
$(i)$~$(k,\ell)$ with $\{k,\ell\}\cap\{i,i+1\} =\varnothing$, $(ii)$~$(k,i)$, $k\notin F$, $k<i$,
$(iii)$~$(i+1,\ell)$, $\ell\in F$, $\ell>i+1$ are the same in the
product for $\vert \tau_{E}\vert^{2}$ for the pairs $(i)$~$(k,\ell)$, $(ii)$~$(k,i+1)$,
$(iii)$~$(i,\ell)$, respectively. The extra factor in the product for $\vert \tau_{E}\vert^{2}$ has the desired value.
\end{proof}

\begin{Proposition}\label{Dnorm}
Let $F_{0}=\{1,\dots,m,N\}$, $F_{1}=\{1,\dots,m\} $ then $\tau_{F_{0}}\in\mathcal{P}_{m,0}$
and $\tau_{F_{1}}\in\mathcal{P}_{m+1,1}$ have the same $\{\omega_{i}\}$-eigenvalues and%
\begin{gather*}
\Vert \tau_{F_{0}}\Vert^{2} =t^{(m+2) (N-m-1)}\frac{[N]_{t}}{[N-m]_{t}},\qquad
\Vert \tau_{F_{1}}\Vert^{2}=t^{-(m+1) (N-m-1)}[N-m]_{t},
\\
\Vert \tau_{F_{0}}\Vert^{2} =t^{(2m+3) (N-m-1)}[N]_{t}[N-m]_{t}^{-2}
\Vert\tau_{F_{1}}\Vert^{2},
\\
D\tau_{F_{1}} =t^{-(m+1) (N-m-1)}[N-m]_{t}\tau_{F_{0}},\qquad
\Vert D\tau_{F_{1}}\Vert^{2}=t^{N-m-1}[N]_{t}\Vert \tau_{F_{1}}\Vert^{2}.
\end{gather*}

\end{Proposition}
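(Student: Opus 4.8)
The plan is to treat the five assertions in turn, the first being a bookkeeping step that feeds the rest. First I would write out the tableaux $Y_{F_0}$ and $Y_{F_1}$: both have first row $N,N-1,\dots,m+1$ and first column (below the corner) $m,m-1,\dots,1$, so $Y_{F_0}=Y_{F_1}$ and
\[
[c(i,F_0)]_{i=1}^N=[c(i,F_1)]_{i=1}^N=[-m,1-m,\dots,-1,\,N-m-1,\dots,1,0].
\]
In particular the two polynomials share all $\{\omega_i\}$-eigenvalues $t^{c(i,\cdot)}$. I would also note that $\operatorname{inv}(F_1)=m(N-m)$ is maximal on $\mathcal{Y}_1$ for the index $m+1$, so the correction term vanishes and $\tau_{F_1}=\eta_{F_1}=M\phi_{F_1}=(-1)^m\sum_{j=m+1}^N\phi_{F_1\cup\{j\}}$, while $\operatorname{inv}(F_0)=m(N-m-1)$ is maximal on $\mathcal{Y}_0$, so $\tau_{F_0}=t^{m(N-m-1)}\psi_{F_0}+p_{F_0}$ with $p_{F_0}$ of strictly smaller $\operatorname{inv}$.

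For the two squared norms I would feed these content vectors into Proposition~\ref{tau0norm} and Proposition~\ref{tauEnorm}. For $\tau_{F_1}$ the relevant vector $[-c(i,F_1)]$ has its negative entries in positions $m+1,\dots,N-1$ and its positive entries in positions $1,\dots,m$; since a factor $u\big(t^{v_i-v_j}\big)$ in Definition~\ref{defuz} demands $i<j$ with $v_i<0<v_j$, no admissible pair exists, $\mathcal{C}([-c(i,F_1)])=1$, and $\|\tau_{F_1}\|^2=t^{-(m+1)(N-m-1)}[N-m]_t$ follows at once. For $\tau_{F_0}$ the admissible pairs run over (a position $\le m$, a position in $\{m+1,\dots,N-1\}$), turning $\mathcal{C}([c(i,F_0)])$ into $\prod_{a=1}^m\prod_{b=1}^{N-m-1}u\big(t^{-a-b}\big)$. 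Using $u\big(t^{-k}\big)=t\,[k-1]_t[k+1]_t/[k]_t^2$ (a direct simplification of $u$), the inner product over $b$ telescopes and then the outer product over $a$ telescopes twice, leaving $t^{m(N-m-1)}[N]_t/([m+1]_t[N-m]_t)$; Proposition~\ref{tau0norm} then gives $\|\tau_{F_0}\|^2=t^{(m+2)(N-m-1)}[N]_t/[N-m]_t$. The displayed ratio relating the two norms is afterwards pure exponent-and-$[\,\cdot\,]_t$ algebra.

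For $\|D\tau_{F_1}\|^2$ I would first record the adjoint relation between $D$ and $M$ for the form of Definition~\ref{phiform}. Comparing $\langle\widehat{\theta}_i\phi_E,\phi_F\rangle$ with $\langle\phi_E,\partial_i\phi_F\rangle$ on the single surviving pair $F=E\cup\{i\}$, and using $\operatorname{inv}(E\cup\{i\})=\operatorname{inv}(E)+N-\#E-i$, yields $\langle Dg,f\rangle=t^{N-k-1}\langle g,Mf\rangle$ for $f\in\mathcal{P}_k$, $g\in\mathcal{P}_{k+1}$. Applying this with $g=\tau_{F_1}\in\mathcal{P}_{m+1}$ and $f=D\tau_{F_1}\in\mathcal{P}_m$ gives $\|D\tau_{F_1}\|^2=t^{N-m-1}\langle\tau_{F_1},MD\tau_{F_1}\rangle$. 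Since $M\tau_{F_1}=M^2\phi_{F_1}=0$, Proposition~\ref{MDDM} collapses $MD\tau_{F_1}=[N]_t\tau_{F_1}-DM\tau_{F_1}$ to $[N]_t\tau_{F_1}$, giving $\|D\tau_{F_1}\|^2=t^{N-m-1}[N]_t\|\tau_{F_1}\|^2$.

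Finally, for the explicit image $D\tau_{F_1}$: because $D$ commutes with every $T_i$ (Proposition~\ref{commMD}), hence with every $\omega_i$, and $D\tau_{F_1}\in D\mathcal{P}_{m+1}=\mathcal{P}_{m,0}$, the element $D\tau_{F_1}$ is an $\{\omega_i\}$-eigenvector with the eigenvalues of $\tau_{F_1}$, i.e.\ those of $\tau_{F_0}$; by uniqueness of such an eigenvector in $\mathcal{P}_{m,0}$, $D\tau_{F_1}=\gamma\tau_{F_0}$ for a scalar $\gamma$. Its modulus is already forced by the two preceding paragraphs through $\gamma^2\|\tau_{F_0}\|^2=\|D\tau_{F_1}\|^2$, which recovers $|\gamma|=t^{-(m+1)(N-m-1)}[N-m]_t$; the phase is then read off by comparing, on both sides, the coefficient of the leading basis vector $\psi_{F_0}$. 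Applying the evaluation map $p\mapsto p|_{\theta_N=0}$ (under which each $\psi_{F_1\cup\{j\}}$ contributes a multiple of $\phi_{F_1}$ of weight $t^{j-1}$) turns this coefficient into $\sum_{j=m+1}^N t^{j-1}=t^m[N-m]_t$, and dividing by the leading coefficient $t^{m(N-m-1)}$ of $\tau_{F_0}$ produces the stated $\gamma$, hence $D\tau_{F_1}=\gamma\tau_{F_0}$. I expect the telescoping evaluation of $\mathcal{C}([c(i,F_0)])$ and the sign accounting in this last comparison---tracking the factors $\sigma(s(i,E))$ from Definition~\ref{defMD} through the evaluation map---to be the only genuinely delicate points; everything else is structural.
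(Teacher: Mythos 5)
Most of your proposal is correct and coincides with the paper's own argument: the common content vector $[-m,1-m,\dots,-1,N-m-1,\dots,1,0]$, the facts that $\tau_{F_1}=M\phi_{F_1}$ exactly and $\tau_{F_0}=t^{m(N-m-1)}\psi_{F_0}+p_{F_0}$, the empty product $\mathcal{C}\big([-c(i,F_1)]_{i=1}^N\big)=1$, and the telescoped evaluation $\mathcal{C}\big([c(i,F_0)]_{i=1}^N\big)=t^{m(N-m-1)}[N]_t/([m+1]_t[N-m]_t)$ all match the paper, as does identifying $D\tau_{F_1}=\gamma\tau_{F_0}$ from commutation with the $\omega_i$ and uniqueness of eigenvectors in $\mathcal{P}_{m,0}$. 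Where you genuinely depart from the paper is the identity $\Vert D\tau_{F_1}\Vert^2=t^{N-m-1}[N]_t\Vert\tau_{F_1}\Vert^2$: the paper obtains it only after finding the proportionality constant by coefficient comparison and then invoking the two explicit norms, whereas you prove it directly from the adjoint relation $\langle Dg,f\rangle=t^{N-k-1}\langle g,Mf\rangle$ (your computation $\operatorname{inv}(E\cup\{i\})=\operatorname{inv}(E)+N-\#E-i$ is correct) together with Proposition~\ref{MDDM} and $M\tau_{F_1}=0$. This is cleaner: it requires no knowledge of $\gamma$, it then yields $|\gamma|$ for free from the norms, and it in fact proves $\Vert Dg\Vert^2=t^{N-m-1}[N]_t\Vert g\Vert^2$ for every $g\in\mathcal{P}_{m+1,1}$, which is exactly the constant $\gamma_2$ in the isomorphism theorem that the paper later deduces from this proposition.

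The genuine gap is in your last step, the sign of $\gamma$. You divide the coefficient of the monomial $\phi_{F_1}$ in $D\tau_{F_1}$ (correctly $t^m[N-m]_t$) by the coefficient of the basis vector $\psi_{F_0}$ in $\tau_{F_0}$ (correctly $t^{m(N-m-1)}$), but these are coefficients relative to different reference vectors; your description of the evaluation map is also inaccurate, since for $j<N$ the polynomial $\psi_{F_1\cup\{j\}}$ contains no $\theta_N$ and is unchanged by $\theta_N\mapsto0$ (only its $\phi_{F_1}$-component has weight $t^{j-1}$). The conversion between the two references is $\psi_{F_0}\mapsto\sigma(s(N,F_0))\,t^{N-1}\phi_{F_1}=(-1)^m t^{N-1}\phi_{F_1}$, so your quotient is off by the factor $(-1)^m t^{N-1}$, and the consistent comparison gives $\gamma=(-1)^m t^{-(m+1)(N-m-1)}[N-m]_t$ rather than the stated positive constant. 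A check with $N=3$, $m=1$: $\tau_{F_1}=-\phi_{\{1,2\}}-\phi_{\{1,3\}}$, $D\tau_{F_1}=t(1+t)\theta_1-\theta_2-\theta_3$, and $\tau_{F_0}=-t^3\theta_1+\frac{t^2}{1+t}(\theta_2+\theta_3)$, so that $D\tau_{F_1}=-t^{-2}(1+t)\tau_{F_0}$, confirming the sign $(-1)^m$. To be fair, the paper's own proof records the coefficient of $\phi_{F_1}$ in $\tau_{F_0}$ as $(-1)^m t^{m(N-1-m)+N-1}$ and then silently drops the $(-1)^m$ in the displayed formula, so this slip is shared with the source; the two squared-norm identities are unaffected, since only $\gamma^2$ enters them.
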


\begin{proof}
The content vector for $F_{0}$ is $[-m,1-m,\dots,-1,N-m-1,\dots,1,0]$ (the same as $F_{1}$) so that%
\begin{gather*}
\mathcal{C}\big([c(i,F_{0})]_{i=1}^{N}\big) =\prod\limits_{i=1}^{m}\prod\limits_{j=1}^{N-m-1}
\frac{(t-t^{-i-j})(1-t^{1-i-j})}{(1-t^{-i-j})^{2}}
\\ \phantom{\mathcal{C}\big([c(i,F_{0})]_{i=1}^{N}\big)}
{}=t^{m(N-m-1)}\prod\limits_{i=1}^{m}\prod\limits_{j=1}^{N-m-1}
\bigg(\frac{t^{i+j+1}-1}{t^{i+j}-1}\bigg) \bigg(\frac{t^{i+j-1}-1}{t^{i+j}-1}\bigg)
\\ \phantom{\mathcal{C}\big([c(i,F_{0})]_{i=1}^{N}\big)}
{}=t^{m(N-m-1)}\prod\limits_{j=1}^{N-m-1}
\bigg(\frac{t^{m+1+j}-1}{t^{1+j}-1}\bigg) \bigg(\frac{t^{j}-1}{t^{m+j}-1}\bigg)
\\ \phantom{\mathcal{C}\big([c(i,F_{0})]_{i=1}^{N}\big)}
{}=t^{m(N-m-1)}\bigg( \frac{t-1}{t^{N-m}-1}\bigg)
\bigg(\frac{t^{N}-1}{t^{m+1}-1}\bigg) =\frac{t^{m(N-m-1)}[N]_{t}}{[N-m]_{t}[m+1]_{t}}
\end{gather*}
by use of telescoping arguments. Thus $\Vert \tau_{F_{0}}\Vert^{2}=t^{(m+2) ( N-m-1) }\frac{[N]_{t}}{[N-m]_{t}}$ by Proposition~\ref{tau0norm}. The~value of
$\Vert \tau_{F_{1}}\Vert^{2}$ is from Proposition~\ref{tauEnorm}.
By definition $\tau_{F_{1}}=M\phi_{F_{1}}=(\theta_{m+1}+\cdots
+\theta_{N})\times \theta_{1}\theta_{2}\cdots\theta_{m}$ and the coefficient
of $\phi_{F_{1}}$ in $D\tau_{F_{1}}$ is $\sum_{i=m+1}^{N}t^{i-1}%
=t^{m}[N-m]_{t}$. The coefficient of~$\phi_{F_{1}}$ in
$\tau_{F_{0}}=D\phi_{F_{1}}$ is $(-1)^{m}t^{m(N-1-m) +N-1}$(see Theorem~\ref{Yzeroex}). From $D\tau_{F_{1}}$ and
$\tau_{F_{0}}$ having the same $\{\omega_{i}\} $-eigenvalues it
follows that $D\tau_{F_{1}}=a\tau_{F_{0}}$ for some constant.
\end{proof}

\subsection{Isomorphisms}

This section concerns the action of the maps $M$, $D$ on the irreducible
$\mathcal{H}_{N}(t)$-modules. The~fol\-lowing is a version of
Schur's lemma for irreducible representations.

\begin{Lemma}
Suppose $\mu$ is a linear isomorphism $V_{1}\rightarrow V_{2}$ of irreducible
$\mathcal{H}_{N}(t) $-modules such that $T_{i}\mu=\mu T_{i}$ for
$1\leq i<N$ and $V_{1}$, $V_{2}$ are equipped with inner products in which each
$T_{i}$ is self-adjoint, then $\Vert \mu f\Vert^{2}/\Vert f\Vert^{2}$ is constant for $f\in V_{1}$.
\end{Lemma}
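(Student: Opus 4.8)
The plan is to transport the inner product of $V_2$ back to $V_1$ along $\mu$ and then show that on an irreducible module any two forms making the $T_i$ self-adjoint must be proportional. Concretely, define a second symmetric bilinear form on $V_1$ by $\langle f,g\rangle' := \langle \mu f,\mu g\rangle$, where the right-hand form is the inner product of $V_2$. Since $\mu$ is a linear isomorphism and the form on $V_2$ is nondegenerate, $\langle\cdot,\cdot\rangle'$ is nondegenerate as well. First I would check that each $T_i$ remains self-adjoint for $\langle\cdot,\cdot\rangle'$: using $\mu T_i=T_i\mu$ and the self-adjointness of $T_i$ on $V_2$ one computes $\langle T_i f,g\rangle'=\langle T_i\mu f,\mu g\rangle=\langle \mu f,T_i\mu g\rangle=\langle f,T_i g\rangle'$. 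Thus on the single space $V_1$ we have two nondegenerate forms, the original $\langle\cdot,\cdot\rangle$ and the pullback $\langle\cdot,\cdot\rangle'$, both making every $T_i$, and hence all of $\mathcal{H}_N(t)$, self-adjoint.

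Next I would introduce the intertwiner comparing the two forms. Since $\langle\cdot,\cdot\rangle$ is nondegenerate there is a unique invertible linear operator $A$ on $V_1$ with $\langle f,g\rangle'=\langle Af,g\rangle$ for all $f,g$. The two self-adjointness properties force $A$ to commute with the action: for every $g$ one has $\langle AT_i f,g\rangle=\langle T_i f,g\rangle'=\langle f,T_i g\rangle'=\langle Af,T_i g\rangle=\langle T_i Af,g\rangle$, and nondegeneracy of $\langle\cdot,\cdot\rangle$ gives $AT_i=T_iA$ for all $i$. In particular $A$ commutes with every Jucys--Murphy element $\omega_i$.

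Now I would use the explicit eigenstructure of the module. The irreducible module $V_1$ has a basis of simultaneous $\{\omega_i\}$-eigenvectors $\tau_E$ whose content vectors $[c(i,E)]$ are pairwise distinct, so each joint eigenspace is one-dimensional. Since $A$ commutes with all $\omega_i$ it preserves these eigenspaces, whence $A\tau_E=a_E\tau_E$ for scalars $a_E$. To see the $a_E$ coincide, recall from Proposition~\ref{Tf2g} that neighbouring eigenvectors are related by $\tau_E=(T_i+b)\tau_F$ for a suitable constant $b$; applying $A$ and using $AT_i=T_iA$ gives $a_E\tau_E=A\tau_E=(T_i+b)A\tau_F=a_F(T_i+b)\tau_F=a_F\tau_E$, so $a_E=a_F$. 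Because the module is irreducible the graph joining the $\tau_E$ by such intertwining steps is connected, so all $a_E$ equal a common nonzero scalar $a$ and $A=a\operatorname{Id}$. Consequently $\langle f,g\rangle'=a\langle f,g\rangle$, and in particular $\Vert\mu f\Vert^2=\langle f,f\rangle'=a\Vert f\Vert^2$, so $\Vert\mu f\Vert^2/\Vert f\Vert^2=a$ is constant.

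The main obstacle is precisely the scalar conclusion for $A$: the ground field is only an extension of $\mathbb{Q}(q,t)$ and need not be algebraically closed, so the textbook form of Schur's lemma does not apply directly. I sidestep this by exploiting the concrete structure of the modules at hand rather than abstract arguments, using the separation of the joint $\{\omega_i\}$-spectrum to force $A$ to be diagonal in the $\tau_E$ basis, and the connectivity of the intertwining graph to force the diagonal entries to agree. The remaining points are routine: nondegeneracy of $\langle\cdot,\cdot\rangle$ makes $A$ well defined, and invertibility of $\mu$ together with nondegeneracy of both forms guarantees $a\neq0$.
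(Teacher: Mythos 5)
Your proof is correct, but it takes a genuinely different route from the paper's. The paper never forms the pullback bilinear form or the intertwiner $A$: it works directly with norms. Starting from a basis eigenvector $f=\tau_E$ of $V_1$, it observes that $\mu f$ is a joint $\{\omega_i\}$-eigenvector in $V_2$ with the same eigenvalues, and that for a step $g=(T_i+b)f$ the orthogonality $\langle f,g\rangle=0=\langle \mu f,\mu g\rangle$ (eigenvalue separation plus self-adjointness of the $\omega_i$) allows Lemma~\ref{fgnorm} to be applied both in $V_1$ and in $V_2$, giving $\Vert g\Vert^2=(1-b)(t+b)\Vert f\Vert^2$ and $\Vert\mu g\Vert^2=(1-b)(t+b)\Vert\mu f\Vert^2$; hence the ratio is unchanged along steps, and connectivity of the step construction makes it constant over the whole eigenbasis. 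You instead transport the $V_2$ form back to $V_1$, encode the comparison of the two invariant forms in an operator $A$ commuting with the Hecke action, and show $A$ is scalar by diagonalizing it on the $\tau_E$ and propagating the eigenvalue along the same steps. What your version buys: a strictly stronger conclusion, $\langle\mu f,\mu g\rangle=a\langle f,g\rangle$ for all $f,g\in V_1$, which handles arbitrary $f$ at once --- the paper's argument as written establishes the constant ratio only on basis vectors and tacitly uses orthogonality of both eigenbases to extend it to general $f$. What the paper's version buys: it needs no nondegeneracy bookkeeping and no auxiliary operator, reusing only Lemma~\ref{fgnorm}, which is already in place.

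Two loose ends in your write-up are worth tightening. First, you invoke nondegeneracy of both forms (to define $A$ and to invert it); this is automatic here, since the radical of a form in which every $T_i$ is self-adjoint is a submodule, so on an irreducible module any nonzero invariant form is nondegenerate --- but it should be stated. Second, ``irreducibility implies the step graph is connected'' needs one more sentence: because the modules are hooks, the contents $c(i,E)$ and $c(i+1,E)$ are always distinct, so Proposition~\ref{Tf2g} places $T_i\tau_E$ in $\operatorname{span}\{\tau_E,\tau_{s_iE}\}$ for every $i$; hence the span of a step-connected component of the basis is $\mathcal{H}_N(t)$-stable, and irreducibility forces a single component. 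Alternatively, connectivity is already supplied by the inductive construction of the $\tau_E$ from the root (Theorem~\ref{Yzeroex} and its $\mathcal{Y}_1$ counterpart), with no appeal to irreducibility needed.
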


\begin{proof}
The argument is based on orthogonal bases defined in the previous sections. By
hypo\-thesis $V_{1}$ has an orthogonal basis consisting of $\{\omega_{i}\} $-eigenfunctions.
The image of this basis under $\mu$ has the
same property. For a typical basis element $f\in V_{1}$ suppose $\omega
_{j}f=\lambda_{j}f$ for all~$j$ and $\lambda_{i+1}\neq t^{\pm1}\lambda_{i}$
then $g=(T_{i}+b) f$ satisfies $\omega_{i}g=\lambda_{i+1}g,\omega_{i+1}g=\lambda_{i}g$ for $b=\frac{(t-1)\lambda_{i}}{\lambda_{i+1}-\lambda_{i}}$ and $\Vert g\Vert
^{2}=(1-b) (t+b) \Vert f\Vert^{2}$
(this equation follows from $T_{i}$ being self-adjoint and $\langle
f,g\rangle =0$). By~hypo\-thesis $\omega_{j}\mu f=\lambda_{j}\mu f$ for
all $j$ and $\mu g=(T_{i}+b) \mu f$ satisfies $\omega_{i}\mu
g=\lambda_{i+1}\mu g$, $\omega_{i+1}\mu g=\lambda_{i}\mu g$. By Lemma~\ref{fgnorm} $\Vert \mu g\Vert^{2}=(1-b) (t+b) \Vert \mu f\Vert^{2}$ and so $\gamma:=\left\Vert \mu
g\right\Vert^{2}/\Vert g\Vert^{2}=\Vert \mu f\Vert
^{2}/\Vert f\Vert^{2}$. By the step constructions $\Vert \mu
f\Vert^{2}/\Vert f\Vert^{2}=\gamma$ holds for every basis
vector of $V_{1}$.
\end{proof}

The relation $MD+DM=[N]_{t}$ (Proposition~\ref{MDDM}) implies
that $\mathcal{P}_{m}$ is a direct sum of $\mathcal{P}_{m,0}=\mathcal{P}%
_{m}\cap\ker D$ and $\mathcal{P}_{m,1}=\mathcal{P}_{m}\cap\ker M$.

\begin{Theorem}
The maps $M$, $D$ are linear isomorphisms $\mathcal{P}_{m,0}\rightarrow
\mathcal{P}_{m+1,1}$, $\mathcal{P}_{m+1,1}\rightarrow\mathcal{P}_{m,0}$
res\-pec\-ti\-vely, of~$\mathcal{H}_{N}(t) $-modules, $\Vert
Mf\Vert^{2}=t^{m+1-N}[N]_{t}\Vert f\Vert
^{2}$ for $f\in\mathcal{P}_{m,0}$ and $\Vert Dg\Vert^{2}=t^{N-m-1}\times[N]_{t}\Vert g\Vert^{2}$ for
$g\in\mathcal{P}_{m+1,1}$.
\end{Theorem}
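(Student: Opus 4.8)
The plan is to establish three things: that $M$ and $D$ map between the stated spaces, that they are genuine isomorphisms intertwining the Hecke action, and finally to compute the norm-scaling constants. The intertwining property is immediate from Proposition~\ref{commMD}, which asserts that both $M$ and $D$ commute with every $T_i$; so whatever linear maps these turn out to be, they automatically respect the $\mathcal{H}_N(t)$-module structure. The containment $D\mathcal{P}_{m+1}=\mathcal{P}_{m,0}$ and $M\mathcal{P}_{m-1}=\mathcal{P}_{m,1}$ was already recorded in the definitions of those spaces, so $D$ maps $\mathcal{P}_{m+1,1}\subset\mathcal{P}_{m+1}$ into $\mathcal{P}_{m,0}$ and $M$ maps $\mathcal{P}_{m,0}\subset\mathcal{P}_m$ into $\mathcal{P}_{m+1,1}$. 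The real content is that these restrictions are bijective and that they scale norms by a single constant.

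**Bijectivity and the scaling constant.** First I would observe that on $\mathcal{P}_{m,0}=\ker D\cap\mathcal{P}_m$ the relation $MD+DM=[N]_t$ from Proposition~\ref{MDDM} collapses: for $f\in\mathcal{P}_{m,0}$ we have $Df=0$, hence $DMf=[N]_t\,f$. Since $t$ is generic, $[N]_t\neq0$, so $\tfrac{1}{[N]_t}D$ is a left inverse to $M$ on $\mathcal{P}_{m,0}$; symmetrically, for $g\in\mathcal{P}_{m+1,1}=\ker M\cap\mathcal{P}_{m+1}$ we get $MDg=[N]_t\,g$, so $\tfrac{1}{[N]_t}M$ is a left inverse to $D$ on $\mathcal{P}_{m+1,1}$. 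These two facts, combined with the dimension counts $\dim\mathcal{P}_{m,0}=\binom{N-1}{m}=\binom{N-1}{(m+1)-1}=\dim\mathcal{P}_{m+1,1}$ established earlier, force both maps to be isomorphisms and moreover identify them as mutual inverses up to the scalar $[N]_t$. For the norm constants I would invoke the Schur-type Lemma stated just above: since $M\colon\mathcal{P}_{m,0}\to\mathcal{P}_{m+1,1}$ is a $T_i$-intertwining isomorphism between spaces with self-adjoint $T_i$, the ratio $\Vert Mf\Vert^2/\Vert f\Vert^2$ is a single constant independent of $f$, and likewise for $D$.

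**Pinning down the constants.** To evaluate these two constants it suffices to test them on one convenient vector in each space, and Proposition~\ref{Dnorm} supplies exactly the needed data. Taking $f=\tau_{F_0}\in\mathcal{P}_{m,0}$ and $g=\tau_{F_1}\in\mathcal{P}_{m+1,1}$ with $F_0=\{1,\dots,m,N\}$, $F_1=\{1,\dots,m\}$ (note $\tau_{F_1}\in\mathcal{P}_{m+1,1}$ as stated there), I read off $\Vert D\tau_{F_1}\Vert^2=t^{N-m-1}[N]_t\Vert\tau_{F_1}\Vert^2$ directly, giving the constant $t^{N-m-1}[N]_t$ for $D$ on $\mathcal{P}_{m+1,1}$. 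For $M$ I would combine $DMg=[N]_t\,g$ with the scaling relation $\Vert Dh\Vert^2=t^{N-m-1}[N]_t\Vert h\Vert^2$ applied to $h=Mg$: this yields $\Vert Mg\Vert^2$ in terms of $\Vert g\Vert^2$, and matching the exponents reproduces the claimed $\Vert Mf\Vert^2=t^{m+1-N}[N]_t\Vert f\Vert^2$ (the shift in the exponent is consistent because $M$ raises fermionic degree from $m$ to $m+1$, so the relevant instance of the $D$-relation uses degree $m+1$).

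**Main obstacle.** The only genuinely delicate point is bookkeeping the exponents of $t$ so that the two constants are mutually consistent — that is, checking that the product of the $M$-constant and the $D$-constant equals $[N]_t^2$, as it must since $DM=[N]_t$ on the relevant space. Everything else is formal: the commutation relations and the Schur lemma do all the structural work, and $MD+DM=[N]_t$ is what converts the abstract isomorphism into the explicit inverse relationship. I expect that verifying the degree-shift in the exponent ($m+1-N$ versus $N-m-1$) and confirming it against the single test computation from Proposition~\ref{Dnorm} is where a careless argument could go wrong, so I would state the test-vector computation explicitly rather than rely on symmetry alone.
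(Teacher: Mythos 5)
Your proposal is correct and follows essentially the same route as the paper's proof: commutation with the $T_i$ (Proposition~\ref{commMD}), the relation $MD+DM=[N]_t$ (Proposition~\ref{MDDM}) to get injectivity/bijectivity, the Schur-type lemma for constancy of the norm ratios, and Proposition~\ref{Dnorm} together with $\gamma_1\gamma_2=[N]_t^2$ to pin down the two constants. The only cosmetic difference is your added dimension count $\binom{N-1}{m}=\binom{N-1}{(m+1)-1}$, which the paper's mutual-left-inverse argument renders unnecessary.
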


\begin{proof}
$M$ and $D$ commute with each $T_{i}$ and hence with each $\omega_{i}$.
Furthermore if $f\in\mathcal{P}_{m,0}$ then $(MD+DM)
f=DMf=[N]_{t}f$ (by Proposition~\ref{MDDM}) and thus $M$ is
one-to-one on~$\mathcal{P}_{m,0}$. Similarly if $g\in\mathcal{P}_{m+1,1}$ then
$[N]_{t}g=(MD+DM) g=MDg$ and $D$ is one-to-one.
By~the lemma there are constants $\gamma_{1},\gamma_{2}$ such that $\Vert
Mf\Vert^{2}=\gamma_{1}\Vert f\Vert^{2}$ and $\Vert
Dg\Vert^{2}=\gamma_{2}\Vert g\Vert^{2}$. From $(MD+DM) f=DMf$ it follows that $\Vert DMf\Vert^{2}=[N]_{t}^{2}\Vert f\Vert^{2}=\gamma_{2}\Vert
Mf\Vert^{2}$ and $\gamma_{1}=[N]_{t}^{2}/\gamma_{2}$. By~Proposition~\ref{Dnorm} $\gamma_{2}=t^{N-m-1}[N]_{t}$ and thus
$\gamma_{1}=t^{m+1-N}[N]_{t}$.
\end{proof}

\section{Nonsymmetric Macdonald polynomials}\label{NSMP}

\subsection{Operators on polynomials}

The following presents the key concepts for our constructions: the definition
of the action of~$\mathcal{H}_{N}(t)$ on superpolynomials and
the ingredients necessary to define the Cherednik operators whose simultaneous
eigenvectors are the nonsymmetric Macdonald superpolynomials. Here we extend
the polynomials in $\{\theta_{i}\}$ by adjoining $N$ commuting
variables $x_{1},\dots,x_{N}$ (that is \mbox{$[x_{i},x_{j}]=0$}, $[x_{i},\theta_{j}] =0$, $\theta_{i}\theta_{j}=-\theta_{j}\theta_{i}$ for all $i$,~$j$). Each polynomial is a sum of mono\-mi\-als~$x^{\alpha}\phi_{E}$, where $E\subset\{1,2,\dots,N\}$ and $\alpha
\in\mathbb{N}_{0}^{N}$, $x^{\alpha}:=\prod_{i=1}^{N}x_{i}^{\alpha_{i}}$.
The partitions in $\mathbb{N}_{0}^{N}$ are denoted by~$\mathbb{N}_{0}^{N,+}$
($\lambda\in\mathbb{N}_{0}^{N,+}$ if and only if $\lambda_{1}\geq\lambda
_{2}\geq\cdots\geq\lambda_{N}$). The \textit{fermionic} degree of this
monomial is $\#E$ and the \textit{bosonic} degree is $\vert
\alpha\vert :=\sum_{i=1}^{N}\alpha_{i}$. Let $s\mathcal{P}_{m}:=\operatorname{span}\big\{x^{\alpha}\phi_{E}\colon \alpha\in\mathbb{N}_{0}^{N},\,\#E=m\big\}$. Then using the decomposition $\mathcal{P}_{m}=\mathcal{P}_{m,0}\oplus\mathcal{P}_{m,1}$ let
\begin{gather*}
s\mathcal{P}_{m,0} =\operatorname{span}\big\{ x^{\alpha}\psi_{E}\colon \alpha
\in\mathbb{N}_{0}^{N},\,E\in\mathcal{Y}_{0}\big\},
\\
s\mathcal{P}_{m,1} =\operatorname{span}\big\{ x^{\alpha}\eta_{E}\colon \alpha
\in\mathbb{N}_{0}^{N},\,E\in\mathcal{Y}_{1}\big\}.
\end{gather*}
The Hecke algebra $\mathcal{H}_{N}(t) $ is represented on
$s\mathcal{P}_{m}$. This allows us to apply the theory of nonsymmetric
Macdonald polynomials taking values in $\mathcal{H}_{N}(t)$-modules (see~\cite{Dunkl2019,DunklLuque2012}).

\begin{Definition}
Suppose $p\in s\mathcal{P}_{m}$ and $1\leq i<N$ then set%
\[
\boldsymbol{T}_{i}p(x;\theta) :=(1-t) x_{i+1}
\frac{p(x;\theta) -p(xs_{i};\theta)}{x_{i}-x_{i+1}}+T_{i}p(xs_{i};\theta).
\]
\end{Definition}

Note that $T_{i}$ acts on the $\theta$ variables according to Definition~\ref{defTi}.

\begin{Theorem}[{\cite[Proposition~3.5]{DunklLuque2012}}]
Suppose $1\leq i<N-1$ then $\boldsymbol{T}_{i}\boldsymbol{T}_{i+1}\boldsymbol{T}_{i} =\boldsymbol{T}_{i+1}\boldsymbol{T}_{i}\boldsymbol{T}_{i+1}$, if~$1\leq i<N$ then $(\boldsymbol{T}_{i}+1) (\boldsymbol{T}_{i}-t) =0$ and if $1\leq
i<j-1\leq N-2$ then $\boldsymbol{T}_{i}\boldsymbol{T}_{j}=\boldsymbol{T}_{j}\boldsymbol{T}_{i}$.
\end{Theorem}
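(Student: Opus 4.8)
The plan is to read $\boldsymbol{T}_i$ as a sum of an $x$-part and a $\theta$-part acting on disjoint sets of variables. Writing $\rho_i$ for the operator $(\rho_i p)(x;\theta)=p(xs_i;\theta)$ (interchange of $x_i,x_{i+1}$) and $\pi_i p:=\frac{p(x;\theta)-p(xs_i;\theta)}{x_i-x_{i+1}}$ for the divided difference, the definition reads $\boldsymbol{T}_i=(1-t)x_{i+1}\pi_i+T_i\rho_i$, where $T_i$ is the operator of Definition~\ref{defTi} acting \emph{only} on the $\theta$-variables. The decisive structural point is that $T_i$ commutes with every operator built from the $x$-variables -- multiplication by $x_j$, $\rho_j$ and $\pi_j$ -- because they involve disjoint variables; together with the elementary identities $\rho_i^2=1$, $\pi_i^2=0$, $\rho_i\pi_i=\pi_i$, $\pi_i\rho_i=-\pi_i$, $\rho_i x_{i+1}\rho_i=x_i$, $(x_i-x_{i+1})\pi_i=1-\rho_i$ and the Leibniz rule $\pi_i x_{i+1}=x_{i+1}\pi_i-\rho_i$, these are the only facts I use. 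This already exhibits the statement as the special case $V=\mathcal{P}_m$ of the general Dunkl--Luque construction quoted here, the hypothesis of that construction -- that the $T_i$ furnish a representation of $\mathcal{H}_N(t)$ on the space of values -- having been checked in the Proposition following Definition~\ref{defTi}; so one option is simply to invoke \cite[Proposition~3.5]{DunklLuque2012}.

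For a self-contained verification I would dispatch the quadratic and commutation relations first. Squaring, the four cross terms simplify through the identities above: $x_{i+1}\pi_i x_{i+1}\pi_i=-x_{i+1}\pi_i$ (using the Leibniz rule, $\pi_i^2=0$ and $\rho_i\pi_i=\pi_i$), the mixed terms collapse via $(x_i-x_{i+1})\pi_i=1-\rho_i$, and $T_i\rho_i T_i\rho_i=T_i^2=(t-1)T_i+t$ by the quadratic relation on $\theta$. Collecting gives $\boldsymbol{T}_i^2=-(t-1)^2 x_{i+1}\pi_i+(t-1)T_i\rho_i+t=(t-1)\boldsymbol{T}_i+t$, which is $(\boldsymbol{T}_i+1)(\boldsymbol{T}_i-t)=0$. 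For $j\geq i+2$ the operators $\boldsymbol{T}_i$ and $\boldsymbol{T}_j$ involve disjoint $x$-positions, so all their $x$-parts commute, while the $\theta$-parts commute by $T_iT_j=T_jT_i$ (distant commutation of the $\theta$-action); hence $\boldsymbol{T}_i\boldsymbol{T}_j=\boldsymbol{T}_j\boldsymbol{T}_i$.

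The braid relation is the real work. Since both products $\boldsymbol{T}_i\boldsymbol{T}_{i+1}\boldsymbol{T}_i$ and $\boldsymbol{T}_{i+1}\boldsymbol{T}_i\boldsymbol{T}_{i+1}$ involve only $x_i,x_{i+1},x_{i+2}$ and the two $\theta$-operators $T_i,T_{i+1}$, I would reduce to $N=3$, $i=1$ and expand each product into its $2^3=8$ terms according to which factor contributes its $T_\bullet\rho_\bullet$ part and which its $(1-t)x_\bullet\pi_\bullet$ part. Because the $\theta$-operators commute with all $x$-operators, each term factors as a word in $T_1,T_2$ times a word in the $x$-operators. The term in which all three factors contribute $T_\bullet\rho_\bullet$ produces $T_1T_2T_1\cdot\rho_1\rho_2\rho_1$ on one side and $T_2T_1T_2\cdot\rho_2\rho_1\rho_2$ on the other; these agree because of the \emph{two} braid relations now in play -- the symmetric-group relation $\rho_1\rho_2\rho_1=\rho_2\rho_1\rho_2$ for the $x$-interchanges, and the Hecke braid relation $T_1T_2T_1=T_2T_1T_2$ on $\theta$ from the Proposition after Definition~\ref{defTi}. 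The remaining terms carry $\theta$-words of length $\leq 2$; after reducing any repeated index by $T_\bullet^2=(t-1)T_\bullet+t$ they are expressed in the basis $\{T(w)\colon w\in\mathcal{S}_3\}$, whose linear independence lets me match the two sides coefficient by coefficient, each match becoming an identity purely among the $x$-operators that is checked on $\mathbb{Q}(q,t)[x_1,x_2,x_3]$ by the same elementary relations. The main obstacle is exactly this coupled bookkeeping: the entanglement of the $x$-permutations with the non-commuting $\theta$-operators means the mixed terms do \emph{not} reduce to the classical scalar computation term by term, and keeping the two braid relations synchronized through the lower-order terms is where the care is required.
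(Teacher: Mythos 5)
Your first route coincides with the paper's entire proof: the theorem is stated there with the citation \cite[Proposition~3.5]{DunklLuque2012} and nothing more, precisely because the Proposition following Definition~\ref{defTi} supplies the hypothesis of the general vector-valued construction (that the $T_i$ give an $\mathcal{H}_N(t)$-module structure on the space of values), so the specialization to $\mathcal{P}_m$ is immediate. Your supplementary direct verification goes beyond what the paper records, and it is essentially sound: the quadratic computation is correct as written ($x_{i+1}\pi_i x_{i+1}\pi_i=-x_{i+1}\pi_i$, the mixed terms collapsing through $(x_i-x_{i+1})\pi_i=1-\rho_i$ to $(1-t)T_i(1-\rho_i)$, and $T_i\rho_iT_i\rho_i=T_i^2$), and distant commutation is immediate from disjointness. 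The one point to repair in the braid-relation plan is the appeal to linear independence of $\{T(w)\colon w\in\mathcal{S}_3\}$ acting on $\mathcal{P}_m$: this can fail (for $m=0$ each $T_i$ is the scalar $t$, for $m=N$ the scalar $-1$), so you may not argue that the coefficients \emph{must} agree. But you do not need that implication; it suffices to \emph{verify} that the six $x$-operator coefficients agree, since coefficientwise equality trivially yields equality of the two operators on every module. Those six identities are module-independent and do hold: the length-two matches reduce to $\rho_1\rho_2\pi_1=\pi_2\rho_1\rho_2$ and $\pi_1\rho_2\rho_1=\rho_2\rho_1\pi_2$, both consequences of $\rho_1\rho_2\rho_1=\rho_2\rho_1\rho_2$, while the coefficients of $T_1$, $T_2$ and $1$ are the genuinely tedious checks you correctly flag -- and, as you also note, they cannot be imported from the classical scalar computation, where the distinct $\theta$-words collapse to powers of $t$ and these identities get mixed together.
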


We also use%
\[
\boldsymbol{T}_{i}^{-1}p(x;\theta) =\bigg(\frac{1-t}{t}\bigg) x_{i}\frac{p(x;\theta) -p(xs_{i};\theta)}{x_{i}-x_{i+1}}+T_{i}^{-1}p(xs_{i};\theta).
\]

\begin{Definition}
Let $T^{(N) }=T_{N-1}T_{N-2}\cdots T_{1}$ and for $p\in
s\mathcal{P}_{m}$ and $1\leq i\leq N$%
\begin{gather*}
\boldsymbol{w}p(x;\theta) :=T^{(N)}p(qx_{N},x_{1},x_{2},\dots,x_{N-1};\theta),
\\
\xi_{i}p(x;\theta) :=t^{i-N}\boldsymbol{T}_{i}
\boldsymbol{T}_{i+1}\cdots\boldsymbol{T}_{N-1}\boldsymbol{wT}_{1}^{-1} \boldsymbol{T}_{2}^{-1}\cdots\boldsymbol{T}_{i-1}^{-1}p(x;\theta).
\end{gather*}
\end{Definition}

The operators $\xi_{i}$ are Cherednik operators, defined by Baker and
Forrester~\cite{BakerForrester97} (see Braverman et al.~\cite{Bravermanetal2020} for the
significance of these operators in double affine Hecke algebras). They
mutually commute (the proof in the vector-valued situation is in~\cite[Theorem~3.8]{DunklLuque2012}). Observe $\xi_{i-1}=t^{-1}\boldsymbol{T}_{i-1}\xi_{i}\boldsymbol{T}_{i-1}$. Their key properties are
\begin{gather}
T^{(N) }T_{i+1} =T_{N-1}\cdots T_{i+1}T_{i}T_{i+1}T_{i-1}\cdots T_{1}
 =T_{N-1}\cdots T_{i+2}T_{i}T_{i+1}T_{i}\cdots T_{1}=T_{i}T^{(N)},\nonumber
 \\
\big(T^{(N)}\big)^{2} =T^{(N)}\big(T_{N-1}\cdots T_{2}\big) T_{1}=\big(T_{N-2}\cdots T_{1}\big)
T^{(N)}T_{1}=T_{N-1}^{-1}\big(T^{(N)}\big)^{2}T_{1},\nonumber
\\
\big(T^{(N)}\big)^{2}T_{1} =T_{N-1}\big(T^{(N)}\big)^{2},\nonumber
\\
\boldsymbol{wT}_{i+1} =\boldsymbol{T}_{i}\boldsymbol{w},\qquad
\boldsymbol{w}^{2}\boldsymbol{T}_{1}=\boldsymbol{T}_{N-1}\boldsymbol{w}^{2},\nonumber
\\
\boldsymbol{w}^{-1}p(x;\theta) =T_{1}^{-1}T_{2}^{-1}\cdots
T_{N-1}^{-1}p( x_{2},x_{3},\dots,x_{N},q^{-1}x_{1};\theta).\label{TNprops}
\end{gather}
There is a basis of $s\mathcal{P}_{m}$ consisting of simultaneous eigenvectors
of $\{\xi_{i}\} $ and these are the nonsymmetric Macdonald
superpolynomials (henceforth abbreviated to \textquotedblleft
NSMP\textquotedblright).

Suppose $p(\theta) $ is independent of $x$ then $\boldsymbol{T}_{i}p=T_{i}p$ and
\begin{align*}
\xi_{i}p(\theta) & =t^{i-N}T_{i}T_{i+1}\cdots T_{N-1}\big(T_{N-1}\cdots T_{2}T_{1}\big) T_{1}^{-1}T_{2}^{-1}\cdots T_{i-1}^{-1}p(\theta)
\\
& =t^{i-N}T_{i}T_{i+1}\cdots T_{N-1}T_{N-1}\cdots T_{i}p(\theta) =\omega_{i}p(\theta) ,
\end{align*}
that is $\xi_{i}$ agrees with $\omega_{i}$ on polynomials of bosonic degree
0. Also $\boldsymbol{wT}_{i+1}=\boldsymbol{T}_{i}\boldsymbol{w}$. Suppose
$j>i+1$ then%
\begin{align*}
\xi_{i}\boldsymbol{T}_{j} & =t^{i-N}\boldsymbol{T}_{i}\boldsymbol{T}%
_{i+1}\cdots\boldsymbol{T}_{N-1}w\boldsymbol{T}_{j}\boldsymbol{T}_{1}%
^{-1}\boldsymbol{T}_{2}^{-1}\cdots\boldsymbol{T}_{i-1}^{-1}
\\
& =t^{i-N}\boldsymbol{T}_{i}\boldsymbol{T}_{i+1}\cdots\boldsymbol{T}%
_{N-1}\boldsymbol{T}_{j-1}w\boldsymbol{T}_{1}^{-1}\boldsymbol{T}_{2}%
^{-1}\cdots\boldsymbol{T}_{i-1}^{-1}
\\
& =t^{i-N}\boldsymbol{T}_{i}\cdots(\boldsymbol{T}_{j-1}\boldsymbol{T}%
_{j}\boldsymbol{T}_{j-1}) \cdots\boldsymbol{T}_{N-1}w\boldsymbol{T}%
_{1}^{-1}\boldsymbol{T}_{2}^{-1}\cdots\boldsymbol{T}_{i-1}^{-1}
\\
& =t^{i-N}\boldsymbol{T}_{i}\cdots( \boldsymbol{T}_{j}\boldsymbol{T}%
_{j-1}\boldsymbol{T}_{j}) \cdots\boldsymbol{T}_{N-1}w\boldsymbol{T}%
_{1}^{-1}\boldsymbol{T}_{2}^{-1}\cdots\boldsymbol{T}_{i-1}^{-1}=\boldsymbol{T}%
_{j}\xi_{i}.
\end{align*}
A similar argument shows $\xi_{i}\boldsymbol{T}_{j}=\boldsymbol{T}_{j}\xi_{i}$
when $j<i-1$, by using $\boldsymbol{T}_{j}^{-1}\boldsymbol{T}_{j+1}%
^{-1}\boldsymbol{T}_{j}=\boldsymbol{T}_{j+1}\boldsymbol{T}_{j}^{-1}%
\boldsymbol{T}_{j+1}^{-1}$.

\subsection{Properties of nonsymmetric Macdonald polynomials}

They have a triangularity property with respect to the partial order $\rhd$ on
the compositions $\mathbb{N}_{0}^{N}$, which is derived from the dominance
order:
\begin{gather*}
\alpha \prec\beta\Longleftrightarrow\sum_{j=1}^{i}\alpha_{j}\leq\sum_{j=1}^{i}\beta_{j},\qquad
1\leq i\leq N,\qquad \alpha\neq\beta,
\\
\alpha\lhd\beta \Longleftrightarrow(\vert \alpha\vert=\vert \beta\vert )
\wedge\big[\big(\alpha^{+}\prec\beta^{+}\big) \vee\big(\alpha^{+}
=\beta^{+}\wedge\alpha\prec\beta\big)\big].
\end{gather*}
The rank function on compositions is involved in the formula for an NSMP.

\begin{Definition}
For $\alpha\in\mathbb{N}_{0}^{N}$, $1\leq i\leq N$%
\[
r_{\alpha}(i) :=\#\{j\colon \alpha_{j}>\alpha_{i}\}
+\#\{j\colon 1\leq j\leq i,\,\alpha_{j}=\alpha_{i}\},
\]
then $r_{\alpha}\in\mathcal{S}_{N}$. There is a shortest expression
$r_{\alpha}=s_{i_{1}}s_{i_{2}}\cdots s_{i_{k}}$ and
$R_{\alpha}:=(T_{i_{1}}T_{i_{2}}\cdots T_{i_{k}})^{-1}\in\mathcal{H}_{N}(t)$ (that is, $R_{\alpha}=T(r_{\alpha})^{-1}$).
\end{Definition}

A consequence is that $r_{\alpha}\alpha=\alpha^{+}$, the \textit{nonincreasing
rearrangement} of $\alpha$, for any $\alpha\in\mathbb{N}_{0}^{N}$. For~example if $\alpha=(1,2,0,5,4,5)$ then $r_{\alpha}=[5,4,6,1,3,2]$ and
$r_{\alpha}\alpha=\alpha^{+}=(5,5,4,2,1,0)$ (recall $(u\alpha)_{i}=\alpha_{u^{-1}(i)}$).
Also $r_{\alpha}=I$ if and only if $\alpha\in\mathbb{N}_{0}^{N,+}$.

\begin{Theorem}[{\cite[Theorem~4.12]{DunklLuque2012}}] 
Suppose $\alpha\in\mathbb{N}_{0}^{N}$ and $E\in\mathcal{Y}_{k}$, $k=0,1$ then there exists a~$(\xi_{i})$-simultaneous eigenfunction%
\begin{gather}
M_{\alpha,E}(x;\theta) =t^{e(\alpha^{+})}q^{b(\alpha) }x^{\alpha}R_{\alpha}(\tau_{E}(\theta)) +\sum_{\beta\vartriangleleft\alpha}x^{\beta}v_{\alpha,\beta,E}(\theta;q,t), \label{MaEeqn}
\end{gather}
where $v_{\alpha,\beta,E}(\theta;q,t) \in\mathcal{P}_{m,k}$ and
its coefficients are rational functions of $q$, $t$. Also
$\xi_{i}M_{\alpha,E}(x;\theta) =\zeta_{\alpha,E}(i) M_{\alpha,E}(x;\theta)$, where $\zeta_{\alpha,E}(i)
=q^{\alpha_{i}}t^{c(r_{\alpha}(i),E)}$ for
$1\leq i\leq N$. The exponent $b(\alpha) :=\sum_{i=1}^{N}
\binom{\alpha_{i}}{2}$ and $e(\alpha^{+}) :=\sum_{i=1}^{N}\alpha_{i}^{+}(N-i+c(i,E))$.
\end{Theorem}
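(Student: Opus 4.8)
The plan is to specialize the general vector-valued construction of \cite{DunklLuque2012} to the hook module $W:=\operatorname{span}\{\tau_E\colon E\in\mathcal{Y}_k\}$, whose $\mathcal{H}_N(t)$-structure and $\{\omega_i\}$-eigenbasis were determined above. The whole argument is a double induction --- an outer induction on the bosonic degree $\vert\alpha\vert$ and, within a fixed degree, an induction along $\lhd$ --- built from two families of intertwining (``creation'') operators that carry $\{\xi_i\}$-eigenfunctions to $\{\xi_i\}$-eigenfunctions while transforming the spectral vector $(\zeta_{\alpha,E}(i))_{i=1}^N$ in a controlled way. Since $\boldsymbol{T}_i$ and $\boldsymbol{w}$ use only the Hecke action on the $\theta$ variables (not $M$, $D$), they preserve each $s\mathcal{P}_{m,k}$, which is why the coefficients $v_{\alpha,\beta,E}$ automatically land in $\mathcal{P}_{m,k}$.

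First I would settle the base case $\alpha=0$. Here $r_0=I$, so $\zeta_{0,E}(i)=t^{c(i,E)}$, and since $\xi_i$ agrees with $\omega_i$ on monomials of bosonic degree $0$ (computed just before this subsection), the relations $\omega_i\tau_E=t^{c(i,E)}\tau_E$ from Theorems~\ref{psi0eigen} and~\ref{eta0eigen}, propagated to all $\tau_E$ by the step construction of Theorem~\ref{Yzeroex}, show that $M_{0,E}=\tau_E$ is the required eigenfunction, with trivial leading term and $e(0)=b(0)=0$. Next I would set up the two intertwiners. The step operator comes from $\xi_{i-1}=t^{-1}\boldsymbol{T}_{i-1}\xi_i\boldsymbol{T}_{i-1}$ together with $\xi_i\boldsymbol{T}_j=\boldsymbol{T}_j\xi_i$ for $\vert i-j\vert\geq2$: exactly as in Proposition~\ref{Tf2g}, for a $\{\xi_i\}$-eigenfunction $f$ with $\zeta(i)\neq\zeta(i+1)$ the vector $(\boldsymbol{T}_i+b)f$ is again an eigenfunction with $\zeta(i)$ and $\zeta(i+1)$ interchanged. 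The affine creation operator comes from $\boldsymbol{w}$: the relations $\boldsymbol{w}\boldsymbol{T}_{i+1}=\boldsymbol{T}_i\boldsymbol{w}$ together with the definition of $\xi_i$ force $\boldsymbol{w}$ to cyclically permute the spectral coordinates, while on monomials $\boldsymbol{w}$ sends $x^\alpha$ to $q^{\alpha_1}x^{(\alpha_2,\dots,\alpha_N,\alpha_1)}$ plus $\lhd$-lower terms; composing with multiplication by a boundary variable yields an operator effecting the raise-and-rotate $\beta\mapsto(\beta_2,\dots,\beta_N,\beta_1+1)$ and multiplying the affected eigenvalue by $q$.

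The inductive step is then routine in structure. Every $\alpha\neq0$ is reached from a composition of degree $\vert\alpha\vert-1$ by one affine move, and every composition of a fixed degree is reached from $\lhd$-smaller ones of the same degree by step moves; applying the corresponding creation operator to the inductively available $M_{\beta,E}$ produces a $\{\xi_i\}$-eigenfunction. I would check that its spectrum is exactly $(q^{\alpha_i}t^{c(r_\alpha(i),E)})_i$ --- tracking how $r_\alpha$ and the content function $c(\cdot,E)$ transform under a swap and a rotation --- and that its leading monomial is $x^\alpha R_\alpha(\tau_E)$ with the stated coefficient. Triangularity is essentially automatic here, since both creation operators are upper triangular on the monomial basis with respect to $\lhd$.

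The two genuine obstacles are the following. First, \emph{spectrum separation}: under the genericity hypothesis $q^a t^n\neq1$ for $n\notin\{2,\dots,N\}$, the assignment $(\alpha,E)\mapsto(\zeta_{\alpha,E}(i))_i$ is injective, so each simultaneous eigenspace is one-dimensional and $M_{\alpha,E}$ is path-independent, well defined, and unique; this is where the restriction on $q,t$ is essential, and it must be proved before one may assert that applying the moves in different orders yields the same polynomial. Second, the \emph{bookkeeping of the leading coefficient}: one must show that the $q$-powers accumulated by successive affine moves telescope to $b(\alpha)=\sum_i\binom{\alpha_i}{2}$ (each raise of a part equal to $\beta_1$ contributes $q^{\beta_1}$, matching $\binom{\beta_1+1}{2}-\binom{\beta_1}{2}=\beta_1$), and that the $t$-powers supplied by the content shifts inside $R_\alpha$ telescope to $e(\alpha^+)=\sum_i\alpha_i^+(N-i+c(i,E))$. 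Obstacle one guarantees the construction is well defined; obstacle two is the calculation-heavy part, and I would handle it by verifying that both exponents obey, under the two moves, the same recurrence as the accumulated powers.
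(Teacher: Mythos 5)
Your proposal is correct and takes essentially the same route as the paper: the paper quotes this result from \cite[Theorem~4.12]{DunklLuque2012} and then lays out precisely the construction you describe --- the Yang--Baxter-graph double induction with root $M_{\boldsymbol{0},E}=\tau_{E}$, step intertwiners as in Proposition~\ref{Mtrans}, the affine step $x_{N}\boldsymbol{w}$ with its effect $T^{(N)}R_{\alpha}=t^{N-j}R_{\Phi\alpha}\omega_{j}$ on leading terms, and the recurrence bookkeeping for the exponents $b(\alpha)$ and $e(\alpha^{+})$ carried out in Proposition~\ref{exponqt}. Your two flagged obstacles (spectral-vector injectivity for well-definedness, and the telescoping of the $q$- and $t$-exponents under the two moves) are exactly the points the paper, respectively the cited reference, address.
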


The \textit{spectral vector is} $[\zeta_{\alpha,E}(i)]_{i=1}^{N}$. Note that the leading term involves the element $R_{\alpha}(\tau_{E}(\theta))$ of $\mathcal{H}_{N}(t)$ acting on fermionic variables. The explanation for the exponents $e(\alpha^{+})$ and $b(\alpha)$ is in~Proposition~\ref{exponqt} below. The relations (\ref{Tomego}) hold when
$\omega_{i}$, $T_{i}$ is replaced by $\xi_{i}$, $\boldsymbol{T}_{i}$ respectively%
\begin{gather*}
\boldsymbol{T}_{j}\xi_{j} =(t-1) \xi_{j}+\xi_{j+1}\boldsymbol{T}_{j},
\\
\xi_{j}\boldsymbol{T}_{j} =\boldsymbol{T}_{j}\xi_{j+1}+(t-1)\xi_{j},
\end{gather*}
and this leads to the following, which has the same proof as Proposition~\ref{Tf2g}:

\begin{Proposition}\label{Mtrans}
Suppose $\xi_{j}f=\lambda_{j}f$ for $1\leq j\leq N$ $(f\neq0$ and $f\in s\mathcal{P}_{m})$ and $g:=\boldsymbol{T}_{i}f+\frac{t-1}{\lambda_{i+1}/\lambda_{i}-1}f$ then $\xi_{j}g=\lambda_{j}g$
for all $j\neq i,i+1$ and $\xi_{i}g=\lambda_{i+1}g$, $\xi_{i+1}g=\lambda_{i}g$.
If~$\lambda_{i+1}\neq t^{\pm1}\lambda_{i}$ then~$g\neq0$.
\end{Proposition}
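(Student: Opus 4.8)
The plan is to transport the proof of Proposition~\ref{Tf2g} verbatim through the dictionary $\omega_i\leftrightarrow\xi_i$, $T_i\leftrightarrow\boldsymbol{T}_i$. All three needed ingredients are already available: the commutation $\xi_j\boldsymbol{T}_i=\boldsymbol{T}_i\xi_j$ for $\vert i-j\vert\geq2$ (derived just above from the braid relations among the $\boldsymbol{T}$'s), the two intertwining relations $\boldsymbol{T}_j\xi_j=(t-1)\xi_j+\xi_{j+1}\boldsymbol{T}_j$ and $\xi_j\boldsymbol{T}_j=\boldsymbol{T}_j\xi_{j+1}+(t-1)\xi_j$ recorded immediately before the statement, and the quadratic relation $\boldsymbol{T}_i^2=(t-1)\boldsymbol{T}_i+t$. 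I would first note that the coefficient rewrites as $\frac{t-1}{\lambda_{i+1}/\lambda_i-1}=\frac{(t-1)\lambda_i}{\lambda_{i+1}-\lambda_i}=:b$, so that $g=(\boldsymbol{T}_i+b)f$ is literally the element appearing in Proposition~\ref{Tf2g}.

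For $j\neq i,i+1$ we have $\vert i-j\vert\geq2$, hence $\xi_j g=\boldsymbol{T}_i\xi_j f+b\xi_j f=\lambda_j g$. To compute $\xi_i g$ I would apply $\xi_i\boldsymbol{T}_i=\boldsymbol{T}_i\xi_{i+1}+(t-1)\xi_i$ to $f$, giving $\xi_i\boldsymbol{T}_i f=\lambda_{i+1}\boldsymbol{T}_i f+(t-1)\lambda_i f$, so that
\begin{gather*}
\xi_i g=\lambda_{i+1}\boldsymbol{T}_i f+\bigl((t-1)+b\bigr)\lambda_i f .
\end{gather*}
Matching this with $\lambda_{i+1}g$ collapses to $(t-1)\lambda_i=b(\lambda_{i+1}-\lambda_i)$, which is precisely the definition of $b$; hence $\xi_i g=\lambda_{i+1}g$. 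For $\xi_{i+1}g$ I would rearrange the first intertwiner into $\xi_{i+1}\boldsymbol{T}_i=\boldsymbol{T}_i\xi_i-(t-1)\xi_i$ and repeat the computation, which closes on the same relation for $b$ and yields $\xi_{i+1}g=\lambda_i g$.

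For nonvanishing I would factor a scalar out of a product of two intertwiners using only the quadratic relation, exactly as in Proposition~\ref{Tf2g}:
\begin{gather*}
\biggl(\boldsymbol{T}_i+\frac{(t-1)\lambda_{i+1}}{\lambda_i-\lambda_{i+1}}\biggr)\biggl(\boldsymbol{T}_i+\frac{(t-1)\lambda_i}{\lambda_{i+1}-\lambda_i}\biggr)=\frac{(\lambda_i t-\lambda_{i+1})(\lambda_i-t\lambda_{i+1})}{(\lambda_i-\lambda_{i+1})^2} .
\end{gather*}
The right factor applied to $f$ is $g$, so applying the left factor to $g$ returns this scalar times $f$; were $g=0$ the scalar would have to vanish, but $\lambda_{i+1}\neq t^{\pm1}\lambda_i$ keeps both numerator factors nonzero, forcing $g\neq0$. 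Since this whole argument is a formal reprise of Proposition~\ref{Tf2g}, there is no genuine obstacle; the only point meriting attention is the bookkeeping verification that the single constant $b$ simultaneously effects both eigenvalue transpositions, which is exactly what the two computations above confirm.
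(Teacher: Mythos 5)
The core of your proposal is sound and is exactly the paper's route: the paper itself disposes of Proposition~\ref{Mtrans} by remarking that it ``has the same proof as Proposition~\ref{Tf2g}'', and your two eigenvalue-swap computations (closing on the defining relation of $b$) and your nonvanishing argument via the quadratic relation are correct transcriptions of that proof. However, there is one genuine flaw: your claim that $j\neq i,i+1$ forces $\vert i-j\vert\geq2$ is false, since $j=i-1$ satisfies $j\neq i,i+1$ but $\vert i-j\vert=1$. As written, your argument never establishes $\xi_{i-1}g=\lambda_{i-1}g$: the commutation you invoke, $\xi_j\boldsymbol{T}_i=\boldsymbol{T}_i\xi_j$ for $\vert i-j\vert\geq2$ (which is all the paper derives immediately above the proposition), says nothing about the pair $\xi_{i-1}$, $\boldsymbol{T}_i$, and neither the two intertwiners nor the quadratic relation delivers that case directly.

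The missing ingredient is the $\xi$-analogue of the identity $T_{j+1}\omega_j=\omega_jT_{j+1}$, which the paper proves by braid relations at the start of its ``Steps'' subsection and which its proof of Proposition~\ref{Tf2g} silently uses when it asserts $\omega_jT_i=T_i\omega_j$ for all $j<i$ (not just $j<i-1$). The $\xi$-version does hold: from $\xi_{i-1}=t^{-1}\boldsymbol{T}_{i-1}\xi_i\boldsymbol{T}_{i-1}=t^{-2}\boldsymbol{T}_{i-1}\boldsymbol{T}_i\xi_{i+1}\boldsymbol{T}_i\boldsymbol{T}_{i-1}$ one computes
\begin{gather*}
\boldsymbol{T}_i\xi_{i-1}
=t^{-2}\boldsymbol{T}_i\boldsymbol{T}_{i-1}\boldsymbol{T}_i\xi_{i+1}\boldsymbol{T}_i\boldsymbol{T}_{i-1}
=t^{-2}\boldsymbol{T}_{i-1}\boldsymbol{T}_i\boldsymbol{T}_{i-1}\xi_{i+1}\boldsymbol{T}_i\boldsymbol{T}_{i-1}
=t^{-2}\boldsymbol{T}_{i-1}\boldsymbol{T}_i\xi_{i+1}\boldsymbol{T}_{i-1}\boldsymbol{T}_i\boldsymbol{T}_{i-1}\\
\hphantom{\boldsymbol{T}_i\xi_{i-1}}{}
=t^{-2}\boldsymbol{T}_{i-1}\boldsymbol{T}_i\xi_{i+1}\boldsymbol{T}_i\boldsymbol{T}_{i-1}\boldsymbol{T}_i
=t^{-1}\boldsymbol{T}_{i-1}\xi_i\boldsymbol{T}_{i-1}\boldsymbol{T}_i
=\xi_{i-1}\boldsymbol{T}_i,
\end{gather*}
where the braid relation is used twice and the middle step uses $\boldsymbol{T}_{i-1}\xi_{i+1}=\xi_{i+1}\boldsymbol{T}_{i-1}$, an honest $\vert i-j\vert\geq2$ instance of your cited commutation. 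Once this relation is added to your list of ingredients, the case $j=i-1$ reads $\xi_{i-1}g=\boldsymbol{T}_i\xi_{i-1}f+b\xi_{i-1}f=\lambda_{i-1}g$, and the rest of your proof stands unchanged.
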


This together with a degree-raising operation provides the method for
constructing the Macdonald polynomials.

Suppose $\alpha\in\mathbb{N}_{0}^{N}$, $E\in\mathcal{Y}_{0}\cup\mathcal{Y}_{1}$ and $\alpha_{i}\neq\alpha_{i+1}$ then let $z=\zeta_{\alpha,E}(i+1)/\zeta_{\alpha,E}(i)$ and
\begin{gather}
M_{s_{i}\alpha,E}=c_{\alpha,E}\bigg(\boldsymbol{T}_{i}+\frac{t-1}{z-1}\bigg) M_{\alpha,E},
\label{TiM1}
\end{gather}
where $c_{\alpha,E}=1$ if $\alpha_{i}<\alpha_{i+1}$ and $c_{\alpha,E}=u(z) $ if $\alpha_{i}>\alpha_{i+1}$. The spectral vector $\zeta_{s_{i}\alpha,E}=s_{i}\zeta_{\alpha,E}$.

Suppose $\alpha\in\mathbb{N}_{0}^{N}$ and $E\in\mathcal{Y}_{0}\cup
\mathcal{Y}_{1}$ and $\alpha_{i}=\alpha_{i+1}$, then let $j=r_{\alpha}(i)$.
If~$\{j,j+1\} \in E$ or~$j=N-1\in E$ then
$\boldsymbol{T}_{i}M_{\alpha,E}=-M_{\alpha,E}$. If $\{j,j+1\}
\cap E=\varnothing$ or $j=N-1\notin E$ then $\boldsymbol{T}_{i}M_{\alpha,E}=tM_{\alpha,E}$. If $j<N-2$ and $(j,j+1) \in E\times E^{C}$
or $(j,j+1) \in E^{C}\times E$ then $z=\zeta_{\alpha,E}(i+1) /\zeta_{\alpha,E}(i) =t^{c\left( j+1,E\right)
-c(j,E) }$ and%
\begin{gather*}
M_{\alpha,s_{j}E}=c_{\alpha,E}\bigg(\boldsymbol{T}_{i}+\frac{t-1}{z-1}\bigg) M_{\alpha,E}, \label{TiM2}%
\end{gather*}
where $c_{\alpha,E}=1$ if (1) $E\in\mathcal{Y}_{0}$ and $(j,j+1)
\in E^{C}\times E$ or if (2) $E\in\mathcal{Y}_{1}$ and $(j,j+1)
\in E\times E^{C}$, or $c_{\alpha,E}=u(z)^{-1}$ if (1)
$E\in\mathcal{Y}_{0}$ and $(j,j+1) \in E\times E^{C}$ or if (2)
$E\in\mathcal{Y}_{1}$ and $(j,j+1) \in E^{C}\times E$. In~all
cases $\zeta_{\alpha,s_{j}E}=s_{i}\zeta_{\alpha,E}.$

The above equations are implicit formulas for $\boldsymbol{T}_{i}M_{\alpha,E}$. Formula~\eqref{TiM1} is the same as that for the scalar case, as in~\cite{BakerForrester97, MimmachiNoumi98}.

The \textit{affine} step is defined as follows: for $\alpha\in\mathbb{N}_{0}^{N}$, $E\in\mathcal{Y}_{0}\cup\mathcal{Y}_{1}$%
\begin{gather*}
\Phi\alpha =(\alpha_{2},\alpha_{3},\dots,\alpha_{N},\alpha_{1}+1),
\\[.5ex]
\zeta_{\Phi\alpha,E} =\big[\zeta_{\alpha,E}(2),\zeta_{\alpha,E}(3),\dots,\zeta_{\alpha,E}(N)
,q\zeta_{\alpha,E}(1) \big],
\\[.5ex]
M_{\Phi\alpha,E} =x_{N}\boldsymbol{w}M_{\alpha,E}.
\end{gather*}
This is based on the relations $\xi_{N}x_{N}\boldsymbol{w}=qx_{N}%
\boldsymbol{w}\xi_{1}$ and $\xi_{i}x_{N}\boldsymbol{w}=x_{N}\boldsymbol{w}%
\xi_{i+1}$ for $1\leq i<N$.

Denote $\boldsymbol{0}=(0,\dots,0) \in\mathbb{N}_{0}^{N}$ and
recall $E_{0}=\{N-m,\dots,N\}$, $E_{1}=\{1,2,\dots,m-1\}$.
In~\cite[Section~4.1]{DunklLuque2012} a Yang--Baxter directed graph
method is used to inductively construct the $M_{\alpha,E}$ (this technique is
due to Lascoux~\cite{Lascoux2001}). Label the nodes $(\alpha,E) $; for
$\mathcal{Y}_{0}$ the root is $\left( \boldsymbol{0,}E_{0}\right) $ with
$M_{\boldsymbol{0,}E_{0}}=\tau_{E_{0}}=D\phi_{E_{0}}$; and for $\mathcal{Y}%
_{1}$ the root is $\left( \boldsymbol{0},E_{1}\right) $ with
$M_{\boldsymbol{0,}E_{1}}=\tau_{E_{1}}=M\phi_{E_{1}}$. The equations have been
set up so that $M_{\boldsymbol{0},E}=\tau_{E}$. The arrows in the graph point
from $(\alpha,E) $ to $(\Phi\alpha,E)$, and from
$(\alpha,E) $ to $(s_{i}\alpha,E)$ ($\alpha_{i}<\alpha_{i+1}$) or to $(\alpha,s_{j}E)$ when $j=r_{\alpha}(E)$ and $c_{\alpha,E}=1$ in the cases described above.

Here is a brief discussion of the effect of $\boldsymbol{T}_{i}$ on
$x^{\alpha}R_{\alpha}\tau_{E}$ for the $c_{\alpha,E}=1$ cases. For~$\alpha
\in\mathbb{N}_{0}^{N}$~let%
\[
\operatorname{inv}(\alpha) :=\#\big\{(i,j)\colon i<j,\,\alpha_{i}<\alpha_{j}\big\}
\]
then $r_{\alpha}\alpha=\alpha^{+}$ and $r_{\alpha}=s_{i_{1}}\cdots s_{i_{\ell}}$, where $\ell=\operatorname{inv}(\alpha)$. Recall $R_{\alpha
}=T_{i_{\ell}}^{-1}\cdots T_{i_{1}}^{-1}$ and the value of $R_{\alpha}$ is
independent of the expressions for $r_{\alpha}$ of length $\ell$. Suppose
$\alpha_{i}<\alpha_{i+1}$ then $\operatorname{inv}(\alpha)
=\operatorname{inv}(s_{i}\alpha)+1$; write $s_{i}\alpha
=r_{s_{i}\alpha}^{-1}\alpha_{+}$ and $r_{s_{i}\alpha}=s_{i_{1}}\cdots
s_{i_{\ell}}$ with $\ell=\operatorname{inv}(s_{i}\alpha)$. Thus
$r_{\alpha}^{-1}=s_{i}s_{i_{1}}\cdots s_{i_{\ell}}$ and $R_{\alpha}=T_{i}%
^{-1}R_{s_{i}\alpha}$ and so $\boldsymbol{T}_{i}x^{\alpha}R_{\alpha}\tau
_{E}=x^{s_{i}\alpha}R_{s_{i}\alpha}\tau_{E}+p(x;\theta) $, where
$p$ is a~sum of~$x^{\beta}p^{\prime}(\theta) $ with $s_{i}%
\alpha\vartriangleright\beta$.

{\sloppy
\begin{Proposition}
Suppose $\alpha_{i}=\alpha_{i+1}$ with $j=r_{\alpha}(i) $ then
$\boldsymbol{T}_{i}x^{\alpha}R_{\alpha}\tau_{E}=x^{\alpha}T_{i}R_{\alpha}%
\tau_{E}$ and \mbox{$T_{i}R_{\alpha}=R_{\alpha}T_{j}$}.
\end{Proposition}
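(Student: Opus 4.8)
The plan is to treat the two assertions in turn: the superpolynomial identity $\boldsymbol{T}_i x^\alpha R_\alpha \tau_E = x^\alpha T_i R_\alpha \tau_E$ first, and then the purely Hecke-algebraic identity $T_i R_\alpha = R_\alpha T_j$, the latter being the substantive part.

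For the superpolynomial identity I would exploit that $\alpha_i = \alpha_{i+1}$ forces $s_i\alpha = \alpha$, hence $x^{s_i\alpha} = x^\alpha$. Setting $p(x;\theta) = x^\alpha(R_\alpha\tau_E)$ and observing that $R_\alpha\tau_E$ does not involve $x$, the evaluation $p(xs_i;\theta)$ merely swaps $x_i, x_{i+1}$ in $x^\alpha$ and therefore equals $p(x;\theta)$. In the defining formula for $\boldsymbol{T}_i$ the divided-difference term $(1-t)x_{i+1}\frac{p(x;\theta)-p(xs_i;\theta)}{x_i-x_{i+1}}$ then vanishes identically, leaving $\boldsymbol{T}_i p = T_i p(xs_i;\theta) = T_i(x^\alpha R_\alpha\tau_E)$. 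Since $T_i$ acts only on the $\theta$ variables and $x^\alpha$ is a scalar for that action, this is exactly $x^\alpha T_i R_\alpha\tau_E$.

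For the Hecke-algebra identity the key combinatorial fact is that $\alpha_i = \alpha_{i+1}$ makes $r_\alpha$ order-preserving on $\{i,i+1\}$. From the definition of the rank function, the equality $\alpha_i = \alpha_{i+1}$ gives $r_\alpha(i+1) = r_\alpha(i)+1 = j+1$, so $r_\alpha$ carries the adjacent pair $i,i+1$ to the adjacent pair $j,j+1$ in the same order; as permutations this says $r_\alpha s_i = s_j r_\alpha$. I would then verify that both sides are length-additive factorizations: because $r_\alpha(i) < r_\alpha(i+1)$ one has $\ell(r_\alpha s_i) = \ell(r_\alpha)+1$, and because $r_\alpha^{-1}(j) = i < i+1 = r_\alpha^{-1}(j+1)$ one has $\ell(s_j r_\alpha) = \ell(r_\alpha)+1$. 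Appending $s_i$ to a reduced word for $r_\alpha$, respectively prepending $s_j$, thus remains reduced, so the well-definedness of $T(\cdot)$ granted by the braid relations yields $T(r_\alpha)T_i = T(r_\alpha s_i) = T(s_j r_\alpha) = T_j T(r_\alpha)$. Recalling $R_\alpha = T(r_\alpha)^{-1}$ and left- and right-multiplying this relation by $R_\alpha$ gives $T_i R_\alpha = R_\alpha T_j$.

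The only real obstacle is the bookkeeping confirming that both the left factorization $s_j r_\alpha$ and the right factorization $r_\alpha s_i$ are reduced; once that is in hand the identity is a formal consequence of $T(\cdot)$ respecting reduced products. The conceptual heart is the single computation $r_\alpha(i+1) = r_\alpha(i)+1$, which is exactly what turns right multiplication by $s_i$ into left multiplication by $s_j$ with no intervening braid moves, explaining why the equal-parts case yields a clean conjugation $T_i R_\alpha = R_\alpha T_j$ rather than the more elaborate transformations occurring when $\alpha_i \neq \alpha_{i+1}$.
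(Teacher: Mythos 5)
Your proof is correct, and it reaches the key identity by a genuinely different route than the paper. Both arguments reduce to showing that $r_{\alpha}s_{i}=s_{j}r_{\alpha}$ with the length going up by one on each side, and then invoking the well-definedness of $T(\cdot)$ on reduced words; the difference is in how this intertwining relation is proved. The paper perturbs the composition: it sets $\beta=\alpha$ except $\beta_{i+1}=\alpha_{i}+\frac{1}{2}$, so that $\operatorname{inv}(\beta)=\operatorname{inv}(\alpha)+1$, observes that both $r_{\alpha}s_{i}$ and $s_{j}r_{\alpha}$ carry $\beta$ to $\beta^{+}$ while being products of $\operatorname{inv}(\alpha)+1$ simple reflections, and concludes that they coincide by minimality of the sorting permutation. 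You instead compute directly from the rank function that $r_{\alpha}(i+1)=r_{\alpha}(i)+1=j+1$, so that conjugation of transpositions gives $r_{\alpha}s_{i}r_{\alpha}^{-1}=(j,j+1)=s_{j}$, and then quote the standard Coxeter length criteria ($\ell(ws_{i})=\ell(w)+1$ iff $w(i)<w(i+1)$, and its left-handed analogue) to see that appending $s_{i}$, respectively prepending $s_{j}$, keeps the word reduced. Your version leans on textbook symmetric-group facts and is arguably cleaner and more transparent; the paper's half-integer perturbation is self-contained within the combinatorics of compositions and never needs the length criterion, since uniqueness of the minimal sorting permutation does that work. You also spell out the first assertion, $\boldsymbol{T}_{i}x^{\alpha}R_{\alpha}\tau_{E}=x^{\alpha}T_{i}R_{\alpha}\tau_{E}$, by noting that $s_{i}\alpha=\alpha$ kills the divided-difference term in the definition of $\boldsymbol{T}_{i}$; the paper treats this as immediate, so making it explicit is a harmless and worthwhile addition.
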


}

\begin{proof}
Let $r_{\alpha}=s_{i_{1}}\cdots s_{i_{\ell}}$ with $\ell=\operatorname{inv}(\alpha) $. Let $\beta=\alpha$ except $\beta_{i+1}=\alpha_{i}+\frac
{1}{2}$ then $\operatorname{inv}(\beta) =\ell+1.$ From $r_{\alpha
}\alpha=\alpha^{+}$ it follows that $r_{\alpha}s_{i}\beta=\beta^{+}$. By
definition of $j$ we obtain $\beta_{j}^{+}=\alpha_{i}+\frac{1}{2},\beta
_{j+1}^{+}=\alpha_{i}$ and $r_{\alpha}^{-1}s_{j}\beta^{+}=\beta$. Now
$r_{\alpha}s_{i}\beta=\beta^{+}$ and $s_{j}r_{\alpha}\beta=\beta^{+}$ and
$r_{\alpha}s_{i},s_{j}r_{\alpha}$ have (at least) $\ell+1$ factors $s_{i_{n}}$
and $\operatorname{inv}(\beta) =\ell+1$. This implies $r_{\alpha}%
s_{i}=s_{j}r_{\alpha}$ and $T_{i_{1}}T_{i_{2}}\cdots T_{i_{\ell}}T_{i}%
=T_{j}T_{i_{1}}T_{i_{2}}\cdots T_{i_{\ell}}$, that is, $R_{\alpha}^{-1}%
T_{i}=T_{j}R_{\alpha}^{-1}$.
\end{proof}

Thus in the case $\alpha_{i}=\alpha_{i+1}$ the transformation laws (Theorem~\ref{Yzeroex} and Corollary~\ref{onedim}) apply to $\big(T_{j}+\frac
{t-1}{z-1}\big) \tau_{E}$ with $z=t^{c(j+1,E) -c(j,E)}$ (with the possible results $-\tau_{E},t\tau_{E},\tau_{s_{j}E},\dots$).

Hence only the affine step changes the power of $t$ in the coefficient of
$x^{\alpha}$. It remains to consider $x^{N}\boldsymbol{w}x^{\alpha}R_{\alpha
}\tau_{E}$.

\begin{Proposition}
Suppose $\alpha\in\mathbb{N}_{0}^{N}$ and $r_{\alpha}(1) =j$
then $T^{(N) }R_{\alpha}=t^{N-j}R_{\Phi\alpha}\omega_{j}$.
\end{Proposition}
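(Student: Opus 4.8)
The plan is to reduce the whole statement to a single permutation-level identity and then telescope in $\mathcal{H}_N(t)$. Throughout write $j=r_\alpha(1)$ and abbreviate $A_j=T_jT_{j+1}\cdots T_{N-1}$, $B_j=T_{N-1}T_{N-2}\cdots T_j$, so that the definition of the Jucys--Murphy element gives $t^{N-j}\omega_j=A_jB_j$ and the target identity is equivalent to $T^{(N)}R_\alpha=R_{\Phi\alpha}A_jB_j$.

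First I would record how the rank function changes under the affine step. A direct count from the definition of $r_\alpha$ gives $r_{\Phi\alpha}(i)=r_\alpha(i+1)$ for $1\le i<N$ and $r_{\Phi\alpha}(N)=r_\alpha(1)=j$; the only delicate point is the bookkeeping of the entry $\alpha_1$, where the strict inequality $[\alpha_1>\alpha_{i+1}]$ appearing in $r_\alpha(i+1)$ combines with the tie $[\alpha_1=\alpha_{i+1}]$ to produce exactly the weak inequality $[\alpha_1\ge\alpha_{i+1}]$ occurring in $r_{\Phi\alpha}(i)$. The same count yields $\operatorname{inv}(\Phi\alpha)=\operatorname{inv}(\alpha)+N-2j+1$, since position $1$ of $\alpha$ carries $j-1$ inversions while the new last entry $\alpha_1+1$ of $\Phi\alpha$ carries $N-j$.

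Next I would factor $r_\alpha$ so that every occurrence of the generator $s_1$ is isolated next to $T^{(N)}$. Writing $c_j=s_{j-1}s_{j-2}\cdots s_1$ (the cycle with $c_j(1)=j$) and $\rho=c_j^{-1}r_\alpha$, one checks $\rho(1)=1$, so $\rho$ lies in the parabolic subgroup generated by $s_2,\dots,s_{N-1}$. Because $c_j$ preserves the relative order of the values $\{2,\dots,N\}$, the inversions of $r_\alpha$ split as $(j-1)+\ell(\rho)$, so $r_\alpha=c_j\rho$ is length additive and $R_\alpha=T(\rho)^{-1}T_1^{-1}T_2^{-1}\cdots T_{j-1}^{-1}$. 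Using the shift relation $T^{(N)}T_i=T_{i-1}T^{(N)}$ (valid for $2\le i\le N-1$) in the inverted form $T^{(N)}T_i^{-1}=T_{i-1}^{-1}T^{(N)}$, I push $T^{(N)}$ through $T(\rho)^{-1}$; since every letter of $\rho$ has index $\ge2$ this never meets the forbidden index $1$, giving $T^{(N)}T(\rho)^{-1}=T(\rho^{\downarrow})^{-1}T^{(N)}$, where $\rho^{\downarrow}$ is $\rho$ with all indices lowered by one (so $\ell(\rho^{\downarrow})=\ell(\rho)$). The isolated block then cancels directly, $T^{(N)}T_1^{-1}\cdots T_{j-1}^{-1}=T_{N-1}\cdots T_j=B_j$, because $T^{(N)}=T_{N-1}\cdots T_j\,T_{j-1}\cdots T_1$. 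Hence $T^{(N)}R_\alpha=T(\rho^{\downarrow})^{-1}B_j$.

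It remains to match this against $R_{\Phi\alpha}A_jB_j$. Cancelling $B_j$, the claim becomes the permutation-level identity $T(r_{\Phi\alpha})=A_j\,T(\rho^{\downarrow})$, that is $r_{\Phi\alpha}=a_j\rho^{\downarrow}$ with $a_j=s_js_{j+1}\cdots s_{N-1}$ the cycle sending $N\mapsto j$. I would verify this from Step~1: evaluating $a_j\rho^{\downarrow}$ via $\rho^{\downarrow}(k)=\rho(k+1)-1$ for $k<N$ and $\rho^{\downarrow}(N)=N$, the identity reduces to the elementary fact $a_j(m-1)=c_j(m)$ for all $m\in\{2,\dots,N\}$, which holds by the two cases $m\le j$ and $m>j$. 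Finally the inversion count gives $\operatorname{inv}(\Phi\alpha)=(N-j)+\ell(\rho)=\ell(a_j)+\ell(\rho^{\downarrow})$, so the product $a_j\rho^{\downarrow}$ is length additive and $T(a_j)T(\rho^{\downarrow})=T(r_{\Phi\alpha})$ genuinely holds, finishing the proof. The main obstacle is conceptual rather than computational: the conjugation rule $T^{(N)}T_i(T^{(N)})^{-1}=T_{i-1}$ breaks down at $i=1$, so the entire argument hinges on choosing the factorization $r_\alpha=c_j\rho$ which gathers all index-$1$ activity into the explicit block $T_1^{-1}\cdots T_{j-1}^{-1}$, where it collapses into $B_j$ and ultimately supplies the factor $\omega_j$.
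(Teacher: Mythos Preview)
Your argument is correct and follows essentially the same route as the paper's own proof: both factor $r_\alpha=c_j\rho$ with $c_j=s_{j-1}\cdots s_1$ and $\rho$ in the parabolic subgroup fixing~$1$, push $T^{(N)}$ through $T(\rho)$ via the shift relation $T^{(N)}T_i=T_{i-1}T^{(N)}$, and identify $r_{\Phi\alpha}=a_j\rho^{\downarrow}$ with $a_j=s_j\cdots s_{N-1}$. The only cosmetic differences are that the paper works with $R_{\Phi\alpha}^{-1}T^{(N)}$ rather than $T^{(N)}R_\alpha$, and describes $\rho$ concretely by its action on $\alpha$ (sorting $\alpha_2,\dots,\alpha_N$) whereas you define it as $c_j^{-1}r_\alpha$ and verify $r_{\Phi\alpha}=a_j\rho^{\downarrow}$ via the rank relation $r_{\Phi\alpha}(i)=r_\alpha(i+1)$; your treatment of length additivity (via the observation that $c_j$ is order-preserving on $\{2,\dots,N\}$ and the inversion count $\operatorname{inv}(\Phi\alpha)=\operatorname{inv}(\alpha)+N-2j+1$) is in fact slightly more explicit than the paper's.
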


\begin{proof}
Let $\ell=\#\{ (i,j) \colon 1<i<j\leq N,\,\alpha_{i}<\alpha_{j}\}$. Then there is a product $u=s_{i_{1}}s_{i_{2}}\cdots s_{i_{\ell}}$ such that $u\alpha=\big(\alpha_{1},\alpha_{1}^{+},\dots, \alpha_{j-1}^{+},\alpha_{j+1}^{+},\dots,\alpha_{N}^{+}\big)$, and
each $i_{k}>1$. By definition $i>j$ implies $\alpha_{1}^{+}\leq\alpha_{i}<\alpha_{1}+1$, and $i<j$ implies $\alpha_{i}^{+}\geq\alpha_{1}+1$. Then
$s_{j-1}s_{j-2}\cdots s_{1}u\alpha=\alpha^{+}$ and $R_{\alpha}^{-1}=\big(
T_{j-1}\cdots T_{1}\big)U$, where $U=T_{i_{1}}\cdots T_{i_{\ell}}$. Let
$u^{\prime}=s_{i_{1}-1}\cdots s_{i_{\ell}-1}$ then%
\[
u^{\prime}(\Phi\alpha) =\big(\alpha_{1}^{+},\dots,\alpha_{j-1}^{+},\alpha_{j+1}^{+}, \dots,\alpha_{N}^{+},\alpha_{1}+1\big)
\]
and $s_{j}s_{j+1}\cdots s_{N-1}u^{\prime}(\Phi\alpha) =(\Phi\alpha)^{+}$. Thus $R_{\Phi\alpha}^{-1}=\big(T_{j}\cdots T_{N-1}\big) U^{\prime}$ and $U^{\prime}=T_{i_{1}-1}\cdots T_{i_{\ell}-1}$.
By~(\ref{TNprops}) $T^{(N) }U=U^{\prime}T^{(N)}$ and
\begin{align*}
R_{\Phi\alpha}^{-1}T^{(N)} & =\big(T_{j}\cdots T_{N-1}\big) U^{\prime}T^{(N) }
=\big(T_{j}\cdots T_{N-1}\big) T^{(N) }U
\\
& =\big(T_{j}\cdots T_{N-1}T_{N-1}\cdots T_{j}\big) T_{j-1}\cdots
T_{1}U=t^{N-j}\omega_{j}R_{\alpha}^{-1}.
\tag*{\qed}
\end{align*}
\renewcommand{\qed}{}
\end{proof}

As a consequence $x_{N}\boldsymbol{w}x^{\alpha}R_{\alpha}\tau_{E}%
=q^{\alpha_{1}}t^{N-j+c(j,E) }x^{\Phi\alpha}R_{\Phi\alpha}%
\tau_{E}$ with $j=r_{\alpha}(1)$. Denote $(\alpha_{1}+1$, $\alpha_{2}+1,\cdots,\alpha_{N}+1) $ by $\alpha+\boldsymbol{1}$.

\begin{Corollary}
$(x_{N}\boldsymbol{w})^{N}M_{\alpha,E}=M_{\alpha+\boldsymbol{1},E}=q^{\vert \alpha\vert} t^{v}(x_{1}x_{2}\cdots x_{N}) M_{\alpha,E}$, where $v=\frac{N(N-1)}{2}+\sum_{i=1}^{N}c(i,E)$.
\end{Corollary}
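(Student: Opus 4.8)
The plan is to establish the two asserted equalities separately: the first is a formal consequence of the affine step, while the second carries the genuine content.

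For the first equality I would simply iterate the affine step. Since $M_{\Phi\alpha,E}=x_{N}\boldsymbol{w}M_{\alpha,E}$ for every composition, applying this $N$ times gives $(x_{N}\boldsymbol{w})^{N}M_{\alpha,E}=M_{\Phi^{N}\alpha,E}$. It then remains to observe the combinatorial identity $\Phi^{N}\alpha=\alpha+\boldsymbol{1}$: each application of $\Phi$ deletes the first entry, cycles the rest to the left, and reinserts the old first entry---incremented by one---at the end, so after $N$ applications every coordinate has been incremented exactly once and returned to its original position. Hence $(x_{N}\boldsymbol{w})^{N}M_{\alpha,E}=M_{\alpha+\boldsymbol{1},E}$.

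For the second equality the key observation is that multiplication by $\pi:=x_{1}x_{2}\cdots x_{N}$ intertwines the Cherednik operators with an extra factor of $q$. Because $\pi$ is $\theta$-free and $\mathcal{S}_{N}$-invariant, the difference quotient in the definition of $\boldsymbol{T}_{i}$ treats $\pi$ as a constant and $T_{i}$ acts only on $\theta$, so $\boldsymbol{T}_{i}(\pi p)=\pi\boldsymbol{T}_{i}p$ and likewise $\boldsymbol{T}_{i}^{-1}(\pi p)=\pi\boldsymbol{T}_{i}^{-1}p$. From $\pi(qx_{N},x_{1},\dots,x_{N-1})=q\pi$ one gets $\boldsymbol{w}(\pi p)=q\pi\boldsymbol{w}p$. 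Pushing $\pi$ through the definition $\xi_{i}=t^{i-N}\boldsymbol{T}_{i}\cdots\boldsymbol{T}_{N-1}\boldsymbol{w}\boldsymbol{T}_{1}^{-1}\cdots\boldsymbol{T}_{i-1}^{-1}$ then yields $\xi_{i}(\pi p)=q\pi\xi_{i}p$ for all $i$. Consequently $\pi M_{\alpha,E}$ is a simultaneous $\{\xi_{i}\}$-eigenfunction with spectral vector $q\,\zeta_{\alpha,E}$; since the rank function is unchanged by adding a constant, $r_{\alpha+\boldsymbol{1}}=r_{\alpha}$ and this equals $\zeta_{\alpha+\boldsymbol{1},E}$. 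By the simplicity of the joint spectrum (part of the theory of~\cite{DunklLuque2012} for generic $q,t$) the corresponding eigenspace is one-dimensional, so $\pi M_{\alpha,E}=C\,M_{\alpha+\boldsymbol{1},E}$ for a scalar $C$.

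To pin down $C$ I would compare leading terms in~\eqref{MaEeqn}. Multiplication by $\pi$ sends the leading monomial $x^{\alpha}$ to $x^{\alpha+\boldsymbol{1}}$ and preserves the order $\lhd$ (which is translation invariant), while $R_{\alpha+\boldsymbol{1}}=R_{\alpha}$; matching the coefficients of $x^{\alpha+\boldsymbol{1}}R_{\alpha}\tau_{E}$ gives $t^{e(\alpha^{+})}q^{b(\alpha)}=C\,t^{e((\alpha+\boldsymbol{1})^{+})}q^{b(\alpha+\boldsymbol{1})}$. The two elementary evaluations $b(\alpha+\boldsymbol{1})-b(\alpha)=\sum_{i}\big[\binom{\alpha_{i}+1}{2}-\binom{\alpha_{i}}{2}\big]=\vert\alpha\vert$ and, using $(\alpha+\boldsymbol{1})^{+}=\alpha^{+}+\boldsymbol{1}$, $e((\alpha+\boldsymbol{1})^{+})-e(\alpha^{+})=\sum_{i=1}^{N}(N-i+c(i,E))=\frac{N(N-1)}{2}+\sum_{i=1}^{N}c(i,E)=v$ then force $C^{-1}=q^{\vert\alpha\vert}t^{v}$, that is $M_{\alpha+\boldsymbol{1},E}=q^{\vert\alpha\vert}t^{v}\,\pi\,M_{\alpha,E}$, as claimed. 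The only step demanding genuine care is the appeal to spectral simplicity above; the remainder is the formal affine-step iteration together with the two routine exponent computations.
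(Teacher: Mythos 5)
Your proof is correct; the first equality is handled exactly as in the paper (iterate the affine step $M_{\Phi\alpha,E}=x_{N}\boldsymbol{w}M_{\alpha,E}$ and use $\Phi^{N}\alpha=\alpha+\boldsymbol{1}$), but your treatment of the second equality takes a genuinely different route. The paper stays inside the Yang--Baxter graph machinery: by the proposition immediately preceding the corollary, each affine step multiplies the coefficient of the leading monomial by $q^{\alpha_{1}}t^{N-r_{\alpha}(1)+c(r_{\alpha}(1),E)}$, and since $r_{\Phi\alpha}(1)=r_{\alpha}(2)$ the $N$ iterated steps accumulate exactly the factor $q^{|\alpha|}t^{v}$ with $v=\sum_{i=1}^{N}\bigl(N-r_{\alpha}(i)+c(r_{\alpha}(i),E)\bigr)=\frac{N(N-1)}{2}+\sum_{i=1}^{N}c(i,E)$, while $R_{\alpha+\boldsymbol{1}}=R_{\alpha}$ identifies the resulting leading monomial with $(x_{1}\cdots x_{N})\,x^{\alpha}R_{\alpha}\tau_{E}$. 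You instead prove the $q$-shift intertwining $\xi_{i}(\pi p)=q\pi\xi_{i}p$ for $\pi=x_{1}\cdots x_{N}$ (via $\boldsymbol{T}_{i}\pi=\pi\boldsymbol{T}_{i}$ and $\boldsymbol{w}\pi=q\pi\boldsymbol{w}$), conclude that $\pi M_{\alpha,E}$ is a joint eigenfunction with spectral vector $\zeta_{\alpha+\boldsymbol{1},E}$ (using $r_{\alpha+\boldsymbol{1}}=r_{\alpha}$), invoke one-dimensionality of joint eigenspaces for generic $q$, $t$, and fix the scalar by a single comparison of leading coefficients via $b(\alpha+\boldsymbol{1})-b(\alpha)=|\alpha|$ and $e\bigl((\alpha+\boldsymbol{1})^{+}\bigr)-e(\alpha^{+})=v$; this appeal to~\eqref{MaEeqn} is not circular, since that theorem is quoted from~\cite{DunklLuque2012}. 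Each route has a virtue. Yours makes explicit the one point where the paper is terse: tracking leading coefficients only shows that $M_{\alpha+\boldsymbol{1},E}$ and $q^{|\alpha|}t^{v}\pi M_{\alpha,E}$ have the same top term, and promoting this to an equality of polynomials requires knowing that $\pi M_{\alpha,E}$ is itself a joint $\{\xi_{i}\}$-eigenfunction---precisely the intertwining-plus-simplicity argument you supply. Conversely, the paper's computation needs neither the genericity-based simplicity of the spectrum nor the closed formulas for the exponents $e$ and $b$: the constant falls out of the graph walk itself, and the same walk is what proves Proposition~\ref{exponqt} immediately afterwards.
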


\begin{proof}
Suppose the coefficient of $x^{\alpha}R_{\alpha}\left( \tau_{E}(\theta) \right) $ in $M_{\alpha,E}$ is $q^{a}t^{b}$ then%
\[
x_{N}\boldsymbol{w}M_{\alpha,E}=M_{\Phi\alpha,E}=q^{a+\alpha_{1}}t^{N-r_{\alpha}(1)
+c(r_{\alpha}(1),E)}x^{\Phi\alpha}R_{\Phi\alpha}(\tau_{E}) +\cdots.
\]
Then $(x_{N}\boldsymbol{w})^{2}M_{\alpha,E}$ involves
$r_{\Phi\alpha}(1) =r_{\alpha}(2)$. Repeating
this process yields a factor of $q^{\vert \alpha\vert}$ and the
$t$-exponent $\sum_{i=1}^{N}(N-r_{\alpha}(i) +c(r_{\alpha}(i),E)) =\frac{N(N-1)
}{2}+\sum_{i=1}^{N}c(i,E) .$ Furthermore $\Phi^{N}\alpha
=\alpha+\boldsymbol{1}$ and $R_{\alpha+\boldsymbol{1}}=R_{\alpha}$.
\end{proof}

If $E\in\mathcal{Y}_{0},\mathcal{Y}_{1}$ then $\sum_{i=1}^{N}c(i,E) =\frac{N(N-1)}{2}-Nm$, $\frac{N(N-1)}{2}-N(m-1) $ respectively (so $v=N(N-m-1)$ or~$N(N-m) $).

\begin{Proposition}
\label{exponqt}The exponent on $q$ in~\eqref{MaEeqn} is $b(\alpha) =\sum_{i=1}^{N}\binom{\alpha_{i}}{2}$. The exponent on $t$ in~\eqref{MaEeqn} is $\sum_{i=1}^{N}\lambda_{i}(N-i+c(i,E))$.
\end{Proposition}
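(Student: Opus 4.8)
The plan is to track the leading coefficient of $M_{\alpha,E}$ — the scalar multiplying $x^{\alpha}R_{\alpha}(\tau_{E})$ in~\eqref{MaEeqn} — as one traverses the Yang--Baxter graph outward from the root $(\mathbf{0},E)$. Write $F(\alpha):=t^{e(\alpha^{+})}q^{b(\alpha)}$ for the claimed coefficient, with $b(\alpha)=\sum_{i}\binom{\alpha_{i}}{2}$, $e(\alpha^{+})=\sum_{i}\alpha_{i}^{+}w_{i}$, and $w_{i}:=N-i+c(i,E)$. At the root $M_{\mathbf{0},E}=\tau_{E}$ has leading coefficient $1=F(\mathbf{0})$. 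Since the graph connects $(\mathbf{0},E)$ to every node through the affine arrows and the $c_{\alpha,E}=1$ arrows, it suffices to check that $F$ transforms along each arrow exactly as the true leading coefficient does.

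First I would dispose of the non-affine ($c_{\alpha,E}=1$) arrows, which, as the discussion preceding the proposition shows, leave the leading coefficient unchanged. For a sorting arrow $(\alpha,E)\to(s_{i}\alpha,E)$ with $\alpha_{i}<\alpha_{i+1}$ one has $(s_{i}\alpha)^{+}=\alpha^{+}$ and $b(s_{i}\alpha)=b(\alpha)$, so $F$ is manifestly unchanged. For an $E$-changing arrow $(\alpha,E)\to(\alpha,s_{j}E)$ (occurring when $\alpha_{i}=\alpha_{i+1}$, $j=r_{\alpha}(i)$) only $b$ is visibly unchanged; for $e$ I would use that $r_{\alpha}(i+1)=j+1$, that $\alpha_{j}^{+}=\alpha_{j+1}^{+}$, and that passing to $s_{j}E$ merely swaps $c(j,E)\leftrightarrow c(j+1,E)$ in the content vector (this swap is forced by $\zeta_{\alpha,s_{j}E}=s_{i}\zeta_{\alpha,E}$). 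The resulting change in the $t$-exponent is
\[
\bigl(\alpha_{j}^{+}-\alpha_{j+1}^{+}\bigr)\bigl(c(j+1,E)-c(j,E)\bigr)=0,
\]
so $F$ is invariant here as well.

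The crux is the affine arrow $(\alpha,E)\to(\Phi\alpha,E)$, for which the identity $x_{N}\boldsymbol{w}\,x^{\alpha}R_{\alpha}\tau_{E}=q^{\alpha_{1}}t^{N-j+c(j,E)}x^{\Phi\alpha}R_{\Phi\alpha}\tau_{E}$ (with $j=r_{\alpha}(1)$) shows the leading coefficient is multiplied by $q^{\alpha_{1}}t^{N-j+c(j,E)}$. I must therefore verify $F(\Phi\alpha)=q^{\alpha_{1}}t^{N-j+c(j,E)}F(\alpha)$. The $q$-part is immediate, since $b(\Phi\alpha)-b(\alpha)=\binom{\alpha_{1}+1}{2}-\binom{\alpha_{1}}{2}=\alpha_{1}$. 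For the $t$-part I would compare the descending rearrangements $(\Phi\alpha)^{+}$ and $\alpha^{+}$ directly: passing from $\alpha$ to $\Phi\alpha$ deletes one copy of the value $\alpha_{1}$ and inserts one copy of $\alpha_{1}+1$. Setting $p=\#\{k:\alpha_{k}>\alpha_{1}\}$, so $j=p+1$, a bookkeeping of the sorted multiset shows every position keeps its value except position $p+1=j$, whose entry rises from $\alpha_{1}$ to $\alpha_{1}+1$. Hence
\[
e\bigl((\Phi\alpha)^{+}\bigr)-e(\alpha^{+})=w_{j}=N-j+c(j,E),
\]
which is exactly the affine factor, completing the induction.

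The main obstacle is this last multiset computation: one must confirm that the single increment at position $j$ is \emph{all} that changes in the sorted vector, which requires carefully handling ties, namely the possible coincidences of $\alpha_{1}$ and of $\alpha_{1}+1$ with other parts of $\alpha$ (splitting by the counts of parts equal to $\alpha_{1}$ and to $\alpha_{1}+1$). Once that is in hand, connectivity of the Yang--Baxter graph from the root propagates $F(\alpha)=t^{e(\alpha^{+})}q^{b(\alpha)}$ to every node, which is precisely the asserted pair of exponents (with $\lambda=\alpha^{+}$, so $\lambda_{i}=\alpha_{i}^{+}$).
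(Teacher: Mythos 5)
Your proposal is correct, and at the top level it follows the same strategy as the paper's proof: track the leading coefficient of $M_{\alpha,E}$ along the Yang--Baxter graph, using that the $c_{\alpha,E}=1$ arrows leave it unchanged while the affine arrow multiplies it by $q^{\alpha_{1}}t^{N-j+c(j,E)}$ with $j=r_{\alpha}(1)$. The key verification, however, is genuinely different. The paper treats the two exponents separately: $b(\alpha)$ by induction on $\vert\alpha\vert$ exactly as you do, but the $t$-exponent by induction on $\ell(\lambda)$ along specially chosen paths that keep $E$ fixed and apply the affine step only to compositions of the form $(\lambda_{k}-1,\lambda_{1},\dots,\lambda_{k-1},0,\dots,0)$; for these $r(1)=k$ is immediate, and building the part $\lambda_{k}$ up from $0$ by identical unsort--affine--sort cycles contributes $t^{\lambda_{k}(N-k+c(k,E))}$, so the paper never needs to analyze how $\Phi$ interacts with the nonincreasing rearrangement. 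You instead prove covariance of the closed form $F(\alpha)=t^{e(\alpha^{+})}q^{b(\alpha)}$ under an \emph{arbitrary} affine arrow, which forces the general identity $e\bigl((\Phi\alpha)^{+}\bigr)-e(\alpha^{+})=N-r_{\alpha}(1)+c(r_{\alpha}(1),E)$; the multiset bookkeeping you flag as the main obstacle does go through: with $p=\#\{k\colon \alpha_{k}>\alpha_{1}\}$ and $p'=\#\{k\colon \alpha_{k}>\alpha_{1}+1\}$, the two sorted vectors agree in positions $1,\dots,p'$, agree with common value $\alpha_{1}+1$ in positions $p'+1,\dots,p$, differ only at position $p+1=r_{\alpha}(1)$ (where $\alpha_{1}$ becomes $\alpha_{1}+1$), and agree thereafter. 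Your explicit check of the $E$-changing arrows, where the content swap $c(j,E)\leftrightarrow c(j+1,E)$ is cancelled by $\alpha_{j}^{+}=\alpha_{j+1}^{+}$, is likewise something the paper never needs, since its paths start at $M_{\boldsymbol{0},E}=\tau_{E}$ and never change $E$. The trade-off: your argument is path-independent, handles both exponents in a single induction, and verifies consistency along every arrow of the graph, at the price of the rearrangement lemma with its tie analysis; the paper's choice of path makes every increment transparent and avoids that lemma entirely, but relies on reachability of each node by paths of the special shape.
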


\begin{proof}
For the value of $b(\alpha) $ we use induction on $\vert
\alpha\vert$. The statement $b(\boldsymbol{0}) =0$ is
true since $M_{\boldsymbol{0},E}=\tau_{E}$. The steps at $\alpha_{i}<\alpha_{i+1}$ taking $x^{\alpha}$ to $x^{s_{i}\alpha}$ do not involve $q$
(which does affect the other terms of the polynomial), and indeed $b(\alpha) $ is invariant under $s_{i}$. The affine step takes~$x^{\alpha}$ to $x^{\Phi\alpha}$ and multiplies by $q^{\alpha_{1}}$, that is, $b(\Phi\alpha) =b(\alpha) +\alpha_{1}$, and $\binom
{\alpha_{1}}{2}+\alpha_{1}=\binom{\alpha_{1}+1}{2}$. Induct on~$\ell(\lambda) $ for the $t$-exponent. Note that only the affine step affects
the exponent so it depends only on~$\lambda=\alpha^{+}$. Suppose $\lambda
_{k}\geq1$ and $\lambda_{i}=0$ for $i>k$. The affine step from $\lambda
^{(k) }$ with $\lambda_{i}^{(k) }=\lambda_{i}$
except $\lambda_{k}^{(k) }=\lambda_{k}-1$ proceeds by
$\lambda^{(k) }\rightarrow(\lambda_{k}-1,\lambda_{1},\dots,\lambda_{k-1},0,\dots,0) \rightarrow(\lambda_{1},\dots,\lambda_{k=1},0,\dots,0,\lambda_{k}) \rightarrow\lambda$
and multiplies by $t^{N-k+c(k,E) }$. Thus passing from
$(\lambda_{1},\dots,\lambda_{k-1},0,\dots,0)$ to $\lambda$ contributes
a~factor of $t^{\lambda_{k}(N-k+c(k,E))}$.
\end{proof}

\begin{Example}
Explicit formulas for $M_{\alpha,E}$ tend to be complicated; here is a fairly
simple one. Let $N=5$, $m=2$, $E=\{3,4,5\}$ and $\alpha=(0,0,1,0,0)$:
\begin{gather*}
M_{\alpha,E}=t^{6}x_{3}(t^{3}\theta_{2}\theta_{4}-t^{2}\theta_{2}\theta_{4}+\theta_{3}\theta_{4})
\\ \hphantom{M_{\alpha,E}=}
{}+\frac{(t-1)t^{9}q}{qt^{3}-1}\big\{x_{4}\big(t^{3}\theta_{2}\theta_{3}-t\theta_{2}\theta_{5} +\theta_{3}\theta_{5}\big)-x_{5}\big( t^{2}\theta_{2}\theta_{3}-t\theta_{2}\theta_{4}+\theta_{3}\theta_{4}\big) \big\} .
\end{gather*}
The spectral vector is $\big[ t,t^{-2},qt^{2},t^{-1},1\big] $ and
$\boldsymbol{T}_{4}M_{\alpha,E}=-M_{\alpha,E}$.
\end{Example}

There is a pairwise commuting set of (bosonic) degree lowering operations,
namely the Dunkl operators defined by Baker and Forrester~\cite{BakerForrester97}.

\begin{Definition}\label{defD}
Suppose $f\in s\mathcal{P}_{m}$ then $\boldsymbol{D}_{N}%
f:=\frac{1}{x_{N}}( f-\xi_{N}f) $ and if $i<N$ then
$\boldsymbol{D}_{i}f=\frac{1}{t}\boldsymbol{T}_{i}\boldsymbol{D}_{i+1}\boldsymbol{T}_{i}f$.
\end{Definition}

Assuming the existence of the nonsymmetric Macdonald polynomials $M_{\alpha
,E}$ the argument for showing that $\boldsymbol{D}_{i}f$ is a polynomial is
the following:

\begin{Proposition}
Suppose $\alpha\in\mathbb{N}_{0}^{N}$ and $E\in\mathcal{Y}_{0}\cup
\mathcal{Y}_{1}$; if $\alpha_{N}=0$ then $\boldsymbol{D}_{N}M_{\alpha,E}=0$ or
if $\alpha_{N}\geq1$ then $\boldsymbol{D}_{N}M_{\alpha,E}=(
1-\zeta_{\alpha,E}(N)) \boldsymbol{w}M_{\beta,E}$, where
$\alpha=\Phi\beta$. Also $\boldsymbol{D}_{N}M_{\Phi\alpha,E}=(
1-q\zeta_{\alpha}(1)) \boldsymbol{w}M_{\alpha,E}$.
\end{Proposition}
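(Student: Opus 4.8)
The plan is to exploit that each $M_{\alpha,E}$ is a $\xi_N$-eigenfunction, so that the operator $\boldsymbol{D}_N$, defined by $\boldsymbol{D}_N f=\frac{1}{x_N}(f-\xi_N f)$, acts on it through a scalar. Since $\xi_N M_{\alpha,E}=\zeta_{\alpha,E}(N)M_{\alpha,E}$, the definition gives at once $\boldsymbol{D}_N M_{\alpha,E}=\frac{1}{x_N}\bigl(1-\zeta_{\alpha,E}(N)\bigr)M_{\alpha,E}$. Thus everything reduces to evaluating the spectral component $\zeta_{\alpha,E}(N)=q^{\alpha_N}t^{c(r_\alpha(N),E)}$ and to ensuring that the factor $x_N^{-1}$ produces a polynomial.

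First I would treat the vanishing case $\alpha_N=0$, where $\zeta_{\alpha,E}(N)=t^{c(r_\alpha(N),E)}$. From the rank function, $r_\alpha(N)=\#\{j\colon\alpha_j>0\}+\#\{j\colon\alpha_j=0\}=N$, because when $\alpha_N=0$ every zero entry is counted in the second sum. Moreover $N$ is the largest entry of the reverse standard tableau $Y_E$, so it occupies the corner cell $[1,1]$ (entries decrease along rows and columns), whose content is $0$; hence $c(N,E)=0$ in both the $\mathcal{Y}_0$ and $\mathcal{Y}_1$ constructions and $\zeta_{\alpha,E}(N)=1$. Then the numerator $1-\zeta_{\alpha,E}(N)$ vanishes and $\boldsymbol{D}_N M_{\alpha,E}=0$. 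For the case $\alpha_N\ge 1$ I would write $\alpha=\Phi\beta$ with $\beta=(\alpha_N-1,\alpha_1,\dots,\alpha_{N-1})$, the unique composition carried to $\alpha$ by the affine step. The affine-step identity gives $M_{\alpha,E}=M_{\Phi\beta,E}=x_N\boldsymbol{w}M_{\beta,E}$, so that $\frac{1}{x_N}M_{\alpha,E}=\boldsymbol{w}M_{\beta,E}$ is manifestly a polynomial; substituting into the reduced formula yields $\boldsymbol{D}_N M_{\alpha,E}=\bigl(1-\zeta_{\alpha,E}(N)\bigr)\boldsymbol{w}M_{\beta,E}$, as required. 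The final assertion is the identical computation applied to $\Phi\alpha$ in place of $\alpha$, so that the role of $\beta$ is played by $\alpha$: here $M_{\Phi\alpha,E}=x_N\boldsymbol{w}M_{\alpha,E}$ and, from the affine spectral vector $\zeta_{\Phi\alpha,E}$, the relevant eigenvalue is $\zeta_{\Phi\alpha,E}(N)=q\zeta_{\alpha,E}(1)$, giving $\boldsymbol{D}_N M_{\Phi\alpha,E}=\bigl(1-q\zeta_{\alpha,E}(1)\bigr)\boldsymbol{w}M_{\alpha,E}$.

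The argument is almost entirely formal once the eigenfunction property is in hand, so I do not expect a genuine obstacle. The one point deserving care—and where I would be most attentive—is the eigenvalue evaluation $\zeta_{\alpha,E}(N)=1$ for $\alpha_N=0$, since it is precisely the vanishing of the numerator $1-\zeta_{\alpha,E}(N)$ that renders the division by $x_N$ harmless in that case, while in the case $\alpha_N\ge 1$ it is the affine-step factorization $M_{\Phi\beta,E}=x_N\boldsymbol{w}M_{\beta,E}$ that cancels the $x_N^{-1}$ explicitly. Everything else is direct substitution.
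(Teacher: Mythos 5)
Your proposal is correct and follows essentially the same route as the paper: reduce everything to the $\xi_N$-eigenvalue $\zeta_{\alpha,E}(N)$, observe that $\alpha_N=0$ forces $r_\alpha(N)=N$ and $c(N,E)=0$ so the eigenvalue is $1$, and in the case $\alpha_N\geq 1$ use the affine-step factorization $M_{\Phi\beta,E}=x_N\boldsymbol{w}M_{\beta,E}$ to absorb the $x_N^{-1}$. Your write-up in fact supplies slightly more detail than the paper (which merely asserts $r_\alpha(N)=N$ and $c(N,E)=0$), but the argument is the same.
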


\begin{proof}
If $\alpha_{N}=0$ then $r_{\alpha}(N) =N,c(N,E)
=0$ and $\xi_{N}M_{\alpha,E}=M_{\alpha,E}$ and $(1-\xi_{N})
M_{\alpha,E}=0$. If $\alpha_{N}\geq1$ then $\alpha=\Phi\beta$ with $\vert
\beta\vert =\vert\alpha\vert -1$ and $(1-\xi_{N}) M_{\alpha,E}=(1-\zeta_{\alpha,E}(N))
M_{\alpha,E}=(1-\zeta_{\alpha,E}(N))x_{N}\boldsymbol{w}M_{\beta,E}$, and thus $\boldsymbol{D}_{N}M_{\alpha,E}=(1-\zeta_{\alpha,E}(N)) \boldsymbol{w}M_{\beta,E}$. For the other statement note that $\zeta_{\Phi\alpha,E}(N) =q\zeta_{\alpha,E}(1)$.
\end{proof}

\begin{Remark}\label{conjval}
We conjecture that there are evaluation formulas for the
special case $E=\{1,2,\dots,m,N\}$, $\alpha_{i}=0$ for $i>m$
and $x_{0}=\big(t^{N-1},t^{N-2},\dots,t,1\big)$. Here are some small
examples in isotype $(2,1,1)$%
\begin{gather*}
J_{(1,1,0,0),E}(x_{0}) =\frac{t^{5}\big(q-t^{4}\big)}{q-t^{2}}g_{12}(\theta),
\\
J_{(1,2,0,0),E}(x_{0}) =\frac{t^{7}\big(q-t^{4}\big) \big(q^{2}-t^{4}\big)}
{\big(q-t^{2}\big) \big(q^{2}-t^{3}\big)}g_{12}(\theta),
\\
J_{(2,1,0,0),E}(x_{0}) =\frac{t^{7}\big(q-t^{2}\big) \big(q-t^{4}\big) \big(q^{2}-t^{4}\big)}
{(q-t)\big(q-t^{2}\big) \big(q^{2}-t^{3}\big)}g_{12}(\theta),
\\
g_{12}(\theta) =\theta_{1}\theta_{2}-\dfrac{1}{t^{2}(1+t)}
(t\theta_{1}-\theta_{2})(\theta_{3}+\theta_{4}).
\end{gather*}
Replace $g_{12}(\theta)$ by $(\theta_{1}\theta_{2}\theta_{3}+\theta_{1}\theta_{2}\theta_{4})$ for the $\mathcal{P}_{3,1}$ version (by applying $M$).
\end{Remark}

\subsection{\label{SymFM}Symmetric bilinear form}

In~this section we define an inner product (symmetric bilinear form) on
$s\mathcal{P}_{m}$ in which $\boldsymbol{T}_{i}$, $\xi_{i}$ are self-adjoint, the
Macdonald polynomials are pairwise orthogonal and it is positive-definite for~$t,q>0$, $q\neq1$ and $\min\big(q^{1/N},q^{-1/N}\big) <t<\max\big(q^{1/N},q^{-1/N}\big)$. The background and proofs for this section are in~\cite{Dunkl2019}. The hypotheses $\langle \boldsymbol{T}_{i}f,g\rangle
=\langle f,\boldsymbol{T}_{i}g\rangle $ for $1\leq i< N$ and
$\langle \xi_{N}f,g\rangle =\langle f,\xi_{N}g\rangle $
already imply that $\langle \xi_{i}f,g\rangle =\langle
f,\xi_{i}g\rangle $ for all $i$ since $\xi_{i}=t^{-1}\boldsymbol{T}%
_{i}\xi_{i+1}\boldsymbol{T}_{i}$ and~thus $\langle M_{\alpha,E}%
,M_{\beta,F}\rangle =0$ if $(\alpha,E) \neq(
\beta,F) $ (at least one different $\{\xi_{i}\}
$-eigenvalue). Denote $\langle f,f\rangle =\Vert f\Vert
^{2}$, even if possibly nonpositive. The aim is to determine a formula for
$\Vert M_{\alpha,E}\Vert^{2}$ which, other than leading
coefficients $q^{\ast}t^{\ast}$, involves only linear factors of the form
$( 1-q^{q}t^{b}) $ with $a\in\mathbb{N}_{0}$, $b\in\mathbb{Z}$,
$\vert b\vert \leq N$. Recall $u(z) :=\frac{(t-z) (1-tz) }{(1-z)^{2}}$ from Definition~\ref{defuz}.

\begin{Proposition}
Suppose there is a symmetric bilinear form on $s\mathcal{P}$ in which each
$\boldsymbol{T}_{i}$ and $\xi_{i}$ is self-adjoint and suppose $E\in
\mathcal{Y}_{0}\cup\mathcal{Y}_{1}$, $\alpha\in\mathbb{N}_{0}^{N}$ and
$\alpha_{i}<\alpha_{i+1}$ for some $i$ then%
\[
\Vert M_{s_{i}\alpha,E}\Vert^{2}=u\big(q^{\alpha_{i+1}
-\alpha_{i}}t^{c(r_{\alpha}(i+1),E) -c(r_{\alpha}(i),E)}\big) \Vert M_{\alpha,E}\Vert^{2}.
\]
\end{Proposition}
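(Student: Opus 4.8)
The plan is to write $M_{s_i\alpha,E}$ as a single $(\boldsymbol{T}_i+b)$ applied to $M_{\alpha,E}$ and then invoke the one-step norm identity of Lemma~\ref{fgnorm}, reread with $\boldsymbol{T}_i$ in place of $T_i$. Because $\alpha_i<\alpha_{i+1}$, the coefficient $c_{\alpha,E}$ in~\eqref{TiM1} equals $1$, so with $z=\zeta_{\alpha,E}(i+1)/\zeta_{\alpha,E}(i)$ and $b:=\frac{t-1}{z-1}$ one has $M_{s_i\alpha,E}=(\boldsymbol{T}_i+b)M_{\alpha,E}$. Since $\zeta_{\alpha,E}(j)=q^{\alpha_j}t^{c(r_\alpha(j),E)}$, the ratio is exactly $z=q^{\alpha_{i+1}-\alpha_i}t^{c(r_\alpha(i+1),E)-c(r_\alpha(i),E)}$, the argument of $u$ in the claim; so the task reduces to proving $\Vert M_{s_i\alpha,E}\Vert^2=u(z)\Vert M_{\alpha,E}\Vert^2$.

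First I would confirm that Lemma~\ref{fgnorm} applies unchanged on $s\mathcal{P}$: its proof uses only that the generator is self-adjoint and satisfies the quadratic relation $\boldsymbol{T}_i^2=(t-1)\boldsymbol{T}_i+t$. Self-adjointness of $\boldsymbol{T}_i$ is a standing hypothesis here, and the quadratic relation is the one already recorded for the $\boldsymbol{T}_i$, so the computation of that lemma carries over verbatim. Next I would supply its remaining hypothesis $\langle M_{\alpha,E},M_{s_i\alpha,E}\rangle=0$. By Proposition~\ref{Mtrans} the spectral vector of $M_{s_i\alpha,E}$ is $s_i\zeta_{\alpha,E}$, which differs from $\zeta_{\alpha,E}$ in the $i$-th (and $(i+1)$-th) slots precisely because $z\neq1$; here $z\neq1$ since $\alpha_{i+1}-\alpha_i\geq1$ forces a nonzero power of $q$, which the genericity of $q,t$ keeps away from $1$. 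Two NSMP with a distinct $\xi_i$-eigenvalue are orthogonal (self-adjointness of $\xi_i$), giving the needed vanishing.

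With both hypotheses met, Lemma~\ref{fgnorm} yields $\Vert M_{s_i\alpha,E}\Vert^2=(1-b)(t+b)\Vert M_{\alpha,E}\Vert^2$, and it remains only to verify the scalar identity $(1-b)(t+b)=u(z)$. Substituting $b=\frac{t-1}{z-1}$ gives $1-b=\frac{z-t}{z-1}$ and $t+b=\frac{tz-1}{z-1}$, whence $(1-b)(t+b)=\frac{(z-t)(tz-1)}{(z-1)^2}=\frac{(t-z)(1-tz)}{(1-z)^2}=u(z)$ by Definition~\ref{defuz}. I do not anticipate a real obstacle; the only points demanding care are the bookkeeping that promotes the scalar lemma to the $\boldsymbol{T}_i$ on $s\mathcal{P}$, and the use of genericity to guarantee $z\neq1$, which simultaneously makes $b$ well-defined and secures the orthogonality underlying the whole argument.
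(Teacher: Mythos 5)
Your proposal is correct and follows exactly the route the paper takes: the paper's proof is the one-line remark that this is the same argument as Lemma~\ref{fgnorm}, and your write-up simply supplies the details of that argument — the step relation~\eqref{TiM1} with $c_{\alpha,E}=1$, orthogonality of $M_{\alpha,E}$ and $M_{s_i\alpha,E}$ from their distinct $\xi_i$-eigenvalues, and the computation $(1-b)(t+b)=u(z)$ for $b=\frac{t-1}{z-1}$.
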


\begin{proof}
This is the same argument used in Lemma~\ref{fgnorm}.
\end{proof}

We introduce a product for expressing $\Vert M_{\alpha,E}\Vert^{2}$ in terms
of~$\Vert M_{\alpha^{+},E}\Vert^{2}$.

\begin{Definition}\label{Rdef}
For $E\in\mathcal{Y}_{0}\cup\mathcal{Y}_{1}$, $\alpha\in\mathbb{N}_{0}^{N}$ let
\[
\mathcal{R}(\alpha,E) :=\prod\limits_{1\leq i<j\leq N,\,\alpha_{i}<\alpha_{j}}
u\big(q^{\alpha_{j}-\alpha_{i}}t^{c(r_{\alpha}(j),E) -c(r_{\alpha}(i),E)}\big) .
\]
\end{Definition}

There are $\operatorname{inv}(\alpha) $ terms in the product. The next
proposition assumes the same hypotheses on the bilinear form.

\begin{Proposition}\label{Mnorm1}
Suppose $E\in\mathcal{Y}_{0}\cup\mathcal{Y}_{1}$, $\alpha\in\mathbb{N}_{0}^{N}$ then
\[
\Vert M_{\alpha^{+},E}\Vert^{2}=\mathcal{R}(\alpha,E) \Vert M_{\alpha,E}\Vert^{2}.
\]
\end{Proposition}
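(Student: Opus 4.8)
The plan is to reach $\alpha^{+}$ from $\alpha$ by a chain of elementary sorting steps, multiply the norm factors supplied by the preceding proposition, and then identify the resulting product with $\mathcal{R}(\alpha,E)$ term by term. Concretely, I would build a sequence $\alpha=\beta^{(0)},\beta^{(1)},\dots,\beta^{(\ell)}=\alpha^{+}$ with $\ell=\operatorname{inv}(\alpha)$, where each $\beta^{(k+1)}=s_{j_{k}}\beta^{(k)}$ for some index $j_{k}$ with $(\beta^{(k)})_{j_{k}}<(\beta^{(k)})_{j_{k}+1}$ (a strict ascent). Such a chain exists because sorting a composition into nonincreasing order by interchanging strict ascents takes exactly $\operatorname{inv}(\alpha)$ steps, which matches $\ell$ since $\operatorname{inv}(\alpha)=\#\{(i,j):i<j,\ \alpha_i<\alpha_j\}$. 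Applying the preceding proposition at each step and telescoping gives
\[
\|M_{\alpha^+,E}\|^2=\Biggl(\prod_{k=0}^{\ell-1} u\bigl(q^{(\beta^{(k)})_{j_k+1}-(\beta^{(k)})_{j_k}}\,t^{c(r_{\beta^{(k)}}(j_k+1),E)-c(r_{\beta^{(k)}}(j_k),E)}\bigr)\Biggr)\|M_{\alpha,E}\|^2.
\]

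The heart of the argument is a bookkeeping claim matching each factor above with a factor of $\mathcal{R}(\alpha,E)$. I track the entries as labelled tokens: token $a$ carries the original value $\alpha_a$ and starts at position $a$. Since we only ever interchange strict ascents, equal-valued tokens never change their relative order, so the rank $r_{\beta^{(k)}}$ assigns to each token exactly the rank it had in $\alpha$; that is, if token $a$ occupies position $p$ in $\beta^{(k)}$ then $r_{\beta^{(k)}}(p)=r_\alpha(a)$. Consequently, if the step at stage $k$ swaps tokens $a$ (on the left) and $b$ (on the right), then $(\beta^{(k)})_{j_k}=\alpha_a$ and $(\beta^{(k)})_{j_k+1}=\alpha_b$ with $\alpha_a<\alpha_b$, and the $k$-th factor collapses to $u(q^{\alpha_b-\alpha_a}\,t^{c(r_\alpha(b),E)-c(r_\alpha(a),E)})$, which is precisely the factor attached to the pair $(a,b)$ in Definition~\ref{Rdef}.

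It then remains to verify that the correspondence ``stage $k\leftrightarrow$ the pair $(a,b)$ swapped there'' is a bijection onto the index set $\{(i,j):i<j,\ \alpha_i<\alpha_j\}$ of $\mathcal{R}(\alpha,E)$. Two tokens $a,b$ with $\alpha_a<\alpha_b$ cross exactly once during a sort into nonincreasing order precisely when $a<b$ initially (if instead $b<a$ the larger-valued token already lies to the left and the two never form an ascent), and ties are never crossed; hence the crossings are in bijection with the strict inversions of $\alpha$, of which there are $\operatorname{inv}(\alpha)=\ell$. At the crossing moment the smaller-valued token $a$ sits immediately to the left, confirming $a<b$ and that the factor is indexed by $(i,j)=(a,b)$ with $\alpha_i<\alpha_j$. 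Multiplying over all stages therefore reproduces $\mathcal{R}(\alpha,E)$ exactly. The main obstacle is the rank-preservation identity $r_{\beta^{(k)}}(p)=r_\alpha(a)$ together with the once-and-only-once crossing count; both rest on the fact that only strict ascents are interchanged, so that the combinatorics of the product is genuinely that of crossing the inversions of $\alpha$. Path-independence of the product (hence consistency with the well-defined left-hand side) is then automatic, since any admissible sorting chain crosses each strict inversion exactly once.
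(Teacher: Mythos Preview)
Your proof is correct and follows essentially the same route as the paper. The paper is terser: it establishes the single-step ratio identity
\[
\frac{\mathcal{R}(\alpha,E)}{\mathcal{R}(s_{i}\alpha,E)}=u\bigl(q^{\alpha_{i+1}-\alpha_{i}}t^{c(r_{\alpha}(i+1),E)-c(r_{\alpha}(i),E)}\bigr)
\]
for $\alpha_i<\alpha_{i+1}$ (by the same pair-matching argument used for $\mathcal{C}$ in Proposition~\ref{tau0norm}) and then implicitly inducts on $\operatorname{inv}(\alpha)$; your token-tracking bijection and rank-preservation observation are exactly the unrolled content of that induction.
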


\begin{proof}
With the same argument as in Proposition~\ref{tau0norm} one shows $\alpha_{i}<\alpha_{i+1}$ implies%
\begin{gather*}
\frac{\mathcal{R}(\alpha,E)}{\mathcal{R}(s_{i}\alpha,E)}
=u\big(q^{\alpha_{i+1}-\alpha_{i}}t^{c(r_{\alpha}(i+1),E) -c(r_{\alpha}(i),E)}\big).
\tag*{\qed}
\end{gather*}
\renewcommand{\qed}{}
\end{proof}

Another hypothesis is required to define the inner product for all polynomials
starting with bosonic degree $0$ ($M_{\boldsymbol{0},E}=\tau_{E}$). The
approach of making $\boldsymbol{D}_{i}$ the adjoint of multiplication by
$x_{i}$, or making an isometry out of the latter (torus norm) as is done in
the Jack polynomial situation, does not work here without a modification.

\begin{Theorem}[{\cite[Section~3.3]{Dunkl2019}}]
There is a unique symmetric bilinear form on
$s\mathcal{P}_{m}$ which extends the form in Definition~$\ref{phiform}$ and
satisfies $($for $f,g\in s\mathcal{P}_{m}$ and $1\leq i<N)$%
\begin{gather}
\langle \boldsymbol{T}_{i}f,g\rangle =\langle f,\boldsymbol{T}_{i}g\rangle ,\label{form1}
\\
\langle \xi_{N}f,g\rangle =\langle f,\xi_{N}g\rangle,\label{form2}
\\
\langle \boldsymbol{w}^{-1}\boldsymbol{D}_{N}f,g\rangle
=(1-q) \langle f,x_{N}\boldsymbol{w}g\rangle.\label{form3}%
\end{gather}
\end{Theorem}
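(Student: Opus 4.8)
The plan is to treat the Macdonald basis $\{M_{\alpha,E}\}$ as the natural orthogonal coordinate system and prove uniqueness and existence separately. First I would establish that any form obeying the hypotheses must be diagonal in this basis. Indeed, \eqref{form1} together with the recursion $\xi_i=t^{-1}\boldsymbol{T}_i\xi_{i+1}\boldsymbol{T}_i$ and \eqref{form2} forces every $\xi_i$ to be self-adjoint; then the orthogonality argument used after Proposition~\ref{Tf2g} (now applied with $\xi_i$ in place of $\omega_i$, via Proposition~\ref{Mtrans}) gives $\langle M_{\alpha,E},M_{\beta,F}\rangle=0$ whenever $(\alpha,E)\neq(\beta,F)$, since distinct pairs carry distinct spectral vectors $[\zeta_{\alpha,E}(i)]_{i=1}^N$. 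Hence any admissible form is determined by the squared norms $\Vert M_{\alpha,E}\Vert^2$.

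Next I would show these norms are forced, which proves uniqueness. The requirement that the form extend Definition~\ref{phiform} fixes the degree-$0$ values $\Vert\tau_E\Vert^2=\Vert M_{\boldsymbol0,E}\Vert^2$. Self-adjointness of $\boldsymbol{T}_i$ then propagates norms within a fixed bosonic degree exactly as in Lemma~\ref{fgnorm}: expanding \eqref{TiM1} and using $\langle f,g\rangle=0$ yields $\Vert M_{s_i\alpha,E}\Vert^2=u(z)\Vert M_{\alpha,E}\Vert^2$, and iterating recovers Proposition~\ref{Mnorm1}. The single remaining degree of freedom, the passage between bosonic degrees, is exactly what \eqref{form3} supplies: taking $f=M_{\Phi\beta,E}=x_N\boldsymbol{w}M_{\beta,E}$ and $g=M_{\beta,E}$, the right-hand side is $(1-q)\Vert M_{\Phi\beta,E}\Vert^2$, while the left-hand side, using $\boldsymbol{w}^{-1}\boldsymbol{D}_N M_{\Phi\beta,E}=(1-q\zeta_{\beta,E}(1))M_{\beta,E}$ from the action of $\boldsymbol{D}_N$ on Macdonald polynomials computed above, equals $(1-q\zeta_{\beta,E}(1))\Vert M_{\beta,E}\Vert^2$. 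Equating gives the affine recursion
\begin{gather*}
\Vert M_{\Phi\beta,E}\Vert^{2}=\frac{1-q\zeta_{\beta,E}(1)}{1-q}\Vert M_{\beta,E}\Vert^{2}.
\end{gather*}
Since every $(\alpha,E)$ is reached from $(\boldsymbol0,E)$ by $s_i$-steps and affine $\Phi$-steps, all norms are determined, so the form is unique.

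For existence I would run the construction in reverse: define $\langle\cdot,\cdot\rangle$ by declaring the $M_{\alpha,E}$ mutually orthogonal with squared norms generated by the two recursions above, starting from the values $\Vert\tau_E\Vert^2$ of Propositions~\ref{tau0norm} and~\ref{tauEnorm}, and then verify \eqref{form1}--\eqref{form3}. Condition \eqref{form2} is automatic, since $\xi_N$ acts diagonally with scalar eigenvalues on a form that is already diagonal. For \eqref{form1}, I would check $\langle\boldsymbol{T}_iM_{\alpha,E},M_{\gamma,F}\rangle=\langle M_{\alpha,E},\boldsymbol{T}_iM_{\gamma,F}\rangle$ on basis pairs; because $\boldsymbol{T}_i$ sends each basis vector to a short combination of basis vectors (the two-term law \eqref{TiM1} when $\alpha_i\neq\alpha_{i+1}$, and the one-dimensional or fermionic-step laws when $\alpha_i=\alpha_{i+1}$), orthogonality annihilates all but a handful of terms and the identity collapses to precisely the norm ratio $u(z)=(1-b)(t+b)$ built into the definition, reproducing Lemma~\ref{fgnorm}. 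Condition \eqref{form3} reduces on basis elements to the affine recursion just derived.

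The hard part will be \emph{consistency}: the Yang--Baxter graph offers many routes from $(\boldsymbol0,E)$ to a given $(\alpha,E)$, so I must confirm that the accumulated product of $u$-factors and affine factors is independent of the chosen path, i.e.\ that the prescription defines a genuine function of $(\alpha,E)$. The clean remedy is to exhibit a closed product formula for $\Vert M_{\alpha,E}\Vert^2$ and verify directly that it satisfies both recursions; the telescoping collapses already seen in Propositions~\ref{tau0norm} and~\ref{Dnorm} strongly suggest the factors assemble into such a formula. A secondary subtlety is that \eqref{form3} must hold for all $f,g$, not merely for eigenbasis pairs. Here I would reduce to the basis by bilinearity after confirming that $\boldsymbol{w}^{-1}\boldsymbol{D}_N$ and $x_N\boldsymbol{w}$ act on $\{M_{\alpha,E}\}$ exactly as the preceding propositions state, so that both sides of \eqref{form3} expand into the same diagonal sum; positivity in the stated $q,t$ range then follows by inspecting the signs of the explicit $(1-q^at^b)$ factors.
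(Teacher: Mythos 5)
Your proposal is correct and follows essentially the same route as the paper, which defers the proof to~\cite[Section~3.3]{Dunkl2019} but lays out exactly this structure in Section~\ref{SymFM}: uniqueness via forced orthogonality of the $M_{\alpha,E}$ (all $\xi_i$ self-adjoint, distinct spectral vectors) together with the $u(z)$-recursion of Proposition~\ref{Mnorm1} and the affine recursion obtained from~\eqref{form3}, and existence via the closed product formula for $\Vert M_{\lambda,E}\Vert^2$ that is verified to satisfy~\eqref{form1}--\eqref{form3}. Your flagged consistency issue (path-independence on the Yang--Baxter graph) is resolved in the paper in precisely the way you suggest, by exhibiting the explicit $(z;q)_n$-product formula.
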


It follows from $\xi_{i}=t^{-1}\boldsymbol{T}_{i}\xi_{i+1}\boldsymbol{T}_{i}$
that $\langle \xi_{i}f,g\rangle =\langle f,\xi_{i}g\rangle $ for all $i$. The reason for the fac\-tor~$(1-q)$ is to allow the limit as $t\rightarrow1$ when $q=t^{1/\kappa}$ to obtain
nonsymmetric Jack polynomials.

In~\cite{Dunkl2019} hypothesis (\ref{form3}) is stated in the equivalent form%
\[
\langle \boldsymbol{D}_{N}f,g\rangle =(1-q)\langle f,x_{N}\boldsymbol{ww}^{\ast}g\rangle,
\]
where
\begin{gather*}
\boldsymbol{w}^{\ast} =\boldsymbol{T}_{1}\boldsymbol{T}_{2}\cdots
\boldsymbol{T}_{N-1}\boldsymbol{wT}_{1}^{-1}\boldsymbol{T}_{2}^{-1}%
\cdots\boldsymbol{T}_{N-1}^{-1},
\\
\langle \boldsymbol{w}f,g\rangle =\langle f,\boldsymbol{w}^{\ast}g\rangle ,
\end{gather*}
and this expression follows from $\boldsymbol{w}=t^{N-1}\boldsymbol{T}_{N-1}^{-1}\cdots \boldsymbol{T}_{2}^{-1}\boldsymbol{T}_{1}^{-1}\xi_{1}$. Next
we use hypothesis~\eqref{form3} to relate norms for polynomials of different bosonic degrees.

\begin{Proposition}
Suppose $E\in\mathcal{Y}_{0}\cup\mathcal{Y}_{1}$, $\alpha\in\mathbb{N}_{0}^{N}$ then
\[
\Vert M_{\Phi\alpha,E}\Vert^{2}=\frac{1-q^{\alpha_{1}+1}t^{c(r_{\alpha}(1),E)}}{1-q}\Vert M_{\alpha,E}\Vert^{2}.
\]
\end{Proposition}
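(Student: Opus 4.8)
The plan is to exploit the affine-step relation $M_{\Phi\alpha,E}=x_{N}\boldsymbol{w}M_{\alpha,E}$ together with the adjointness hypothesis \eqref{form3}, which was engineered precisely to trade a norm of $x_{N}\boldsymbol{w}M_{\alpha,E}$ against a norm of $M_{\alpha,E}$. First I would record that, by the affine step, $\Vert M_{\Phi\alpha,E}\Vert^{2}=\langle M_{\Phi\alpha,E},x_{N}\boldsymbol{w}M_{\alpha,E}\rangle$. Consequently, evaluating the right-hand side of \eqref{form3} at $f=M_{\Phi\alpha,E}$ and $g=M_{\alpha,E}$ produces exactly $(1-q)\langle M_{\Phi\alpha,E},x_{N}\boldsymbol{w}M_{\alpha,E}\rangle=(1-q)\Vert M_{\Phi\alpha,E}\Vert^{2}$.

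Next I would compute the left-hand side of \eqref{form3} for the same $f$ and $g$. The preceding proposition on the action of $\boldsymbol{D}_{N}$ gives $\boldsymbol{D}_{N}M_{\Phi\alpha,E}=(1-q\zeta_{\alpha,E}(1))\boldsymbol{w}M_{\alpha,E}$, and since $\boldsymbol{w}$ is invertible, applying $\boldsymbol{w}^{-1}$ cancels the $\boldsymbol{w}$ to leave $\boldsymbol{w}^{-1}\boldsymbol{D}_{N}M_{\Phi\alpha,E}=(1-q\zeta_{\alpha,E}(1))M_{\alpha,E}$. Pairing this against $g=M_{\alpha,E}$ then makes the left-hand side of \eqref{form3} equal to $(1-q\zeta_{\alpha,E}(1))\Vert M_{\alpha,E}\Vert^{2}$.

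Equating the two evaluations yields $(1-q\zeta_{\alpha,E}(1))\Vert M_{\alpha,E}\Vert^{2}=(1-q)\Vert M_{\Phi\alpha,E}\Vert^{2}$, and dividing by $1-q$ gives the asserted formula once one substitutes $q\zeta_{\alpha,E}(1)=q^{\alpha_{1}+1}t^{c(r_{\alpha}(1),E)}$, which is immediate from $\zeta_{\alpha,E}(i)=q^{\alpha_{i}}t^{c(r_{\alpha}(i),E)}$. The whole argument is thus a direct substitution rather than a genuine calculation.

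I do not expect a real obstacle here; the result is essentially a bookkeeping consequence of two facts established earlier. The only points demanding care are that the affine-step identity $M_{\Phi\alpha,E}=x_{N}\boldsymbol{w}M_{\alpha,E}$ and the explicit value of $\boldsymbol{D}_{N}M_{\Phi\alpha,E}$ be invoked with matching index conventions (in particular that the spectral-vector entry appearing is $\zeta_{\alpha,E}(1)$, the first component, and not that of $\Phi\alpha$), and that the invertibility of $\boldsymbol{w}$ be used to justify the cancellation $\boldsymbol{w}^{-1}\boldsymbol{w}=\mathrm{id}$. Both are guaranteed by the preceding development, so I expect the proof to occupy only a few lines.
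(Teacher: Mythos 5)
Your proof is correct and follows essentially the same route as the paper's: both apply hypothesis \eqref{form3} with $f=M_{\Phi\alpha,E}$, $g=M_{\alpha,E}$, use the affine step $M_{\Phi\alpha,E}=x_{N}\boldsymbol{w}M_{\alpha,E}$ on the right side, and use $\boldsymbol{D}_{N}M_{\Phi\alpha,E}=(1-q\zeta_{\alpha,E}(1))\boldsymbol{w}M_{\alpha,E}$ (which the paper re-derives inline from $\boldsymbol{D}_{N}f=\frac{1}{x_{N}}(1-\xi_{N})f$ rather than citing the preceding proposition) on the left. The identification $\zeta_{\Phi\alpha,E}(N)=q\zeta_{\alpha,E}(1)=q^{\alpha_{1}+1}t^{c(r_{\alpha}(1),E)}$ closes the argument exactly as in the paper.
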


\begin{proof}
In~\eqref{form3} set $g=M_{\alpha,E}$ and $f=M_{\Phi\alpha,E}$ then $(1-q) \langle f,x_{N}\boldsymbol{w}g\rangle =(1-q) \Vert M_{\Phi\alpha,E}\Vert^{2}$. On~the other hand%
\begin{gather*}
\boldsymbol{D}_{N}f =\frac{1}{x_{N}}(1-\xi_{N}) f=\frac{1}{x_{N}}(1-\zeta_{\Phi\alpha,E}(N))
M_{\Phi\alpha,E} =(1-\zeta_{\Phi\alpha,E}(N)) \boldsymbol{w}M_{\alpha,E},
\\
\langle \boldsymbol{w}^{-1}\boldsymbol{D}_{N}f,g\rangle
=(1-\zeta_{\Phi\alpha,E}(N))\langle M_{\alpha,E},M_{\alpha,E}\rangle,
\end{gather*}
thus $\Vert M_{\Phi\alpha,E}\Vert^{2}=\frac{1-\zeta_{\Phi\alpha,E}(N)}{1-q}\Vert M_{\alpha,E}\Vert^{2}$ and~$\zeta_{\Phi\alpha,E}(N) =q\zeta_{\alpha,E}(1)
=q^{\alpha_{1}+1}t^{c(r_{\alpha}(1),E)}$.
\end{proof}

With this formula and Proposition~\ref{Mnorm1} we can use induction to find
$\Vert M_{\alpha,E}\Vert^{2}$ for any $\alpha$. The~first step
uses $\alpha=\boldsymbol{0}$ and any $E\in\mathcal{Y}_{0}\cup\mathcal{Y}_{1}$,
where $M_{\boldsymbol{0},E}=\tau_{E}$ and the spectral vector $[
\zeta_{\boldsymbol{0},E}(i) ]_{i=1}^{N}=\big[t^{c(i,E)}\big]_{i=1}^{N}$. Then $\Phi\alpha=(0,\dots,0,1)$ and $M_{\Phi\alpha,E}=x_{N}T^{(N)}\tau_{E}$, $\Vert M_{\Phi\alpha,E}\Vert^{2}=\frac{1-qt^{c(1,E)}}{1-q}\Vert\tau_{E}\Vert^{2}$ (see Propositions~\ref{tau0norm} and~\ref{tauEnorm} for this value).

The argument for establishing the formula for $\Vert M_{\lambda,E}\Vert^{2}$, where $\lambda\in\mathbb{N}_{0}^{N,+}$ uses the following
steps, starting with the assumption that $\lambda_{k}\geq1$ and $\lambda_{j}=0$ for $k<j\leq N$. Throughout $E$ is fixed. Let
\begin{gather*}
\mu =( \lambda_{1},\dots,\lambda_{k-1},\lambda_{k}-1,0..,0)
=\alpha^{+},
\\
\alpha =( \lambda_{k}-1,\lambda_{1},\dots,\lambda_{k-1}%
,0,\dots,0) ,
\\
\beta =( \lambda_{1},\dots,\lambda_{k-1},0,\dots,0,\lambda
_{k}) =\Phi\alpha
\end{gather*}
so that $\Vert M_{\alpha,E}\Vert^{2}=\mathcal{R}(\alpha,E)^{-1}\Vert M_{\mu,E}\Vert^{2}$, $\Vert
M_{\beta,E}\Vert^{2}=\frac{1-q^{\lambda_{k}}t^{c(k,E)}}{1-q}\Vert M_{\alpha,E}\Vert^{2}$ (since $r_{\alpha}(1) =k$) and $\Vert M_{\lambda,E}\Vert^{2}=\mathcal{R}(\beta,E) \Vert M_{\beta,E}\Vert^{2}$. In~the resulting formula we use a slightly different expression for~$u(z) =\frac{(t-z) (1-tz)}{(z-1)^{2}}=t\frac{( 1-z/t)(1-tz)}{(z-1)^{2}}$ because $u(z)/t$ is invariant under
$t\rightarrow\frac{1}{t}$, $z\rightarrow\frac{1}{z}$, but this causes a power of
$t$ to appear in the form $k(\lambda) :=\sum_{i=1}^{N}(N-2i+1) \lambda_{i}$ for $\lambda\in\mathbb{N}_{0}^{N,+}$. The shifted
$q$-factorial $( z;q)_{0}=1,( z;q)_{n+1}=(z;q)_{n}( 1-zq^{n})$, $n=0,1,2,\dots$ is used.

\begin{Theorem}[\cite{Dunkl2019}]
Suppose $\lambda\in\mathbb{N}_{0}^{N,+}$, $\alpha,\beta
\in\mathbb{N}_{0}^{N}$ and $E,F\in\mathcal{Y}_{0}\cup\mathcal{Y}_{1}$ then the
following satisfy the hypotheses~\eqref{form1}, \eqref{form2}, and~\eqref{form3}$:$%
\begin{gather*}
\langle M_{\alpha,E},M_{\beta,F}\rangle =0,\qquad (\alpha,E) \neq(\beta,F),
\\[.5ex]
\Vert M_{\alpha,E}\Vert^{2} =\mathcal{R}(\alpha,E)^{-1}\Vert M_{\alpha^{+},E}\Vert^{2},
\\
\Vert M_{\lambda,E}\Vert^{2} =t^{k(\lambda)}\Vert\tau_{E}\Vert^{2}(1-q)^{-\vert\lambda\vert} \prod\limits_{i=1}^{N}\big(qt^{c(i,E)};q\big)_{\lambda_{i}}
\\ \hphantom{\Vert M_{\lambda,E}\Vert^{2} =}
{} \times\prod\limits_{1\leq i<j\leq N}\frac{\big(qt^{c(i,E)-c(j,E) -1};q\big)_{\lambda_{i}-\lambda_{j}} \big(qt^{c(i,E) -c(j,E) +1};q\big)_{\lambda_{i}-\lambda_{j}}}
{\big(qt^{c(i,E) -c(j,E)};q\big)_{\lambda_{i}-\lambda_{j}}^{2}}.
\end{gather*}
\end{Theorem}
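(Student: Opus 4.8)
The plan is to treat the three displayed assertions separately, the explicit product being the only substantial one. For the orthogonality $\langle M_{\alpha,E},M_{\beta,F}\rangle=0$ when $(\alpha,E)\neq(\beta,F)$, I would invoke that each $\xi_i$ is self-adjoint (granted by the cited existence theorem, which supplies a form satisfying \eqref{form1}--\eqref{form3}). Since $M_{\alpha,E}$ is a simultaneous $\xi_i$-eigenfunction with eigenvalue $\zeta_{\alpha,E}(i)=q^{\alpha_i}t^{c(r_\alpha(i),E)}$, whenever the pairs differ some coordinate of the two spectral vectors differs, and then $\zeta_{\alpha,E}(i)\langle M_{\alpha,E},M_{\beta,F}\rangle=\langle\xi_iM_{\alpha,E},M_{\beta,F}\rangle=\langle M_{\alpha,E},\xi_iM_{\beta,F}\rangle=\zeta_{\beta,F}(i)\langle M_{\alpha,E},M_{\beta,F}\rangle$ forces the inner product to vanish; I would also record that the spectral vector determines $(\alpha,E)$, the $q$-powers recovering $\alpha$ and the $t$-contents recovering the tableau, hence $E$. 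The second assertion $\Vert M_{\alpha,E}\Vert^2=\mathcal{R}(\alpha,E)^{-1}\Vert M_{\alpha^+,E}\Vert^2$ is merely a rearrangement of Proposition~\ref{Mnorm1}.

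For the partition formula I would write the right-hand side as $P(\lambda,E)$ and induct on $\vert\lambda\vert$. The base case $\lambda=\boldsymbol 0$ is immediate: every shifted factorial $(\,\cdot\,;q)_0$ equals $1$, $k(\boldsymbol 0)=0$, and $M_{\boldsymbol 0,E}=\tau_E$, so $P(\boldsymbol 0,E)=\Vert\tau_E\Vert^2$. For the inductive step, choose the largest $k$ with $\lambda_k\ge1$ and form the triple displayed before the theorem, $\mu=(\lambda_1,\dots,\lambda_{k-1},\lambda_k-1,0,\dots,0)$, $\alpha=(\lambda_k-1,\lambda_1,\dots,\lambda_{k-1},0,\dots,0)$ with $\alpha^+=\mu$, and $\beta=\Phi\alpha$ with $\beta^+=\lambda$. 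Concatenating Proposition~\ref{Mnorm1} (for $\alpha$ and for $\beta$) with the affine-step norm proposition (using $r_\alpha(1)=k$, whence $q^{\alpha_1+1}=q^{\lambda_k}$) gives
\[
\Vert M_{\lambda,E}\Vert^2=\mathcal{R}(\beta,E)\,\frac{1-q^{\lambda_k}t^{c(k,E)}}{1-q}\,\mathcal{R}(\alpha,E)^{-1}\,\Vert M_{\mu,E}\Vert^2 .
\]
By induction $\Vert M_{\mu,E}\Vert^2=P(\mu,E)$, so the task reduces to the identity $P(\lambda,E)/P(\mu,E)=\bigl(\mathcal{R}(\beta,E)/\mathcal{R}(\alpha,E)\bigr)(1-q^{\lambda_k}t^{c(k,E)})/(1-q)$.

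The left-hand ratio is easy to read off, since $\lambda$ and $\mu$ agree except in coordinate $k$: the single product contributes $1-q^{\lambda_k}t^{c(k,E)}$, the prefactor $(1-q)^{-\vert\lambda\vert}$ contributes $(1-q)^{-1}$, the $t$-prefactor contributes $t^{k(\lambda)-k(\mu)}$, and the double product contributes a ratio of shifted factorials over the pairs $(i,k)$ with $i<k$ and $(k,j)$ with $j>k$. The work is all on the right: writing $\mathcal{R}(\alpha,E)=\prod_{i<j,\ \alpha_i<\alpha_j}u\bigl(\zeta_{\alpha,E}(j)/\zeta_{\alpha,E}(i)\bigr)$ and using $\zeta_{\beta,E}=[\zeta_{\alpha,E}(2),\dots,\zeta_{\alpha,E}(N),q\zeta_{\alpha,E}(1)]$, I would track how the inversion set and rank function change under the cyclic shift $\Phi$, substitute the rewriting $u(z)=t(1-z/t)(1-tz)/(z-1)^2$, and telescope the resulting $u$-factors against the $(q;q)$-factorials.

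I expect the telescoping to be the main obstacle: one must show that $\mathcal{R}(\beta,E)/\mathcal{R}(\alpha,E)$ collapses to the claimed ratio of shifted $q$-factorials times the correct power $t^{k(\lambda)-k(\mu)}$. The delicate points will be pinning down the exact change in the inversion set of $\beta=\Phi\alpha$ relative to $\alpha$ (which selects the surviving $u$-factors) and assembling the leftover powers of $t$ — produced by the explicit factor $t$ in $u(z)=t\cdot\dots$ — into $k(\lambda)=\sum_{i=1}^N(N-2i+1)\lambda_i$. The invariance of $u(z)/t$ under $t\mapsto1/t$, $z\mapsto1/z$ is the structural reason the $t$-powers organize cleanly, and exploiting that symmetry is what keeps the bookkeeping manageable.
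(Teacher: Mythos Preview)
Your proposal is correct and follows essentially the same route as the paper, which itself only sketches the argument (the theorem is cited from \cite{Dunkl2019}): orthogonality from self-adjointness of the $\xi_i$, the second line as a restatement of Proposition~\ref{Mnorm1}, and the partition formula by induction on $|\lambda|$ via exactly the triple $\mu,\alpha,\beta=\Phi\alpha$ displayed just before the theorem. Your honest flagging of the telescoping and the bookkeeping of $t$-powers as the residual work matches where the paper leaves off and defers to the reference.
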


As Griffeth~\cite{Griffeth2010} pointed out there is not much cancellation between
successive terms in~gene\-ral; there is a certain amount for the extreme cases
$E_{0}=\{N-m,N-m+1,\dots,N\}$ and $E_{1}=\{1,2,\dots,m-1\}$. By~\cite[Proposition~11]{Dunkl2019} $\Vert M_{\alpha,E}\Vert^{2}>0$ if $q>0$ and $\min\big(q^{-1/N},q^{1/N}\big)
<t<\max\big(q^{-1/N},q^{1/N}\big)$.

\section{Symmetric Macdonald superpolynomials}\label{SyMP}

\subsection{From nonsymmetric to symmetric}

This concerns polynomials $p$ in $s\mathcal{P}_{m}$ which satisfy
$\boldsymbol{T}_{i}p=tp$ for $1\leq i<N$ and which are eigenfunctions of
$\sum_{i=1}^{N}\xi_{i}$. We call polynomials satisfying $\boldsymbol{T}%
_{i}p=tp$ for all $i$, or $\boldsymbol{T}_{i}p=-p$ for all $i$,
\textit{symmetric} or \textit{antisymmetric}, respectively. (The meaning of
\textit{symmetric} here is not the same as for the symmetric group situation,
as will be shown by example.) There are two approaches to~producing symmetric
polynomials. One way is to identify a set of $M_{\alpha,E}$ which is closed
under the steps $f\rightarrow(\boldsymbol{T}_{i}+b) f$ of the
type described in Proposition~\ref{Mtrans} and then to apply the symmetry
conditions to a linear combination of these polynomials with undetermined
coefficients. The~other way is to apply a symmetrization operator to one
polynomial. The~ori\-gi\-nal idea for these approaches comes from Baker and
Forrester~\cite{BakerForrester99}.

\begin{Definition}
For $\alpha\in\mathbb{N}_{0}^{N}$ and $E\in\mathcal{Y}_{0}\cup\mathcal{Y}_{1}$
let $\lfloor \alpha,E\rfloor $ denote the tableau obtained from
$Y_{E}$ by replacing $i$ by $\alpha_{i}^{+}$ for $1\leq i\leq N$. Let
$\mathcal{M}(\alpha,E) =\operatorname{span}\{M_{\beta,F}\colon \lfloor \beta,F\rfloor =\lfloor \alpha,E\rfloor\}$.
\end{Definition}

Example: let $N=9$, $m=4$, $E=\{2,3,6,8,9\}$, $\alpha=(3,5,6,2,2,1,4,4,6)$, $\alpha^{+}=(6,6,5,4,4,$ $3,2,2,1)$ and
\[
Y_{E}=\begin{bmatrix}
9 & 7 & 5 & 4 & 1\\
\cdot & 8 & 6 & 3 & 2
\end{bmatrix}\!,\qquad
\lfloor \alpha,E\rfloor =\begin{bmatrix}
1 & 2 & 4 & 4 & 6\\
\cdot & 2 & 3 & 5 & 6
\end{bmatrix}\!.
\]

\begin{Theorem}[{\cite[Proposition~5.2]{DunklLuque2012}}]
Suppose $\alpha\in\mathbb{N}_{0}^{N}$ and
$E\in\mathcal{Y}_{0}\cup\mathcal{Y}_{1}$, then there is a series of
transformations of the form $a(\boldsymbol{T}_{i}+b) $ mapping
$M_{\alpha,E}$ to $M_{\beta,F}$ if and only if $\lfloor \beta,F\rfloor =\lfloor \alpha,E\rfloor$.
\end{Theorem}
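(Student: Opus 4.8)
The plan is to read the admissible transformations off the construction and to show that $\lfloor\alpha,E\rfloor$ is a complete invariant for the equivalence relation they generate. A single step $a(\boldsymbol{T}_i+b)$ applied to the $\{\xi_j\}$-eigenfunction $M_{\alpha,E}$ can only produce another eigenfunction (Proposition~\ref{Mtrans}), and since distinct pairs $(\alpha,E)$ have distinct spectral vectors $\zeta_{\alpha,E}(i)=q^{\alpha_i}t^{c(r_\alpha(i),E)}$ (the $q$-exponents recover $\alpha$, hence $r_\alpha$, and then the contents recover $E$), the only possible targets are the ones listed before the theorem: the step either fixes $(\alpha,E)$ (the scalar cases $\boldsymbol{T}_iM=-M$ or $tM$), or sends $(\alpha,E)\mapsto(s_i\alpha,E)$ when $\alpha_i\neq\alpha_{i+1}$ (formula~\eqref{TiM1}), or sends $(\alpha,E)\mapsto(\alpha,s_jE)$ with $j=r_\alpha(i)$ when $\alpha_i=\alpha_{i+1}$. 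Each step is invertible (apply $\boldsymbol{T}_i+b'$ with the transposed spectral vector), so ``connected by a series of steps'' is an equivalence relation, and it suffices to (I) show $\lfloor\cdot,\cdot\rfloor$ is preserved by each step and (II) connect every $(\alpha,E)$ to a canonical representative depending only on $\lfloor\alpha,E\rfloor$.

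For (I) observe that $\lfloor\alpha,E\rfloor$ depends on $\alpha$ only through $\alpha^+$ and on $E$ only through $Y_E$. A step $(\alpha,E)\mapsto(s_i\alpha,E)$ leaves $\alpha^+$ and $E$ unchanged, hence fixes the invariant. For a step $(\alpha,E)\mapsto(\alpha,s_jE)$ we have $\alpha_i=\alpha_{i+1}$ and $j=r_\alpha(i)$; the definition of the rank function gives $r_\alpha(i+1)=r_\alpha(i)+1=j+1$, and from $\alpha^+_{r_\alpha(k)}=\alpha_k$ we obtain $\alpha^+_j=\alpha_i=\alpha_{i+1}=\alpha^+_{j+1}$. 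Since $Y_{s_jE}$ is $Y_E$ with the entries $j$ and $j+1$ interchanged, replacing each entry by its $\alpha^+$-value gives the same filling before and after, so $\lfloor\alpha,s_jE\rfloor=\lfloor\alpha,E\rfloor$. Thus every step preserves the invariant, which is the ``only if'' direction.

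For (II), suppose $\lfloor\alpha,E\rfloor=\lfloor\beta,F\rfloor$. The multiset of entries of this tableau equals $\{\alpha^+_1,\dots,\alpha^+_N\}=\{\beta^+_1,\dots,\beta^+_N\}$, so $\alpha^+=\beta^+=:\lambda$. First I sort: whenever the current composition is not $\lambda$ it has an ascent $\alpha_i<\alpha_{i+1}$, and the type-1 step at $i$ is a valid nonzero move decreasing $\operatorname{inv}$, so bubble-sorting connects $(\alpha,E)$ to $(\lambda,E)$ and $(\beta,F)$ to $(\lambda,F)$. It remains to connect $(\lambda,E)$ to $(\lambda,F)$ by $E$-moves alone. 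Since $\lambda$ is nonincreasing, $r_\lambda=\mathrm{id}$, so the available type-2 moves are exactly the swaps $E\mapsto s_iE$ interchanging the entries $i$ and $i+1$ of $Y_E$, allowed precisely when $\lambda_i=\lambda_{i+1}$ (so $i,i+1$ lie in one block of equal parts) and $i,i+1<N$ lie on opposite sides of the hook (for such values the arm is $E^{C}$ and the leg is $E$); the excluded corner case $j=N-1$ is exactly where the content difference is $\pm1$ and the move degenerates to a scalar.

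Finally the combinatorial heart. The equality $\lfloor\lambda,E\rfloor=\lfloor\lambda,F\rfloor$ says $Y_E$ and $Y_F$ assign each cell to the same block of $\lambda$, so within a block $B_r=\{p,\dots,q\}$ the entries occupying the $B_r$-cells are exactly the values $p,\dots,q$, and a filling is recorded by the word in $\{\mathrm{arm},\mathrm{leg}\}$ read in value order $p<p+1<\cdots<q$ (the corner value $N$ being pinned in its block, and distinct blocks not interacting). An allowed swap $s_i$ is precisely an adjacent transposition of an $\mathrm{arm}/\mathrm{leg}$ pair in this word. The main obstacle is to see that these \emph{restricted} transpositions already connect all fillings; this reduces to the elementary fact that two binary words of the same content are connected by adjacent transpositions of unequal neighbours (a bubble sort), applied independently in each block. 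Sorting each block to the filling with its largest available values in the arm produces a canonical $E^\ast$ determined by $\lfloor\lambda,\cdot\rfloor$, so $(\lambda,E)$ and $(\lambda,F)$ both connect to $(\lambda,E^\ast)$. Combining with the sorting step and transitivity produces a series of transformations from $M_{\alpha,E}$ to $M_{\beta,F}$, completing the ``if'' direction.
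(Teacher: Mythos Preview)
The paper itself does not prove this theorem; it is quoted from \cite[Proposition~5.2]{DunklLuque2012} and no argument is given in the text. Your write-up supplies a correct, self-contained proof along the lines one would expect from the Yang--Baxter graph description in that reference.

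A couple of remarks. Your ``only if'' direction is clean: the two types of nontrivial steps (with the specific $b$ that produces an eigenfunction) either permute $\alpha$ without touching $E$, or swap two entries of $Y_E$ carrying the same $\alpha^+$-value, and in either case $\lfloor\alpha,E\rfloor$ is unchanged. It is worth saying explicitly that the statement is about the \emph{eigenfunction-preserving} steps catalogued before the theorem; with an arbitrary $b$ the image $a(\boldsymbol{T}_i+b)M_{\alpha,E}$ is a linear combination of two $M$'s and is not a Macdonald polynomial, so the phrase ``mapping $M_{\alpha,E}$ to $M_{\beta,F}$'' already forces $b$ to be one of the admissible values.

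For the ``if'' direction your two-stage plan (bubble-sort $\alpha$ to $\lambda=\alpha^+$, then adjust $E$ block by block) is exactly right, and your handling of the corner is correct: the exceptional case $j=N-1$ is precisely where $N-1$ and $N$ are adjacent in $Y_E$ (content difference $\pm1$), and since $N$ is pinned at $[1,1]$ in both $\mathcal{Y}_0$ and $\mathcal{Y}_1$, the remaining values $\{p,\dots,N-1\}$ in that block are still connected by the allowed swaps $s_i$ with $p\le i\le N-2$. The binary-word bubble-sort within each block is the standard argument and needs no further justification.
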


It is a consequence of the transformation rules that if $\lfloor \beta,F\rfloor =\lfloor \alpha,E\rfloor $ then the spectral
vector~$\zeta_{\beta,F}$ is a permutation of $\zeta_{\alpha,E}$. Furthermore
$\mathcal{M}(\alpha,E) $ is an $\mathcal{H}_{N}(t)$-module.

\begin{Theorem}[{\cite[Theorem~5.27]{DunklLuque2012}}]
Suppose $\alpha\in\mathbb{N}_{0}^{N}$ and $E\in\mathcal{Y}_{0}\cup\mathcal{Y}_{1}$ and $\lfloor\alpha,E\rfloor$ is column-strict $($the entries in column 1 are strictly
decreasing$)$ then there is a unique symmetric polynomial $($up~to multiplication
by a constant$)$ in $\mathcal{M}(\alpha,E) $ otherwise there is no
nonzero symmetric polynomial.
\end{Theorem}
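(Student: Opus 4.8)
The plan is to realize a symmetric polynomial as a vector in the finite-dimensional $\mathcal{H}_{N}(t)$-module $\mathcal{M}(\alpha,E)$ killed by every $\boldsymbol{T}_{i}-t$, and to analyze this one generator at a time. Write a candidate as $p=\sum_{(\beta,F)}c_{\beta,F}M_{\beta,F}$, the sum over all pairs with $\lfloor\beta,F\rfloor=\lfloor\alpha,E\rfloor$; since these $M_{\beta,F}$ form a basis of $\mathcal{M}(\alpha,E)$, the requirement $\boldsymbol{T}_{i}p=tp$ for $1\le i<N$ becomes a linear system in the $c_{\beta,F}$. First I would record, from the transformation rules stated just before \eqref{TiM1}, that for each fixed $i$ the basis splits into $\boldsymbol{T}_{i}$-invariant pieces of three kinds: one-dimensional with eigenvalue $t$ (imposing no constraint), one-dimensional with eigenvalue $-1$, and two-dimensional blocks $\{M_{\beta,F},M_{\beta',F'}\}$ on which $\boldsymbol{T}_{i}$ has eigenvalues $t$ and $-1$. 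Because $(\boldsymbol{T}_{i}-t)(\boldsymbol{T}_{i}+1)=0$, imposing $\boldsymbol{T}_{i}p=tp$ forces $c_{\beta,F}=0$ on every eigenvalue-$(-1)$ line, while on each two-dimensional block it forces the pair $(c_{\beta,F},c_{\beta',F'})$ to be proportional to the unique $t$-eigenvector, a fixed nonzero ratio governed by $u(z)$ with $z=\zeta_{\beta,F}(i+1)/\zeta_{\beta,F}(i)$.

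The crux is to pin down exactly when a \emph{pure} eigenvalue-$(-1)$ line occurs. Reading off the rules, $\boldsymbol{T}_{i}M_{\beta,F}=-M_{\beta,F}$ happens precisely when $\beta_{i}=\beta_{i+1}$ and $\{j,j+1\}\subset F$ with $j=r_{\beta}(i)$; since $\beta_{i}=\beta_{i+1}$ gives $r_{\beta}(i+1)=r_{\beta}(i)+1=j+1$, and two consecutive integers $j,j+1$ lying in $F$ occupy two adjacent cells of the first column (leg) of $Y_{F}$, this says exactly that two equal values sit in adjacent cells of column $1$ of $\lfloor\beta,F\rfloor$. As every pair $(\beta,F)$ in the module carries the same tableau $\lfloor\beta,F\rfloor=\lfloor\alpha,E\rfloor$, whose column-$1$ entries are weakly monotone (labels decrease down a column of an RSYT while $\alpha^{+}$ is nonincreasing, so a repeat can only occur in adjacent cells), I would show: a pure $-1$-line occurs for some $(\beta,F)$ and some $i$ if and only if $\lfloor\alpha,E\rfloor$ has two equal entries in column $1$, i.e.\ if and only if $\lfloor\alpha,E\rfloor$ fails to be column-strict. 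One direction realizes such a repeat as a scalar by choosing $F$ with the two equal cells indexed by consecutive $j,j+1\in F$ and $\beta=\alpha^{+}$; the converse is immediate.

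For the non-column-strict case I would conclude non-existence by propagation. The two-dimensional-block ratios are all nonzero for generic $q,t$, and by \cite[Proposition~5.2]{DunklLuque2012} any two basis vectors of $\mathcal{M}(\alpha,E)$ are joined by a chain of maps $a(\boldsymbol{T}_{i}+b)$; hence the graph whose edges are the two-dimensional blocks is connected. A forced vanishing $c_{\beta,F}=0$ coming from a $-1$-line therefore spreads along this graph: in any adjacent block the $t$-eigenvector has both coordinates nonzero, so vanishing of one coordinate forces the other. Thus all $c_{\beta,F}=0$ and $p=0$, giving no nonzero symmetric polynomial.

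In the column-strict case there are no $-1$-lines, so the only constraints are the block ratios. Choosing a spanning tree of the connected block graph and propagating the ratios from one root coefficient determines every $c_{\beta,F}$ up to a single overall scalar, which yields an at most one-dimensional solution space and hence uniqueness. For existence I would verify that this assignment is consistent around cycles and genuinely solves $\boldsymbol{T}_{i}p=tp$ in all directions: consistency is not a separate miracle, since the $\boldsymbol{T}_{i}$ satisfy the braid and quadratic relations, so $\bigcap_{i}\ker(\boldsymbol{T}_{i}-t)$ is a well-defined subspace and the tree-propagated vector lies in it unless the propagation meets an obstruction, and an obstruction can only be a $-1$-line, which we have excluded. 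To certify non-vanishing I would single out the pair $(\alpha^{+},F_{0})$, with $F_{0}$ the unique standardization ordering column $1$ strictly, and track its coefficient---equivalently, check that the Hecke symmetrizer $\sum_{w\in\mathcal{S}_{N}}T(w)$ does not annihilate $M_{\alpha^{+},F_{0}}$. I expect the main obstacle to be exactly this last point: showing that column-strictness not only removes the $-1$-obstructions but also makes the block-ratio system globally consistent with a nonzero solution, rather than merely bounding the solution space from above.
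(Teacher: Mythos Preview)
Your block-by-block analysis of $\boldsymbol{T}_{i}$ acting on $\sum c_{\beta,F}M_{\beta,F}$ is exactly the mechanism the paper uses in the paragraphs following the theorem: the paper records the same trichotomy (a $t$-singleton, a $(-1)$-singleton, or a two-dimensional block with ratio $u_{0}(z)=\frac{t-z}{1-z}$) and arrives at the closed formula $A(\beta,F)=\mathcal{R}_{0}(\beta,F)\,\mathcal{C}_{0}(E_{S})\big/\mathcal{C}_{0}(F)\cdot A(\lambda,E_{S})$. Your identification of the $(-1)$-singletons with a repeated value in column~1 of $\lfloor\alpha,E\rfloor$ matches the paper's remark that the case $c(j,F)=c(j+1,F)-1$ with $\beta_{i}=\beta_{i+1}$ ``occurs only if there are adjacent equal values in column~1 of $\lfloor\lambda,E_{R}\rfloor$, ruled out by hypothesis''. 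For uniqueness and for the non-column-strict non-existence your propagation along the connected block graph (via \cite[Proposition~5.2]{DunklLuque2012}) is correct and is the same argument in slightly different language.

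The genuine divergence is at the point you yourself flag: existence in the column-strict case. Your sentence ``$\bigcap_{i}\ker(\boldsymbol{T}_{i}-t)$ is a well-defined subspace and the tree-propagated vector lies in it unless the propagation meets an obstruction'' is not a proof; well-definedness of the intersection says nothing about whether your particular tree-propagated vector satisfies the constraints coming from the non-tree edges, and the braid relations do not by themselves force the product of the block ratios around every cycle to equal~$1$. The paper bypasses this entirely with the Hecke symmetrizer: $S^{(N-1)}=\sum_{w\in\mathcal{S}_{N}}\boldsymbol{T}(w)$ satisfies $(\boldsymbol{T}_{j}-t)S^{(N-1)}=0$ for all $j$ (Theorem~\ref{symmop}), so $S^{(N-1)}M_{\lambda^{-},E_{R}}$ is automatically symmetric, and Theorem~\ref{SymM} computes its coefficient on $M_{\lambda,E_{S}}$ to be $\prod_{i}[m_{i}]_{t}!\neq 0$. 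That single nonvanishing coefficient replaces your cycle-consistency verification and is what makes the paper's existence argument go through cleanly.
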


In~\cite{Derosiersetal2003} the authors defined a \textit{superpartition} with $N$ parts
and fermionic degree $m$ as an $N$-tuple $\big(\Lambda_{1},\dots,\Lambda_{m};\Lambda_{m+1},\dots,\Lambda_{N}\big)$ which satisfies
$\Lambda_{1}>\Lambda_{2}>\cdots>\Lambda_{m}$ and $\Lambda_{m+1}\geq
\Lambda_{m+2}\geq\cdots\geq\Lambda_{N}$. Suppose $\lambda\in\mathbb{N}_{0}^{N,+}$, $E\in\mathcal{Y}_{0}$ and $\lfloor \lambda,E\rfloor$
is column strict, then $\Lambda_{i}=\lfloor \lambda,E\rfloor[m+2-i,1]$ for~$1\leq i$ $\leq m$ and $\Lambda_{i}=\lfloor\lambda,E\rfloor [ 1,N+1-i] $ for~$m+1\leq i\leq N$, and
also $\Lambda_{m}>\Lambda_{N}$. Alternatively suppose $\lambda\in
\mathbb{N}_{0}^{N,+}$, $E\in\mathcal{Y}_{1}$ and $\lfloor \lambda
,E\rfloor $ is column strict, then $\Lambda_{i}=\lfloor
\lambda,E\rfloor [m+1-i,1] $ for~$1\leq i\leq m$ and~$\Lambda_{i}=\lfloor \lambda,E\rfloor [1,N+2-i] $ for~$m+1\leq i\leq N$, and also $\Lambda_{m}\leq\Lambda_{N}$ (because $\Lambda_{m}=\lfloor \lambda,E\rfloor[1,1]$ and~$\Lambda_{N}=\lfloor \lambda,E\rfloor [1,2]$). Thus
the inequalities $\Lambda_{m}>\Lambda_{N}$ and $\Lambda_{m}\leq\Lambda_{N}$
distinguish $\mathcal{Y}_{0}$ from $\mathcal{Y}_{1}$.

As a standardization for the labels use $\lambda=\alpha^{+}$ and for $E$ use
the \textit{root} $E_{R}$ or the sink $E_{S}$
\begin{Definition}
Suppose $E\in\mathcal{Y}_{0},\lambda\in\mathbb{N}_{0}^{N,+}$ then the root
$E_{R}$ and the sink $E_{S}$ (which implicitly depend on $\lambda$) satisfy%
\begin{gather*}
\operatorname{inv}(E_{R}) =\min\big\{\operatorname{inv}(F) \colon \lfloor \alpha,F\rfloor
=\lfloor \lambda,E\rfloor \big\},
\\
\operatorname{inv}( E_{S}) =\max\big\{\operatorname{inv}(F) \colon \lfloor \alpha,F\rfloor
=\lfloor \lambda,E\rfloor \big\}.
\end{gather*}
The root and the sink are produced by minimizing the entries of $F$ in row 1,
respectively minimizing the entries of $F$ in column 1. For $E\in
\mathcal{Y}_{1}$ the definitions of $E_{R}$ and $E_{S}$ are reversed.
\end{Definition}

So in the above example $E_{R}=E$ and $E_{S}=\{1,3,6,7,9\} $ and
there are four sets $F$ such that $\lfloor \lambda,F\rfloor=\lfloor \lambda,E_{S}\rfloor$.

Consider $p=\sum_{\lfloor \beta,F\rfloor =\lfloor\lambda,E_{R}\rfloor}A(\beta,F) M_{\beta,F}$ then the
action of $\boldsymbol{T}_{i}$ decomposes the sum into pairs and singletons.
Suppose $\lfloor \beta,F\rfloor =\big\lfloor \lambda,E_{R}\big\rfloor$ for some $\beta$ with $\beta_{i}<\beta_{i+1}$, some $i$. Let $z=\zeta_{\beta,F}(i+1) /\zeta_{\beta,F}(i)$ then%
\begin{gather*}
\bigg(\boldsymbol{T}_{i}+\frac{t-1}{z-1}\bigg) M_{\beta,F} =M_{s_{i}\beta,F},\qquad
\boldsymbol{T}_{i}M_{\beta,F}=-\frac{t-1}{z-1}M_{\beta,F}+M_{s_{i}\beta,F},
\\
\bigg(\boldsymbol{T}_{i}+\frac{t-1}{z^{-1}-1}\bigg) M_{s_{i}\beta,F}=u(z) M_{\beta,F},\qquad
\boldsymbol{T}_{i}M_{s_{i}\beta,F}=u(z) M_{\beta,F}-\frac{t-1}{z^{-1}-1}M_{s_{i}\beta,F},
\end{gather*}
and
\[
(\boldsymbol{T}_{i}-t) \big(A(\beta,F)M_{\beta,F}+A(s_{i}\beta,F) M_{s_{i}\beta,F}\big) =0
\]
implies $A(\beta,F) =\frac{t-z}{1-z}A(s_{i}\beta,F)$.

\begin{Definition}
Let $u_{0}(z) :=\frac{t-z}{1-z}$, $u_{1}(z):=\frac{1-tz}{1-z}$ and for $\beta\in\mathbb{N}_{0}^{N}$, $F\in\mathcal{Y}_{0}\cup\mathcal{Y}_{1}$, $k=1,2$ let
\[
\mathcal{R}_{k}(\beta,F) =\prod\limits_{1\leq i<j\leq N,\,\beta_{i}<\beta_{j}}u_{k}
\big(q^{\beta_{j}-\beta_{i}}t^{c(r_{\beta}(j),F) -c(r_{\beta}(i),F)}\big).
\]
\end{Definition}

Thus $\mathcal{R}(\beta,F) =\mathcal{R}_{0}(\beta,F) \mathcal{R}_{1}(\beta,F)$ (see Definition~\ref{Rdef}).

\begin{Lemma}
If $\boldsymbol{T}_{i}p=tp$ for $1\leq i<N$ then
\[
A(\beta,F) =\mathcal{R}_{0}(\beta,F) A\big(\beta^{+},F\big).
\]
\end{Lemma}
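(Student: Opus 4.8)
The plan is to argue by induction on $\operatorname{inv}(\beta)$, using the single-step relation derived immediately above the statement: whenever $\beta_i<\beta_{i+1}$ and $z=\zeta_{\beta,F}(i+1)/\zeta_{\beta,F}(i)$, the condition $\boldsymbol{T}_ip=tp$ forces $A(\beta,F)=\frac{t-z}{1-z}A(s_i\beta,F)=u_0(z)A(s_i\beta,F)$. Since $\lfloor s_i\beta,F\rfloor=\lfloor\beta,F\rfloor$ (the floor depends only on $\beta^+$ and $F$, and $(s_i\beta)^+=\beta^+$), the pair $(s_i\beta,F)$ again indexes a term of $p$, so the relation stays inside the sum defining $p$ and the coefficient $A(s_i\beta,F)$ is meaningful.

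For the base case $\operatorname{inv}(\beta)=0$ we have $\beta=\beta^+$, and $\mathcal{R}_0(\beta^+,F)$ is an empty product, so the assertion reads $A(\beta^+,F)=A(\beta^+,F)$, which is trivial. For the inductive step take $\beta$ with $\operatorname{inv}(\beta)>0$; then there is an index $i$ with $\beta_i<\beta_{i+1}$, and $\operatorname{inv}(s_i\beta)=\operatorname{inv}(\beta)-1$ with $(s_i\beta)^+=\beta^+$. By the induction hypothesis $A(s_i\beta,F)=\mathcal{R}_0(s_i\beta,F)A(\beta^+,F)$, and combining with the single-step relation gives $A(\beta,F)=u_0(z)\mathcal{R}_0(s_i\beta,F)A(\beta^+,F)$. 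It therefore remains to establish the factor identity
\[
\frac{\mathcal{R}_0(\beta,F)}{\mathcal{R}_0(s_i\beta,F)}=u_0\big(q^{\beta_{i+1}-\beta_i}t^{c(r_\beta(i+1),F)-c(r_\beta(i),F)}\big)=u_0(z).
\]

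This factor identity is the heart of the argument, and it is the same bookkeeping as in Proposition~\ref{Mnorm1} (itself modeled on Proposition~\ref{tau0norm}), now carried out with $u_0$ in place of $u$. The key input is the behavior of the rank function under an ascent swap: one checks directly from the definition of $r_\beta$ that $r_{s_i\beta}(i)=r_\beta(i+1)$, $r_{s_i\beta}(i+1)=r_\beta(i)$, and $r_{s_i\beta}(k)=r_\beta(k)$ for $k\neq i,i+1$, i.e.\ $r_{s_i\beta}=r_\beta\circ s_i$. Using this one partitions the pairs entering $\mathcal{R}_0(\beta,F)$ and $\mathcal{R}_0(s_i\beta,F)$ into: (i) pairs disjoint from $\{i,i+1\}$, which contribute identically; (ii) for each $k\notin\{i,i+1\}$, the two pairs linking $k$ to $\{i,i+1\}$, whose contributions match after exchanging the roles of $i$ and $i+1$, since swapping the values at $i,i+1$ together with their ranks leaves each $u_0$-argument fixed while the deciding inequality migrates from one pair to the other; and (iii) the pair $(i,i+1)$ itself, which contributes $u_0(z)$ to $\mathcal{R}_0(\beta,F)$ because $\beta_i<\beta_{i+1}$, but nothing to $\mathcal{R}_0(s_i\beta,F)$ because $(s_i\beta)_i>(s_i\beta)_{i+1}$. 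Hence the quotient collapses to the single surviving factor $u_0(z)$, completing the induction.

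The main obstacle is precisely this cancellation in category (ii): one must verify that for each $k$ the $u_0$-argument attached to $(k,i)$ (resp.\ $(i,k)$) in $\beta$ coincides with that attached to $(k,i+1)$ (resp.\ $(i+1,k)$) in $s_i\beta$, and that the membership conditions pair up so that no factor is created or destroyed outside of $(i,i+1)$. Once the transposition formula $r_{s_i\beta}=r_\beta\circ s_i$ is in hand this is routine but slightly fiddly, and identical in form to the computation already performed for $\mathcal{R}$; since $u=u_0u_1$ factors and the same bookkeeping applies to $u_0$ and $u_1$ separately, the $\mathcal{R}_0$ statement follows exactly as the $\mathcal{R}$ statement does in Proposition~\ref{Mnorm1}.
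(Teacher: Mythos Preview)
Your proof is correct and follows essentially the same approach as the paper: the paper's proof simply records the one-step ratio identity $\mathcal{R}_0(s_i\beta,F)/\mathcal{R}_0(\beta,F)=u_0\big(\zeta_{\beta,F}(i)/\zeta_{\beta,F}(i+1)\big)$ (stated with $\beta_i>\beta_{i+1}$, equivalent to your ascent version) and then invokes ``the same argument as in Proposition~\ref{tau0norm}'', which is exactly the induction on $\operatorname{inv}(\beta)$ and the pair-by-pair cancellation you spell out. Your explicit verification that $r_{s_i\beta}=r_\beta\circ s_i$ and the case analysis (i)--(iii) are precisely what the reference to Proposition~\ref{tau0norm} (and its $\mathcal{R}$-analogue Proposition~\ref{Mnorm1}) is abbreviating.
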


\begin{proof}
Suppose $\beta_{i}>\beta_{i+1}$ then%
\[
\frac{\mathcal{R}_{0}(s_{i}\beta,F)}{\mathcal{R}_{0}(\beta,F)}
=u_{0}\bigg(\frac{\zeta_{\beta,F}(i)}{\zeta_{\beta,F}(i+1)}\bigg).
\]
The same argument as in Proposition~\ref{tau0norm} applies.
\end{proof}

\begin{Definition}
For $k=0,1$ let $\mathcal{C}_{k}(E) :=\prod\limits_{\substack{1\leq i<j<N,\\c(i,E) <0<c(j,E)}} u_{k}\big(t^{v(i,E) -c(ij,E)}\big)$.
\end{Definition}

Thus $\mathcal{C}\big[c(i,E)]_{i=1}^{N}\big) =\mathcal{C}_{0}(E) \mathcal{C}_{1}(E)$ (see~(\ref{Cprod1})).

\begin{Lemma}
$\dfrac{A(\beta,F)}{\mathcal{C}_{0}(F)}%
=\dfrac{A\left(\beta,E_{S}\right)}{\mathcal{C}_{0}\left(E_{S}\right)}$.
\end{Lemma}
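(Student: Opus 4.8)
The plan is to prove the identity one fermionic transposition at a time and then telescope to the sink, in exact parallel with Proposition~\ref{tau0norm} and with the preceding lemma expressing $A(\beta,F)$ through $\mathcal{R}_0(\beta,F)$. Fix $\beta$ and let $F$ range over the sets with $\lfloor\beta,F\rfloor=\lfloor\lambda,E\rfloor$; any two of them are joined by elementary swaps $F\leftrightarrow s_jF$ coming from an index $i$ with $\beta_i=\beta_{i+1}$, $j=r_\beta(i)$, and $(j,j+1)$ straddling $F$ (the genuinely two-dimensional case in the transformation rules after Proposition~\ref{Mtrans}). Each such swap preserves $\lfloor\beta,\cdot\rfloor$ because $\beta^+_j=\beta^+_{j+1}=\beta_i$, and since $E_S$ is the member of extremal $\operatorname{inv}$ it suffices to establish the one-step version of the identity and then compose along a path from $F$ to $E_S$.

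First I would record the one-step relation among the coefficients $A$. On the two-dimensional $\boldsymbol{T}_i$-invariant span of $M_{\beta,F}$ and $M_{\beta,s_jF}$ the operator $\boldsymbol{T}_i$ acts by the implicit formulas following Proposition~\ref{Mtrans}, with
\[
z=\frac{\zeta_{\beta,F}(i+1)}{\zeta_{\beta,F}(i)}=t^{c(j+1,F)-c(j,F)},
\]
the $q$-powers cancelling since $\beta_i=\beta_{i+1}$. Imposing $\boldsymbol{T}_ip=tp$ on $A(\beta,F)M_{\beta,F}+A(\beta,s_jF)M_{\beta,s_jF}$ and using $\boldsymbol{T}_i^2=(t-1)\boldsymbol{T}_i+t$ gives, by exactly the computation that produced the $\mathcal{R}_0$ lemma, the relation $A(\beta,F)=u_0(z)\,A(\beta,s_jF)$.

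Next I would compute the one-step change in $\mathcal{C}_0$. Passing from $F$ to $s_jF$ fixes every content except that the values at positions $j$ and $j+1$ are interchanged, so $c(j,s_jF)=c(j+1,F)$ and $c(j+1,s_jF)=c(j,F)$. In the product defining $\mathcal{C}_0$ the pairs meeting $\{j,j+1\}$ in a single index split, as in the proof of Proposition~\ref{tau0norm}, into matched families $\{(k,j),(k,j+1)\}$ and $\{(j,\ell),(j+1,\ell)\}$ whose factors agree between $\mathcal{C}_0(F)$ and $\mathcal{C}_0(s_jF)$, while the only unmatched contribution is the pair $(j,j+1)$ itself, which is present in exactly one of the two products and contributes a single factor $u_0(z)$ (its argument $t^{c(j+1,F)-c(j,F)}$ coincides with the $z$ above). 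Combining the two one-step relations $A(\beta,F)=u_0(z)\,A(\beta,s_jF)$ and $\mathcal{C}_0(s_jF)=u_0(z)\,\mathcal{C}_0(F)$ shows that the $\mathcal{C}_0$-weighted coefficient $A(\beta,F)\,\mathcal{C}_0(F)$ is left invariant by the step; telescoping along a path from $F$ to $E_S$ then compares the weighted coefficient at $F$ with its value at the sink, which is the assertion.

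The main obstacle is the bookkeeping in this last computation. One must verify that under the content-swap every pair other than $(j,j+1)$ genuinely cancels in the quotient $\mathcal{C}_0(s_jF)/\mathcal{C}_0(F)$ — tracking which factors survive the sign conditions $c(a,\cdot)<0<c(b,\cdot)$, which are forced by the way $(j,j+1)$ straddles $F$ — and must confirm that the lone surviving factor carries exactly the argument appearing in the coefficient relation, so that the two one-step factors line up. Getting this matching right, in particular the exact power of $u_0(z)$ and the side on which the factor appears, is where the sign conventions have to be watched carefully; once the single-step identity is secured, the telescoping and the independence of the chosen chain are routine, since $A(\beta,\cdot)$ and $\mathcal{C}_0(\cdot)$ are functions of $F$ alone.
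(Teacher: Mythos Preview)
Your strategy is exactly the paper's: isolate the one-step relation on the $A$-coefficients coming from $\boldsymbol T_i p=tp$ when $\beta_i=\beta_{i+1}$, $j=r_\beta(i)$, $c(j,F)<0<c(j+1,F)$, compare it with the one-step change in $\mathcal C_0$ under $F\mapsto s_jF$, and telescope to $E_S$. The paper's proof in fact only records the coefficient relation $A(\beta,s_jF)=\dfrac{t-z}{1-z}\,A(\beta,F)$ with $z=t^{c(j,F)-c(j+1,F)}$ and leaves the match with $\mathcal C_0$ to the reader, so your sketch is more explicit than the paper's.

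Two bookkeeping corrections, of precisely the kind you flag at the end. First, with the paper's choice of $z$ the relation reads $A(\beta,s_jF)=u_0(z)\,A(\beta,F)$; in your convention $z_{\mathrm{you}}=t^{c(j+1,F)-c(j,F)}=z^{-1}$, and since $u_0(1/z)\ne 1/u_0(z)$ your displayed relation $A(\beta,F)=u_0(z_{\mathrm{you}})A(\beta,s_jF)$ is not the correct factor. Second, the pair $(j,j+1)$ satisfies $c(j,F)<0<c(j+1,F)$, so the lone extra factor sits in $\mathcal C_0(F)$, not in $\mathcal C_0(s_jF)$, and its argument is $t^{c(j,F)-c(j+1,F)}$; hence $\mathcal C_0(F)=u_0(z)\,\mathcal C_0(s_jF)$. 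Both of your one-step formulas are therefore off, but in the same direction, so your conclusion that $A(\beta,\cdot)\,\mathcal C_0(\cdot)$ is step-invariant is correct.

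That conclusion is in fact the right one: it is what is used immediately after the lemma (where $A(\beta,F)=A(\lambda,E_S)\,\mathcal R_0(\beta,F)\,\mathcal C_0(E_S)/\mathcal C_0(F)$) and in the theorem giving $p_{\lambda,E}$. The displayed lemma with $\mathcal C_0$ in the denominator is a misprint; the invariant is the product $A(\beta,F)\,\mathcal C_0(F)$, not the quotient.
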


\begin{proof}
Consider the possibilities when $\beta_{i}=\beta_{i+1}$ and $j=r_{\beta}(i)$: if $c(j,F) =c( j+1,F)+1$,
that is, $j$ and $j+1$ are in adjacent cells of row 1 of $Y_{F}$ then
$\boldsymbol{T}_{i}M_{\beta,F}=tM_{\beta,F}$, imposing no conditions on
$A(\beta,F) $; if $c(j,F) =c( j+1,F)-1$ then $\boldsymbol{T}_{i}M_{\beta,F}=-M_{\beta,F}$ but this occurs only if there are adjacent equal values ($\beta_{i}$) in column 1 of $\lfloor
\lambda,E_{R}\rfloor $, ruled out by hypothesis; $c(j,F)
<0<c( j+1,F) $. In~this case we relate $M_{\beta,F}$ to
$M_{\beta,s_{j}F}$, where $\operatorname{inv}( s_{j}F) =\operatorname{inv}%
(F) -1$: the formulas similar to~\eqref{TiM2} with
$z=\frac{\zeta_{\beta,F}(i) }{\zeta_{\beta,F}(i+1)
}=t^{c(j,F) -c( j+1,F) }$ appear here:%
\begin{gather*}
\boldsymbol{T}_{i}M_{\beta,s_{j}F} =-\frac{t-1}{z-1}M_{\beta,s_{j}F}+M_{\beta,F},
\\
\boldsymbol{T}_{i}M_{\beta,F} =u(z) M_{\beta,s_{j}F}-\frac{t-1}{z^{-1}-1}M_{\beta,F},
\end{gather*}
then $(\boldsymbol{T}_{i}-t) \big(A(\beta,F)M_{\beta,F}+A(\beta,s_{j}F) M_{\beta,s_{j}F}\big) =0$ implies $A\big(\beta,s_{j}F\big) \allowbreak=\frac{t-z}{1-z}A(\beta,F)$.
\end{proof}

So $A(\beta,F) =\mathcal{R}_{0}(\beta,F) A(\beta^{+},F) =A(\lambda,E_{S})\dfrac{\mathcal{R}_{0}(\beta,F) \mathcal{C}_{0}(E_{S})}{\mathcal{C}_{0}(F) }$.

\begin{Theorem}
Suppose $\lambda\in\mathbb{N}_{0}^{N,+}$, $E\in\mathcal{Y}_{0}$, and
$\lfloor \lambda,E\rfloor $ is column-strict then
\[
p_{\lambda,E}=\sum_{\left\lfloor \alpha,F\right\rfloor =\lfloor \lambda,E\rfloor }\frac{\mathcal{C}_{0}\left( E_{S}\right)
\mathcal{R}_{0}\left( \alpha,F\right) }{\mathcal{C}_{0}(F)
}M_{\alpha,F}%
\]
is the supersymmetric polynomial in $\mathcal{M}(\lambda,E) $,
unique when the coefficient of $M_{\lambda,E_{S}}$ is $1$.
\end{Theorem}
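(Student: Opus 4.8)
The plan is to turn the two preceding lemmas into an explicit determination of the coefficients. Write a prospective symmetric element of $\mathcal{M}(\lambda,E)$ as $p=\sum_{\lfloor\alpha,F\rfloor=\lfloor\lambda,E\rfloor}A(\alpha,F)M_{\alpha,F}$. Existence and uniqueness up to a scalar of such a symmetric polynomial in the column-strict case is already supplied by the preceding theorem of \cite{DunklLuque2012}, so the real task is only to read off the coefficients $A(\alpha,F)$ and to fix the normalization $A(\lambda,E_S)=1$. Note that $(\lambda,E_S)$ does lie in the index set, since $E_S$ satisfies $\lfloor\lambda,E_S\rfloor=\lfloor\lambda,E\rfloor$, so the normalization is meaningful.

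First I would record the action of each $\boldsymbol{T}_i$ on the spanning set $\{M_{\alpha,F}\}$, which at a given $i$ falls into three disjoint types. When $\alpha_i\neq\alpha_{i+1}$, $\boldsymbol{T}_i$ mixes $M_{\alpha,F}$ with $M_{s_i\alpha,F}$. When $\alpha_i=\alpha_{i+1}$ with $j=r_\alpha(i)$ and $j$, $j+1$ row-adjacent or column-adjacent in $Y_F$, $\boldsymbol{T}_i$ acts on $M_{\alpha,F}$ by the scalar $t$ or $-1$ respectively. When $\alpha_i=\alpha_{i+1}$ with $c(j,F)<0<c(j+1,F)$, $\boldsymbol{T}_i$ mixes $M_{\alpha,F}$ with $M_{\alpha,s_jF}$. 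In each mixing case, imposing $(\boldsymbol{T}_i-t)p=0$ on the pertinent two-dimensional invariant subspace forces the ratio of the two coefficients to be a value of $u_0$, exactly as in the displayed formulas preceding the statement.

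Next I would integrate these local ratios. Iterating the first mixing relation along a chain $\alpha\to s_i\alpha\to\cdots\to\alpha^+=\lambda$, the telescoping argument of Proposition~\ref{tau0norm} yields the first Lemma above, $A(\alpha,F)=\mathcal{R}_0(\alpha,F)\,A(\lambda,F)$. Iterating the second mixing relation along a chain of $F$'s adjusting $\operatorname{inv}$ toward $E_S$ yields the second Lemma, which controls the dependence on $F$ through $\mathcal{C}_0$. Combining the two gives $A(\alpha,F)=\mathcal{R}_0(\alpha,F)\,\mathcal{C}_0(E_S)\,\mathcal{C}_0(F)^{-1}A(\lambda,E_S)$, so the normalization $A(\lambda,E_S)=1$ reproduces precisely the asserted coefficients.

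The step that requires care — and the main obstacle — is the fixed type of action, where $\boldsymbol{T}_iM_{\alpha,F}=-M_{\alpha,F}$ would force $A(\alpha,F)=0$ and could collapse the entire solution to zero. I would argue that this obstructive subcase arises exactly when two equal entries $\alpha_i$ occupy adjacent cells of column $1$ of $\lfloor\lambda,E_R\rfloor$, which is forbidden precisely by the column-strict hypothesis; the benign subcase $\boldsymbol{T}_iM_{\alpha,F}=tM_{\alpha,F}$ imposes no constraint. Consequently no coefficient is forced to vanish, and since the two integrated lemmas express every $A(\alpha,F)$ uniquely in terms of $A(\lambda,E_S)$ while the graph of admissible moves connects each $(\alpha,F)$ to $(\lambda,E_S)$, the assignment is path-independent and well defined. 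This candidate is therefore the unique symmetric polynomial furnished by the preceding theorem, normalized so that the coefficient of $M_{\lambda,E_S}$ equals $1$, which is the assertion.
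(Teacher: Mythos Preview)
Your proposal is correct and follows essentially the same route as the paper: you derive the coefficients by combining the two preceding lemmas (the $\mathcal{R}_0$-relation passing from $\alpha$ to $\alpha^+=\lambda$, and the $\mathcal{C}_0$-relation passing from $F$ to $E_S$), invoke the existence/uniqueness theorem from \cite{DunklLuque2012} to guarantee consistency, and use the column-strict hypothesis to rule out the $\boldsymbol{T}_iM_{\alpha,F}=-M_{\alpha,F}$ obstruction. The paper states the theorem immediately after displaying the combined formula $A(\beta,F)=\mathcal{R}_0(\beta,F)\mathcal{C}_0(E_S)\mathcal{C}_0(F)^{-1}A(\lambda,E_S)$ without further proof, so your write-up is in fact slightly more explicit about the connectivity and well-definedness than the paper itself.
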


To show that this meaning of \textit{symmetric} is different from the group
case consider $N=4$, $E=\{1,2\}$, $m=3$, $\lambda=(2,1,0,0)$ and the corresponding symmetric polynomial (too large to display here) begins:
\[
p=x_{1}^{2}x_{2}\theta_{1}\theta_{2}(\theta_{3}+\theta_{4})-x_{1}^{2}x_{3}\theta_{1} (t\theta_{2}\theta_{3} +(t-1)\theta_{2}\theta_{4}-\theta_{3}\theta_{4}) -tx_{1}^{2}x_{4}\theta_{1}(\theta_{2}+\theta_{3}) \theta_{4}+\cdots\,.
\]

\subsection{Symmetrization operator and norms}

The symmetrization operator is defined analogously to the group case.

\begin{Definition}
For $n\geq1$ let $X_{0}=1$ and $X_{n}=1+\boldsymbol{T}_{n}X_{n-1}$, and
$S^{(n) }=X_{1}X_{2}\cdots X_{n}$.
\end{Definition}

Equivalently $X_{n}=1+\boldsymbol{T}_{n}+\boldsymbol{T}_{n}\boldsymbol{T}%
_{n-1}+\dots+\boldsymbol{T}_{n}\cdots\boldsymbol{T}_{2}\boldsymbol{T}_{1}$.

\begin{Theorem}
\label{symmop}
If $1\leq j\leq n$ then $\big(\boldsymbol{T}_{j}-t\big)S^{(n) }=0$.
\end{Theorem}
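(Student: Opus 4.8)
The plan is to identify $S^{(n)}$ with the full Hecke-algebra symmetrizer on the first $n+1$ strands and then exploit a left-multiplication pairing. Because the operators $\boldsymbol{T}_{1},\dots,\boldsymbol{T}_{n}$ satisfy the braid and quadratic relations, the element $\boldsymbol{T}(w):=\boldsymbol{T}_{i_{1}}\cdots\boldsymbol{T}_{i_{\ell}}$ attached to a reduced word $w=s_{i_{1}}\cdots s_{i_{\ell}}\in\mathcal{S}_{n+1}$ is well defined, exactly as for $T(u)$ in Section~\ref{HA}. I will use repeatedly the two standard facts $\boldsymbol{T}_{j}\boldsymbol{T}(w)=\boldsymbol{T}(s_{j}w)$ whenever $\ell(s_{j}w)=\ell(w)+1$, and the quadratic relation in the form $\boldsymbol{T}_{j}^{2}=(t-1)\boldsymbol{T}_{j}+t$.

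First I would prove the factorization lemma $S^{(n)}=\sum_{w\in\mathcal{S}_{n+1}}\boldsymbol{T}(w)$ by induction on $n$, the base case being immediate. Let $\mathcal{S}_{n}\subset\mathcal{S}_{n+1}$ be generated by $s_{1},\dots,s_{n-1}$ and let $C_{n}:=\{e,\,s_{n},\,s_{n}s_{n-1},\dots,s_{n}s_{n-1}\cdots s_{1}\}$ be the set of minimal-length representatives of the cosets $\mathcal{S}_{n}w$ $(w\in\mathcal{S}_{n+1})$. Every $w\in\mathcal{S}_{n+1}$ then factors uniquely as $w=w'c$ with $w'\in\mathcal{S}_{n}$, $c\in C_{n}$ and $\ell(w)=\ell(w')+\ell(c)$. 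Since the displayed expressions $s_{n}\cdots s_{k}$ are reduced, $\sum_{c\in C_{n}}\boldsymbol{T}(c)=1+\boldsymbol{T}_{n}+\boldsymbol{T}_{n}\boldsymbol{T}_{n-1}+\cdots+\boldsymbol{T}_{n}\cdots\boldsymbol{T}_{1}=X_{n}$, and length-additivity gives $\boldsymbol{T}(w)=\boldsymbol{T}(w')\boldsymbol{T}(c)$. Hence $\sum_{w\in\mathcal{S}_{n+1}}\boldsymbol{T}(w)=\big(\sum_{w'\in\mathcal{S}_{n}}\boldsymbol{T}(w')\big)X_{n}=S^{(n-1)}X_{n}=S^{(n)}$, using the inductive hypothesis and $S^{(n)}=X_{1}\cdots X_{n}$.

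With the lemma in hand, fix $j$ with $1\le j\le n$ and partition $\mathcal{S}_{n+1}$ into the two-element orbits $\{w,s_{j}w\}$ of the fixed-point-free involution $w\mapsto s_{j}w$. In each orbit one element, say $w$, satisfies $\ell(s_{j}w)=\ell(w)+1$; then $\boldsymbol{T}_{j}\boldsymbol{T}(w)=\boldsymbol{T}(s_{j}w)$ and $\boldsymbol{T}_{j}\boldsymbol{T}(s_{j}w)=\boldsymbol{T}_{j}^{2}\boldsymbol{T}(w)=(t-1)\boldsymbol{T}(s_{j}w)+t\,\boldsymbol{T}(w)$, so that $\boldsymbol{T}_{j}\big(\boldsymbol{T}(w)+\boldsymbol{T}(s_{j}w)\big)=t\big(\boldsymbol{T}(w)+\boldsymbol{T}(s_{j}w)\big)$. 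Summing over all orbits yields $\boldsymbol{T}_{j}S^{(n)}=tS^{(n)}$, that is $(\boldsymbol{T}_{j}-t)S^{(n)}=0$.

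The quadratic-relation manipulations and the orbit bookkeeping are routine; the real content sits in the factorization lemma, specifically the length-additivity $\ell(w'c)=\ell(w')+\ell(c)$ that licenses $\boldsymbol{T}(w)=\boldsymbol{T}(w')\boldsymbol{T}(c)$. This is a standard property of minimal coset representatives, but it is exactly the step one must treat carefully, since without it the expansion of $X_{1}\cdots X_{n}$ could produce unreduced words and the clean identification with $\sum_{w}\boldsymbol{T}(w)$ would fail. A more self-contained alternative is a direct induction on $n$: for $j<n$ one has $(\boldsymbol{T}_{j}-t)S^{(n)}=\big[(\boldsymbol{T}_{j}-t)S^{(n-1)}\big]X_{n}=0$ by the inductive hypothesis, while the case $j=n$ reduces, after commuting $\boldsymbol{T}_{n}$ past $X_{1}\cdots X_{n-2}$, to the single identity $(\boldsymbol{T}_{n}-t)X_{n-1}X_{n}=0$; but establishing that identity cleanly again amounts to recognizing $X_{n-1}X_{n}$ as a sum of $\boldsymbol{T}(w)$ over coset representatives and pairing, so I would present the symmetrizer argument above.
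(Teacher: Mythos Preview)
Your proof is correct and rests on the same key identification $S^{(n)}=\sum_{w\in\mathcal{S}_{n+1}}\boldsymbol{T}(w)$ as the paper's, but the two arguments reach it and exploit it differently. The paper first establishes the group-algebra identity $\widetilde{X}_{1}\cdots\widetilde{X}_{n}=\sum_{u\in\mathcal{S}_{n+1}}u$ and then argues by a counting/grading comparison that every word in the expansion is already reduced, which justifies replacing each $s_{i}$ by $\boldsymbol{T}_{i}$; you instead use the coset decomposition $\mathcal{S}_{n+1}=\mathcal{S}_{n}\cdot C_{n}$ with length additivity to obtain the identity directly in the Hecke algebra. For the final step the paper invokes the diagram involution $\boldsymbol{T}_{i}\mapsto\boldsymbol{T}_{j+1-i}$ on the subsymmetrizer $X_{1}\cdots X_{j}$ to pull an explicit left factor $(1+\boldsymbol{T}_{j})$ out of $S^{(n)}$, whence $(\boldsymbol{T}_{j}-t)(1+\boldsymbol{T}_{j})=0$; your pairing $\{w,s_{j}w\}$ bypasses that factorization and computes $\boldsymbol{T}_{j}$ on each pair directly via the quadratic relation. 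Your route is slightly more self-contained (no appeal to the group-algebra identity or the diagram automorphism), while the paper's factorization makes the structure $(1+\boldsymbol{T}_{j})\cdot(\text{rest})$ visible, which is convenient for the analogous antisymmetrizer argument later.
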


\begin{proof}
Consider the same formulas with $\boldsymbol{T}_{i}$ replaced by $s_{i}$ and
denote $\widetilde{X}_{n}=1+s_{n}\widetilde{X}_{n-1}$. In~the full expansion
there are $(n+1)!$ terms and the coefficient of $t^{k}$ in
$[n+1]_{t}!$ is the number of terms with $k$ factors. Claim
that $\widetilde{S}^{(n) }=\widetilde{X}_{1}\widetilde{X}_{2}\cdots\widetilde{X}_{n} =\sum_{u\in\mathcal{S}_{n+1}}u\in\mathbb{Z}\mathcal{S}_{n+1}$; proceeding by~induction the statement is true for $n=1$, where $\widetilde{X}_{1}=1+s_{1}$ and now suppose it is true for~$n$
and consider $\sum_{u\in\mathcal{S}_{n+1}}u(1+s_{n+1}+s_{n+1}s_{n}+\dots+s_{N+1}\cdots s_{1})$ acting on $\gamma=(\gamma_{1},\dots,\gamma_{n+2})$; then $s_{n+1}\cdots s_{j}
\gamma=(\gamma_{1},\dots,\gamma_{i-1},\gamma_{i+1},\dots,\gamma_{n+2},\gamma_{i})$. Thus $\sum_{u\in\mathcal{S}_{n+1}}us_{n+1}\cdots s_{j}$ is the sum of all~$u^{(i)}$ such that
$\big( u^{(i)}\gamma\big)_{n+2}=\gamma_{i}$. This shows
$\widetilde{S}^{(n+1) }=\sum_{u\in\mathcal{S}_{n+2}}u$.
Since the number of terms with~$k$ factors in $\widetilde{S}^{(n)}$ is the same as the number of $u$ of length $k$ each term is of~minimum length (the shortest expression of $u$ as a product of $\{s_{i}\}$). Thus replacing each $s_{i}$ by $\boldsymbol{T}_{i}$ shows
that~$S^{(n) }=\sum_{u\in\mathcal{S}_{n+1}}\boldsymbol{T}(u) $.

Replacing $\boldsymbol{T}_{i}$ by $\boldsymbol{T}_{n+1-i}$ for $1\leq i\leq n$
in $S^{(n) }$ does not affect the sum (implicitly the braid
relations are used). Given $j\leq n$ apply the map $\boldsymbol{T}%
_{i}\rightarrow\boldsymbol{T}_{j+1-i}$ in $X_{1}X_{2}\cdots X_{j}$ to obtain%
\[
S^{(n) }=\big( 1+\boldsymbol{T}_{j}\big) \big(
1+\boldsymbol{T}_{j-1}+\boldsymbol{T}_{j-1}\boldsymbol{T}_{j}\big)
\cdots\big( 1+\boldsymbol{T}_{1}+\dots+\boldsymbol{T}_{1}\cdots
\boldsymbol{T}_{j}\big) X_{j+1}\cdots X_{n}%
\]
and it is now obvious that $\big( \boldsymbol{T}_{j}-t\big) S^{(n) }=0$.
\end{proof}

\begin{Corollary}
Suppose $f\in s\mathcal{P}_{m}$ then $\boldsymbol{T}_{j}\big( S^{(N-1) }f\big) =tS^{(N-1) }f$ for $1\leq j<N$.
\end{Corollary}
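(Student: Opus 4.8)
The plan is to obtain this statement as an immediate specialization of Theorem~\ref{symmop}. That theorem supplies the operator identity $(\boldsymbol{T}_j - t)S^{(n)} = 0$ valid for every $n \geq 1$ and every index $j$ with $1 \leq j \leq n$. The only thing to arrange is the choice $n = N-1$, which turns the admissible range $1 \leq j \leq N-1$ into the range $1 \leq j < N$ appearing in the corollary. So the first step is simply to instantiate the theorem at $n = N-1$.

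Before applying the identity to a given $f$, I would record that $S^{(N-1)}$ is a well-defined operator on $s\mathcal{P}_m$. Each $\boldsymbol{T}_i$ preserves the fermionic degree: its action on the $\theta$-variables sends $\mathcal{P}_m$ into itself (Definition~\ref{defTi}), while the difference-operator part only redistributes the $x$-factors and thus keeps the bosonic sector inside $s\mathcal{P}_m$. Consequently each $X_k = 1 + \boldsymbol{T}_k X_{k-1}$, and hence the product $S^{(N-1)} = X_1 X_2 \cdots X_{N-1}$, maps $s\mathcal{P}_m$ into itself; in particular $S^{(N-1)}f \in s\mathcal{P}_m$ whenever $f \in s\mathcal{P}_m$.

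Finally, applying the operator identity $(\boldsymbol{T}_j - t)S^{(N-1)} = 0$ to $f$ yields $(\boldsymbol{T}_j - t)S^{(N-1)}f = 0$, which is exactly $\boldsymbol{T}_j\big(S^{(N-1)}f\big) = t S^{(N-1)}f$ for $1 \leq j < N$, as required. I do not anticipate any genuine obstacle here: the entire substance is carried by Theorem~\ref{symmop}, and what remains is only the bookkeeping of the index range together with the observation that the symmetrizer preserves $s\mathcal{P}_m$.
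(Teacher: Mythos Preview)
Your proof is correct and matches the paper's approach: the corollary is stated without proof in the paper precisely because it is the immediate specialization $n=N-1$ of Theorem~\ref{symmop}. Your additional remark that $S^{(N-1)}$ preserves $s\mathcal{P}_m$ is a harmless clarification that the paper leaves implicit.
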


\begin{Corollary}\label{S2S}
$S^{(n) }S^{(n) }=[n+1]_{t}!S^{(n)}$.
\end{Corollary}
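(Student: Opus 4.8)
The plan is to combine the two facts about $S^{(n)}$ already extracted in the proof of Theorem~\ref{symmop}: the explicit expansion $S^{(n)}=\sum_{u\in\mathcal{S}_{n+1}}\boldsymbol{T}(u)$, where $\boldsymbol{T}(u)=\boldsymbol{T}_{i_{1}}\cdots\boldsymbol{T}_{i_{\ell}}$ for a reduced word $u=s_{i_{1}}\cdots s_{i_{\ell}}$ of length $\ell=\ell(u)$; and the eigenvalue relation $\boldsymbol{T}_{j}S^{(n)}=tS^{(n)}$ for $1\leq j\leq n$, which is just $(\boldsymbol{T}_{j}-t)S^{(n)}=0$. Writing $S^{(n)}S^{(n)}$ and expanding only the left-hand factor via the first fact reduces everything to evaluating $\boldsymbol{T}(u)S^{(n)}$ for each $u\in\mathcal{S}_{n+1}$.

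First I would fix a reduced word $u=s_{i_{1}}\cdots s_{i_{\ell(u)}}$, so that each index $i_{k}$ lies in $\{1,\dots,n\}$ because $\mathcal{S}_{n+1}$ is generated by $s_{1},\dots,s_{n}$. Applying the relation $\boldsymbol{T}_{j}S^{(n)}=tS^{(n)}$ repeatedly, beginning with the rightmost factor $\boldsymbol{T}_{i_{\ell(u)}}$ and working leftward, each of the $\ell(u)$ generators contributes one factor of $t$, giving $\boldsymbol{T}(u)S^{(n)}=t^{\ell(u)}S^{(n)}$. Summing over $u$ then yields
\[
S^{(n)}S^{(n)}=\Bigl(\sum_{u\in\mathcal{S}_{n+1}}\boldsymbol{T}(u)\Bigr)S^{(n)}
=\sum_{u\in\mathcal{S}_{n+1}}t^{\ell(u)}S^{(n)}
=\Bigl(\sum_{u\in\mathcal{S}_{n+1}}t^{\ell(u)}\Bigr)S^{(n)}.
\]

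It remains only to identify the scalar $\sum_{u\in\mathcal{S}_{n+1}}t^{\ell(u)}$ with $[n+1]_{t}!$. This is precisely the Poincar\'{e}-polynomial count already invoked in the proof of Theorem~\ref{symmop}: the number of $u\in\mathcal{S}_{n+1}$ of length $k$ equals the coefficient of $t^{k}$ in $[n+1]_{t}!=[1]_{t}[2]_{t}\cdots[n+1]_{t}$, so the two sides agree as polynomials in $t$. I do not anticipate a genuine obstacle. The only point needing care is the index bookkeeping, namely that every factor $\boldsymbol{T}_{i_{k}}$ appearing in $\boldsymbol{T}(u)$ has index at most $n$, so that the eigenvalue relation applies uniformly; this is automatic from $u\in\mathcal{S}_{n+1}$. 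Well-definedness of $\boldsymbol{T}(u)$ independent of the chosen reduced word is guaranteed by the braid relations, and so the computation of $\boldsymbol{T}(u)S^{(n)}=t^{\ell(u)}S^{(n)}$ is unambiguous.
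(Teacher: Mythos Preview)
Your proof is correct and rests on the same key fact as the paper's, namely $(\boldsymbol{T}_{j}-t)S^{(n)}=0$ from Theorem~\ref{symmop}. The paper's one-line argument is organized slightly differently: rather than expanding $S^{(n)}$ fully as $\sum_{u}\boldsymbol{T}(u)$, it uses the factorization $S^{(n)}=X_{1}X_{2}\cdots X_{n}$ and observes that each $X_{j}=1+\boldsymbol{T}_{j}+\boldsymbol{T}_{j}\boldsymbol{T}_{j-1}+\cdots$ acts on an invariant polynomial as multiplication by $1+t+\cdots+t^{j}=[j+1]_{t}$, so the product yields $[n+1]_{t}!$ directly. Your route via the Poincar\'e polynomial $\sum_{u\in\mathcal{S}_{n+1}}t^{\ell(u)}=[n+1]_{t}!$ is the ``unfactored'' version of the same computation and is equally valid.
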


\begin{proof}
The effect of $X_{j}$ on an invariant polynomial is to multiply by
$1+t+t^{2}+\dots+t^{j}$.
\end{proof}

\begin{Corollary}
Suppose $f,g\in s\mathcal{P}_{m}$ then $\big\langle S^{(N-1)}f,g\big\rangle =\big\langle f,S^{(N-1)}g\big\rangle$.
\end{Corollary}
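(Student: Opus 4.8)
The plan is to use the expansion $S^{(N-1)}=\sum_{u\in\mathcal{S}_{N}}\boldsymbol{T}(u)$ established in the proof of Theorem~\ref{symmop}, together with the self-adjointness of each generator $\boldsymbol{T}_{i}$ for $1\leq i<N$ (hypothesis~\eqref{form1}). Since $S^{(N-1)}$ is a linear combination of products of the $\boldsymbol{T}_{i}$, the self-adjointness of $S^{(N-1)}$ will follow once the adjoint of each term $\boldsymbol{T}(u)$ is identified and one sums over $\mathcal{S}_{N}$.

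First I would record how the adjoint behaves on products. If $u=s_{i_{1}}\cdots s_{i_{\ell}}$ is a shortest expression then $\boldsymbol{T}(u)=\boldsymbol{T}_{i_{1}}\cdots\boldsymbol{T}_{i_{\ell}}$, and peeling off one self-adjoint factor at a time gives
\[
\big\langle \boldsymbol{T}_{i_{1}}\cdots\boldsymbol{T}_{i_{\ell}}f,g\big\rangle
=\big\langle f,\boldsymbol{T}_{i_{\ell}}\cdots\boldsymbol{T}_{i_{1}}g\big\rangle .
\]
Now $u^{-1}=s_{i_{\ell}}\cdots s_{i_{1}}$ is again a shortest expression of the same length $\ell$ (reversing a reduced word yields a reduced word for the inverse), so $\boldsymbol{T}(u^{-1})=\boldsymbol{T}_{i_{\ell}}\cdots\boldsymbol{T}_{i_{1}}$, this element being well-defined by the braid relations satisfied by the $\boldsymbol{T}_{i}$. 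Hence the adjoint of $\boldsymbol{T}(u)$ is exactly $\boldsymbol{T}(u^{-1})$, that is $\big\langle \boldsymbol{T}(u)f,g\big\rangle=\big\langle f,\boldsymbol{T}(u^{-1})g\big\rangle$.

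Finally I would sum over $\mathcal{S}_{N}$ and use that $u\mapsto u^{-1}$ is a bijection of $\mathcal{S}_{N}$:
\[
\big\langle S^{(N-1)}f,g\big\rangle=\sum_{u\in\mathcal{S}_{N}}\big\langle\boldsymbol{T}(u)f,g\big\rangle
=\sum_{u\in\mathcal{S}_{N}}\big\langle f,\boldsymbol{T}(u^{-1})g\big\rangle
=\big\langle f,S^{(N-1)}g\big\rangle .
\]
The one point needing care is the identity that the adjoint of $\boldsymbol{T}(u)$ equals $\boldsymbol{T}(u^{-1})$: it rests on the reverse of a shortest word for $u$ being a shortest word for $u^{-1}$, so that the reversed product of self-adjoint generators is genuinely $\boldsymbol{T}(u^{-1})$ and not merely some rearrangement with extra lower-order corrections. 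Given the well-definedness of $\boldsymbol{T}(\cdot)$ through the braid relations this is routine, and I expect no real obstacle beyond it.
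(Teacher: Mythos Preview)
Your proof is correct and follows essentially the same route as the paper: use the expansion $S^{(N-1)}=\sum_{u\in\mathcal{S}_{N}}\boldsymbol{T}(u)$, observe that self-adjointness of the generators gives $\langle\boldsymbol{T}(u)f,g\rangle=\langle f,\boldsymbol{T}(u^{-1})g\rangle$, and conclude via the bijection $u\mapsto u^{-1}$ on $\mathcal{S}_{N}$. Your explicit remark that the reverse of a reduced word for $u$ is a reduced word for $u^{-1}$ is the only point the paper leaves implicit.
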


\begin{proof}
Suppose $u\in\mathcal{S}_{N}$ and $u=s_{i_{1}}\cdots s_{i_{\ell}}$ is a
shortest expression for $u$ so that $\boldsymbol{T}(u)
=\boldsymbol{T}_{i_{1}}\cdots$ ${}\times\boldsymbol{T}_{i_{\ell}}$ then $\langle
\boldsymbol{T}(u) f,g\rangle =\langle
f,\boldsymbol{T}_{i_{\ell}}\cdots\boldsymbol{T}_{i_{1}}g\rangle
=\big\langle f,\boldsymbol{T}(u^{-1}) g\big\rangle $. Since
$\sum_{u\in\mathcal{S}_{N}}\!\boldsymbol{T}(u)
=\sum_{u\in\mathcal{S}_{N}}\!\boldsymbol{T}(u^{-1}) $ this
completes the proof.
\end{proof}

There is a summation-free formula for $\Vert p_{\lambda,E}\Vert^{2}$, derived as follows:

Suppose $\lfloor \alpha,F\rfloor =\lfloor \lambda,E\rfloor$ then $S^{(N-1)}M_{\alpha,F}=cp_{\lambda,E}$
for some constant $c$, because of the uniqueness of $p_{\lambda,E}$ in
$\mathcal{M}(\lambda,E)$. Then
\begin{gather}
\big\langle p_{\lambda,E},S^{(N-1) }M_{\alpha,F}\big\rangle
 =c\langle p_{\lambda,E},p_{\lambda,E}\rangle
 =\big\langle S^{(N-1) }p_{\lambda,E},M_{\alpha,F}\big\rangle \nonumber
\\ \hphantom{\big\langle p_{\lambda,E},S^{(N-1) }M_{\alpha,F}\big\rangle}
{} =[N]_{t}!\langle p_{\lambda,E},M_{\alpha,F}\rangle =[N]_{t}!\frac{\mathcal{C}_{0} \big(E_{S}\big)\mathcal{R}_{0}(\alpha,F)}{\mathcal{C}_{0}(F)}\Vert M_{\alpha,F}\Vert^{2}.
 \label{pSymM}
\end{gather}
The evaluation depends on determining $c$, which can be done by using
$M_{\lambda^{-},E_{R}}$, where $\lambda^{-}$ is the nondecreasing rearrangement
of $\lambda$. For each $i\leq\lambda_{1}$ let $m_{i}=\#\big\{j\colon \big\lfloor
\lambda,E_{S}\big\rfloor [1,j] =i\big\}$ (the~mul\-ti\-plicity
of $i$ in row 1 of $\big\lfloor \lambda,E_{S}\big\rfloor$). We will show
that the coefficient of $M_{\lambda,E_{S}}$ in $S^{(N-1)
}M_{\lambda_{-},E_{R}}$ is~$[m_{i}]_{t}!$. (This was shown in
\cite[Theorem~5.39]{DunklLuque2012}; we are outlining a proof here with simplifications
due to the simple hook shape $(N-m,1^{m})$, also to accommodate
the different notation.) Here is an illustration of the following theorem and
the method of proof. Suppose $\lambda=(3,2,2,2,1,0)$ and
\[
Y_{E_{R}}=\begin{bmatrix}
6 & 5 & 3 & 2\\
\cdot & 4 & 1 &
\end{bmatrix}\!,\qquad
\big\lfloor \lambda,E_{S}\big\rfloor =\begin{bmatrix}
0 & 1 & 2 & 2\\
\cdot & 2 & 3 &
\end{bmatrix}\!,\qquad
Y_{E_{S}}=\begin{bmatrix}
6 & 5 & 4 & 3\\
\cdot & 2 & 1 &
\end{bmatrix}\!.
\]
We demonstrate the effect on the significant terms by using the spectral vectors:
\begin{gather*}
\big(\lambda^{-},E_{R}\big) \simeq\big(1,qt,q^{2}t^{3},q^{2}t^{2},q^{2}t^{-1},q^{3}t^{-2}\big),
\\
X_{5} \colon\ \big( qt,q^{2}t^{3},q^{2}t^{2},q^{2}t^{-1},q^{3}t^{-2},1\big),
\\
X_{4} \colon\ \big( q^{2}t^{3},q^{2}t^{2},q^{2}t^{-1},q^{3}t^{-2},qt,1\big),
\\
X_{3} \colon\ (1+t) \times\big( q^{2}t^{3},q^{2}t^{-1},q^{3}t^{-2},q^{2}t^{2},qt,1\big),
\\
X_{2} \colon\ (1+t) \times\big( q^{2}t^{-1},q^{3}t^{-2},q^{2}t^{3},q^{2}t^{2},qt,1\big),
\\
X_{1} \colon\ (1+t) \times\big( q^{3}t^{-2},q^{2}t^{-1},q^{2}t^{3},q^{2}t^{2},qt,1\big),
\end{gather*}
and this is the spectral vector of $\big(\lambda,E_{S}\big) $.

\begin{Theorem}\label{SymM}
For each $M_{\beta,F}\in\mathcal{M}\big(\lambda,E_{S}\big) $
with $(\beta,F) \neq\big(\lambda,E_{S}\big)$ there is a
constant $c_{\beta,F}$ such~that
\[
S^{(N-1) }M_{\lambda^{-},E_{R}}=\prod\limits_{i=1}^{\lambda_{1}}[m_{i}]_{t}!M_{\lambda,E_{S}} +\sum_{\lfloor \beta,F\rfloor =\left\lfloor \lambda,E_{S}\right\rfloor}
\big\{c_{\beta,F}M_{\beta,F}\colon (\beta,F) \neq\big(\lambda,E_{S}\big)\big\}.
\]
\end{Theorem}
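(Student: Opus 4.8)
The plan is to apply the factors of $S^{(N-1)}=X_1X_2\cdots X_{N-1}$ to $M_{\lambda^-,E_R}$ one at a time, starting from the rightmost factor $X_{N-1}$, and to track the coefficient of the single Macdonald polynomial lying on the ``principal branch'' whose spectral vectors interpolate between $\zeta_{\lambda^-,E_R}$ and $\zeta_{\lambda,E_S}$. As a preliminary reduction I would note that, by the Corollary to Theorem~\ref{symmop}, the polynomial $S^{(N-1)}M_{\lambda^-,E_R}$ satisfies $\boldsymbol{T}_jp=tp$ for all $1\le j<N$, hence is symmetric; since the $\boldsymbol{T}_i$ preserve $\mathcal{M}(\lambda,E_S)$ and $M_{\lambda^-,E_R}\in\mathcal{M}(\lambda,E_S)$ (because $\lfloor\lambda^-,E_R\rfloor=\lfloor\lambda,E_S\rfloor$), the result lies in $\mathcal{M}(\lambda,E_S)$. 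Consequently it is automatically a combination $\sum_{\lfloor\beta,F\rfloor=\lfloor\lambda,E_S\rfloor}c_{\beta,F}M_{\beta,F}$, and the whole content of the statement is the evaluation of the single coefficient of $M_{\lambda,E_S}$.

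For the bookkeeping I would use the expansion $X_n=1+\boldsymbol{T}_n+\boldsymbol{T}_n\boldsymbol{T}_{n-1}+\cdots+\boldsymbol{T}_n\cdots\boldsymbol{T}_1$, in which the summand $\boldsymbol{T}_n\cdots\boldsymbol{T}_k$ cyclically rotates the content of position $k$ into position $n+1$. Thus each factor $X_j$, applied in turn, is responsible for installing the correct entry into slot $j+1$ of the spectral vector, exactly as in the displayed staircase $X_5,X_4,\dots,X_1$ preceding the theorem. On the principal branch I would invoke the transformation formulas stated just before the theorem together with Proposition~\ref{Mtrans}: whenever the rotated entry carries a bosonic exponent $q^{\beta_i}$ distinct from those it passes, the generic relation $\big(\boldsymbol{T}_i+\tfrac{t-1}{z-1}\big)M_{\beta,F}=M_{s_i\beta,F}$ (or its $F$-changing analogue, with the corresponding $u(z)$ factor) advances the branch, while the off-branch terms feed only into the remainder $\sum\{c_{\beta,F}M_{\beta,F}:(\beta,F)\neq(\lambda,E_S)\}$.

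The factor $\prod_{i=1}^{\lambda_1}[m_i]_t!$ then emerges from the runs of equal bosonic exponents destined for row~$1$ of $\lfloor\lambda,E_S\rfloor$. A maximal block of $m_i$ equal values is symmetrized by a consecutive sub-product of the $X_j$'s, and on such a block the relevant transformations reduce to the situation $\boldsymbol{T}_ip=tp$, so by Corollary~\ref{S2S} (applied across the block, where $X_k$ multiplies an already $t$-invariant configuration by $[k+1]_t$) the accumulated coefficient telescopes to $[1]_t[2]_t\cdots[m_i]_t=[m_i]_t!$. The values of multiplicity one, and the cells filling column~$1$ of the tableau, are placed without collision and contribute the trivial factor; multiplying the block contributions over $i=1,\dots,\lambda_1$ gives the asserted product.

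I expect the main obstacle to be the precise combinatorial accounting: verifying that, within each equal-value block, the surviving contributions sum to exactly $[m_i]_t!$ rather than to a shifted or off-by-one $q$-integer, and that no off-branch term ever re-enters the principal spectral vector. The first point requires carefully matching the indexing of the $X_j$'s against the positions that the block of equal values occupies in $\lambda^-$ versus in $\lambda$, and keeping track of the $u(z)$ weights and the $\mathcal{R}_0$-type ratios so that they telescope; the second is controlled by the triangularity of the $M_{\alpha,E}$ and by the fact that distinct $(\beta,F)$ with $\lfloor\beta,F\rfloor=\lfloor\lambda,E_S\rfloor$ have distinct $\{\xi_i\}$-eigenvalue patterns, so that the principal branch is genuinely a single chain. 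The exclusion of the $i=0$ term from the product is part of this indexing check, reflecting the position of the zero-block at the front of $\lambda^-$.
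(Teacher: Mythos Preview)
Your plan is essentially the paper's: apply $X_{N-1},X_{N-2},\dots,X_1$ in turn to $M_{\lambda^-,E_R}$, follow a single ``principal branch'' of spectral vectors ending at $\zeta_{\lambda,E_S}$, and read off the coefficient. Your preliminary reduction (the result is symmetric and lies in $\mathcal{M}(\lambda,E_S)$, so only one coefficient matters) is exactly right.

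Where your outline diverges from the actual mechanism is in the handling of the transition coefficients. You anticipate having to ``keep track of the $u(z)$ weights and the $\mathcal{R}_0$-type ratios so that they telescope''. In fact there is nothing to telescope: the whole point of starting at $(\lambda^-,E_R)$ rather than at some other label in $\mathcal{M}(\lambda,E_S)$ is that every step on the principal branch is of the $c_{\alpha,E}=1$ type in both \eqref{TiM1} and its $F$-changing analogue. When $\boldsymbol{T}_{N-1}\cdots\boldsymbol{T}_k$ pushes the smallest remaining entry to the right, each swap has $\beta_i<\beta_{i+1}$, so the branch advances with coefficient~$1$; and when a $\boldsymbol{T}_k$ changes $E'\to s_jE'$ (the equal-value, different-row case), the direction is $\operatorname{inv}(E')\to\operatorname{inv}(E')+1$, which for $\mathcal{Y}_0$ is again the $c=1$ case. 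This is the paper's closing sentence ``By beginning with $E_R$ the factors \dots\ are always~$1$'', and it is the reason the final coefficient is \emph{only} the block product, with no stray $u(z)$'s.

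The $[m_i]_t!$ factors also arise more concretely than via Corollary~\ref{S2S}. For a block of $k$ equal smallest values in positions $1,\dots,k$ of the current composition, one has $\boldsymbol{T}_iM=tM$ for $1\le i\le k-1$; then in $X_{N-1}=\sum_{j}\boldsymbol{T}_{N-1}\cdots\boldsymbol{T}_j$, the terms with $1\le j\le k$ all land on the same next node, the $j$th contributing $t^{\,k-j}$, so the step produces exactly $[k]_t$. The next $X$ sees a block of length $k-1$ and gives $[k-1]_t$, etc., yielding $[k]_t!$ for that block. Finally, your attempt to justify the lower limit $i=1$ in the product is chasing a typo: the block of smallest values (which may be $0$'s) contributes $[m_0]_t!$ just like the others, consistently with the $\prod_{i\ge0}[m_i]_t!$ appearing in the subsequent norm theorem.
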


\begin{proof}
The proof relies on identifying the intermediate steps in transforming
$x^{\lambda^{-}}R_{\lambda_{-}}(\tau_{E_{R}})$ to~$x^{\lambda
}\tau_{E_{S}}$. Roughly the action of $X_{N-1-i}$ transforms $M_{\alpha(i),E}$ to $M_{\alpha(i+1) ,E}$ by means of
$\boldsymbol{T}_{N-1-i}$ ${}\times\cdots\boldsymbol{T}_{1}$, where $\alpha(0) =\lambda^{-}$ and for $i\geq1$%
\[
\alpha(i) =\big( \lambda_{N-i},\lambda_{N-2},\dots,\lambda_{1},\lambda_{N-i+1},\dots,\lambda_{N}\big)
\]
however the situation is not this simple because repeated values of
$\lambda_{j}$ have to taken into account. Note that $X_{j}$ does not affect
the variables $x_{k}$ for $k>j+1$. It (almost) suffices to consider the
coefficient of $x^{\alpha(1) }$ in $X_{N-1}M_{\lambda_{-},E_{R}}$. (Throughout we use $\Sigma$ to denote a linear combination of~terms
$M_{\beta,F}$ which can not be transformed into $M_{\lambda,E_{S}}$ by the
operators $X_{1}\cdots X_{j}$.) Suppose that $\lambda_{N}<\lambda_{N-1}$ (that
is, $\lambda_{1}^{-}<\lambda_{2}^{-}$) then $\boldsymbol{T}_{N-1-i}%
\cdots\boldsymbol{T}_{1}M_{\lambda^{-},E_{R}}=M_{\alpha(1)
,E_{R}}+\Sigma$, and the process is repeated with $M_{\alpha(1)
,E_{R}}.$ The other possibility is that $\lambda_{N-k}>\lambda_{N-k+1}%
=\cdots=\lambda_{N}$ for some $k\geq2$. This implies $r_{\lambda^{-}}(i) =N-k+i$ for $1\leq i\leq k$ and $c\left( r_{\lambda^{-}}(i) ,E_{R}\right) =k-i$, because the entries $N-k+1,\dots,N$ are
adjacent in row 1 of $Y_{E_{R}}$ (by hypothesis $\lfloor \lambda,E\rfloor [2,1] >\lambda_{N}$ and so~$\lambda_{N}=\lambda_{N-1}$ implies $\lfloor \lambda,E\rfloor [1,2] =N-1$). Thus $\boldsymbol{T}_{i}M_{\lambda^{-},E_{R}}=tM_{\lambda^{-},E_{R}}$ for $1\leq i\leq k-1$ and
\begin{gather*}
\boldsymbol{T}_{N-1}\cdots\boldsymbol{T}_{k}( 1+\boldsymbol{T}_{k-1} +\boldsymbol{T}_{k-1}\boldsymbol{T}_{k-2}+\dots+\boldsymbol{T}_{k-1}\cdots\boldsymbol{T}_{1}) M_{\lambda^{-},E_{R}}
\\ \qquad
{} =\big(1+t+t^{2}+\dots+t^{k-1}\big) \boldsymbol{T}_{N-1}\cdots\boldsymbol{T}_{k}M_{\lambda^{-},E_{R}}
=[k]_{t}\big\{ M_{\alpha(1) ,E_{R}}+\Sigma\big\} .
\end{gather*}
Then $\alpha(1) =\big(\lambda_{N-1},\dots,\lambda
_{N-k+1},\lambda_{N-k},\dots,\lambda_{1},\lambda_{N}\big) $ and the
previous argument applies with $k-1$ replacing $k$. The result of applying
$X_{N-k}\cdots X_{N-1}$ is $[k]_{t}M_{\alpha(k)
,E^{r}}+\Sigma$. Now $\alpha(k) =\big( \lambda_{N-k}%
,\lambda_{N-k+1},\cdots\big) $ and the $\lambda_{N}<\lambda_{N-1}$ type
process applies.

The last case to consider is $\lambda_{N-i-1-k}>\lambda_{N-i-k}=\dots
=\lambda_{N-i}>\lambda_{N-i+1}$, where $Y_{E_{R}}[\ell,1] =N-i$
and the entries $N-i-1,N-i-2,\dots,N-i-k$ are adjacent in row 1 of $Y_{E_{R}%
}$. Then
\begin{gather*}
\alpha(i) =\big(\lambda_{N-i},\dots,\lambda_{N-i-k},\lambda_{N-i-k-1}\big),
\end{gather*}
and similarly to the previous case $\boldsymbol{T}_{i}M_{\alpha(i) ,E^{\prime}}=tM_{\alpha(i) ,E^{\prime}}$ for $1\leq i\leq k-1$. The set $E^{\prime}$ is an~inter\-mediate step in a series of
transpositions transforming $E_{R}$ to $E_{S}$, at this stage using only~$s_{j}$ with $i>N-i$. Similarly
\begin{gather*}
 \boldsymbol{T}_{N-i-1}\cdots\boldsymbol{T}_{k}(1+\boldsymbol{T}_{k-1} +\boldsymbol{T}_{k-1}\boldsymbol{T}_{k-2}+\dots+\boldsymbol{T}_{k-1}\cdots\boldsymbol{T}_{1}) M_{\alpha(i) ,E^{\prime}}
 \\ \qquad
 =\big( 1+t+t^{2}+\dots+t^{k-1}\big) \boldsymbol{T}_{N-i-1}
\cdots\boldsymbol{T}_{k}M_{\alpha(i) ,E^{\prime}} =[k]_{t}\big\{ M_{\alpha(i+1),E"}+\Sigma\big\} .
\end{gather*}
Here $\boldsymbol{T}_{k}$ transforms $M_{\alpha(i) ,E^{\prime}}$
to $M_{\alpha(i) ,E''}$, where $E''=s_{N-i-1}E^{\prime}$ (since
$c(N-i,E^{\prime}) <0<c(N-i-1,E^{\prime}) $ and
$\operatorname{inv}(E'') =\operatorname{inv}(E^{\prime}) +1$.
Eventually these steps transform $E_{R}$ to~$E_{S}$ and~$\lambda^{-}$ to~$\lambda$. Each set of $m_{i}$ (contiguous) $\lambda_{i}$ values $\lambda
_{j}=i$ in row 1 of $\big\lfloor \lambda,E_{S}\big\rfloor $ contributes a
factor of~$[m_{i}]_{t}!$. By beginning with $E_{R}$ the factors
appearing in $(\boldsymbol{T}_{i}+b) M_{\beta,F}$ are always $1$
(see~(\ref{TiM1}) and~(\ref{TiM2})).
\end{proof}

\begin{Lemma}
Suppose\vspace{-.5ex}
\[
F(\alpha,E) :=\prod\limits_{\substack{1\leq i<j\leq
N\\\alpha_{i}<\alpha_{j},}}g\big(\alpha_{j}-\alpha_{i},c(r_{\alpha}(j),E) -c(r_{\alpha}(i,E))\big)
\]
for some function $g$ then\vspace{-.5ex}
\[
F(\lambda^{-},E) =\prod\limits_{\lambda_{i}>\lambda_{j}}g
\big(\lambda_{i}-\lambda_{j},c(i,E) -c(j,E) \big).
\]
\end{Lemma}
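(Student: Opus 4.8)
The plan is to read off an explicit bijection between the two index sets from the defining property $r_{\alpha}\alpha=\alpha^{+}$ of the rank function, and then match the factors one at a time. Write $\alpha=\lambda^{-}$ and $w=r_{\lambda^{-}}$. Since $\lambda\in\mathbb{N}_{0}^{N,+}$ is already nonincreasing we have $(\lambda^{-})^{+}=\lambda$, and the relation $(u\alpha)_{k}=\alpha_{u^{-1}(k)}$ applied to $w\alpha=\alpha^{+}=\lambda$ gives $\lambda_{k}=\alpha_{w^{-1}(k)}$; putting $i=w^{-1}(k)$ this reads $\lambda_{w(i)}=\alpha_{i}=\lambda^{-}_{i}$. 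Thus $w$ records the position into which each entry of the nondecreasing rearrangement $\lambda^{-}$ is sent inside the nonincreasing $\lambda$.

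First I would fix a pair $i<j$ with $\alpha_{i}<\alpha_{j}$ and set $k=r_{\alpha}(i)$, $\ell=r_{\alpha}(j)$. From $\lambda_{k}=\alpha_{i}<\alpha_{j}=\lambda_{\ell}$ and the nonincreasing order of $\lambda$ one gets $k>\ell$. The corresponding factor of $F(\lambda^{-},E)$ is $g\big(\alpha_{j}-\alpha_{i},\,c(\ell,E)-c(k,E)\big)=g\big(\lambda_{\ell}-\lambda_{k},\,c(\ell,E)-c(k,E)\big)$, and writing $p=\ell$, $q=k$ this is precisely $g\big(\lambda_{p}-\lambda_{q},\,c(p,E)-c(q,E)\big)$ with $p<q$ and $\lambda_{p}>\lambda_{q}$, i.e.\ exactly one factor appearing on the right-hand side.

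Next I would verify that the map $(i,j)\mapsto(p,q)=(r_{\alpha}(j),r_{\alpha}(i))$ is a bijection from $\{(i,j):i<j,\ \lambda^{-}_{i}<\lambda^{-}_{j}\}$ onto $\{(p,q):p<q,\ \lambda_{p}>\lambda_{q}\}$. Its candidate inverse sends $(p,q)$ to $(i,j)=(w^{-1}(q),w^{-1}(p))$; here $\lambda^{-}_{i}=\lambda_{q}<\lambda_{p}=\lambda^{-}_{j}$, and since $\lambda^{-}$ is nondecreasing this strict inequality forces $i<j$, so the preimage indeed lies in the left-hand index set. As $w$ is a permutation the two assignments are mutually inverse, establishing the bijection; the product over the left index set therefore reindexes term by term to the product over the right index set, which is the asserted identity.

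The only delicate point is the bookkeeping of strict versus non-strict inequalities when $\lambda$ has repeated parts. This is handled by noting that both selection conditions, $\lambda^{-}_{i}<\lambda^{-}_{j}$ on the left and $\lambda_{p}>\lambda_{q}$ on the right, are strict: monotonicity of $\lambda^{-}$ and of $\lambda$ then pins down the index order uniquely, so no equal-valued pair is ever selected and the correspondence cannot collapse two distinct pairs. Once this is observed the bijection is forced and the equality of the two products follows at once.
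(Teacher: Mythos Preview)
Your argument is correct and follows essentially the same route as the paper: both match factors via the rank permutation $w=r_{\lambda^{-}}$, using that $\lambda_{w(i)}=\lambda^{-}_{i}$ to carry each pair $(i,j)$ with $\lambda^{-}_{i}<\lambda^{-}_{j}$ to a pair $(p,q)=(w(j),w(i))$ with $\lambda_{p}>\lambda_{q}$. The paper does this by explicitly computing $r_{\lambda^{-}}$ on each block of equal values, whereas you extract the bijection directly from $r_{\alpha}\alpha=\alpha^{+}$; this is a slightly cleaner packaging of the same idea.
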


\begin{proof}
It is clear that $\lambda_{i}>\lambda_{j}$ if and only if $\lambda_{N+1-i}%
^{-}>\lambda_{N+1-j}^{-}$. If $\lambda_{a-1}>\lambda_{a}=\cdots=\lambda
_{a+k-1}>\lambda_{a+k}$ then $[r_{\lambda^{-}}(b+i)]_{i=1}^{k}=[a,a+1,\dots,a+k-1]$ and $\lambda
_{b+i}^{-}=\lambda_{a}$ for $b=N+1-a-k,1\leq i\leq k$. The corresponding
contents $[c(r_{\lambda^{-}}(b+i)),E]_{i=1}^{k}$ are the same as $[c(a+i-1),E]_{i=1}^{k}$. So each term in $F_{\lambda^{-},E}$ matches one in the
stated $\lambda$-product.
\end{proof}

\begin{Theorem}
Suppose $\lfloor \lambda,E_{S}\rfloor$ is column-strict and
$m_{i}=\#\big\{ j\colon \lfloor \lambda,E_{S}\rfloor [1,j] =i\big\}$ for $0\leq i$ $\leq\lambda_{1}$ then\vspace{-1ex}
\begin{gather*}
\Vert p_{\lambda,E_{S}}\Vert^{2} =t^{2(N-m-1)
+k(\lambda)}[m+1]_{t}(1-q)^{-\vert \lambda\vert }
\prod\limits_{i=1}^{N}\big( qt^{c\left(i,E_{S}\right)};q\big)_{\lambda_{i}}
\\ \hphantom{\Vert p_{\lambda,E_{S}}\Vert^{2} =}
{} \times\prod\limits_{1\leq i<j\leq N}\frac{\big(qt^{c\left(
i,E_{S}\right) -c\left( j,E_{S}\right) -1};q\big)_{\lambda_{i}%
-\lambda_{j}}\big( qt^{c\left( i,E_{S}\right) -c\left( j,E_{S}\right)
+1};q\big)_{\lambda_{i}-\lambda_{j}-1}}{\big(1-q^{\lambda_{i}%
-\lambda_{j}}t^{c\left( i,E_{S}\right) -c\left( j,E_{S}\right) }\big)
\big( qt^{c\left(i,E_{S}\right) -c\left(j,E_{S}\right) };q\big)
_{\lambda_{i}-\lambda_{j}-1}^{2}}
\\ \hphantom{\Vert p_{\lambda,E_{S}}\Vert^{2} =}
{} \times\frac{[N]_{t}!}{\prod\limits_{i\geq0}[m_{i}]_{t}!}\mathcal{C}_{0}\big(E_{S}\big) \mathcal{C}%
_{1}\big( E_{R}\big) .
\end{gather*}
\end{Theorem}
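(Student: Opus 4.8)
The plan is to reduce $\Vert p_{\lambda,E_S}\Vert^2$ to the already-known norm $\Vert M_{\lambda,E_R}\Vert^2$ and then to carry out the content bookkeeping converting the root data $E_R$ into the sink data $E_S$ of the claimed formula (throughout $E\in\mathcal{Y}_0$). First I would apply the identity \eqref{pSymM} with $(\alpha,F)=(\lambda^-,E_R)$,
\begin{gather*}
c\,\Vert p_{\lambda,E_S}\Vert^2=[N]_t!\,\frac{\mathcal{C}_0\big(E_S\big)\,\mathcal{R}_0(\lambda^-,E_R)}{\mathcal{C}_0(E_R)}\,\Vert M_{\lambda^-,E_R}\Vert^2,
\end{gather*}
where $c$ is the constant with $S^{(N-1)}M_{\lambda^-,E_R}=c\,p_{\lambda,E_S}$. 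Comparing coefficients of $M_{\lambda,E_S}$: by Theorem~\ref{SymM} the coefficient on the left is $\prod_i[m_i]_t!$, while the coefficient of $M_{\lambda,E_S}$ in $p_{\lambda,E_S}$ is $\mathcal{R}_0(\lambda,E_S)=1$ (no inversions, as $\lambda$ is nonincreasing); hence $c=\prod_i[m_i]_t!$, which supplies the factor $[N]_t!/\prod_i[m_i]_t!$.

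Next I would use Proposition~\ref{Mnorm1} with $(\lambda^-)^+=\lambda$ to write $\Vert M_{\lambda^-,E_R}\Vert^2=\mathcal{R}(\lambda^-,E_R)^{-1}\Vert M_{\lambda,E_R}\Vert^2$ and factor $\mathcal{R}=\mathcal{R}_0\mathcal{R}_1$, so that $\mathcal{R}_0(\lambda^-,E_R)/\mathcal{R}(\lambda^-,E_R)=\mathcal{R}_1(\lambda^-,E_R)^{-1}$. Substituting the explicit formula for $\Vert M_{\lambda,E_R}\Vert^2$ together with $\Vert\tau_{E_R}\Vert^2=t^{2(N-m-1)}[m+1]_t\,\mathcal{C}_0(E_R)\mathcal{C}_1(E_R)$ from Proposition~\ref{tau0norm}, the factor $\mathcal{C}_0(E_R)$ cancels against the $\mathcal{C}_0(E_R)^{-1}$ above, leaving the prefactor $t^{2(N-m-1)+k(\lambda)}[m+1]_t(1-q)^{-\vert\lambda\vert}$, the trailing $\tfrac{[N]_t!}{\prod_i[m_i]_t!}\mathcal{C}_0(E_S)\mathcal{C}_1(E_R)$, and the $q$-products. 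The two single products coincide, $\prod_i\big(qt^{c(i,E_R)};q\big)_{\lambda_i}=\prod_i\big(qt^{c(i,E_S)};q\big)_{\lambda_i}$, because each is the product over the cells of the fixed tableau $\lfloor\lambda,E\rfloor$ of $(qt^{\gamma};q)_{v}$, where $\gamma$ and $v$ are the content and the entry of a cell, and so is independent of whether the cells are labelled by $E_R$ or $E_S$.

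Everything then comes down to the pair-product identity
\begin{gather*}
\frac{1}{\mathcal{R}_1(\lambda^-,E_R)}\prod_{1\le i<j\le N}\frac{\big(qt^{c(i,E_R)-c(j,E_R)-1};q\big)_{\lambda_i-\lambda_j}\big(qt^{c(i,E_R)-c(j,E_R)+1};q\big)_{\lambda_i-\lambda_j}}{\big(qt^{c(i,E_R)-c(j,E_R)};q\big)_{\lambda_i-\lambda_j}^2}
\\
=\prod_{1\le i<j\le N}\frac{\big(qt^{c(i,E_S)-c(j,E_S)-1};q\big)_{\lambda_i-\lambda_j}\big(qt^{c(i,E_S)-c(j,E_S)+1};q\big)_{\lambda_i-\lambda_j-1}}{\big(1-q^{\lambda_i-\lambda_j}t^{c(i,E_S)-c(j,E_S)}\big)\big(qt^{c(i,E_S)-c(j,E_S)};q\big)_{\lambda_i-\lambda_j-1}^2}.
\end{gather*}
Using $(z;q)_d=(z;q)_{d-1}(1-q^{d-1}z)$ one checks termwise that each right-hand factor equals the corresponding left-hand factor written with $E_S$-contents, divided by $u_1\big(q^{\lambda_i-\lambda_j}t^{c(i,E_S)-c(j,E_S)}\big)$, where $u_1(z)=\frac{1-tz}{1-z}$. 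By the Lemma, $\mathcal{R}_1(\lambda^-,E_R)=\prod_{\lambda_i>\lambda_j}u_1\big(q^{\lambda_i-\lambda_j}t^{c(i,E_R)-c(j,E_R)}\big)$, so the identity is equivalent to a comparison, cell by cell, of the $E_R$- and $E_S$-labellings of the hook.

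The hard part will be this comparison of the root and sink labellings. Both fill the same cells, with the same contents, by the two extremal rules (minimizing the row-$1$ entries for $E_R$, minimizing the column-$1$ entries for $E_S$), so the only pairs whose content differences change are those lying in a block of equal $\lambda$-values along row~$1$; there the contents are consecutive integers and the associated $u_1$-factors telescope. I expect this telescoping both to convert each Pochhammer $(\,\cdot\,;q)_{\lambda_i-\lambda_j}$ into the $(\,\cdot\,;q)_{\lambda_i-\lambda_j-1}$ demanded by the target and to produce the single extra denominator factor $1-q^{\lambda_i-\lambda_j}t^{c(i,E_S)-c(j,E_S)}$ in each term, exactly in the style of the telescoping arguments of Propositions~\ref{tau0norm} and~\ref{Dnorm}. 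Verifying that this bookkeeping closes with no leftover factors is the crux.
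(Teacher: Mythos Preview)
Your reduction is exactly the paper's: apply \eqref{pSymM} at $(\lambda^{-},E_R)$, read off $c=\prod_i[m_i]_t!$ from Theorem~\ref{SymM}, pass to $\Vert M_{\lambda,E_R}\Vert^2$ via Proposition~\ref{Mnorm1}, substitute the explicit norm together with $\Vert\tau_{E_R}\Vert^2=t^{2(N-m-1)}[m+1]_t\,\mathcal C_0(E_R)\mathcal C_1(E_R)$, and cancel $\mathcal R_0(\lambda^{-},E_R)$ and $\mathcal C_0(E_R)$. At that point the paper simply uses the Lemma to rewrite $\mathcal R_1(\lambda^{-},E_R)=\prod_{\lambda_i>\lambda_j}u_1\bigl(q^{\lambda_i-\lambda_j}t^{c(i,E_R)-c(j,E_R)}\bigr)$ and observes that dividing the double $q$-product by this changes two of the factors $(\cdot;q)_{\lambda_i-\lambda_j}$ to $(\cdot;q)_{\lambda_i-\lambda_j-1}$; that is the whole proof, with the contents left as $E_R$-contents. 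The replacement of $E_R$ by $E_S$ in the two $q$-product lines is stated afterwards as a remark.

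Where you diverge is in treating the $E_R\to E_S$ conversion as the ``hard part'' and anticipating a telescoping argument across equal-$\lambda$ blocks. That is unnecessary. Both the single product $\prod_i(qt^{c(i,E)};q)_{\lambda_i}$ and the double product over pairs with $\lambda_i>\lambda_j$ are really products over cells (respectively unordered pairs of cells) of the fixed tableau $\lfloor\lambda,E\rfloor$, with each factor depending only on the entry and content of the cell(s); they are therefore independent of which RSYT $E$ with $\lfloor\lambda,E\rfloor$ fixed is used to index them. (Pairs with $\lambda_i=\lambda_j$ contribute $1$ in the original Pochhammer form, and $\mathcal R_1(\lambda^{-},E_R)$ has no such factors, so they never enter.) Your termwise identity relating the two displayed pair-products via $u_1$ is correct, and once you note this labelling-invariance the ``crux'' evaporates: there is nothing to telescope.
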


\begin{proof}
By (\ref{pSymM}) and Theorem~\ref{SymM} $c=\prod_{i\geq0}[m_{i}]_{t}!$,
\begin{align*}
\Vert p_{\lambda,E_{S}}\Vert^{2} &=\frac{[N]_{t}!}{\prod\limits_{i\geq0}[m_{i}]_{t}!}
\frac{\mathcal{C}_{0}\big( E_{S}\big) \mathcal{R}_{0}\big( \lambda^{-},E_{R}\big)
}{\mathcal{C}_{0}\big( E_{R}\big)}\big\Vert M_{\lambda^{-},E_{R}}\big\Vert^{2}
\\
 &=\frac{[N]_{t}!}{\prod\limits_{i\geq0}[m_{i}]_{t}!}\frac{\mathcal{C}_{0}\big(E_{S}\big) \mathcal{R}_{0}\big(\lambda^{-},E_{R}\big)}{\mathcal{C}_{0}\big(E_{R}\big)
\mathcal{E}\big(\lambda^{-},E_{R}\big)}\big\Vert M_{\lambda,E_{R}}\big\Vert^{2}%
\end{align*}
and
\begin{gather*}
\big\Vert M_{\lambda,E_{R}}\big\Vert^{2} =t^{\ell(\lambda)}\Vert \tau_{E_{R}}\Vert^{2}(1-q)
^{-\vert \lambda\vert }\prod\limits_{i=1}^{N}\big(qt^{c\left(i,E_{R}\right)};q\big)_{\lambda_{i}}
\\ \hphantom{\big\Vert M_{\lambda,E_{R}}\big\Vert^{2} =}
 {}\times\prod\limits_{1\leq i<j\leq N}\frac{\big(qt^{c\left(i,E_{R}\right) -c\left( j,E_{R}\right) -1};q\big)_{\lambda_{i}-\lambda_{j}}\big( qt^{c\left( i,E_{R}\right) -c\left( j,E_{R}\right)
+1};q\big)_{\lambda_{i}-\lambda_{j}}}{\big( qt^{c\left( i,E_{R}\right)
-c\left(j,E_{R}\right)};q\big)_{\lambda_{i}-\lambda_{j}}^{2}}.
\end{gather*}
Also $\Vert \tau_{E_{R}}\Vert^{2}=t^{2(N-m-1)}[m+1]_{t}\mathcal{C}\big([c(i,E_{R})]_{i=1}^{N}\big) =t^{2(N-m-1)}[m+1]_{t}\mathcal{C}_{0}\big(E_{R}\big) \mathcal{C}_{1}\big(E_{R}\big)$. Since $\mathcal{E}\big(\lambda^{-},E_{R}\big) =\mathcal{R}_{0}\big(\lambda^{-},E_{R}\big) \mathcal{R}_{1}\big(\lambda^{-},E_{R}\big)$ the terms $\mathcal{R}_{0}\big( \lambda^{-},E_{R}\big)$ and
$\mathcal{C}_{0}\big( E_{R}\big) $ cancel out. By the lemma
\begin{gather*}
\mathcal{R}_{1}\big(\lambda^{-},E_{S}\big) =\prod\limits_{1\leq
i<j\leq N}u_{1}\big( q^{\lambda_{i}-\lambda_{j}}t^{c\left( i,E_{R}\right)
-c\left( j,E_{R}\right) }\big)
 =\prod\limits_{1\leq i<j\leq N}\frac{1-q^{\lambda_{i}-\lambda_{j}%
}t^{c\left( i,E_{R}\right) -c\left( j,E_{R}\right) +1}}{1-q^{\lambda
_{i}-\lambda_{j}}t^{c\left( i,E_{R}\right) -c\left( j,E_{R}\right) }}%
\end{gather*}
and dividing by this product changes two of the $\left( \ast;q\right)
_{\lambda_{i}-\lambda j}$ terms to $\left( \ast;q\right)_{\lambda
_{i}-\lambda j-1}$.
\end{proof}

Note that $E_{R}$ can be replaced by $E_{S}$ in the first two lines of the
formula for $\Vert p_{\lambda,E_{S}}\Vert^{2}.$ By using the $M$
map the formulas produce supersymmetric polynomials in $\mathcal{P}_{m,1}$:
consider the polynomials $M(p_{\lambda,E_{S}})$, where
$p_{\lambda,E_{S}}\in\mathcal{P}_{m-1,0}$. This is why we do not go into
detail about the $E\in\mathcal{E}_{1}$ case. The norm formula implies the
identity%
\[
\sum_{\lfloor \alpha,F\rfloor =\lfloor \lambda,E\rfloor
}\frac{\mathcal{C}_{1}(F) \mathcal{R}_{0}(\alpha,F)}{\mathcal{C}_{0}(F) \mathcal{R}_{1} (\alpha,F)} =\frac{[N]_{t}!}{\prod\limits_{i\geq0}[m_{i}]_{t}!}\frac{\mathcal{C}_{1}\big( E_{R}\big)}
{\mathcal{C}_{0}\big( E_{S}\big) \mathcal{R}_{1}\big(\lambda^{-},E_{R}\big)}.
\]
This formula was checked by computer algebra for a ``small'' example,
$N=5$, $m=2$, $\lambda=(2,2,1,1,0)$ with
\[
\big\lfloor \lambda,E_{S}\big\rfloor =\begin{bmatrix}
0 & 1 & 2\\
\cdot & 1 & 2
\end{bmatrix}\!,\qquad
E_{R}=\{2,4,5\},\qquad
E_{S}=\{1,3,5\} ;
\]
there are $120$ labels $(\beta,F)$ with $\lfloor \beta,F\rfloor =\big\lfloor \lambda,E_{S}\big\rfloor$, that is $\dim\mathcal{M}\big(\lambda,E_{S}\big) =120$.

\subsection{Special values}

In~the scalar Macdonald polynomial situation there are formulas for special
values. There is one such fairly simple formula here. Let $F=\{1,2,\dots,m,N\}$. (In~the $\mathcal{Y}_{1}$-case use $E_{1}$.) Refer
to Proposition~\ref{Dnorm} for useful facts about $\tau_{F}$ and $\Vert\tau_{F}\Vert^{2}$.

\begin{Proposition}\label{pfact}
Suppose $p\in s\mathcal{P}_{m}$ is symmetric and%
\[
z:=\big( z_{1},z_{2},\dots,z_{m},t^{N-m-1},\dots,t^{2},t,1\big)
\]
then
\[
p(z;\theta) =\prod\limits_{1\leq i<j\leq m}(z_{i}-tz_{j})
\prod\limits_{k=1}^{m}\big(z_{k}-t^{N-m}\big)p_{0}(z_{1},\dots,z_{m})\tau_{F},
\]
where $p_{0}$ is $\mathcal{S}_{m}$-symmetric.
\end{Proposition}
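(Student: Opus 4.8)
The plan is to restrict the $N-1$ symmetry relations $\boldsymbol{T}_i p = tp$ to the affine slice $\Sigma$ on which $x_j = t^{N-j}$ for $j>m$, and to read off from them first the fermionic part of $p$ and then the scalar factors. The basic observation is that on $\{x_i = tx_{i+1}\}$ the defining formula for $\boldsymbol{T}_i$ collapses: since $(1-t)x_{i+1}/(x_i-x_{i+1}) = -1$ there, one gets
\[
\boldsymbol{T}_i p = -p + \big(1 + T_i\big)\, p(xs_i;\theta),
\]
so $\boldsymbol{T}_i p = tp$ becomes $(t+1)p = (1+T_i)\,p(xs_i;\theta)$ on that hyperplane. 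As $\tfrac{1}{t+1}(1+T_i)$ is the projection onto the $t$-eigenspace of $T_i$ on $\mathcal{P}_m$, this shows $p|_{\{x_i=tx_{i+1}\}}$ is valued in that eigenspace. Because consecutive tail coordinates on $\Sigma$ satisfy $z_j = tz_{j+1}$ for $m+1\le j\le N-1$, I conclude that $p(z;\theta)$, viewed as a $\mathcal{P}_m$-valued polynomial in $z_1,\dots,z_m$, takes its values in $V:=\{v\in\mathcal{P}_m: T_i v = tv,\ m+1\le i\le N-1\}$, the subspace symmetric in the Hecke sense in the tail positions.

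Next I would identify $V$ using the hook structure of $\mathcal{P}_m$. Restricting the tail subalgebra $\langle T_{m+1},\dots,T_{N-1}\rangle\cong\mathcal{H}_{N-m}(t)$, the trivial character occurs in $\bigwedge^k$ of the tail variables only for $k\le 1$, so $V$ is spanned by $\theta_1\cdots\theta_m$ together with the $m$ vectors $\phi_{\{1,\dots,m\}\setminus\{a\}}\Theta$, where $\Theta$ is the tail-symmetric one-form; in particular $\tau_F\in V$. The crux is to cut $V$ down to the line $\mathbb{C}\tau_F$. For this I use the remaining relations: the head relations $\boldsymbol{T}_i p = tp$ for $i<m$, which involve only the free variables and the action of $T_i$ on the sign-type directions of $V$, together with the interface relation at $i=m$. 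The interface is exploited on the further slice $z_m=t^{N-m}$, where $x_m=tx_{m+1}$ also holds; the collapse above then forces $p$ there into $\{v: T_i v=tv,\ m\le i\le N-1\}$, which by the same hook count is one-dimensional and, crucially, does \emph{not} contain $\tau_F$ (by Corollary~\ref{onedim}, $T_m\tau_F$ is not $t\tau_F$, since $(m,m+1)\in F\times F^{C}$). Reducing to the $\mathcal{Y}_0$ isotype, so that fermionic values lie in $\mathcal{P}_{m,0}$, and combining these constraints pins the fermionic value to $\mathbb{C}\tau_F$, i.e.\ $p(z;\theta)=P(z_1,\dots,z_m)\tau_F$. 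I expect this fermionic collapse to be the main obstacle, since it requires tracking how the two sign-type copies and the $(2,1^{m-2})$-isotype inside $V$ are eliminated by the head and interface relations.

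Finally, with $p(z;\theta)=P\,\tau_F$ in hand, the scalar factorization follows from the same relations. For $i<m$ one has $T_i\tau_F=-\tau_F$, so $\boldsymbol{T}_i p=tp$ reduces, after clearing $x_i-x_{i+1}$, to the exchange relation
\[
P(z)\,(tz_i - z_{i+1}) = P(zs_i)\,(tz_{i+1}-z_i),
\]
whence $P$ vanishes at $z_i=tz_{i+1}$; this gives divisibility by $\prod_{i<j}(z_i-tz_j)$, and a direct check shows the quotient $Q:=P/\prod_{i<j}(z_i-tz_j)$ is $\mathcal{S}_m$-symmetric. The interface computation above shows $P$ vanishes at $z_m=t^{N-m}$, and by the symmetry of $Q$ this propagates to all $z_k=t^{N-m}$, so $\prod_{k=1}^m(z_k-t^{N-m})$ divides $Q$. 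Dividing out this symmetric factor leaves $p_0:=Q/\prod_k(z_k-t^{N-m})$, again $\mathcal{S}_m$-symmetric, which yields the asserted form $p(z;\theta)=\prod_{1\le i<j\le m}(z_i-tz_j)\prod_{k=1}^m(z_k-t^{N-m})\,p_0(z_1,\dots,z_m)\,\tau_F$.
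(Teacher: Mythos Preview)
Your first step (the collapse of $\boldsymbol{T}_i$ on the hyperplane $x_i=tx_{i+1}$, giving $T_i p(z;\theta)=tp(z;\theta)$ for $m+1\le i\le N-1$) and your final step (the exchange relation from $T_i\tau_F=-\tau_F$ for $i<m$, and the interface vanishing at $z_m=t^{N-m}$) match the paper's proof exactly.

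The divergence is in the middle step, where you try to pin the fermionic value to $\mathbb{C}\tau_F$. You describe the full $T_{m+1},\dots,T_{N-1}$-invariant subspace $V\subset\mathcal{P}_m$, note it is $(m{+}1)$-dimensional, and then propose to cut it down using the head relations $\boldsymbol{T}_i p=tp$ for $i<m$ together with the interface at $i=m$ --- and you flag this as ``the main obstacle''. This is where you are overworking. The paper bypasses all of it with one observation: from $T_i p(z;\theta)=tp(z;\theta)$ for $m+1\le i\le N-1$ one gets immediately, via $\omega_i=t^{-1}T_i\omega_{i+1}T_i$ and $\omega_N=1$, that
\[
\omega_i\, p(z;\theta)=t^{N-i}\,p(z;\theta),\qquad m+1\le i\le N.
\]
But $\mathcal{P}_{m,0}$ has the basis $\{\tau_E:E\in\mathcal{Y}_0\}$ of simultaneous $\omega$-eigenvectors, and the conditions $c(i,E)=N-i$ for $m+1\le i\le N$ force the entries $m+1,\dots,N$ to fill row~1 of $Y_E$ in the unique possible way, hence $E=\{1,\dots,m,N\}=F$. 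So $V\cap\mathcal{P}_{m,0}=\mathbb{C}\tau_F$ with no further work, and $p(z;\theta)=\widetilde p(z_1,\dots,z_m)\tau_F$ follows at once. Your ``two sign-type copies and the $(2,1^{m-2})$-isotype'' live in $\mathcal{P}_{m,1}$, so the restriction to the $\mathcal{Y}_0$ isotype that you yourself invoke already disposes of them; there is nothing left to eliminate with head or interface relations. In short: the obstacle you anticipate is not there --- the Jucys--Murphy eigenvalue argument does the whole job in one line, and the head/interface relations are needed only afterwards, for the scalar factorization (which you handle correctly).
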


\begin{proof}
First we show that if $\boldsymbol{T}_{i}p(x;\theta) =tp(x;\theta)$ then $T_{i}p\big( x^{(i)};\theta\big)=tp\big( x^{(i) };\theta\big)$, where $x_{i}^{(i) }=tx_{i+1}^{(i)}$. By hypothesis%
\begin{gather*}
(1+\boldsymbol{T}_{i}) p(x;\theta) =(1+t) p(x;\theta)
 =p(x;\theta) +(1-t) x_{i+1}\frac{p(x;\theta) -p(xs_{i};\theta) }{x_{i}-x_{i+1}}%
+T_{i}p( xs_{i};\theta).
\end{gather*}
Substitute $x_{i}=tx_{i+1}$ in the equations:
\begin{gather*}
(1+t) p(x;\theta) =p(x;\theta)-(p(x;\theta) -p(xs_{i};\theta))+T_{i}p( xs_{i};\theta)
 =( 1+T_{i}) p( xs_{i};\theta)
\end{gather*}
and this shows $(t-T_{i}) p(x;\theta) =0$ at
$x=x^{(i)}$. By hypothesis on $z$ this shows $T_{i}p(z;\theta) =tp(z;\theta) $ for $m+1\leq i<N$ and this implies $\omega_{i}p(z;\theta) =t^{N-i}p(z;\theta)$ for $m+1\leq i\leq N$. There is only one $\tau_{E}$ which has these
$\{\omega_{i}\} $-eigenvalues, namely $\tau_{F}$. Thus $p(z;\theta) =\widetilde{p}( z_{1},\dots,z_{m}) \tau_{F}$
for some polynomial $\widetilde{p}$ and $\boldsymbol{T}_{i}\widetilde
{p}(z) \tau_{F}=t\widetilde{p}(z) \tau_{F}$ for
$1\leq i<m$. In~this range $T_{i}\tau_{F}=-\tau_{F}$ thus $\widetilde{p}$
satisfies the equation%
\begin{gather*}
t\widetilde{p}(z) \tau_{E} =(1-t) z_{i+1}\frac{\widetilde{p}(z) -\widetilde{p}(zs_{i})
}{z_{i}-z_{i+1}}\tau_{E}-\widetilde{p}(zs_{i}) \tau_{E},
\\
\widetilde{p}(zs_{i}) =\frac{z_{i+1}-tz_{i}}{z_{i}-tz_{i+1}}\widetilde{p}(z) .
\end{gather*}
Thus $z_{i}-tz_{i+1}$ is a factor of $\widetilde{p}(z) $ because
$\widetilde{p}(zs_{i}) $ is polynomial. Furthermore
$\widetilde{p}(z) /(z_{i}-tz_{i+1})$ is~$s_{i}$-invariant. We claim by induction that $(z_{i}-tz_{i+k})$ is a~factor of $\widetilde{p}(z) $ for $1\leq i<i+k\leq m$: this is
valid for $k=1$ so consider that $(z_{i}-tz_{i+k})$ is a factor
of $\widetilde{p}(z) $ and $\widetilde{p}(z)/(z_{i+k}-tz_{i+k+1})$ is~$s_{i+k}$-invariant thus $(z_{i}-tz_{i+k+1})$ is a factor (where $i+k+1\leq m$).

Suppose $z_{m}^{\prime}=t^{N-m}=tz_{N-m+1}^{\prime}$ then $T_{m}\widetilde
{p}(z^{\prime})\tau_{F}=t\widetilde{p}(z^{\prime}) \tau_{F}$ but this implies $\widetilde{p}(z^{\prime})=0$ or~$\omega_{m}\tau_{F}=t^{N-m}\tau_{F}$ which is impossible. Thus $\big(z_{m}-t^{N-m}\big)$ is a factor of $\widetilde{p}(z) $. The
symmetry properties imply $\big(z^{i}-t^{N-m}\big)$ is a factor of
$\widetilde{p}(z) $ for $1\leq i\leq m$.
\end{proof}

Gonz\'{a}lez and Lapointe~\cite{GonzalezLapointe2020} proved an evaluation formula for the
version of supersymmetric Macdonald polynomials constructed in~\cite{BlondeauFournieretal2012}, with
\[
z=\big(t^{N-1}q^{-m},t^{N-2}q^{1-m},\dots,t^{N-m}q^{-1},t^{N-m-1},\dots,t,1\big).
\]
An example appears to show there is no such general result in our version.
However there may be one for the special case where $\lfloor
\lambda,E_{S}\rfloor [1,j] =0$ for $1\leq j\leq N-m$. At
this point we offer no conjecture, but some very small examples with $N=3$, $4$
and $\vert \lambda\vert \leq4$ suggest there is something to be found.

\subsection{Minimal symmetric polynomial}

For given $N,m$, isotype $(N-m,1^{m}) $ there is a unique
column-strict tableau with minimum sum of entries, namely
\[
\lfloor \lambda,E\rfloor =\begin{bmatrix}
0 & 0 & 0 & \cdots & 0\\
\cdot & 1 & 2 & \cdots & m
\end{bmatrix}\!,
\]
thus $\lambda=(m,m-1,\dots,2,1,0,\dots,0)$ and $E_{R}=E_{S}=\{1,2,\dots,m,N\}$. There is non-trivial multiplicity $m_{0}=N-m$. Thus $p_{\lambda,E}$ is the symmetric polynomial in
$\mathcal{P}_{m,0}$ of minimum bosonic degree $\frac{m(m+1)}{2}$ There is a concise formula for $\Vert p_{\lambda,E}\Vert^{2}$.

\begin{Theorem}\label{symnorm}
Suppose $\lambda=(m,m-1,\dots,1,0,\dots,0)
\in\mathbb{N}_{0}^{N,+}$ and $E=\{1,2,\dots,m,N\}$ then
$\lfloor \lambda,E\rfloor$ is column-strict and%
\[
\Vert p_{\lambda,E}\Vert^{2}=\frac{[N]_{t}![N]_{t} t^{\gamma}}{[N-m]_{t}![N-m]_{t}}\big(qt^{-N};q\big)_{m}\prod\limits_{j=2}^{m}\big(qt^{-j};q\big)_{j-1},
\]
with $\gamma=(N-m-1) \big(1+\frac{(m+1)(m+2)}{2}\big) +\frac{m(m+1) (m+2)}{6}$.
\end{Theorem}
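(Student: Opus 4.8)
The plan is to specialize the general formula for $\Vert p_{\lambda,E_S}\Vert^{2}$ to the present data and then collapse the resulting products by telescoping. First I would record the structure. Since $\#E=m+1$ and $N\in E$ we have $E\in\mathcal{Y}_{0}$, the displayed tableau $\lfloor\lambda,E\rfloor$ is column-strict, and $E_{R}=E_{S}=E$ (the entries of row~$1$ of $Y_{E}$ are already the smallest available). By Proposition~\ref{Dnorm} the content vector is $c(i,E)=i-m-1$ for $1\le i\le m$, $c(i,E)=N-i$ for $m<i<N$, and $c(N,E)=0$; the decisive observation is that $c(i,E)=-\lambda_{i}$ for $1\le i\le m$, so that $\prod_{i=1}^{N}(qt^{c(i,E)};q)_{\lambda_{i}}=\prod_{k=1}^{m}(qt^{-k};q)_{k}$. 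Only $m_{0}=N-m$ is a nontrivial multiplicity, whence $\prod_{i\ge0}[m_{i}]_{t}!=[N-m]_{t}!$, and $\vert\lambda\vert=\tfrac{m(m+1)}{2}$.

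Next I would dispose of the factors that are rational in $t$ alone. Because $E_{R}=E_{S}=E$, the block $\mathcal{C}_{0}(E_{S})\mathcal{C}_{1}(E_{R})$ equals $\mathcal{C}\big([c(i,E)]_{i=1}^{N}\big)$, and this is exactly the product evaluated inside the proof of Proposition~\ref{Dnorm}, namely $t^{m(N-m-1)}[N]_{t}\big([N-m]_{t}[m+1]_{t}\big)^{-1}$. Multiplying by the explicit $[m+1]_{t}$ of the general formula (which cancels) and by $[N]_{t}!/\prod_{i\ge0}[m_{i}]_{t}!=[N]_{t}!/[N-m]_{t}!$ assembles the announced block $[N]_{t}![N]_{t}\big([N-m]_{t}![N-m]_{t}\big)^{-1}$ together with a clean monomial $t^{m(N-m-1)}$.

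The heart of the argument is the $q$-dependent part. Here I would combine $\prod_{k=1}^{m}(qt^{-k};q)_{k}$ with the double product of the general formula and telescope, organizing the inverted pairs $(i,j)$ with $\lambda_{i}>\lambda_{j}$ into three families: both indices in $\{1,\dots,m\}$; one index $\le m$ and the other in $\{m+1,\dots,N-1\}$; and $i\le m$ with $j=N$. Using $c(i,E)=-\lambda_{i}$ together with the contents of the outer cells, the shifted factorials $(qt^{c(i,E)-c(j,E)\pm1};q)_{\ast}$ of consecutive pairs cancel in chains, exactly in the spirit of the telescoping already used in Propositions~\ref{tau0norm} and~\ref{Dnorm}, leaving only boundary factors that reassemble into $(qt^{-N};q)_{m}\prod_{j=2}^{m}(qt^{-j};q)_{j-1}$, with the remaining $(1-q)$ normalization carried along. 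I expect this matching of numerator and denominator Pochhammer factors across adjacent pairs to be the main obstacle, since one must verify that every interior factor is annihilated and that no spurious $q$-dependence survives.

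Finally I would collect the power of $t$. The only monomial $t$-contributions come from the prefactor $t^{2(N-m-1)}$, from $t^{k(\lambda)}$, and from the $t^{m(N-m-1)}$ produced above, since the telescoping leaves the remaining $t$'s inside the surviving shifted factorials. Hence the exponent equals $2(N-m-1)+m(N-m-1)+k(\lambda)=(m+2)(N-m-1)+\tfrac{m(m+1)(3N-2m-1)}{6}$, where $k(\lambda)=\sum_{i=1}^{m}(N-2i+1)(m+1-i)$ is evaluated with the usual formulas for $\sum i$ and $\sum i^{2}$. A short rearrangement shows this coincides with $\gamma=(N-m-1)\big(1+\tfrac{(m+1)(m+2)}{2}\big)+\tfrac{m(m+1)(m+2)}{6}$, which finishes the identification; the specialization $N=3$, $m=1$ (where $\gamma=5$) serves as a numerical check.
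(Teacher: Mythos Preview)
Your proposal is correct and follows essentially the same route as the paper: specialize the general $\Vert p_{\lambda,E_{S}}\Vert^{2}$ formula with $E_{R}=E_{S}=E$, use the content product $\mathcal{C}\big([c(i,E)]\big)=t^{m(N-m-1)}[N]_{t}\big([m+1]_{t}[N-m]_{t}\big)^{-1}$, and telescope the $q$-products over the pairs $(i,j)$ with $\lambda_{i}>\lambda_{j}$; your three families of pairs are exactly the paper's blocks $P_{1,k}$ and $P_{2,k}$ (the latter grouping your first and third families together), and your treatment of the $t$-exponent via $k(\lambda)=\frac{m(m+1)(3N-2m-1)}{6}$ matches the paper's. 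The only cosmetic difference is that you invoke Proposition~\ref{Dnorm} for the value of $\mathcal{C}$, whereas the paper rederives it as $\prod_{k}\Pi_{k}$.
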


\begin{proof}
The exponent on $t$ is%
\begin{align*}
\gamma & =2(N-m-1) +k(\lambda) +m(N-m-1)
\\
& =(m+2) (N-m-1) +\sum_{i=1}^{m}(N-2i+1) (m+1-i).
\end{align*}
The spectral vector is $\zeta_{\lambda,E}=\big[q^{m}t^{-m},q^{m-1}%
t^{1-m},\dots,qt^{-1},t^{N-m-1},\dots,t,1\big]$. Consider the content
product (part of $\mathcal{C}$) $\Pi_{k}$ for the pairs $(m+1-k,m+j)$ with $1\leq k\leq m$ and $j=1,\dots,N-m-1$:
\begin{gather*}
\Pi_{k} =\!\!\!\prod\limits_{j=1}^{N-m-1}\!\!\!u\big(t^{-k-j}\big)
=\!\!\!\prod\limits_{j=1}^{N-m-1}\!\frac{1\!-\!t^{-k-j+1}}{1\!-t^{-k-j}}\frac{t\!-t^{-k-j}%
}{1\!-t^{-k-j}} =t^{N-m-1}\frac{1\!-\!t^{-k}}{1\!-t^{-k-N+m+1}}\frac{1\!-\!t^{-k-N+m}}{1\!-t^{-k-1}},
\end{gather*}
by telescoping the products (and $t-t^{-k-j}=t(1-t^{-k-j-1})$)
then%
\[
\prod\limits_{k=1}^{m}\Pi_{k}=t^{m(N-m-1)}\frac{1-t^{-1}}{1-t^{-m-1}} \frac{1-t^{-N}}{1-t^{-N+m}}=t^{m(N-m-1)}\frac{[N]_{t}}{[m+1]_{t}[N-m]_{t}}.
\]
Consider the $q$-factors for the pairs $(m+1-k,m+j)$ with
$1\leq k\leq m$ and $j=1,\dots,N-m-1$ (use telescoping)%
\begin{gather*}
P_{1,k} =\prod\limits_{j=1}^{N-m-1}\frac{(qt^{-k-j-1};q)_{k} \big(qt^{-k-j+1};q\big)_{k-1}}{\big(qt^{-k-j};q\big)_{k}\big(qt^{-k-j};q\big)_{k-1}}
 =\frac{\big( qt^{-k-N+m};q\big)_{k}\big( qt^{-k};q\big)_{k-1}} {(qt^{-m-1};q)_{m}\big(qt^{-k-N+m+1};q\big)_{k-1}},
\end{gather*}
then%
\[
P_{1}:=\prod\limits_{k=1}^{m}P_{1,k}=\frac{\big(qt^{-N};q\big)_{m}}{\big(qt^{-m-1};q\big)_{m}}.
\]
Next consider $(m+1-k,m+1-i)$ with $k>i\geq1$ together with $(m+1-k,N)$:%
\[
P_{2,k}=\prod\limits_{i=0}^{k-1}\frac{\big(qt^{-k+i-1};q\big)_{k-i}
\big(qt^{-k+i+1};q\big)_{k-i-1}}{\big( qt^{-k+i};q\big)_{k-i}
\big(qt^{-k+i};q\big)_{k-i-1}}=\frac{\big( qt^{-k-1};q\big)_{k}}{\big(qt^{-k};q\big)_{k}}.
\]
Combine%
\begin{align*}
\prod\limits_{i=1}^{N}\big(qt^{c(i,E)};q\big)_{\lambda_{i}}P_{1}\prod\limits_{k=1}^{m}P_{2,k}
& =\frac{\big(qt^{-N};q\big)_{m}}{\big( qt^{-m-1};q\big)_{m}}\prod\limits_{k=1}^{m} \frac{\big(qt^{-k};q\big)_{k}\big(qt^{-k-1};q\big)_{k}}{\big(qt^{-k};q\big)_{k}}
\\
&=\big(qt^{-N};q\big)_{m}\prod\limits_{k=1}^{m-1}\big( qt^{-k-1};q\big)_{k}.
\end{align*}
This concludes the proof.
\end{proof}

The formula has a hook length interpretation: $\prod_{(i,j) \in Y_{E}}\!\big( qt^{-\mathrm{hook}(i,j)};q\big)_{\mathrm{leg}(i,j) }$; here \mbox{$\mathrm{leg}(1,j)\! =\!0$} for $2\leq j\leq N-m$; $\mathrm{hook}(i,1)=m+2-i$, $\mathrm{leg}(i,1) =m+1-i$, $2\leq i\leq m+1$ and
$\mathrm{hook}(1,1) =N$, $\mathrm{leg}(1,1) =m$ (see~\cite[Theorem~6.22]{DunklLuque2012}).

\subsection{\label{antiSy}Antisymmetric superpolynomials}

Consider $p_{a}=\sum_{\lfloor \beta,F\rfloor =\lfloor\lambda,E_{R}\rfloor }A(\beta,F) M_{\beta,F}$, where $\lfloor \lambda,E\rfloor $ is row-strict and $\boldsymbol{T}_{i}p_{a}=-p_{a}$ for $1\leq i<N$. Recall the action of $\boldsymbol{T}_{i}$ on the sum, which decomposes into pairs and singletons. Suppose $\lfloor \beta,F\rfloor =\lfloor \lambda,E_{R}\rfloor $ for some
$\beta$ with $\beta_{i}<\beta_{i+1}$, some $i$. Let $z=\zeta_{\beta,F}(i+1) /\zeta_{\beta,F}(i) $ then by~(\ref{TiM1})%
\[
(\boldsymbol{T}_{i}+1) \big(A(\beta,F)M_{\beta,F}+A(s_{i}\beta,F) M_{s_{i}\beta,F}\big) =0
\]
implies $A(\beta,F) =-\frac{1-zt}{1-z}A(s_{i}\beta,F)$. Recall $\sigma(n) =(-1)^{n}$
(Definition~\ref{defMD}).

\begin{Lemma}
If $\boldsymbol{T}_{i}p_{a}=-p_{a}$ for $1\leq i<N$ then
\[
A(\beta,F) =\sigma(\operatorname{inv}(\beta)) \mathcal{R}_{1}(\beta,F) A(\beta^{+},F).
\]
\end{Lemma}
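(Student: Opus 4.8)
The plan is to prove the identity by induction on $\operatorname{inv}(\beta)$, mirroring the preceding (symmetric) lemma but tracking the parity sign $\sigma(\operatorname{inv}(\beta))$ that the antisymmetry condition $\boldsymbol{T}_{i}p_{a}=-p_{a}$ forces. For the base case $\operatorname{inv}(\beta)=0$ one has $\beta=\beta^{+}$, the product $\mathcal{R}_{1}(\beta^{+},F)$ is empty and equals $1$, and $\sigma(0)=1$, so both sides reduce to $A(\beta^{+},F)$.

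For the inductive step take $\beta$ with $\operatorname{inv}(\beta)\geq 1$. Since $\beta\neq\beta^{+}$ there is an index $i$ with $\beta_{i}<\beta_{i+1}$, and then $\operatorname{inv}(s_{i}\beta)=\operatorname{inv}(\beta)-1$ while $(s_{i}\beta)^{+}=\beta^{+}$. Writing $z=\zeta_{\beta,F}(i+1)/\zeta_{\beta,F}(i)=q^{\beta_{i+1}-\beta_{i}}t^{c(r_{\beta}(i+1),F)-c(r_{\beta}(i),F)}$, the relation displayed just before the lemma reads $A(\beta,F)=-u_{1}(z)\,A(s_{i}\beta,F)$, because $-\frac{1-zt}{1-z}=-u_{1}(z)$. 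Applying the inductive hypothesis to $s_{i}\beta$ gives $A(s_{i}\beta,F)=\sigma(\operatorname{inv}(\beta)-1)\,\mathcal{R}_{1}(s_{i}\beta,F)\,A(\beta^{+},F)$.

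The substantive step is the single-factor ratio
\[
\frac{\mathcal{R}_{1}(\beta,F)}{\mathcal{R}_{1}(s_{i}\beta,F)}=u_{1}(z),
\]
which I would establish by the factor-by-factor matching already carried out in Proposition~\ref{tau0norm} and recorded for the full product $\mathcal{R}$ in Proposition~\ref{Mnorm1}. Passing from $\beta$ to $s_{i}\beta$ swaps the ranks $r_{\beta}(i)\leftrightarrow r_{\beta}(i+1)$ and fixes $r_{\beta}(k)$ for $k\neq i,i+1$. A pair $(k,\ell)$ disjoint from $\{i,i+1\}$ contributes the identical factor to both products; for fixed $k<i$ the two pairs $(k,i)$, $(k,i+1)$ contribute the same unordered pair of $u_{1}$-factors to $\mathcal{R}_{1}(\beta,F)$ and to $\mathcal{R}_{1}(s_{i}\beta,F)$ (their inversion status and arguments merely interchange), and symmetrically for $(i,\ell)$, $(i+1,\ell)$ with $\ell>i+1$. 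The only unmatched factor is that of the pair $(i,i+1)$, an inversion of $\beta$ but not of $s_{i}\beta$, contributing exactly $u_{1}(z)$; this gives the claimed ratio, i.e.\ $u_{1}(z)\,\mathcal{R}_{1}(s_{i}\beta,F)=\mathcal{R}_{1}(\beta,F)$.

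Combining the three displays and using $\sigma(\operatorname{inv}(\beta)-1)=-\sigma(\operatorname{inv}(\beta))$,
\[
A(\beta,F)=-u_{1}(z)\,\sigma(\operatorname{inv}(\beta)-1)\,\mathcal{R}_{1}(s_{i}\beta,F)\,A(\beta^{+},F)=\sigma(\operatorname{inv}(\beta))\,\mathcal{R}_{1}(\beta,F)\,A(\beta^{+},F),
\]
since the two minus signs cancel and $u_{1}(z)$ is absorbed into $\mathcal{R}_{1}$. The one place warranting genuine care is the sign bookkeeping: unlike the symmetric lemma, where $u_{0}$ enters with a plus sign and no parity factor, each elementary transposition here multiplies by $-u_{1}(z)$, and it is precisely the accumulation of these minus signs over the $\operatorname{inv}(\beta)$ steps carrying $\beta$ to $\beta^{+}$ that produces $\sigma(\operatorname{inv}(\beta))$. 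The ratio computation itself I expect to be entirely routine given Proposition~\ref{tau0norm}, and no consistency problem over the choice of $i$ arises because the asserted formula is a fixed function of $\beta$ and $F$.
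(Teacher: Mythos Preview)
Your proof is correct and follows essentially the same route as the paper's. The paper's proof is a one-line statement of the ratio $\mathcal{R}_{1}(s_{i}\beta,F)/\mathcal{R}_{1}(\beta,F)=u_{1}\big(\zeta_{\beta,F}(i)/\zeta_{\beta,F}(i+1)\big)$ (stated for $\beta_{i}>\beta_{i+1}$, equivalent to your $\mathcal{R}_{1}(\beta,F)/\mathcal{R}_{1}(s_{i}\beta,F)=u_{1}(z)$ for $\beta_{i}<\beta_{i+1}$) together with an implicit appeal to the argument of Proposition~\ref{tau0norm}; you have simply unpacked that induction and the factor-matching in full, including the sign bookkeeping that distinguishes this case from the symmetric lemma.
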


\begin{proof}
Suppose $\beta_{i}>\beta_{i+1}$ then%
\begin{gather*}
\frac{\mathcal{R}_{1}(s_{i}\beta,F)}{\mathcal{R}_{1}(\beta,F)}
=u_{1}\bigg(\frac{\zeta_{\beta,F}(i)}{\zeta_{\beta,F}(i+1)}\bigg) .
\tag*{\qed}
\end{gather*}
\renewcommand{\qed}{}
\end{proof}

Consider the possibilities when $\beta_{i}=\beta_{i+1}$ and $j=r_{\beta}(i)$:
$(i)$~if $c(j,F) =c(j+1,F)-1$, that is, $j$ and $j+1$ are in adjacent cells of column 1 of $Y_{F}$ then
$\boldsymbol{T}_{i}M_{\beta,F}=-M_{\beta,F}$, imposing no conditions on
$A(\beta,F) $; $(ii)$~if $c(j,F) =c(j+1,F) +1$ then $\boldsymbol{T}_{i}M_{\beta,F}=tM_{\beta,F}$ but this occurs only if there are adjacent equal values ($\beta_{i}$) in row 1 of
$\lfloor \lambda,E_{R}\rfloor$, ruled out by hypothesis; $(iii)$~$c(j,F) <0<c(j+1,F)$. In~this case we relate $M_{\beta,F}$ to $M_{\beta,s_{j}F}$, where $\operatorname{inv}(s_{j}F)
=\operatorname{inv}(F) -1$: the formulas similar to (\ref{TiM2}) with
$z=\zeta_{\beta,F}(i)/\zeta_{\beta,F}(i+1)=t^{c(j,F) -c(j+1,F)}$ appear here: then%
\[
(\boldsymbol{T}_{i}+1)\big(A(\beta,F)M_{\beta,F}+A(\beta,s_{j}F) M_{\beta,s_{j}F}\big) =0
\]
implies $A\big(\beta,s_{j}F\big) =-\frac{1-tz}{1-z}A(\beta,F)$.

\begin{Lemma}
$\sigma(\operatorname{inv}(F)) \dfrac{A(\beta,F)}{\mathcal{C}_{1}(F)}
=\sigma\big(\operatorname{inv}(E_{S})\big)
\dfrac{A\big(\beta,E_{S}\big)}{\mathcal{C}_{1}\big( E_{S}\big)}$.
\end{Lemma}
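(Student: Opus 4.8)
The plan is to run the proof of the preceding (symmetric) lemma essentially verbatim, with two modifications: the symmetrization condition $(\boldsymbol{T}_i-t)$ is replaced by the antisymmetrization condition $(\boldsymbol{T}_i+1)$, so that $u_0$ is replaced by $u_1$; and one must now carry the parity factor $\sigma(\operatorname{inv}(\cdot))$, which is the genuinely new bookkeeping. The quantity I claim is constant on the class $\{F:\lfloor\beta,F\rfloor=\lfloor\lambda,E_R\rfloor\}$ is $\sigma(\operatorname{inv}(F))\,A(\beta,F)/\mathcal{C}_1(F)$, and evaluating it at a general $F$ and at $E_S$ yields the asserted identity.

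First I would dispose of the cases $\beta_i=\beta_{i+1}$, $j=r_\beta(i)$ exactly as listed in the paragraph preceding the lemma. Column-$1$ adjacency ($c(j,F)=c(j+1,F)-1$) gives $\boldsymbol{T}_iM_{\beta,F}=-M_{\beta,F}$, which is compatible with antisymmetry and imposes no relation on $A(\beta,F)$; row-$1$ adjacency ($c(j,F)=c(j+1,F)+1$) gives $\boldsymbol{T}_iM_{\beta,F}=tM_{\beta,F}$ and is excluded by the row-strictness of $\lfloor\lambda,E_R\rfloor$. Only the mixed case $c(j,F)<0<c(j+1,F)$ links two distinct labels: with $z=t^{c(j,F)-c(j+1,F)}$, the two-term rules for $M_{\beta,F}$ and $M_{\beta,s_jF}$ given above, together with $(\boldsymbol{T}_i+1)\big(A(\beta,F)M_{\beta,F}+A(\beta,s_jF)M_{\beta,s_jF}\big)=0$, force $A(\beta,s_jF)=-u_1(z)\,A(\beta,F)$, and here $\operatorname{inv}(s_jF)=\operatorname{inv}(F)-1$.

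The core of the argument is then the invariance of $\sigma(\operatorname{inv}(F))\,A(\beta,F)/\mathcal{C}_1(F)$ under the move $F\mapsto s_jF$. Three things change: the parity factor flips, since $\operatorname{inv}$ drops by one; the coefficient $A(\beta,\cdot)$ picks up the factor $-u_1(z)$ just computed; and $\mathcal{C}_1$ changes by the single $u_1(z)$-factor attached to the one content pair that is present on one side of the move but absent on the other. This last ratio I would obtain by repeating, for the $u_1$-part of the product, the pair-matching bookkeeping of Proposition~\ref{tau0norm}: since $\mathcal{C}=\mathcal{C}_0\mathcal{C}_1$ and $u=u_0u_1$ factor compatibly, and every matched pair carries the same argument $t^{c(j,F)-c(j+1,F)}$ on both sides, the $u_1$-factors split off exactly as the $u$-factors do there. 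Combining the parity flip with the minus sign in the $A$-relation accounts for a net sign of $+1$, while the $u_1(z)$ produced by $A$ is balanced against the $u_1(z)$-factor carried by $\mathcal{C}_1$; the displayed quantity is therefore preserved. Because every $F$ in the class is joined to $E_S$ by a chain of such moves, the quantity is constant along the chain and the identity follows.

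The main obstacle is precisely this content-product bookkeeping: one must verify that under the interchange of the negative content at position $j$ with the positive content at position $j+1$, all pairs $(k,\ell)$ with $c(k,\cdot)<0<c(\ell,\cdot)$ other than the distinguished one are matched bijectively, with equal $u_1$-values, between $\mathcal{C}_1(F)$ and $\mathcal{C}_1(s_jF)$, so that exactly one $u_1(z)$-factor survives. This is routine, but it is where an indexing or sign slip is easiest to make, the more so because the antisymmetric relation already contributes its own minus sign, which must be tallied correctly against the parity of $\operatorname{inv}$.
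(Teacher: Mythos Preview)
Your approach is correct and is precisely the argument the paper has in mind: the antisymmetric lemma is stated in the paper without proof, the understanding being that one reruns the proof of the symmetric lemma immediately preceding it, replacing $(\boldsymbol{T}_i-t)$ by $(\boldsymbol{T}_i+1)$, $u_0$ by $u_1$, and inserting the parity $\sigma(\operatorname{inv}(\cdot))$. Your case split, the derivation of $A(\beta,s_jF)=-u_1(z)\,A(\beta,F)$, the observation that $\mathcal{C}_1$ changes by exactly one $u_1(z)$-factor under $F\mapsto s_jF$ (via the pair-matching of Proposition~\ref{tau0norm}), and the connectivity argument are all exactly what is needed.

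One caution on the point you yourself flag as the ``main obstacle'': be careful about the \emph{direction} of the $\mathcal{C}_1$-change. With $c(j,F)<0<c(j+1,F)$ and $\operatorname{inv}(s_jF)=\operatorname{inv}(F)-1$, the pair $(j,j+1)$ contributes $u_1(z)$ to $\mathcal{C}_1(F)$ but not to $\mathcal{C}_1(s_jF)$, so $\mathcal{C}_1(F)=u_1(z)\,\mathcal{C}_1(s_jF)$. Combined with $A(\beta,s_jF)=-u_1(z)\,A(\beta,F)$ and the parity flip, the quantity that is genuinely invariant is $\sigma(\operatorname{inv}(F))\,A(\beta,F)\,\mathcal{C}_1(F)$, i.e.\ the \emph{product} rather than the quotient. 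The paper's displayed formula for $A(\beta,F)$ immediately after the lemma (and the analogous line after the symmetric lemma) is consistent with the product form, so the quotient in the lemma statement appears to be a typographical slip carried over from the symmetric case. Your method proves the correct (product) version; just make sure your ``balanced against'' sentence tracks the direction explicitly.
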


Thus%
\begin{align*}
A(\beta,F) & =\sigma(\operatorname{inv}(\beta)) \mathcal{R}_{1}(\beta,F) A(\beta^{+},F)
\\
& =\sigma\big(\operatorname{inv}(\beta) +\operatorname{inv}(F) +\operatorname{inv}\big(E_{S}\big) \big) A\big(\lambda,E_{S}\big) \dfrac{\mathcal{R}_{1}(\beta,F) \mathcal{C}_{1}\big( E_{S}\big)}{\mathcal{C}_{1}(F)}.
\end{align*}

\begin{Theorem}
Suppose $\lambda\in\mathbb{N}_{0}^{N,+}$, $E\in\mathcal{Y}_{0}$, and
$\lfloor \lambda,E\rfloor $ is row-strict then
\[
p_{\lambda,E}^{a}=\sum_{\lfloor \alpha,F\rfloor \in\mathcal{T}(\lambda,E)} \sigma\big(\operatorname{inv}(\beta)
+\operatorname{inv}(F) +\operatorname{inv}\big(E_{S}\big) \big)
\frac{\mathcal{C}_{1}\big( E_{S}\big) \mathcal{R}_{1}(\alpha,F)}{\mathcal{C}_{1}(F) }M_{\alpha,F}%
\]
is the antisymmetric polynomial in $\mathcal{M}(\lambda,E)$,
unique when the coefficient of $M_{\lambda,E_{S}}$ is $1$.
\end{Theorem}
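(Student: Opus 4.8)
The plan is to mirror exactly the argument that produced the symmetric polynomial $p_{\lambda,E}$, with the $t$-eigenvalue condition replaced by the $(-1)$-eigenvalue condition and with $\mathcal{R}_{0},\mathcal{C}_{0}$ traded for $\mathcal{R}_{1},\mathcal{C}_{1}$ while the signs are tracked through $\sigma$. First I would write the candidate as an undetermined combination $p_{\lambda,E}^{a}=\sum_{\lfloor\alpha,F\rfloor=\lfloor\lambda,E\rfloor}A(\alpha,F)M_{\alpha,F}$ spanning the module $\mathcal{M}(\lambda,E)$, and impose the antisymmetry conditions $\boldsymbol{T}_{i}p_{\lambda,E}^{a}=-p_{\lambda,E}^{a}$ for $1\leq i<N$.

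Second I would analyze the action of a single $\boldsymbol{T}_{i}$ on the basis. Using the transformation formula~\eqref{TiM1} and its analogue for equal parts, the set $\{M_{\alpha,F}\}$ splits under $\boldsymbol{T}_{i}$ into two-dimensional blocks $\{M_{\beta,F},M_{s_{i}\beta,F}\}$ when $\beta_{i}<\beta_{i+1}$ and one-dimensional blocks when $\beta_{i}=\beta_{i+1}$. On a two-dimensional block $\boldsymbol{T}_{i}$ has eigenvalues $t$ and $-1$, so the $(-1)$-eigenspace is one-dimensional and $(\boldsymbol{T}_{i}+1)(A(\beta,F)M_{\beta,F}+A(s_{i}\beta,F)M_{s_{i}\beta,F})=0$ forces $A(\beta,F)=-\frac{1-zt}{1-z}A(s_{i}\beta,F)$ with $z=\zeta_{\beta,F}(i+1)/\zeta_{\beta,F}(i)$. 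On a one-dimensional block with $j=r_{\beta}(i)$, a column-$1$ adjacency of $j,j+1$ gives $\boldsymbol{T}_{i}M_{\beta,F}=-M_{\beta,F}$ and imposes no condition, whereas a row-$1$ adjacency would give $\boldsymbol{T}_{i}M_{\beta,F}=tM_{\beta,F}$, which as a singleton would force $A(\beta,F)=0$.

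Third, the two lemmas immediately preceding the statement convert these local recursions into global form: the first gives $A(\beta,F)=\sigma(\operatorname{inv}(\beta))\mathcal{R}_{1}(\beta,F)A(\beta^{+},F)$, the second gives $\sigma(\operatorname{inv}(F))A(\beta,F)/\mathcal{C}_{1}(F)=\sigma(\operatorname{inv}(E_{S}))A(\beta,E_{S})/\mathcal{C}_{1}(E_{S})$. Composing them and fixing the remaining scalar by $A(\lambda,E_{S})=1$ yields
\[
A(\alpha,F)=\sigma\big(\operatorname{inv}(\alpha)+\operatorname{inv}(F)+\operatorname{inv}(E_{S})\big)\frac{\mathcal{C}_{1}(E_{S})\mathcal{R}_{1}(\alpha,F)}{\mathcal{C}_{1}(F)},
\]
which is exactly the claimed formula, and uniqueness is immediate since every coefficient is pinned to $A(\lambda,E_{S})$.

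The main obstacle is not the coefficient formula, which is mechanical given the lemmas, but the existence and consistency that those lemmas presuppose: one must verify that the web of two-term relations coming from the various $\boldsymbol{T}_{i}$ is compatible, so a nonzero solution genuinely exists. For consistency I would invoke the Yang--Baxter graph structure of \cite[Proposition~5.2]{DunklLuque2012}, which shows $\mathcal{M}(\lambda,E)$ is generated from any single $M_{\alpha,F}$ by the steps $a(\boldsymbol{T}_{i}+b)$, so the coefficient ratios are determined along every path and the braid relations force distinct paths between two labels to agree. For existence, the row-strict hypothesis forbids adjacent equal entries in row~$1$ of $\lfloor\lambda,E\rfloor$, precisely eliminating the singleton case $\boldsymbol{T}_{i}M_{\beta,F}=tM_{\beta,F}$ that would collapse $p_{\lambda,E}^{a}$ to zero; conversely, failure of row-strictness reintroduces such a singleton and kills every antisymmetric element. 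Hence row-strictness is exactly the condition under which a unique (up to scale) antisymmetric polynomial exists, completing the proof.
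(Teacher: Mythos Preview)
Your proposal is correct and follows essentially the same route as the paper: write $p_{\lambda,E}^{a}$ as an undetermined combination over $\mathcal{M}(\lambda,E)$, impose $(\boldsymbol{T}_{i}+1)p_{\lambda,E}^{a}=0$, derive the two-term recursions on pairs, and feed these into the two lemmas preceding the theorem to obtain the closed form for $A(\alpha,F)$ normalized by $A(\lambda,E_{S})=1$. The paper presents the theorem as the summary of that derivation without a separate proof block; your added paragraph on consistency via the Yang--Baxter graph and on how row-strictness rules out the fatal $\boldsymbol{T}_{i}M_{\beta,F}=tM_{\beta,F}$ singletons makes explicit what the paper leaves implicit (by analogy with the symmetric case and its citation of \cite[Theorem~5.27]{DunklLuque2012}), but does not depart from the method.
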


The antisymmetrizing operator is defined analogously to $S$.

\begin{Definition}
For $n\geq1$ let $X_{0}^{a}=1$ and $X_{n}^{a}=1-\frac{1}{t}\boldsymbol{T}_{n}X_{n-1}$, and $A^{(n)}=X_{1}^{a}X_{2}^{a}\cdots X_{n}^{a}$.
\end{Definition}

Equivalently $X_{n}^{a}=1-\frac{1}{t}\boldsymbol{T}_{n}+\frac{1}{t^{2}}\boldsymbol{T}_{n} \boldsymbol{T}_{n-1}+\dots+\frac{(-1)^{n}}{t^{n}}\boldsymbol{T}_{n}\cdots \boldsymbol{T}_{2}\boldsymbol{T}_{1}$.

\begin{Theorem}
If $1\leq j\leq n$ then $(\boldsymbol{T}_{j}+1) A^{(n)}=0$.
\end{Theorem}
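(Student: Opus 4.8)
The plan is to mirror the proof of Theorem~\ref{symmop}, replacing the symmetrizing factors by their antisymmetric counterparts and the relation $(\boldsymbol{T}_j-t)(1+\boldsymbol{T}_j)=0$ by its companion. First I would record the single algebraic identity that drives the argument: the quadratic relation $(\boldsymbol{T}_j+1)(\boldsymbol{T}_j-t)=0$ gives $(\boldsymbol{T}_j+1)\boldsymbol{T}_j=t(\boldsymbol{T}_j+1)$, and therefore
\[
(\boldsymbol{T}_j+1)\Big(1-\tfrac{1}{t}\boldsymbol{T}_j\Big)=(\boldsymbol{T}_j+1)-\tfrac{1}{t}\,t(\boldsymbol{T}_j+1)=0 .
\]

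Next I would establish the antisymmetric analogue of the expansion $S^{(n)}=\sum_{u\in\mathcal{S}_{n+1}}\boldsymbol{T}(u)$ obtained in Theorem~\ref{symmop}. The combinatorial heart of that proof---that fully expanding $X_1\cdots X_n$ (equivalently $\widetilde{X}_1\cdots\widetilde{X}_n$) produces each $u\in\mathcal{S}_{n+1}$ exactly once and always through a reduced word---is a statement about $\mathbb{Z}\mathcal{S}_{n+1}$ and is unaffected by the change of coefficients. Tracking the factor $-1/t$ that each generator carries in $X_k^{a}=1-\tfrac{1}{t}\boldsymbol{T}_kX_{k-1}^{a}$, and using that every surviving product is of minimal length, I obtain
\[
A^{(n)}=\sum_{u\in\mathcal{S}_{n+1}}\Big(\!-\tfrac{1}{t}\Big)^{\ell(u)}\boldsymbol{T}(u),
\]
where $\ell(u)$ is the length of $u$; the coefficient is well defined precisely because the number of generators in each term equals $\ell(u)$.

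Then I would use the length-preserving automorphism of $\mathcal{S}_{n+1}$ induced by the flip $s_i\mapsto s_{n+1-i}$ of $\{1,\dots,n+1\}$. Since the coefficient above depends only on $\ell(u)$, the sum is invariant under $\boldsymbol{T}_i\mapsto\boldsymbol{T}_{n+1-i}$; applying this relabelling to the truncated product $A^{(j)}=X_1^{a}\cdots X_j^{a}$ rewrites it, for each fixed $j\le n$, as
\[
A^{(j)}=\Big(1-\tfrac{1}{t}\boldsymbol{T}_j\Big)\Big(1-\tfrac{1}{t}\boldsymbol{T}_{j-1}+\tfrac{1}{t^2}\boldsymbol{T}_{j-1}\boldsymbol{T}_j\Big)\cdots\Big(1-\tfrac{1}{t}\boldsymbol{T}_1+\cdots+\tfrac{(-1)^{j}}{t^{j}}\boldsymbol{T}_1\cdots\boldsymbol{T}_j\Big).
\]
Writing $A^{(n)}=A^{(j)}X_{j+1}^{a}\cdots X_n^{a}$ and multiplying on the left by $\boldsymbol{T}_j+1$, the identity of the first paragraph annihilates the leading factor $1-\tfrac{1}{t}\boldsymbol{T}_j$, so $(\boldsymbol{T}_j+1)A^{(n)}=0$.

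The main obstacle is the bookkeeping of the middle step: confirming that the reduced-word expansion of Theorem~\ref{symmop} assigns each $u$ the single weight $(-1/t)^{\ell(u)}$, with no interference from shorter or non-reduced products. This is guaranteed by the minimal-length conclusion already proved there, so the antisymmetric case introduces no new combinatorics---only the observation that the relabelling $\boldsymbol{T}_i\mapsto\boldsymbol{T}_{j+1-i}$ and the quadratic relation cooperate with the sign-and-$t$-weighted coefficients exactly as the unit coefficients did in the symmetric case.
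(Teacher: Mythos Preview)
Your proof is correct and follows essentially the same route as the paper. The paper condenses the argument into one observation: the elements $-\tfrac{1}{t}\boldsymbol{T}_i$ satisfy the braid relations, so replacing each $\boldsymbol{T}_i$ by $-\tfrac{1}{t}\boldsymbol{T}_i$ in the proof of Theorem~\ref{symmop} (which uses only braid relations and the reduced-word expansion) carries through verbatim and puts $1-\tfrac{1}{t}\boldsymbol{T}_j$ as a left factor of $A^{(n)}$; the conclusion then follows from $(\boldsymbol{T}_j+1)\big(1-\tfrac{1}{t}\boldsymbol{T}_j\big)=0$. You unpack this by tracking the weight $(-1/t)^{\ell(u)}$ explicitly, but the substance is the same. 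One small wording slip: the relabelling you apply to the truncated product $A^{(j)}$ is $\boldsymbol{T}_i\mapsto\boldsymbol{T}_{j+1-i}$, not $\boldsymbol{T}_{n+1-i}$; your displayed formula is nonetheless correct.
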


\begin{proof}
The operators $\big\{{-}\frac{1}{t}\boldsymbol{T}_{i}\big\} $ satisfy the
braid relations so the same approach as in Theorem~\ref{symmop} works here,
and the proof then follows from $(\boldsymbol{T}_{i}+1) \big(1-\frac{1}{t}\boldsymbol{T}_{i}\big) =0$.
\end{proof}

Similarly to Corollary~\ref{S2S} one can show that%
\[
A^{(N-1)}A^{(N-1)}=t^{-N(N-1)/2}[N]_{t}!A^{(N-1)}.
\]
There is a result analogous to Proposition~\ref{pfact}.

\begin{Lemma}
Suppose for some $i$ that $\boldsymbol{T}_{i}p(x;\theta)=-p(x;\theta)$ then $T_{i}p\left(x^{(i)};\theta\right) =-p\left(x^{(i)};\theta\right)$, where
$x_{i+1}^{(i) }=tx_{i}^{(i)}$.
\end{Lemma}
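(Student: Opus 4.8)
The plan is to mirror the proof of Proposition~\ref{pfact}, simply replacing the eigenvalue $t$ by $-1$ and the specialization $x_{i}=tx_{i+1}$ by $x_{i+1}=tx_{i}$. Starting from the defining identity
\[
\boldsymbol{T}_{i}p(x;\theta)=(1-t)x_{i+1}\frac{p(x;\theta)-p(xs_{i};\theta)}{x_{i}-x_{i+1}}+T_{i}p(xs_{i};\theta),
\]
I would specialize $x$ to a point $x^{(i)}$ with $x_{i+1}^{(i)}=tx_{i}^{(i)}$. At such a point the rational prefactor simplifies, since
\[
\frac{(1-t)x_{i+1}}{x_{i}-x_{i+1}}=\frac{(1-t)tx_{i}}{x_{i}-tx_{i}}=t,
\]
so that $\boldsymbol{T}_{i}p=t\big(p-p(xs_{i};\theta)\big)+T_{i}p(xs_{i};\theta)$ once evaluated at $x^{(i)}$.

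Next I would insert the hypothesis $\boldsymbol{T}_{i}p=-p$ and rearrange, obtaining
\[
-(1+t)\,p\big(x^{(i)};\theta\big)=(T_{i}-t)\,p\big(x^{(i)}s_{i};\theta\big).
\]
The decisive observation — the exact analog of the fact that in the symmetric case $(1+T_{i})$ lands in the $t$-eigenspace of $T_{i}$ — is that the quadratic relation $(T_{i}-t)(T_{i}+1)=0$ (equivalently $T_{i}^{2}=(t-1)T_{i}+t$) forces the right-hand side into $\ker(T_{i}+1)$: for every $f\in\mathcal{P}$ one has $(T_{i}+1)(T_{i}-t)f=0$. Hence the left-hand side $-(1+t)p(x^{(i)};\theta)$ also lies in $\ker(T_{i}+1)$. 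Because $t$ is generic we have $t^{2}\neq1$, so $1+t\neq0$, and therefore $(T_{i}+1)p(x^{(i)};\theta)=0$, which is precisely the asserted identity $T_{i}p(x^{(i)};\theta)=-p(x^{(i)};\theta)$.

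The only genuinely delicate point is the bookkeeping in the specialization step: the operator $T_{i}$ acts solely on the fermionic variables $\theta$ (Definition~\ref{defTi}) and therefore commutes with evaluation of the $x$-variables at $x^{(i)}$, so the factor $(T_{i}-t)$ may legitimately be pulled outside the specialization. Granting this, the argument is a one-line computation followed by the eigenspace remark, which does all the real work; the parallel with Proposition~\ref{pfact} is most transparent here, the role of $(1+T_{i})$ there being played by $(T_{i}-t)$ now.
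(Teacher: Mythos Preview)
Your proof is correct and follows essentially the same route as the paper's: both compute $(t-\boldsymbol{T}_{i})p=(t+1)p$, expand $\boldsymbol{T}_{i}$ via its definition, specialize to $x_{i+1}=tx_{i}$ so that the divided-difference coefficient becomes~$t$, and arrive at $(t+1)\,p\big(x^{(i)};\theta\big)=(t-T_{i})\,p\big(x^{(i)}s_{i};\theta\big)$. The paper then simply asserts ``this shows $(1+T_{i})p=0$ at $x=x^{(i)}$''; you have spelled out the missing justification, namely that the quadratic relation forces $\operatorname{im}(T_{i}-t)\subset\ker(T_{i}+1)$ and $1+t\neq0$, which is a welcome clarification rather than a different argument.
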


\begin{proof}
By hypothesis%
\begin{align*}
(t-\boldsymbol{T}_{i}) p(x;\theta) =(t+1) p(x;\theta)
 =tp(x;\theta) -(1-t) x_{i+1}\frac{p(x;\theta) -p(xs_{i};\theta)}{x_{i}-x_{i+1}}T_{i}p(xs_{i};\theta).
\end{align*}
Substitute $x_{i+1}=tx_{i}$ in the equations:
\begin{align*}
(t+1) p(x;\theta) =tp(x;\theta)-t(p(x;\theta) -p(xs_{i};\theta))
-T_{i}p(xs_{i};\theta) =(t-T_{i}) p( xs_{i};\theta)
\end{align*}
and this shows $( 1+T_{i}) p(x;\theta) =0$ at
$x=x^{(i) }$.
\end{proof}

Suppose $p_{\lambda,E}^{a}$ is antisymmetric and $E_{0}=\{N-m,N-m+1,\dots,N\}$ and consider $p_{\lambda,E}^{a}(z)$, where $z=\big(z_{1},z_{2},\dots,z_{N-m-1},t^{-m},\dots,t^{-2},t^{-1},1\big)$, then by the lemma $T_{i}p(z;\theta)=-p(z;\theta)$ for $N-m\leq i<N$ which $\omega_{i}p(z;\theta) =t^{i-N}p(z;\theta)$ for $N-m\leq i\leq N$.
The eigenvalues determine~$\tau_{E_{0}}$ and thus $p(z;\theta)=\widetilde{p}(z) \tau_{E_{0}}(\theta)$. If range $1\leq i\leq N-m-2$ then $T_{i}\tau_{F}=t\tau_{F}$ thus $\widetilde{p}$
satisfies the equation%
\begin{gather*}
-\widetilde{p}(z) \tau_{E_{0}} =(1-t)z_{i+1}\frac{\widetilde{p}(z) -\widetilde{p}(zs_{i}) }{z_{i}-z_{i+1}}\tau_{E_{0}}+t\widetilde{p}(zs_{i}) \tau_{E_{0}},
\\
\widetilde{p}(zs_{i}) =\frac{z_{i}-tz_{i+1}}{z_{i+1}-tz_{i}}\widetilde{p}(z).
\end{gather*}
This implies $(z_{i+1}-tz_{i}) $ is a factor of $\widetilde{p}(z)$ and $\frac{\widetilde{p}(z)}{z_{i+1}-tz_{i}}$ is $s_{i}$-invariant. Furthermore $(tz_{i}-z_{j})$ is a~factor of $\widetilde{p}(z)$ for $1\leq i<j\leq N-m-1$. Also $z_{N-m-1}=t^{-m-1}$ implies $\widetilde{p}(z) =0$ (or else $\omega_{N-m-1}\tau_{E_{0}}=t^{-m-1}\tau_{E_{0}}$, contra) and so $\big(t^{m+1}z_{N-m-1}-1\big)$ is a factor of~$\widetilde{p}(z) $. Thus%
\[
p^{a}(z;\theta) =\prod\limits_{1\leq i<j\leq N-m-1}(
tz_{i}-z_{j}) \prod\limits_{k=1}^{N-m-1}\big( t^{m+1}z_{k}-1\big)
p_{0}( z_{1},\dots,z_{N-m-1}) \tau_{E_{0}}
\]
and $p_{0}$ is $\mathcal{S}_{N-m-1}$-symmetric. With methods similar to those
of Theorem~\ref{symnorm} and by use of the antisymmetrizing operator
$A^{(N-1) }$ one can derive a formula for $\Vert
p_{\lambda,E}^{a}\Vert^{2}$.

\section{Conclusion}

We constructed a representation of the Hecke algebra $\mathcal{H}_{N}(t)$ on superpolynomials and applied the theory of vector-valued
nonsymmetric Macdonald polynomials to this situation. The basic facts such as
orthogonal bases for irreducible representations on fermionic variables, the
partial order on compositions used in expressions for the Macdonald
polynomials, and a sketch of the Yang--Baxter graph technique for constructing
the polynomials starting from degree zero were presented. The polynomials are
mutually orthogonal with respect to a bilinear form in which the generators of
the Hecke algebra are self-adjoint. The ideas of Baker and Forrester were used
to construct symmetric polynomials and to determine their squared norms.

There are some topics which deserve further investigation. What can be proven
about values of the nonsymmetric Macdonald polynomials at special points such
as $\big( t^{N-1},t^{N-2},\dots,t,1\big) $ (see~Remark~\ref{conjval})?
Are there special values of the symmetric and anti-symmetric polynomials? A~minimal factorization was proven in Proposition~\ref{pfact}.

Characterizing singular values of the parameters $q$, $t$ and the corresponding
polynomials is another important problem: this means that for a specific value
of $\left( q,t\right) $ there is a polynomial annihilated by $\boldsymbol{D}%
_{i}$ for $1\leq i\leq N$ (see Definition~\ref{defD}). This problem is
connected with the existence of maps between different modules. Also there
should be interesting factorizations. Here are two examples with $N=5$.

Let $\alpha=(2,0,0,0,0)$, $m=2$, $E=\{3,4,5\}\in\mathcal{Y}_{0}$ then $\boldsymbol{D}_{i}M_{\alpha,E}=0$ for $1\leq i\leq5$
when $q^{2}t^{5}=1$ or $qt=-1$ (that is $q^{2}t^{2}=1$, $qt\neq1$), and
\begin{gather*}
M_{\alpha,E}\big(x_{1},x_{2},tx_{2},t^{2}x_{2},t^{3}x_{2}\big)
=t^{10}(tx_{1}-x_{2}) (qtx_{1}-x_{2}) \tau_{E},
\\
\tau_{E} =t^{4}\theta_{3}\theta_{4}-t^{3}\theta_{3}\theta_{5}+t^{2}\theta_{4}\theta_{5},
\end{gather*}
when $(q,t) $ takes on a singular value (note if $q=-1/t$ then
$qtx_{1}-x_{2}=-(x_{1}+x_{2})$).

Let $\alpha=(2,0,0,0,0)$, $m=3$, $E=\{1,2\}\in\mathcal{Y}_{1}$ then $\boldsymbol{D}_{i}M_{\alpha,E}=0$ for $1\leq i\leq5$ when $q^{2}t^{-5}=1$ or $q=-t$ (that is $q^{2}t^{-2}=1$ and $qt^{-1}\neq1$), and
\begin{gather*}
M_{\alpha,E}\big(x_{1},x_{2},t^{-1}x_{2},t^{-2}x_{2},t^{-3}x_{2}\big)
 =t^{6}\big( t^{-1}x_{1}-x_{2}\big) \big(qt^{-1}x_{1}-x_{2}\big)\tau_{E},
\\
\tau_{E} =\theta_{1}\theta_{2}(\theta_{3}+\theta_{4}+\theta_{5}) ,
\end{gather*}
when $q^{2}=t^{5}$ (set $q=u^{5}$, $t=u^{2}$) or $q=-t$.

Obviously there are delicate interactions among $\alpha$, $E$, $q$, $t$, $x$ for such factorizations to hold.

\pdfbookmark[1]{References}{ref}
\LastPageEnding


\begin{thebibliography}{99}
\footnotesize\itemsep=0pt

\bibitem{BakerForrester97}
Baker T.H., Forrester P.J., A {$q$}-analogue of the type {$A$} {D}unkl operator
 and integral kernel, \href{https://doi.org/10.1155/S1073792897000445}{\textit{Int. Math. Res. Not.}} \textbf{1997} (1997), 667--686,
 \href{https://arxiv.org/abs/q-alg/9701039}{arXiv:q-alg/9701039}.

\bibitem{BakerForrester99}
Baker T.H., Forrester P.J., Symmetric {J}ack polynomials from non-symmetric
 theory, \href{https://doi.org/10.1007/BF01608781}{\textit{Ann. Comb.}} \textbf{3} (1999), 159--170,
 \href{https://arxiv.org/abs/q-alg/9707001}{arXiv:q-alg/9707001}.

\bibitem{BlondeauFournieretal2012}
Blondeau-Fournier O., Desrosiers P., Lapointe L., Mathieu P., Macdonald
 polynomials in superspace as eigenfunctions of commuting operators,
 \href{https://doi.org/10.4310/JOC.2012.v3.n3.a8}{\textit{J.~Comb.}} \textbf{3} (2012), 495--561, \href{https://arxiv.org/abs/1202.3922}{arXiv:1202.3922}.

\bibitem{Bravermanetal2020}
Braverman A., Etingof P., Finkelberg M., Cyclotomic double affine {H}ecke
 algebras (with an appendix by Hiraku Nakajima and Daisuke Yamakawa),
 \href{https://doi.org/10.24033/asens.244}{\textit{Ann. Sci. \'Ec. Norm. Sup\'er.~(4)}} \textbf{53} (2020), 1249--1312,
 \href{https://arxiv.org/abs/1611.10216}{arXiv:1611.10216}.

\bibitem{Cherednik95}
Cherednik I., Nonsymmetric {M}acdonald polynomials, \href{https://doi.org/10.1155/S1073792895000341}{\textit{Int. Math. Res.
 Not.}} \textbf{1995} (1995), 483--515, \href{https://arxiv.org/abs/q-alg/9505029}{arXiv:q-alg/9505029}.

\bibitem{Derosiersetal2003}
Desrosiers P., Lapointe L., Mathieu P., Jack polynomials in superspace,
 \href{https://doi.org/10.1016/j.aim.2006.10.004}{\textit{Comm. Math. Phys.}} \textbf{242} (2003), 331--360,
 \href{https://arxiv.org/abs/hep-th/0209074}{arXiv:hep-th/0209074}.

\bibitem{DipperJames86}
Dipper R., James G., Representations of {H}ecke algebras of general linear
 groups, \href{https://doi.org/10.1112/plms/s3-52.1.20}{\textit{Proc. London Math. Soc.}} \textbf{52} (1986), 20--52.

\bibitem{Dunkl2019}
Dunkl C.F., A positive-definite inner product for vector-valued {M}acdonald
 polynomials, \textit{S\'em. Lothar. Combin.} \textbf{80} (2019), Art.~B80a,
 26~pages, \href{https://arxiv.org/abs/1808.05251}{arXiv:1808.05251}.

\bibitem{DunklLuque2012}
Dunkl C.F., Luque J.G., Vector valued {M}acdonald polynomials, \textit{S\'em.
 Lothar. Combin.} \textbf{66} (2012), Art.~B66b, 68~pages, \href{https://arxiv.org/abs/1106.0875}{arXiv:1106.0875}.

\bibitem{GonzalezLapointe2020}
Gonz\'alez C., Lapointe L., The norm and the evaluation of the {M}acdonald
 polynomials in superspace, \href{https://doi.org/10.1016/j.ejc.2019.103018}{\textit{European~J. Combin.}} \textbf{83} (2020),
 103018, 30~pages, \href{https://arxiv.org/abs/1808.04941}{arXiv:1808.04941}.

\bibitem{Griffeth2010}
Griffeth S., Orthogonal functions generalizing {J}ack polynomials,
 \href{https://doi.org/10.1090/S0002-9947-2010-05156-6}{\textit{Trans. Amer. Math. Soc.}} \textbf{362} (2010), 6131--6157,
 \href{https://arxiv.org/abs/0707.0251}{arXiv:0707.0251}.

\bibitem{Lascoux2001}
Lascoux A., Yang--{B}axter graphs, {J}ack and {M}acdonald polynomials,
 \href{https://doi.org/10.1007/s00026-001-8019-3}{\textit{Ann. Comb.}} \textbf{5} (2001), 397--424.

\bibitem{Macdonald2001}
Macdonald I.G., Affine {H}ecke algebras and orthogonal polynomials,
 \textit{Cambridge Tracts in Mathematics}, Vol.~157, \href{https://doi.org/10.1017/CBO9780511542824}{Cambridge University
 Press}, Cambridge, 2003.

\bibitem{MimmachiNoumi98}
Mimachi K., Noumi M., A reproducing kernel for nonsymmetric {M}acdonald
 polynomials, \href{https://doi.org/10.1215/S0012-7094-98-09124-4}{\textit{Duke Math.~J.}} \textbf{91} (1998), 621--634,
 \href{https://arxiv.org/abs/q-alg/9610014}{arXiv:q-alg/9610014}.

\end{thebibliography}
\end{document}